\def\N {\mathbb{N}}
\def\Z {\mathbb{Z}}
\def\R {\mathbb{R}}
\def\C {\mathbb{C}}
\def\im{\mathrm{i}}
\def\id{\mathrm{id}}
\def\h {\mathrm{H}}
\def\trivlin{\mathbf{I}}
\def\quand{\quad\text{ and }\quad}
\def\quomma{\quad\text{, }\quad}
\def\quere{\quad\text{ where }\quad}
\def\ev{\mathrm{ev}}
\def\hc#1{\mathrm{h}_{#1}}
\def\subset{\subseteq}
\def\nobr{~\hspace{-0.26em}}
\def\maps{\nobr:\nobr}
\def\df{\nobr := \nobr}
\def\eq{\nobr = \nobr}
\let\Oldin\in\renewcommand{\in}{\nobr\Oldin\nobr}
\let\Oldtimes\times\renewcommand{\times}{\nobr\Oldtimes}
\let\Oldotimes\otimes\renewcommand{\otimes}{\nobr\Oldotimes}
\newlength{\widthtmp}
\def\length#1{\settowidth{\widthtmp}{#1}\the\widthtmp}
\def\lli#1{\,_{#1}\!}
\renewcommand{\varepsilon}{\epsilon}
\def\bigset#1#2{\left\lbrace\;\begin{minipage}[c]{#1}\begin{center}#2\end{center}\end{minipage}\;\right\rbrace}
\def\erf#1{(\ref{#1})}
\newlength{\myl}
\newcommand\sheaf[1]{\unitlength 0.1mm
  \settowidth{\myl}{$#1$}
  \addtolength{\myl}{-0.8mm}
  \begin{picture}(0,0)(0,0)
  \put(3,6){\text{\uline{\hspace{\myl}}}}
  \end{picture}#1\hspace{-0.15mm}}
\newcommand{\ueins}{{\mathrm{U}}(1)}
\newcommand{\spin}[1]{{\mathrm{Spin}}\brackets{#1}}
\newcommand{\so}[1]{{\mathrm{SO}}\brackets{#1}}
\def\diff{\mathcal{D}\!i\!f\!\!f}
\def\hom{\mathcal{H}\!om}
\def\triv#1{\mathcal{T}\!riv(#1)}
\def\brackets#1{\IfStrEq{#1}{-}{}{(#1)}}
\def\subindex#1{\IfStrEq{#1}{-}{}{_{#1}}}
\def\buntech#1#2{\mathcal{B}\hspace{-0.01em}un_{\hspace{-0.1em}#1}^{#2}}
\def\bun#1#2{\buntech{#1}{}\brackets{#2}}
\def\buncon#1#2{\buntech{#1}{\nabla}\hspace{-0.05em}\brackets{#2}}
\def\bunconflat#1#2{\buntech#1{\nabla_{\!0}}\hspace{-0.05em}\brackets{#2}}
\def\grbtech#1{\mathcal{G}\hspace{-0.06em}r\hspace{-0.06em}b_{\hspace{-0.07em}{#1}}}
\def\grb#1#2{\grbtech#1\brackets{#2}}
\def\grbcon#1#2{\grbtech{#1}^{\nabla\!}\brackets{#2}}
\newcommand{\alxydim}[2]{\begin{aligned}\xymatrix#1{#2}\end{aligned}}
\renewcommand{\to}{\nobr\!\xymatrix@R=0cm@C=1.4em{\ar[r] &}\nobr}
\renewcommand{\mapsto}{\!\xymatrix@R=0cm@C=1.4em{\ar@{|->}[r] &}\!}
\renewcommand{\Rightarrow}{\!\xymatrix@R=0cm@C=1.4em{\ar@{=>}[r] &}\!}
\renewcommand{\Leftarrow}{\!\xymatrix@R=0cm@C=1.4em{\ar@{<=}[r] &}\!}
\newcommand{\incl}{\!\xymatrix@R=0cm@C=1.4em{\ar@{^(->}[r] &}\!}
\renewcommand\Leftrightarrow{\!\xymatrix@R=0cm@C=1.4em{\ar@{<=>}[r] &}\!}
\newcounter{denseversion}
\newcounter{authorcounter}
\newcounter{adresscounter}
\def\title#1{\gdef\@title{#1}}
\def\@title{}
\def\subtitle#1{\gdef\@subtitle{#1}}
\def\@subtitle{}
\def\authortagsused{0}
\def\adresstag#1{\if!#1!\else$^{\;#1\;}$\fi}
\renewcommand{\author}[2][]{
  \stepcounter{authorcounter}
  \if!#1!\else\gdef\authortagsused{1}\fi
  \ifnum\value{authorcounter}=1
    \def\@authorstringa{#2\adresstag{#1}}
    \def\@authorstringb{#2}
    \def\@authorstringc{#2\adresstag{#1}}
  \else
    \g@addto@macro\@authorstringa{\ and #2\adresstag{#1}}
    \g@addto@macro\@authorstringb{\ and #2}
    \g@addto@macro\@authorstringc{\\#2\adresstag{#1}}
  \fi}
\def\@author{\ifnum\value{denseversion}=0\@authorstringa\else\@authorstringb\fi}
\def\@adressstringa{}
\def\@adressstringb{}
\newcommand{\adress}[2][]{
  \stepcounter{adresscounter}
  \ifnum\value{adresscounter}=1
    \g@addto@macro\@adressstringa{\ifnum\authortagsused=0\def\br{\\}\else\def\br{, }\fi\adresstag{#1}#2}
    \g@addto@macro\@adressstringb{\def\br{\\}\adresstag{#1}\parbox[t]{14cm}{#2}}
  \else
    \g@addto@macro\@adressstringa{\\[\bigskipamount]\adresstag{#1}#2}
    \g@addto@macro\@adressstringb{\\[\medskipamount]\adresstag{#1}\parbox[t]{14cm}{#2}}
  \fi}
\def\@adress{\ifnum\value{denseversion}=0\@adressstringa\else\@adressstringb\fi}
\def\preprint#1{\gdef\@preprint{#1}}
\def\@preprint{}
\def\keywords#1{\gdef\@keywords{#1}}
\def\@keywords{}
\def\msc#1{\gdef\@msc{#1}}
\def\@msc{}
\def\email#1{
   \gdef\@email{#1}
   \g@addto@macro\@authorstringc{ {\it (#1)}}}
\def\@email{}
\def\dedication#1{\gdef\@dedication{#1}}
\def\@dedication{}
\def\mybaselinestretch#1{\gdef\@mybaselinestretch{#1}}
\def\@mybaselinestretch{}
\def\refname{References}
\renewcommand{\baselinestretch}{\@mybaselinestretch}
\def\denseversion{
  \setcounter{denseversion}{1}
  \newgeometry{left=3cm,right=3cm,top=3cm}
  \mybaselinestretch{1.1}
  \renewcommand{\baselinestretch}{\@mybaselinestretch}
  \normalfont
  \fancyfoot[C]{\itshape{\hspace{2.5cm}--$\,\,$\thepage$\,\,$--}}}
\newlength{\myparskip}
\newlength{\myproofparskip}
\renewcommand{\emph}[1]{\def\reserved@a{it}\ifx\f@shape\reserved@a\uline{#1}\else\textit{#1}\fi}
\newcommand{\mytableofcontents}{
   \ifnum\value{denseversion}=0
     \tableofcontents
   \else
     \renewcommand{\baselinestretch}{0.8}
     \normalfont
     \tableofcontents
     \renewcommand{\baselinestretch}{\@mybaselinestretch}
     \normalfont
   \fi}
\newlength{\zeilenlaenge}
\def\putindent#1{
  \settowidth{\zeilenlaenge}{#1}
  \ifnum\zeilenlaenge>\textwidth
    #1
  \else
    \noindent #1
  \fi
}
\def\href#1#2{#2}
\def\kohyp{
  \usepackage{hyperref}
  \hypersetup{
    linktocpage = true,
    pdftitle = {\@title},
    pdfauthor = {\@author},
    pdfkeywords = {\@keywords},    
    bookmarksopen = true,
    bookmarksopenlevel = 1
  }}  
\def\showkeywords{\begin{flushleft}\footnotesize\textbf{Keywords}: \@keywords\end{flushleft}}
\def\showmsc{\begin{flushleft}\footnotesize\textbf{MSC 2010}: \@msc\end{flushleft}}
\newcounter{mythm}[subsection]
\newcounter{mainthm}
\def\setsecnumdepth#1{
  \setcounter{secnumdepth}{#1}
  \setcounter{mythm}{0}
  \ifnum \c@secnumdepth >0
    \ifnum \c@secnumdepth >1
      \def\themythm{\thesubsection.\arabic{mythm}}
      \numberwithin{equation}{subsection}
      \renewcommand\theequation{\thesubsection.\arabic{equation}}
    \else
      \def\themythm{\thesection.\arabic{mythm}}
      \numberwithin{equation}{section}
      \renewcommand\theequation{\thesection.\arabic{equation}}
    \fi
  \else
    \def\themythm{\arabic{mythm}}
  \fi}
\newenvironment{mythmenv}{\strut\ \setlength{\parskip}{\myproofparskip}}{\setlength{\parskip}{\myparskip}}
\newlength{\mythmskip}
\newlength{\mythmtopskip}
\newtheoremstyle{mythmstylea}{\mythmtopskip}{\mythmskip}{\it}{}{\bf}{.}{0em}{}
\newtheoremstyle{mythmstyleb}{\mythmtopskip}{\mythmskip}{}{}{\bf}{.}{0em}{}
\theoremstyle{mythmstylea}
\newtheorem{mytheorem}[mythm]{Theorem}
\newtheorem{mydefinition}[mythm]{Definition}
\newtheorem{mycorollary}[mythm]{Corollary}
\newtheorem{myproposition}[mythm]{Proposition}
\newtheorem{mylemma}[mythm]{Lemma}
\newtheorem{mymaintheorem}[mainthm]{Theorem}
\newtheorem{mymaincorollary}[mainthm]{Corollary}
\newtheorem{mymainproposition}[mainthm]{Proposition}
\newtheorem{mymaindefinition}[mainthm]{Definition}
\theoremstyle{mythmstyleb}
\newtheorem{myremark}[mythm]{Remark}
\newtheorem{myexample}[mythm]{Example}
\newtheorem{myexercise}[mythm]{Exercise}
\newenvironment{theorem}[1][]{\begin{mytheorem}[#1]\begin{mythmenv}}{\end{mythmenv}\end{mytheorem}}
\newenvironment{definition}[1][]{\begin{mydefinition}[#1]\begin{mythmenv}}{\end{mythmenv}\end{mydefinition}}
\newenvironment{corollary}[1][]{\begin{mycorollary}[#1]\begin{mythmenv}}{\end{mythmenv}\end{mycorollary}}
\newenvironment{proposition}[1][]{\begin{myproposition}[#1]\begin{mythmenv}}{\end{mythmenv}\end{myproposition}}
\newenvironment{lemma}[1][]{\begin{mylemma}[#1]\begin{mythmenv}}{\end{mythmenv}\end{mylemma}}
\newenvironment{remark}[1][]{\begin{myremark}[#1]\begin{mythmenv}}{\end{mythmenv}\end{myremark}}
\newenvironment{maintheorem}[1]{\begin{mymaintheorem}\begin{mythmenv}}{\end{mythmenv}\end{mymaintheorem}}
\newenvironment{maindefinition}[1]{\begin{mymaindefinition}\begin{mythmenv}}{\end{mythmenv}\end{mymaindefinition}}
\newenvironment{maincorollary}[1]{\begin{mymaincorollary}\begin{mythmenv}}{\end{mythmenv}\end{mymaincorollary}}
\renewenvironment{proof}[1][Proof]{\noindent #1. \begin{mythmenv}}{\hfill$\square$\end{mythmenv}\medskip}
\def\tocsection#1{\section*{#1}\addcontentsline{toc}{section}{#1}}
\def\mytitle{}
\def\zmptitle{
  \begin{tabular}{cc}
    \begin{minipage}[c]{0.4\textwidth}
      \begin{flushleft}
        \includegraphics[width=110pt]{../../tex/zmp}
      \end{flushleft}  
    \end{minipage}&
    \begin{minipage}[c]{0.55\textwidth}
      \begin{flushright}
      {\small\sf\@preprint}
      \end{flushright}
    \end{minipage}
  \end{tabular}
  \vskip 2cm}
\def\maketitle{
  \setlength{\parskip}{\myparskip}  
  \newpage
  \noindent
  \mytitle
  \begin{center}
    \LARGE\@title\\
    \if!\@subtitle!\else \smallskip\LARGE\@subtitle\\\fi
    \bigskip
    \if!\@author!\else\bigskip\large\@author\\\fi
    \ifnum\value{denseversion}=0
      \if!\@adress!\else     \bigskip\normalsize\@adress\\\fi
      \if!\@email!\else\ifnum\value{authorcounter}=1\bigskip\normalsize\textit{\@email}\\\else\fi\fi
    \else
    \fi
    \if!\@dedication!\else \bigskip\normalsize{\@dedication}\\\fi
  \end{center}
  \ifnum\value{denseversion}=0\vskip 1.5cm\else\vskip0.5cm\fi
  \thispagestyle{empty}}
\def\kobiburl#1{
   \IfBeginWith
     {#1}
     {http://arxiv.org/abs/}
     {\kobibarxiv{#1}}
     {\kobiblink{#1}}}
\def\kobibarxiv#1{\href{#1}{\texttt{[arxiv:\StrGobbleLeft{#1}{21}]}}}
\def\kobiblink#1{Available as: \href{#1}{\texttt{\StrSubstitute{#1}{_}{\underline{\;\;}}}}}
\def\kobib#1{
  \begin{raggedright}
  \ifnum\value{denseversion}=0\else\small\fi

  \end{raggedright}
  \ifnum\value{denseversion}=0\else
      \noindent
      \if!\@authorstringc!\else
        \ifnum\authortagsused=0\ifnum\value{authorcounter}>1\normalsize\@authorstringc\\[\medskipamount]\else\fi\else\normalsize\@authorstringc\\[\medskipamount]\fi
      \fi
      \if!\@adress!\else\normalsize\@adress\\{}\fi
      \ifnum\authortagsused=0
        \ifnum\value{authorcounter}=1
          \if!\@email!\else\linebreak\normalsize\textit{\@email}\\{}\fi
        \else
        \fi
      \else
      \fi
  \fi
  }
\newenvironment{commentfigure}{}
\newenvironment{sidewayscommentfigure}{\begin{minipage}}{\end{minipage}}
\def\showcomments{ -- Comments suppressed}
\newif\if@fewtab\@fewtabtrue{
  \count255=\time\divide\count255 by 60
  \xdef\hourmin{\number\count255}
  \multiply\count255 by-60\advance\count255 by\time
  \xdef\hourmin{\hourmin:\ifnum\count255<10 0\fi\the\count255}}
\def\ps@draft{
  \let\@mkboth\@gobbletwo
  \def\@oddfoot{
    \hbox to 7 cm{\tiny \versionno\hfil}
    \hskip -7cm\hfil\rm\thepage\hfil{\tiny\draftdate}}
  \def\@oddhead{}
  \def\@evenhead{}
  \let\@evenfoot\@oddfoot}
\def\draftdate{\number\month/\number\day/\number\year\ \ \ \hourmin }
\newcommand\version[1]{
  \typeout{}\typeout{#1}\typeout{}
  \vskip-1.7cm \centerline{\fbox{{\normalsize\tt DRAFT -- #1 -- 
  \draftdate\showcomments}}} \vskip0.92cm}
\def\draft#1{
  \def\versionno{#1}
  \pagestyle{draft}\thispagestyle{draft}
  \gdef\@ntitle{\version\versionno \@title}
  \global\def\draftcontrol{1}}
\global\def\draftcontrol{0}
\def\quot#1{``#1''}
\def\exd#1{{#1^{\vee}}}
\def\can{can}
\def\px#1#2{P_{\!#2}#1}
\def\p{P}
\def\ptx#1#2{\mathcal{P}_{\!#2}#1}
\def\ev{\mathrm{ev}}
\def\pt#1{\mathcal{P}#1}
\def\lt#1{\mathcal{L}#1}
\def\hc#1{\mathrm{h}_{#1}}
\def\pcomp{\nobr\star\nobr}
\def\prev#1{\overline{#1}}
\def\fun#1#2{\mathcal{F}\!un(#1,#2)}
\def\tfun#1#2{2\text{-}\mathcal{F}\!un(#1,#2)}
\def\un{\mathscr{R}}
\def\uncon{\mathscr{R}^{\nabla}}
\def\trcon{\mathscr{T}^{\nabla}}
\def\fus#1#2{\mathcal{F}\!us(#2,#1)}
\def\fuslc#1#2{\mathcal{F}\!us_{\mathrm{lc}}(#2,#1)}
\def\fusbun#1#2{\mathcal{F}\!us\buntech#1{}(#2)}
\def\fusbunconsf#1#2{\mathcal{F}\!us\bunconsf{#1}{#2}}
\def\fusbuncon#1#2{\mathcal{F}\!us\buncon{#1}{#2}}
\def\Omegafus{\Omega_{\!f\!u\!s}}
\def\extcon#1#2#3{\mathcal{E}\hspace{-0.12em}x\hspace{-0.02em}t^{\nabla}_{#1}(#2,#3)}
\def\extconsf#1#2#3{\mathcal{E}\hspace{-0.12em}x\hspace{-0.02em}t^{\nabla}_{#1}{}^{_{\!\!s\!f\!}}(#2,#3)}
\def\fusextconsf#1#2#3{\mathcal{F}\!us\!\,\extconsf{#1}{#2}{#3}}
\def\bunconsf#1#2{\buntech{#1}{\,\nabla}{}^{_{\!\!s\!f}}\hspace{-0.15em}\brackets{#2}}
\def\prodbun#1#2{\mathcal{P}\!r\!od\buntech#1{}(#2^{[2]})}
\def\prodbuncon#1#2{\mathcal{P}\!r\!od\buncon{#1}{#2^{[2]}}}
\def\diffbun#1#2{\diff\hspace{-0.1em}\bun{#1}{#2}}
\def\diffbunconflat#1#2{\diff\hspace{-0.1em}\bunconflat#1{#2}}
\def\diffgrb#1#2{\diff\hspace{-0.1em}\grbtech#1(#2)}
\def\diffgrbcon#1#2{\diff\hspace{-0.1em}\grbcon{#1}{#2}}
\def\ptr#1{\tau_{#1}}
\def\struc#1#2{#1\text{-}\mathcal{L}\!i\!f\!t(#2)}
\def\Untrans{Regression}
\title{Transgression to Loop Spaces and its Inverse, II}
\author{Konrad Waldorf}
\email{waldorf@math.berkeley.edu}
\keywords{connections on gerbes, diffeological spaces, gerbes, holonomy, loop space, orientability of loop spaces, spin structures, transgression}
\begin{document}


\maketitle

\begin{abstract}
We prove that the category of abelian gerbes with connection over a smooth manifold is equivalent to a  certain category of principal bundles over the free loop space. These bundles are equipped with a connection and with a \quot{fusion} product with respect to triples of paths. The equivalence is established by explicit functors in both directions: transgression and regression. We describe applications to geometric lifting problems and loop group extensions. 
\showkeywords
\showmsc
\end{abstract}

\mytableofcontents

\tocsection{Introduction}

This series of papers is about a geometric version of transgression to loop spaces: it relates geometry over a smooth manifold $M$ to geometry over its free loop space $LM$. In terms of the underlying topology,  transgression  covers the homomorphism 
\begin{equation*}
H^k(M,\Z) \to H^{k-1}(LM,\Z)
\end{equation*}
obtained by pullback along the evaluation map $\ev\maps LM \times S^1 \to M$ followed by integration over the fibre. Part I \cite{waldorf9} is concerned with the case $k=2$: there we derive bijections between  bundles over $M$ and certain \quot{fusion maps} on $LM$, both in a setup with and without connections. In the present Part II we look at the case $k=3$ in a setup \emph{with} connections. It is written in a self-contained way -- contents taken from Part I are either reviewed or explicitly referenced. Part III \cite{waldorf11}  treats the case $k=3$ in a setup \emph{without} connections.

Thus, in this article we  want to relate \emph{gerbes with connection} over $M$ to  \emph{bundles with connection} over $LM$. Such a relationship has first been studied by Brylinski and McLaughlin \cite{brylinski1,brylinski4}. They define  transgression as a functor which takes a \quot{Dixmier-Douady sheaf of groupoids with connective structure and curving} over  $M$ (one version of a gerbe with connection) to a hermitian line bundle over $LM$. Brylinski and McLaughlin observed that the  bundles in the image of transgression come equipped with the following  additional structure:
\begin{enumerate}[(i)]
\item 
A metric connection.

\item
A product with respect to loop composition. For two loops $\tau_1$ and $\tau_2$ based at the same point and smoothly composable to a third loop $\tau_1 \pcomp \tau_2$, this product provides fibre-wise linear maps
\begin{equation*}
\lambda\maps P_{\tau_1} \otimes P_{\tau_2} \to P_{\tau_1 \pcomp \tau_2}
\end{equation*}
that are compatible with the homotopy associativity of loop composition. \footnote{Apart from the claim that this product exists, I was unable to find its definition  in the literature. } 

\item
An equivariant structure with respect to the action of orientation-preserving diffeomorphisms of $S^1$ on $LM$.
\end{enumerate}
Unfortunately, it remained unclear whether or not this list is complete, i.e. if it characterizes those line bundles over $LM$ that can be obtained by transgression.

The purpose of this article is to provide a complete solution to this problem. We give a variation of the above additional structure, which is carefully chosen exactly such that transgression becomes an \emph{equivalence of categories}. In other words, we give a complete  loop space formulation of the geometry of  gerbes with connection. The precise formulation of this result is given in Section \ref{sec:results}. The following list  outlines the difference between ours and Brylinski's and McLaughlin's characterization. 
\begin{enumerate}[(i)]
\item 
Not every  connection is allowed: its holonomy around loops in $LM$ is subject to extra conditions related to the rank of the  associated torus in $M$. 

\item
The product is defined with respect to \emph{paths} instead of loops: if $(\gamma_1,\gamma_2,\gamma_3)$ is a triple of paths in $M$ with a common initial point and a common end point, and each path is smoothly composable with the inverse of each other, the product consists of fibre-wise linear maps
\begin{equation*}
\lambda\maps P_{\,\prev {\gamma_2} \pcomp \gamma_1} \otimes P_{\,\prev{\gamma_3} \pcomp \gamma_2} \to P_{\,\prev{\gamma_3} \pcomp \gamma_1}\text{.}
\end{equation*} 
These maps are required to be \emph{strictly} associative for quadruples of paths. 

\item
The equivariant structure is no additional structure at all. We will show that it is already determined by the connection (i).

\item
Additionally, we discover a subtle, new compatibility condition between the connection (i) and the product (ii) related to loop rotation by an angle of $\pi$. 

\end{enumerate}
The main theorem of this article states that the category of \emph{abelian gerbes with connection} over $M$ is equivalent to the category formed by bundles over $LM$ equipped with the structures (i) and (ii) subject to the condition (iv). The equivalence is established by explicit functors in both directions: \emph{transgression} and \emph{regression}. Our transgression functor is essentially  the one of Brylinski and McLaughlin, while our regression functor is a novelty: it performs the construction of a  gerbe with connection over $M$ from a given  bundle  over $LM$. 

A major role in the derivation of our results is played by  generalized manifolds, more precisely: \emph{diffeological spaces}. In particular, we will use a reformulation of differential forms in terms of smooth maps on certain diffeological spaces as an essential tool in the definition of regression. This reformulation has been developed  in joint work with Schreiber \cite{schreiber3,schreiber5}. Further, it turns out that Murray's \emph{bundle gerbes} \cite{murray} are a convenient model for abelian gerbes in the context of this article. 
Nonetheless, the choice of bundle gerbes  is a purely technical issue. Many other models of abelian gerbes are equivalent to bundle gerbes, for example  the Dixmier-Douady sheaves of groupoids  used by Brylinski and McLaughlin \cite{brylinski1}, Deligne cocycles, and $B\C^{\times}$-bundles  \cite{gajer1}. Via such equivalences, our results  apply to each of these models.

We describe two applications of our results, corresponding to two natural examples of bundle gerbes: lifting gerbes \cite{brylinski1,murray}, and gerbes over Lie groups \cite{gawedzki1,meinrenken1,gawedzki2}. In the case of lifting gerbes, our results provide a complete loop space formulation for \emph{geometric lifting problems}. In particular, it clarifies the relationship between spin structures on an oriented Riemannian manifold  and orientations of its loop space, which appears in classical results of Atiyah and Witten, see \cite{atiyah2}. In the case of gerbes over a Lie group $G$, our results identify   a new and interesting subclass of \emph{central extensions} of the loop group $LG$, namely exactly those which lie in the image of transgression. These central extensions can hence be studied  with \emph{finite-dimensional} methods on the group $G$. One line of future research could be to relate the fusion product that we have discovered  on these central extensions to the tensor product of positive energy representations (also called \quot{fusion}). Such a relationship  could lead to a new, finite-dimensional  understanding of the Verlinde algebra. 
A third application that could be addressed in the future concerns the \emph{index gerbe} of Lott \cite{Lott} whose transgression is related to certain determinant bundles over the loop space \cite{Bunke2002}. According to the results of the present article, these bundles carry so far unrecognized additional structure: fusion products and a particular kind of connections.

\paragraph{Acknowledgements.} I gratefully acknowledge  a Feodor-Lynen scholarship, granted by the Alex\-an\-der von Hum\-boldt Foundation.  I thank  Martin Olbermann and Peter Teichner for many exciting discussions.

\setsecnumdepth{2}

\section{Results of this Article}

\label{sec:results}

\subsection{Main Theorem}

Let $M$ be a connected smooth manifold and let $A$ be an abelian Lie group, which may be discrete and may be non-compact. We work with principal $A$-bundles, which reduce to the hermitian line bundles from the introduction  upon setting $A = \ueins$.

We use the theory of diffeological spaces \cite{kriegl1,iglesias1,baez6}. Principal bundles and connections over diffeological spaces have been introduced and studied in Part I  \cite{waldorf9}. For the purpose of this section, it will be enough to accept  that diffeological spaces are like  smooth manifolds -- there are just more of them. Examples are the free loop space $LM \df C^{\infty}(S^1,M)$ and the path space $PM$. The diffeology on the loop space is equivalent to the Fréchet manifold structure \cite[Lemma A.1.7]{waldorf9}, but  more convenient to use. Concerning path spaces, we demand that all paths  have \quot{sitting instants}, so that paths with a common endpoint are always smoothly composable. Spaces of paths with sitting instants do not have a \quot{good} Fréchet manifold structure, but are  nice diffeological spaces. Ultimately, we will  use  diffeological spaces of \emph{thin homotopy classes} of paths and loops; such spaces are by no means manifolds.

Let us start by looking at the category of bundles over $LM$ that we want to consider; a detailed discussion is the content of Section \ref{sec:fusbun}.
We denote by $PM^{[k]}$ the diffeological space of $k$-tuples of paths in $M$ with a common initial point and a common endpoint. We introduce a \emph{fusion product} on a principal $A$-bundle $P$ over $LM$ as a smooth bundle isomorphism
\begin{equation*}
\lambda\maps e_{12}^{*}P \otimes e_{23}^{*}P \to e_{13}^{*}P
\end{equation*}
over $PM^{[3]}$. Here, $e_{ij}\maps PM^{[3]} \to LM$ is the $i\!j$-projection to $PM^{[2]}$ followed by the map $l\maps PM^{[2]}\to LM$ which takes a pair $(\gamma_1,\gamma_2)$ to the loop $\prev{\gamma_2} \pcomp \gamma_1$.
A fusion product is required to satisfy a strict associativity condition over $PM^{[4]}$. A principal $A$-bundle $P$ with a fusion product $\lambda$ will be called \emph{fusion bundle} (Definition \ref{def:fusionbundle}).

Next we add connections. 
There are two conditions we impose for connections on a fusion bundle $(P,\lambda)$. The first  is the obvious one: we call a connection on $P$ \emph{compatible with $\lambda$}, if $\lambda$ is a connection-preserving bundle morphism. The second  is the subtle condition mentioned in the introduction under (iv). A complete definition has to be deferred  (Definition \ref{def:symmetrizing}). 
Roughly, we observe that the spaces $PM^{[k]}$ have an involution which inverts the paths and permutes their order. The map $l$ exchanges this involution with the loop rotation by an angle of $\pi$. Regarding these rotations as paths in $LM$, the parallel transport of the connection on $P$ lifts them to the total space $P$. Now, the connection \emph{symmetrizes} the fusion product $\lambda$ if switching the factors in the fusion product commutes with these lifts.

Finally, we introduce the notion of a \emph{superficial}  connection (Definition \ref{def:superficial}), imposing the conditions mentioned in the introduction under (i). It applies only to connections on bundles \emph{over loop spaces}, but in general without respect to a   fusion product. The idea is that the holonomy of a superficial connection around a loop $S^1 \to LM$ should have two  features of a \quot{surface holonomy}\footnote{The german word for \quot{superficial} (\quot{oberflächlich}) is -- literally -- \quot{surface-like}.}
  around the associated torus $\phi\maps S^1 \times S^1 \to M$. 
The first feature is that the holonomy vanishes   whenever $\phi$ is a rank one map. The second feature is that the holonomy  does not depend on the rank-two homotopy class of $\phi$. In that sense, superficial connections are a precise manifestation of what Segal calls a \quot{string connection} \cite{Segal2001}. 
\begin{maindefinition}{A}
\label{def:fusbuncon}
A \emph{fusion bundle with superficial connection over $LM$} is a  principal $A$-bundle over $LM$ with a fusion product and a compatible, symmetrizing, and superficial connection. 
\end{maindefinition}

Fusion bundles with superficial connection form a monoidal category that we denote by  $\fusbunconsf A {LM}$. The morphisms in this category are bundle morphisms that preserve the connections and the fusion products.

On the side of the geometry over  $M$, we consider a monoidal 2-category $\diffgrb A M$ modelling $A$-banded gerbes over $M$. Its objects are \emph{diffeological $A$-bundle gerbes}: they are very similar to the $\C^{\times}$-bundle gerbes introduced by Murray \cite{murray}, but we allow $\C^{\times}$ to be replaced by $A$, and we allow them to live internal to diffeological spaces instead of smooth manifolds. The latter generalization is purely technical: the inclusion of  \emph{smooth} $A$-bundle gerbes into \emph{diffeological} $A$-bundle gerbes is an equivalence of 2-categories. The same comments apply to the 2-category $\diffgrbcon A M$ of diffeological $A$-bundle gerbes \emph{with connection}. A  detailed exposition of diffeological $A$-bundle gerbes is given in Section \ref{sec:diffgrb}.

In order to compare \emph{2-}categories of diffeological bundle gerbes over $M$ and \emph{1-}categories of fusion bundles over $LM$, we downgrade the 2-categories to categories with morphisms the 2-isomorphism-classes of the former 1-morphisms.
This downgrading is below indicated by the symbol $\hc 1$. Transgression is then a  monoidal  functor
\begin{equation*}
\alxydim{@C=2cm}{\hc 1 \diffgrbcon A M \ar[r]^-{\trcon} & \fusbunconsf A {L M}\text{.}}
\end{equation*}
Its definition is based on the one given by Brylinski and McLaughlin. In Section \ref{sec:trans} we explain in detail that the resulting principal $A$-bundles over $LM$ are fusion bundles with superficial connection in the sense of  Definition \ref{def:fusbuncon}.

Opposed to transgression, its inverse functor -- which we call  \emph{regression} -- is completely new. The reason is that it is only well-defined on the \quot{correct} category of bundles over $LM$, namely the category $\fusbunconsf A {L M}$ we introduce in this article. Regression depends on the choice of a base point $x\in M$, and is a monoidal functor
\begin{equation*}
\alxydim{@C=2cm}{\hc 1 \diffgrbcon A M & \ar[l]_-{\uncon_x} \fusbunconsf A {L M}\text{.}}
\end{equation*}
For different choices of base points one obtains  naturally equivalent regression functors. The idea of the regression functor is very simple. Given a fusion bundle $(P,\lambda)$ over $LM$ it reconstructs the following diffeological bundle gerbe over $M$: its subduction (the diffeological analog of a surjective submersion) is the endpoint evaluation $\ev_1 \maps \px Mx \to M$ on the space of paths starting at $x$. Over its two-fold fibre product $\px Mx^{[2]}$ we have the principal $A$-bundle $l^{*}P$, and the given fusion product $\lambda$ pulls back to a bundle gerbe product. More difficult is the promotion of this simple construction to the setup \emph{with} connections. It includes the construction of a \quot{curving} 2-form on $\px Mx$, for which we use diffeological techniques developed jointly with Schreiber \cite{schreiber3,schreiber5}.  
Regression is defined in Section \ref{sec:untrans}.

We also compute expressions for the surface holonomy of the regressed bundle gerbe $\uncon_x(P)$ in terms of the connection on $P$. The  idea is to decompose a  surface into cylinders, discs, and pairs of pants. We show that the surface holonomy of a cylinder is simply the parallel transport of the connection on $P$, while the one of a disc is related to the fact that a fusion bundle trivializes over the constant loops. The surface holonomy of a pairs of pants is determined by the fusion product. 

Now we are in position to state the main theorem of this article: 

\Needspace{3\baselineskip}
\begin{maintheorem}{A}
\label{th:con}
Let $M$ be a connected  smooth manifold with base point $x\in M$. Then, transgression and regression  form an equivalence of monoidal categories,
\begin{equation*}
\alxydim{@C=2cm}{\hc 1 \diffgrbcon AM  \ar@<0.4em>[r]^{\trcon} & \fusbunconsf A{LM} \ar@<0.4em>[l]^{\uncon_x}\text{.}}
\end{equation*}
Moreover, this equivalence is natural with respect to base point-preserving smooth maps.
\end{maintheorem}

The proof of Theorem \ref{th:con}  consists in an explicit  construction of the two required natural equivalences, and is carried out in Section \ref{sec:proof}.  In the following we describe some immediate implications of Theorem \ref{th:con}.

First we look at the bijection that the equivalence of Theorem \ref{th:con} induces on the \emph{Hom-sets} of the involved categories. If $\mathcal{G}$ and $\mathcal{H}$ are isomorphic bundle gerbes with connection, the Hom-set $\hom(\mathcal{G},\mathcal{H})$ in the category $\hc 1 \diffgrbcon AM$ is  a torsor over the group $\hc 0 \bunconflat AM$ of isomorphism classes of flat principal $A$-bundles over $M$ (Lemma \ref{lem:diffgrbcon}). On the other side, if $P$ and $Q$ are isomorphic fusion bundles with superficial connections, the Hom-set $\hom(P,Q)$ in the category $\fusbunconsf A{LM}$ is a torsor over the group $\fuslc A {LM}$ of locally constant \emph{fusion maps} (Lemma \ref{lem:fusiontorsor}). 
Fusion maps are smooth maps $f\maps  LM \to A$ subject to the  condition 
\begin{equation*}
e_{12}^{*}f \cdot e_{23}^{*}f = e_{13}^{*}f
\end{equation*}
over $PM^{[3]}$, which can be seen as a \quot{de-categorification} of a fusion product for bundles. 
Since the two Hom-sets are in bijection, we get:

\Needspace{3\baselineskip}
\begin{maincorollary}{A}
\label{co:homsets}
Transgression and regression induce a bijection
\begin{equation*}
\hc 0 \bunconflat A M \cong \fuslc A {LM}\text{.}
\end{equation*}
\end{maincorollary}  

In other words, locally constant fusion maps constitute a loop space formulation for the geometry of flat principal $A$-bundles. Indeed, fusion maps have been introduced in Part I of this series of papers, where such formulations are studied. The above corollary  reproduces  \cite[Corollary A]{waldorf9}.

Next are two implications that follow from properties of the functors $\trcon$ and $\uncon_x$. Firstly, as mentioned above, all bundle gerbes in the image of regression have the same subduction, namely the end-point evaluation $\ev_1: \px Mx \to M$ on the space of paths starting at $x$. Thus we get:

\Needspace{2\baselineskip}
\begin{maincorollary}{B}
Every bundle gerbe $\mathcal{G}$ with connection over $M$ is isomorphic to a   bundle gerbe with connection and subduction $\ev_1 \maps \px Mx \to M$, namely to $\uncon_x(\trcon_{\mathcal{G}})$. 
\end{maincorollary}

Secondly, transgression and regression restrict properly to  \emph{flat} bundle gerbes and \emph{flat} fusion bundles  (Propositions \ref{prop:connectionprop1} and \ref{prop:flatreg}): 

\Needspace{2\baselineskip}
\begin{maincorollary}{C}
\label{co:flat}
Transgression and regression form an equivalence between  categories of bundle gerbes with \emph{flat} connection and fusion bundles with \emph{flat} superficial connections. \end{maincorollary}

A similar restriction to \emph{trivial} gerbes and \emph{trivial} bundles is more subtle. 
Let us for simplicity look at the case $A=\ueins$, and identify its Lie algebra with $\R$.  Now, connections on the \emph{trivial} bundle gerbe are just 2-forms $\omega \in \Omega^2(M)$. Superficial connections on the trivial fusion bundle over $LM$ form a group denoted $\Omegafus^1(LM)$; these are 1-forms on $LM$  satisfying a list of properties reflecting the three conditions of Definition \ref{def:fusbuncon}, see Section \ref{sec:fusionforms}.  Variants of such forms are sometimes called \quot{local} or \quot{ultra-local}; see e.g. \cite{Coquereaux1998}.
Transgression restricts properly to trivial gerbes in the sense that the resulting fusion bundles are canonically trivializable \cite[Lemma 3.6]{waldorf13}. Under these trivializations, transgression is given by the  formula
\def\theequation{\arabic{equation}}
\begin{equation}
\label{eq:transformula}
\Omega^2(M) \to \Omegafus^1(LM) \maps \omega \mapsto - \int_{S^1}\ev^{*}\omega\text{.}
\end{equation}
On the other hand, the regression of a trivial fusion bundle with connection $\eta \nobr \in \nobr \Omega^1_{fus}(LM)$ is a trivializable gerbe, but no trivialization is given. Thus, the equivalence of Theorem \ref{th:con} only restricts well to a bijection between \emph{isomorphism classes} of trivial gerbes with connection and \emph{isomorphism classes} of trivial fusion bundles with superficial connections. It is well-known that two connections on the trivial gerbe are isomorphic if and only if they differ by a closed 2-form with integral periods; these form the group $\Omega^2_{cl,\Z}(M)$. On the other side, two  superficial connections on the trivial fusion bundle are isomorphic if and only if they differ by the image of a fusion map under the logarithmic derivative (Lemma \ref{lem:dlogfus}).
Summarizing, we obtain the following statement, which can be understood as a loop space formulation of the geometry of 2-forms:

\begin{maincorollary}{D}
\label{co:forms}
The transgression map \erf{eq:transformula} induces a group isomorphism
\begin{equation*}
\frac{\Omega^2(M)}{\Omega^2_{cl,\Z}(M) } \cong  \frac{\Omega^1_{fus}(LM)}{\mathrm{dlog}  \,\fus{\ueins}{LM}} \;  \text{.}
\end{equation*}
\end{maincorollary}

The last implication of Theorem \ref{th:con} that I want to point out concerns again the case $A=\ueins$.  Then, bundle gerbes with connection over $M$ are classified by  the differential cohomology group $\widehat \h^3(M,\Z)$. Hence, Theorem \ref{th:con} provides a loop space formulation of  differential cohomology:

\Needspace{1\baselineskip}
\begin{maincorollary}{E}
The group of isomorphism classes of fusion bundles over $LM$ with superficial connections is isomorphic to  the degree three differential cohomology  of $M$:
\begin{equation*}
\widehat \h^3(M,\Z) \cong \hc 0 \fusbunconsf {\,\ueins\!\!\!} {LM}\text{.}
\end{equation*}
\end{maincorollary}

\subsection{Application to  Lifting Problems}

\label{sec:lifting}

A \emph{lifting problem} is the problem of lifting the structure group of a principal $G$-bundle $E$ over $M$ to a central extension
\begin{equation*}
1 \to A \to \hat G \to G \to 1
\end{equation*}
of $G$. Lifting problems can be understood completely by looking at the \emph{lifting  gerbe} $\mathcal{G}_E$,  a certain $A$-bundle gerbe \cite{murray}.  Namely, there is an equivalence 
\setcounter{equation}{1}
\def\theequation{\arabic{equation}}
\begin{equation}
\label{eq:lifteq}
\struc {\hat G}E \cong \triv {\mathcal{G}_E}
\end{equation} 
between the category of lifts of $E$ and the category of trivializations of the gerbe $\mathcal{G}_E$. An analogous equivalence for \emph{geometric} lifting problems involving connections exists but requires a more detailed treatment  \cite{gomi3,waldorf13}.

The idea is to transgress the lifting gerbe $\mathcal{G}_E$ to a \emph{principal $A$-bundle} over the loop space. Since transgression is a functor, trivializations become  \emph{sections} of this bundle. Since it is an equivalence of categories, this provides a complete loop space formulation of  geometric lifting problems. It is worked out in full detail in \cite{waldorf13}.

In the following we continue with the special case of a \emph{discrete} Lie group $A$, in order to avoid the above-mentioned issues with connections. Indeed, for discrete $A$, every principal $A$-bundle and every $A$-gerbe has a \emph{unique} connection, and every morphism between bundles or gerbes is connection-preserving. Thus, the lifting gerbe $\mathcal{G}_E$ defines a fusion bundle $P_E \df \trcon_{\mathcal{G}_E}$  over $LM$. We say that a section $\sigma\maps LM \to P$ of a fusion bundle $(P,\lambda)$ over $LM$ is \emph{fusion-preserving}, if 
\begin{equation*}
\lambda(e_{12}^{*}\sigma \otimes e_{23}^{*}\sigma) = e_{13}^{*}\sigma\text{,}
\end{equation*} 
i.e. if it corresponds to a fusion-preserving bundle morphism between $P$ and the trivial fusion bundle. 

\begin{maintheorem}{B}
\label{th:lift}
Let $M$ be a connected smooth manifold, $G$ be a Lie group, and $E$ be a principal $G$-bundle over $M$. Let $\hat G$ be a central extension of $G$ by a discrete  group. Then, transgression and regression induce a bijection
\begin{equation*}
\hc 0 (\struc {\hat G}E) \cong \bigset{3cm}{Fusion-preserving sections of $P_E$}\text{.}
\end{equation*}
\end{maintheorem}

\begin{proof}
We have the following bijections:
\begin{equation*}
\hc 0 (\struc {\hat G}E) \cong\hc 0 \triv {\mathcal{G}_E} \cong \hc 0 \hom(\mathcal{G}_E,\mathcal{I})  \cong \hom(P_E, \trivlin)\text{,}
\end{equation*}
where the first Hom-set is the one of the category $\hc 1 \diffgrbcon AM$, and the second Hom-set  is the one in $\fusbunconsf A{LM}$. The first bijection is induced from the  equivalence \erf{eq:lifteq}, the second is the definition of a trivialization of a gerbe, and the third is induced by Theorem \ref{th:con}. The set on the right is by definition  the set of fusion-preserving sections.\end{proof}

The main motivation for    Theorem \ref{th:lift} is the case of spin structures on an $n$-dimensional oriented Riemannian manifold $M$. In this case, the relevant central extension is
\begin{equation*}
1 \to \Z/2\Z \to \spin n \to \so n \to 1\text{,}
\end{equation*}
and the bundle whose structure group we want to lift is the oriented frame bundle $FM$ of $M$. The transgression of the associated lifting gerbe $\mathcal{G}_{FM}$ is a $\Z/2\Z$-bundle $P_{FM}$ over $LM$, and it plays the role of the \emph{orientation bundle} of $LM$  \cite{atiyah2,mclaughlin1,stolz3}.

From the theory developed in this article we get the additional information that the orientation bundle of $LM$ comes  with a fusion product. Accordingly, among \emph{all}  orientations of $LM$ (the sections of $P_{FM}$) is a subset  of  \emph{fusion-preserving} orientations. In this context, Theorem \ref{th:lift} becomes:
\begin{maincorollary}{E}
Let $M$ be a connected oriented Riemannian manifold. Then, $M$ is spin if and only if there exists a fusion-preserving orientation of $LM$. In this case, transgression and regression induce a bijection
\begin{equation*}
\bigset{3.7cm}{Equivalence classes of spin structures on $M$} \cong \bigset{3.3cm}{Fusion-preserving orientations of $LM$}
\end{equation*}
\end{maincorollary}

I learned about  this correspondence  from an unpublished paper \cite{stolz3} of Stolz and Teichner, which gives a proof using Clifford bimodules \cite[Theorem 9]{stolz3}. It also outlines a proof using lifting gerbes  \cite[Remark 11]{stolz3}. It was the main motivation for the present article  to understand this result from a more general point of view. 

An example that illustrates  the difference between orientations and fusion-preserving orientations  is a real Enriques surface $\mathscr{E}$. Indeed, $\mathscr{E}$  is not spin but its loop space is orientable (its second Stiefel-Whitney class is non-trivial, but transgresses to zero). 
In other words, no orientation of $L\mathscr{E}$ is fusion-preserving.

\subsection{Application to Loop Group Extensions}

Let $G$ be a connected Lie group. We denote by $\left \langle -,-  \right \rangle$ the Killing form and by $\theta,\bar\theta$ the left and right invariant Maurer-Cartan forms, respectively. We consider the differential forms
\begin{equation*}
H \df \frac{1}{6} \left \langle  \theta \wedge [\theta \wedge \theta]  \right \rangle \in \Omega^3(G)
\quand
\rho \df \frac{1}{2}\left \langle  \mathrm{pr}_1^{*}\theta \wedge \mathrm{pr}_2^{*}\bar\theta  \right \rangle \in \Omega^2(G \times G)\text{.}
\end{equation*}
They have the property that the tuple $(H,\rho,0,0)$ is a 4-cocycle in the \emph{simplicial de Rham complex} of $G$, and so represents an element in $\h^4(BG,\R)$.

 A \emph{multiplicative gerbe with connection} over $G$ \cite[Definition 1.3]{waldorf5} is a pair $(\mathcal{G},\mathcal{M})$ consisting of a $\ueins$-bundle gerbe $\mathcal{G}$ over $G$ with connection of curvature $H$ and a connection-preserving 1-isomorphism
\begin{equation*}
\mathcal{M}\maps \mathrm{pr}_1^{*}\mathcal{G} \otimes \mathrm{pr}_2^{*}\mathcal{G} \to m^{*}\mathcal{G} \otimes \mathcal{I}_{\rho}\text{,}
\end{equation*}
where $m\maps G \times G \to G$ denotes the group multiplication, $\mathrm{pr}_1,\mathrm{pr}_2\maps G \times G \to G$ are the projections, and $\mathcal{I}_{\rho}$ denotes the trivial gerbe equipped with the connection $\rho$. The 1-isomorphism $\mathcal{M}$ has to satisfy an associativity constraint\footnote{In general, there is an \quot{associator} instead of a constraint. However, in the present setting with a connected Lie group and gerbes \emph{with connections}, this associator is unique if it exists, see \cite[Example 1.5]{waldorf5}.} over $G \times G \times G$. A multiplicative gerbe with connection determines a class in $\h^4(BG,\Z)$ which is a preimage for the class of $(H,\rho,0,0)$ under the homomorphism induced by the inclusion $\Z \subset \R$ \cite[Proposition 2.3.1]{waldorf5}.

Since transgression is functorial, monoidal, and natural, it takes the multiplicative gerbe $(\mathcal{G},\mathcal{M})$ to a pair $(P,\mu)$ where $P$ is a principal $\ueins$-bundle with connection over $LG$, and $\mu$ is a multiplicative structure on $P$: a connection-preserving bundle isomorphism
\begin{equation*}
\mu\maps \mathrm{pr}_1^{*}P \otimes \mathrm{pr}_2^{*}P \to m^{*}P \otimes \trivlin_{-L\rho}
\end{equation*}
over $LG^2$ which is associative over $LG^3$. 
Such bundle morphisms $\mu$ correspond bijectively to Fréchet Lie group structures on $P$ making it a central extension of $LG$ by $\ueins$ \cite{grothendieck1}, also see \cite[Theorem 3.1.7]{waldorf5}. We obtain a category $\extcon {}{LG}{\ueins}$, whose objects are central extensions of $LG$ by $\ueins$ equipped with a connection such that the multiplication $\mu$ is connection-preserving, and whose morphisms are connection-preserving isomorphisms between central extensions. Summarizing, there is a functor
\begin{equation*}
\trcon \maps \hc 1 \mathcal{M}ult\diffgrbcon {} G \to \extcon {} {LG}\ueins\text{.}
\end{equation*}
For example, the universal central extension of a compact, simple, simply-connected Lie group belongs to the category $\extcon {}{LG}\ueins$, namely as the image of the \emph{basic} gerbe over $G$ \cite[Corollary 3.1.9]{waldorf5}.

We give  a little foretaste how the results of this article can be applied to central extensions. Theorem \ref{th:con} lets us recognize new features of central extensions in the image of transgression. Firstly, they come equipped with a \emph{fusion product}, and their group structure is fusion-preserving (in the sense that the corresponding isomorphism $\mu$ is fusion-preserving). Secondly, the connections are \emph{superficial} and \emph{symmetrizing}. We denote the category of these central extensions by $\fusextconsf{}{LG}{\ueins}$; this is an interesting, new  subclass of central extensions of loop groups. Theorem \ref{th:con} implies:

\begin{maintheorem}{C}
\label{th:lgext}
Let $G$ be a connected Lie group. Then, transgression and regression induce an equivalence of categories:
\begin{equation*}
\hc 1 \mathcal{M}ult\diffgrbcon {} G \cong \fusextconsf {} {LG}\ueins\text{.}
\end{equation*}
\end{maintheorem}

In other words, the central extensions in $\fusextconsf {} {LG}\ueins$ can be understood via a theory of multiplicative gerbes over $G$, in particular, with \emph{finite-dimensional} methods. For example, there exists a complete classification of multiplicative gerbes $(\mathcal{G},\mathcal{M})$ over all compact, simple Lie groups, in terms of constraints for the \emph{level} of the gerbe $\mathcal{G}$, determined in joint work with Gaw\c edzki \cite{gawedzki9}. Via Theorem \ref{th:lgext}, this classification transfers to a complete classification of the central  extensions in $\fusextconsf {} {LG}\ueins$.

\setsecnumdepth{2}

\section{Fusion Bundles and Superficial Connections}

\label{sec:fusbun}

\subsection{Fusion Bundles}

For $G$ a (abelian) Lie group and $X$ a diffeological space, diffeological principal $G$-bundles over $X$ form a (monoidal) sheaf of groupoids with respect to the Grothendieck topology of subductions \cite[Theorems 3.1.5 and 3.1.6]{waldorf9}. \emph{Subductions} are the diffeological analog of maps with smooth local sections. We also recall that the category of diffeological spaces has all fibre products. If $\pi\maps Y \to X$ is a smooth map between diffeological spaces, we write $Y^{[k]}$ for the $k$-fold fibre product of $Y$ over $X$, and we write  $\pi_{i_1...i_k}\maps Y^{[p]} \to Y^{[k]}$ for the projection to the indexed factors.

\begin{definition}
\label{def:product}
Let $\pi\maps Y \to X$ be a subduction between diffeological spaces. 
\begin{enumerate}
\item 
A \emph{product} on a  principal $A$-bundle $P$ over $Y^{[2]}$ is a bundle isomorphism
\begin{equation*}
\lambda\maps \pi_{12}^{*}P \otimes \pi_{23}^{*}P \to \pi_{13}^{*}P
\end{equation*}
over $Y^{[3]}$ that is associative in the sense that the diagram
\begin{equation*}
\alxydim{@R=1.2cm@C=2cm}{\pi_{12}^{*}P \otimes \pi_{23}^{*}P \otimes  \pi_{34}^{*}P \ar[r]^-{\id \otimes \pi_{234}^{*}\lambda} \ar[d]_{\pi_{123}^{*}\lambda \otimes \id} & \pi_{12}^{*}P \otimes \pi_{24}^{*}P \ar[d]^{\pi_{124}^{*}\lambda} \\ \pi_{13}^{*}P \otimes \pi_{34}^{*}P \ar[r]_-{\pi_{134}^{*}\lambda} & \pi_{14}^{*}P}
\end{equation*}
of bundle morphisms  over $Y^{[4]}$ is commutative.

\item

An isomorphism $\varphi\maps P \to P'$ of principal $A$-bundles with  products $\lambda$ and $\lambda'$, respectively, is called \emph{product-preserving}, if the diagram
\begin{equation*}
\alxydim{@=1.2cm}{\pi_{12}^{*}P \otimes \pi_{23}^{*}P \ar[r]^-{\lambda} \ar[d]_{\pi_{12}^{*}\varphi \otimes \pi_{23}^{*}\varphi} & \pi_{13}^{*}P \ar[d]^{\pi_{13}^{*}\varphi} \\\pi_{12}^{*}P' \otimes \pi_{23}^{*}P' \ar[r]_-{\lambda'} & \pi_{13}^{*}P'}
\end{equation*}
of bundle morphisms over $Y^{[3]}$ is commutative.
\end{enumerate}
\end{definition}

Bundles over $Y^{[2]}$  with products,  and product-preserving isomorphisms form a monoidal groupoid in an evident way. A connection on $P$ is called called \emph{compatible} with a product $\lambda$, if $\lambda$ is a connection-preserving bundle morphism. We denote by $\Delta\maps Y \to Y^{[2]}$ the diagonal map. Using dual bundles one can show:
\begin{lemma}
\label{lem:prodbuntriv}
If $P$ is a principal $A$-bundle over $Y^{[2]}$, a product $\lambda$ determines a section of $\Delta^{*}P$. If $P$ carries a  connection compatible with $\lambda$, this section is flat. 
\end{lemma}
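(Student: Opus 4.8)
The plan is to produce the section by restricting the product to the total diagonal, and then to read off flatness from a short computation in the trivialisation induced by that section.

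First I would pull $\lambda$ back along the threefold diagonal $\Delta_3\maps Y \to Y^{[3]}$, $y \mapsto (y,y,y)$. Since $\pi_{ij}\circ\Delta_3 = \Delta$ for every pair $i<j$, writing $L \df \Delta^{*}P$ this produces a bundle isomorphism $m \df \Delta_3^{*}\lambda \maps L \otimes L \to L$ over $Y$. Now I use dual bundles, as the lemma suggests: a morphism of principal $A$-bundles $L \otimes L \to L$ is the same datum as a section of the principal $A$-bundle $(L\otimes L)^{\vee}\otimes L \cong L^{\vee}\otimes L^{\vee}\otimes L$, which, since $A$ is abelian and $L^{\vee}\otimes L$ is canonically trivial, is $\cong L^{\vee}$. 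Any section of a principal $A$-bundle trivialises it, so $m$ trivialises $L^{\vee}$ and hence dually determines a section $s$ of $L = \Delta^{*}P$. Concretely, $s$ is the unique section characterised by $m(s\otimes s) = s$: writing a general section as $s' = t\cdot a$ for a local trivialisation $t$ and a smooth $a\maps Y \to A$, the relation $m(s'\otimes s') = m(t\otimes t)\cdot a^{2}$ (using that $A$ is abelian) shows there is exactly one $a$ with $m(s'\otimes s')=s'$, depending smoothly on the base point. This gives the first assertion.

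For the second assertion, suppose the connection $\nabla$ on $P$ is compatible with $\lambda$, i.e. $\lambda$ is connection-preserving for the connections $\pi_{ij}^{*}\nabla$. Pulling back along $\Delta_3$ and using $\Delta_3^{*}\pi_{ij}^{*}\nabla = \Delta^{*}\nabla$ for all three pairs, the isomorphism $m$ is connection-preserving for the connection $\nabla_L \df \Delta^{*}\nabla$ on $L$ and the induced tensor-product connection on $L\otimes L$. I would then trivialise $L$ by the section $s$ and let $\alpha \in \Omega^{1}(Y,\mathfrak{a})$ be the connection form of $\nabla_L$ in this trivialisation, so that $s$ is flat precisely when $\alpha = 0$. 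In the trivialisation $s\otimes s$ the tensor-product connection on $L\otimes L$ has connection form $\alpha+\alpha = 2\alpha$, while in the trivialisation $s$ the target connection has form $\alpha$; and in these trivialisations $m$ is the gauge transformation $Y \to A$ that is constant equal to the unit, because $m(s\otimes s)=s$. Connection-preservation of $m$ equates source and target connection forms up to the logarithmic derivative of this comparison gauge transformation, which vanishes since the latter is constant; hence $2\alpha = \alpha$, so $\alpha = 0$ and $s$ is flat.

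The routine parts are the duality and tensor bookkeeping, together with the fact that morphisms of principal $A$-bundles correspond to sections of the associated Hom-bundle — valid over diffeological spaces by the sheaf-of-groupoids structure recalled at the start of this section. The step carrying the actual content is the collapse $2\alpha = \alpha \Rightarrow \alpha = 0$: it is exactly the doubling produced by the tensor square, combined with the fact that the idempotent section $s$ makes the comparison gauge transformation constant, that forces the connection form to vanish. The one point I expect to require care is confirming that $s$, its trivialisation, and the tensor-product connection are genuinely well-defined and smooth in the diffeological setting, but this is guaranteed by the groupoid and sheaf properties of principal $A$-bundles with connection over diffeological spaces established in Part I. Note that associativity of $\lambda$ is not used anywhere here; only that $\lambda$ is an isomorphism and, for the flatness statement, compatible with the connection.
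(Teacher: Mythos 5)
Your proof is correct, and it follows essentially the route the paper intends (the paper only gives the hint "using dual bundles"): the threefold diagonal turns $\lambda$ into an isomorphism $m\maps \Delta^{*}P\otimes\Delta^{*}P\to\Delta^{*}P$, which via $(L\otimes L)^{\vee}\otimes L\cong L^{\vee}$ trivializes $\Delta^{*}P$, with the section pinned down fibrewise by the idempotency condition $m(s\otimes s)=s$. The flatness computation $2\alpha=\alpha$ in the trivialization defined by $s$ is also correct, and your observation that associativity of $\lambda$ is not needed is accurate.
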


The loop space $LX$ of $X$ is the diffeological space of smooth maps $\tau\maps S^1 \to X$, and the path space $PX$ is the diffeological space of smooth maps $\gamma\maps [0,1] \to X$ with \quot{sitting instants}, see \cite[Section 2]{waldorf9}. We will frequently need two important  maps related to spaces of paths and loops: the evaluation map 
\begin{equation*}
\ev\maps \p X \to X \times X\maps \gamma \mapsto (\gamma(0),\gamma(1))\text{,}
\end{equation*}
and the map $l$ which has already appeared in Section \ref{sec:results},
\begin{equation*}
l\maps \p X^{[2]} \to L X\maps (\gamma_1,\gamma_2) \mapsto \prev{\gamma_2} \pcomp \gamma_1\text{,}
\end{equation*}
where $PX^{[k]}$ denotes the $k$-fold fibre product of $PX$ over $X \times X$.
Both maps are smooth, and $\ev$ is a subduction if $X$ is connected, see \cite[Section 2]{waldorf9}.

\begin{definition}
\label{def:fusionbundle}
A \emph{fusion product} on a principal $A$-bundle $P$ over $LX$  is a product $\lambda$ on the bundle $l^{*}P$ over $PX^{[2]}$ in the sense of Definition \ref{def:product}. A pair $(P,\lambda)$ is called \emph{fusion bundle}. A morphism $\varphi\maps P_1 \to P_2$ between fusion bundles is called fusion-preserving, if $l^{*}\varphi\maps l^{*}P_1 \to l^{*}P_2$ is product-preserving. 
\end{definition}

We denote the monoidal groupoid formed by fusion bundles and fusion-preserving bundle morphisms by $\fusbun A {LX}$. A connection on a principal $A$-bundle $P$ over $LX$ is \emph{compatible} with a fusion product $\lambda$, if its pullback  to $l^{*}P$ is compatible with the product $\lambda$. 
 Let $\Delta \maps PX \to LX$ be the smooth map that assigns to a path $\gamma\in PX$ the loop $l(\gamma,\gamma) \in LX$.  Lemma \ref{lem:prodbuntriv} implies:
\begin{lemma}
\label{lem:fusbuntrivflat}
For every fusion bundle $P$  over $LX$ with compatible connection, the fusion product determines a flat section  $\can\maps PX \to \Delta^{*}P$.
\end{lemma}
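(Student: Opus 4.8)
The plan is to obtain this as a direct consequence of Lemma \ref{lem:prodbuntriv}. First I would recall that, by Definition \ref{def:fusionbundle}, a fusion product on $P$ is precisely a product $\lambda$ on the bundle $l^{*}P$ over $PX^{[2]}$, where $PX^{[2]}$ is the two-fold fibre product of $PX$ over $X \times X$ along the subduction $\ev$. I would therefore apply Lemma \ref{lem:prodbuntriv} with $Y \df PX$, subduction $\ev\maps PX \to X \times X$, and the bundle $l^{*}P$ in the role of the bundle over $Y^{[2]} = PX^{[2]}$.

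By that lemma, the product $\lambda$ determines a section of $\Delta_{PX}^{*}(l^{*}P)$, where $\Delta_{PX}\maps PX \to PX^{[2]}$ is the diagonal $\gamma \mapsto (\gamma,\gamma)$. The essential point is the identification $l \circ \Delta_{PX} = \Delta$, since $l(\Delta_{PX}(\gamma)) = l(\gamma,\gamma) = \prev{\gamma} \pcomp \gamma = \Delta(\gamma)$; this yields $\Delta_{PX}^{*}(l^{*}P) = (l \circ \Delta_{PX})^{*}P = \Delta^{*}P$. The section delivered by Lemma \ref{lem:prodbuntriv} is then exactly the asserted section $\can\maps PX \to \Delta^{*}P$.

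For flatness I would invoke the definition of compatibility: the connection on $P$ is compatible with $\lambda$ exactly when its pullback to $l^{*}P$ is compatible with the product $\lambda$, which is precisely the hypothesis under which Lemma \ref{lem:prodbuntriv} guarantees that the resulting section is flat. Hence $\can$ is flat for the connection pulled back to $\Delta^{*}P$.

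I do not expect a genuine obstacle, as the statement is essentially a specialization of Lemma \ref{lem:prodbuntriv}. The only care needed is to keep the two maps called $\Delta$ apart — the diagonal $\Delta_{PX}\maps PX \to PX^{[2]}$ featuring in Definition \ref{def:product} and Lemma \ref{lem:prodbuntriv}, versus the map $\Delta\maps PX \to LX$ from the statement — and to track the pullbacks along $l$ correctly.
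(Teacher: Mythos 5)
Your proof is correct and is exactly the argument the paper intends: the lemma is stated as an immediate consequence of Lemma \ref{lem:prodbuntriv} applied to the product $\lambda$ on $l^{*}P$ over $PX^{[2]}$, with the identification $l \circ \Delta_{PX} = \Delta$ that you spell out. The paper leaves these details implicit, so your write-up is simply a more explicit version of the same route.
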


Notice that one can further pullback this section along $X \to PX\maps x \mapsto \id_x$, so that the restriction of any fusion bundle to the constant loops has a flat trivialization.

Next we explain the additional condition for connections on fusion bundles, which we have mentioned in the introduction as (iv). For an angle $\varphi \in [0,2\pi]$, we denote by $r_{\varphi}\maps LX \to LX$ the loop rotation by $\varphi$. Consider  the \quot{rotation homotopy} $h_{\varphi} \maps [0,1] \to LS^1$ defined by $h_{\varphi}(t)(s)=s \mathrm{e}^{\im t\varphi}$. For any loop $\beta\in LX$, the composite $H_{\varphi}(\beta) \df L\beta \circ h_{\varphi}$ is a path in $LX$ connecting $\beta$ with $r_{\varphi}(\beta)$. 
The parallel transport of the connection on $P$ along the path $H_{\varphi}(\beta)$ defines a bundle automorphism $R_{\varphi}$ of $P$ that covers the rotation $r_{\varphi}$.

We are particularly interested in the half-rotation, i.e. $\varphi=\pi$. Then, the loop rotation  $r_{\pi}$ lifts along the map $l\maps PX^{[2]} \to LX$. Namely, we define the smooth maps
\begin{equation*}
\pi\maps PX^{[n]} \to PX^{[n]}\maps (\gamma_1,...,\gamma_n) \mapsto (\prev {\gamma_n},...,\prev{\gamma_1})\text{,}
\end{equation*}
and obtain the commutative diagram
\begin{equation*}
\alxydim{@=1.2cm}{PX^{[2]} \ar[d]_{\pi} \ar[r]^{l} & LX \ar[d]^{r_{\pi}} \\ PX^{[2]} \ar[r]_{l} &  LX\text{.}}
\end{equation*}
As a consequence, the bundle automorphism $R_{\pi}$ pulls back to an automorphism of $l^{*}P$ that covers the map $\pi$; we denote that also by $R_{\pi}$. 

\begin{definition}
\label{def:symmetrizing}
Let $P$ be a principal $A$-bundle over $LX$ with fusion product $\lambda$. A  connection on $P$ \emph{symmetrizes}  $\lambda$, if
\begin{equation*}
R_{\pi}(\lambda(q_1 \otimes q_2)) = \lambda(R_{\pi}(q_2) \otimes R_{\pi}(q_1))
\end{equation*}
for all  $(\gamma_1,\gamma_2,\gamma_3) \in PX^{[3]}$ and all   $q_1 \in P_{l(\gamma_1,\gamma_2)}$ and $q_2\in P_{l(\gamma_2,\gamma_3)}$.
\end{definition}

\begin{remark}
 It is instructive to assume for a moment that the bundle $P$ together with its connection is pulled back from the \emph{thin loop space} $\lt X$, i.e. the diffeological space of thin homotopy classes of loops, along the projection $\mathrm{pr}\maps LX \to \lt X$. Since the rotation $r_{\varphi}$ projects to the \emph{identity} on $\lt X$, the automorphisms $R_{\varphi}$ are also identities. Then, the condition of Definition \ref{def:symmetrizing} is simply
\begin{equation*}
\lambda(q_1 \otimes q_2) = \lambda(q_2 \otimes q_1)\text{,}
\end{equation*}
i.e. the fusion product is \emph{commutative}.
\end{remark}

A \emph{connection on a fusion bundle} $(P,\lambda)$ is by definition  a connection on $P$ which is compatible with $\lambda$ and symmetrizes $\lambda$. A \emph{morphism between fusion bundles with connections} is a fusion-preserving, connection-preserving bundle morphism. Fusion bundles with connection form a category that we denote by $\fusbuncon A {LX}$. For the Hom-sets of this category, we provide the following lemma used in Corollary \ref{co:homsets}. 

\begin{lemma}
\label{lem:fusiontorsor}
Let $P_1$ and $P_2$ be fusion bundles with connections. Then, the set of connection-preserving, fusion-preserving bundle morphisms $\varphi\maps P_1 \to P_2$ is either empty or a torsor over the group $\fuslc A {LM}$ of locally constant fusion maps. 
\end{lemma}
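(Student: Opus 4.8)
The plan is to realize $\hom(P_1,P_2)$ as a torsor over a group of automorphisms and then cut that group down by the two structural conditions. First I recall that a morphism of principal $A$-bundles covering the identity is automatically an isomorphism, so every element of $\hom(P_1,P_2)$ is invertible. If the set is empty we are done; otherwise I fix one element $\varphi_0$. By the theory of diffeological principal bundles from Part I, every automorphism of $P_1$ covering the identity has the form $g_f\maps p \mapsto p\cdot f(\pi(p))$ for a unique smooth map $f\maps LM \to A$, so that $f \mapsto \varphi_0 \circ g_f$ identifies $C^{\infty}(LM,A)$ with the set of \emph{all} bundle isomorphisms $P_1 \to P_2$. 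Since $\varphi_0$ already preserves the connection and the fusion product, the composite $\varphi_0 \circ g_f$ lies in $\hom(P_1,P_2)$ exactly when $g_f$ itself is connection-preserving and fusion-preserving; the task reduces to characterizing those $f$.

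Second, I would compute the effect of $g_f$ on each structure. Because $A$ is abelian, the connection on $P_1$ transforms under $g_f$ by addition of $\pi^{*}(\mathrm{dlog}\, f)$, so $g_f$ is connection-preserving if and only if $\mathrm{dlog}\, f = 0$, i.e. if and only if $f$ is locally constant (this is vacuous when $A$ is discrete, consistent with the remark that all morphisms are then connection-preserving). For the fusion product I use that $\lambda$ is a morphism of principal $A$-bundles, hence $A$-equivariant, and that a gauge transformation acts on a tensor product of $A$-bundles by the product of the two gauge functions. Pulling $g_f$ back along $l$ yields the gauge transformation of $l^{*}P_1$ by $l^{*}f$; evaluating the product-preserving square of Definition \ref{def:product} and using equivariance of $\lambda$, the square commutes precisely when the gauge functions on source and target agree, that is when $e_{12}^{*}f \cdot e_{23}^{*}f = e_{13}^{*}f$ over $PM^{[3]}$. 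This is exactly the defining identity of a fusion map. Combining the two computations, $g_f$ preserves both structures if and only if $f$ is a locally constant fusion map, i.e. $f \in \fuslc A {LM}$.

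Finally I would assemble the torsor statement. The group $\fuslc A {LM}$, under pointwise multiplication, acts on the nonempty set $\hom(P_1,P_2)$ by $\varphi \cdot f \df \varphi \circ g_f$, which is well-defined by the previous paragraph. The action is free, since each $\varphi$ is an isomorphism, so $\varphi \circ g_f = \varphi \circ g_{f'}$ forces $g_f = g_{f'}$ and hence $f = f'$. It is transitive, because for $\varphi,\varphi' \in \hom(P_1,P_2)$ the automorphism $\varphi^{-1}\circ\varphi'$ of $P_1$ preserves the connection and the fusion product -- structure-preserving isomorphisms being closed under composition and inverse -- and therefore equals $g_f$ for a unique $f \in \fuslc A {LM}$, whence $\varphi' = \varphi\cdot f$. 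Thus $\hom(P_1,P_2)$ is either empty or a torsor over $\fuslc A {LM}$.

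I expect the main obstacle to be the fusion computation: correctly tracking which $A$-valued function implements the gauge transformation on the tensor product $e_{12}^{*}(l^{*}P_1)\otimes e_{23}^{*}(l^{*}P_1)$ as opposed to on $e_{13}^{*}(l^{*}P_1)$, and verifying that equivariance of $\lambda$ collapses the commuting square precisely to the fusion-map identity. The connection part is routine once the abelian gauge transformation law is in hand, and the remaining torsor bookkeeping is formal.
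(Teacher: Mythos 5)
Your proof is correct and takes essentially the same route as the paper: the paper cites \cite[Lemma 3.2.4]{waldorf9} for the fact that connection-preserving isomorphisms form a torsor over locally constant maps $LM\to A$, and then observes that imposing fusion-preservation cuts this down to fusion maps, which is exactly the content of your $\mathrm{dlog}\,f=0$ computation and your equivariance calculation for $\lambda$. The only difference is that you unpack the cited Part~I lemma rather than invoking it.
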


\begin{proof}
By \cite[Lemma 3.2.4]{waldorf9}, the set of connection-preserving isomorphisms between isomorphic diffeological principal bundles is a torsor over the group of locally constant maps $f \maps LX \to A$.  Adding the condition that these isomorphisms respect the fusion product results in the condition that  $f$ is a fusion map.     
\end{proof}

\subsection{Superficial Connections}

A loop in the loop space, $\tau\in LLX$, is called \emph{thin}, if the adjoint map $\exd\tau\maps S^1 \times S^1 \to X$ defined by $\exd\tau(s,t) \df \tau(s)(t)$ has rank one (in smooth manifolds, this means that the rank of its differential is less or equal than one; in general see \cite[Definition 2.1.1]{waldorf9}). Two loops $\tau_1,\tau_2 \in LLX$ are called \emph{rank-two-homotopic}, if there exists a path $h\in PLLX$ connecting $\tau_1$ with $\tau_2$, whose adjoint map $\exd h\maps [0,1] \times S^1 \times S^1 \to X$ has rank two. 

\begin{definition}
\label{def:superficial}
A connection on a principal $A$-bundle $P$ over $LX$ is called \emph{superficial}, if two conditions are satisfied:
\begin{enumerate}[(i)]
\item 
Thin loops have vanishing holonomy.

\item
Rank-two-homotopic loops have the same holonomy.
\end{enumerate}
\end{definition}

We remark that condition (ii) is a weakened flatness condition, i.e. a flatness implies (ii). If $A$ is a discrete group, then the unique connection on $P$ is superficial.

For later use we shall derive two implications for the parallel transport of superficial connections. Corresponding to the notion of a thin loop is the one of a \emph{thin path}, i.e. one whose adjoint map $[0,1] \times S^1 \to X$ has rank one. For $\gamma\maps [0,1] \to LX$ a path in $LX$, we denote the parallel transport of a connection on a principal $A$-bundle $P$ over $LX$ by
$\ptr{\gamma}\maps P_{\gamma(0)} \to P_{\gamma(1)}$.

\begin{lemma}
\label{lem:thinpath}
Let $P$ be a principal $A$-bundle over $LX$ with superficial connection. If $\gamma_1,\gamma_2 \in PLX$ are thin paths with a common initial point $\beta_0\in LX$  and a common endpoint $\beta_1\in LX$, then $\ptr{\gamma_1} = \ptr{\gamma_2}$. \end{lemma}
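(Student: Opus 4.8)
The plan is to reduce the claimed equality of parallel transports to the vanishing of the holonomy around a single loop in $LX$, and then to recognise that loop as thin, so that condition (i) of Definition \ref{def:superficial} applies directly.

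First I would build the loop $\tau \in LLX$ by concatenating $\gamma_1$ with the reversed path $\prev{\gamma_2}$: since both $\gamma_1$ and $\gamma_2$ run from $\beta_0$ to $\beta_1$ and all paths carry sitting instants, traversing first $\gamma_1$ and then $\prev{\gamma_2}$ yields a smooth loop in $LX$ based at $\beta_0$. As parallel transport along a reversed path is the inverse of parallel transport along the path, the round-trip parallel transport around $\tau$ -- that is, its holonomy acting on the fibre $P_{\beta_0}$ -- equals $\ptr{\gamma_2}^{-1} \circ \ptr{\gamma_1}$. Hence it is enough to show that $\tau$ has vanishing holonomy.

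The central point is that $\tau$ is thin, i.e. its adjoint $\exd\tau \maps S^1 \times S^1 \to X$ has rank at most one, which is a pointwise condition on the differential. Away from the two junction circles, $\exd\tau$ agrees, up to an affine reparametrisation of the path coordinate, with $\exd{\gamma_1}$ or with $\exd{\gamma_2}$; both have rank at most one because $\gamma_1$ and $\gamma_2$ are thin (and $\prev{\gamma_2}$ is thin, its adjoint being a reparametrisation of that of $\gamma_2$). On each junction circle the sitting instants make $\exd\tau$ locally constant in the path coordinate, so that only the derivative in the loop coordinate survives; this derivative is tangent to a single loop ($\beta_0$ or $\beta_1$) and therefore again has rank at most one. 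Thus $\exd\tau$ has rank at most one everywhere and $\tau$ is thin.

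Applying condition (i) of the definition of a superficial connection to the thin loop $\tau$ then gives $\ptr{\gamma_2}^{-1} \circ \ptr{\gamma_1} = \id$, hence $\ptr{\gamma_1} = \ptr{\gamma_2}$. I expect the only delicate point to be the verification of thinness at the junctions: one must use the sitting instants both to guarantee that the concatenation is a genuinely smooth element of $LLX$ and to ensure the rank does not jump to two where the two thin pieces are glued. It is worth noting that this argument uses only condition (i); condition (ii) is reserved for the companion statement about rank-two-homotopic paths.
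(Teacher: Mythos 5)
Your argument is exactly the paper's proof: the loop $l(\gamma_1,\gamma_2)=\prev{\gamma_2}\pcomp\gamma_1$ is thin, so condition (i) of Definition \ref{def:superficial} forces its holonomy $\ptr{\gamma_2}^{-1}\circ\ptr{\gamma_1}$ to vanish. The paper states the thinness of this loop without comment; your verification at the junctions via the sitting instants is a correct filling-in of that step.
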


\begin{proof}
$l(\gamma_1,\gamma_2) = \prev{\gamma_2} \pcomp \gamma_1$ is a thin loop in $LX$, and has vanishing holonomy by (i). 
\end{proof}

Loops connected by a thin path are called \emph{thin homotopic}, see \cite{barret1}. The lemma states that a superficial connection provides   identifications between  fibres of $P$ over thin homotopic loops, independent of the choice of a thin path. To prepare the second implication, we notice that a rank-two-homotopy $h$ between two paths $\gamma_1,\gamma_2 \nobr \in  \nobr  PLX$  induces paths $h_0,h_1\in PLX$ with $h_0(t) \df h(0)(t)$ and $h_1(t) \df  h(1)(t)$; these connect the end-loops of the paths $\gamma_1$ and $\gamma_2$ with each other. 

\begin{lemma}
\label{lem:pointwisethin}
Let $P$ be a principal $A$-bundle over $LX$ with superficial connection.
Let $h$ be a rank-two-homotopy between paths $\gamma_1$ and $\gamma_2$ in $LX$.  Then, the diagram
\begin{equation*}
\alxydim{@=1.2cm}{P_{\gamma_1(0)} \ar[d]_{\ptr{h_0}} \ar[r]^{\ptr{\gamma_1}} & P_{\gamma_1(1)} \ar[d]^{\ptr{h_1}} \\ P_{\gamma_2(0)} \ar[r]_{\ptr{\gamma_2}} & P_{\gamma_2(1)}}
\end{equation*}
of parallel transport maps is commutative. In particular, if $h$  fixes the end-loops, i.e. the paths $h_0$ and $h_1$ are constant, then $\ptr{\gamma_1}=\ptr{\gamma_2}$. \end{lemma}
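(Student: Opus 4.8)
The plan is to recognise the commutativity of the square as the statement that the holonomy of its boundary loop vanishes, exactly as in the proof of Lemma \ref{lem:thinpath}, and then to produce that boundary loop as rank-two-homotopic to a constant loop so that condition (ii) of Definition \ref{def:superficial} applies.

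First I would reformulate the claim. Parallel transport is functorial under concatenation and sends a reversed path to the inverse transport, so the asserted identity $\ptr{h_1} \circ \ptr{\gamma_1} = \ptr{\gamma_2}\circ \ptr{h_0}$ is equivalent to $\ptr{h_0}^{-1} \circ \ptr{\gamma_2}^{-1} \circ \ptr{h_1}\circ \ptr{\gamma_1} = \id$ on $P_{\gamma_1(0)}$. Since $\gamma_1$ ends where $h_1$ begins, $h_1$ ends where $\prev{\gamma_2}$ begins, and $\prev{\gamma_2}$ ends where $\prev{h_0}$ begins and then returns to $\gamma_1(0)$, the four sides compose to a loop $\Gamma \df \prev{h_0} \pcomp \prev{\gamma_2} \pcomp h_1 \pcomp \gamma_1$ in $LX$ based at $\gamma_1(0)$, and the displayed composite is precisely the parallel transport $\ptr{\Gamma}$ of the connection around $\Gamma$, i.e. its holonomy. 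So it suffices to show that $\Gamma$ has vanishing holonomy. The final ``in particular'' clause is then immediate: if $h_0$ and $h_1$ are constant then $\ptr{h_0}=\ptr{h_1}=\id$, and commutativity reduces to $\ptr{\gamma_1}=\ptr{\gamma_2}$.

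The main step is to exhibit $\Gamma$ as rank-two-homotopic to a thin loop. Viewing the homotopy $h$ through its adjoint $\exd{h}\maps [0,1]^2 \times S^1 \to X$, which by hypothesis has rank two, the loop $\Gamma$ is exactly the loop in $LX$ tracing the boundary $\partial [0,1]^2$ of the square under $h$. I would shrink this boundary to the corner $(0,0)$: choose a smooth family $\phi_r\maps S^1 \to [0,1]^2$ of parametrised square boundaries, with $\phi_1$ the full boundary and $\phi_0$ constant equal to $(0,0)$, arranged with sitting instants so that each $\Gamma_r \df (\theta\mapsto h(\phi_r(\theta)))$ is a genuine loop in $LX$ and $r\mapsto \Gamma_r$ is a path $H\in PLLX$ from the constant loop at $\gamma_1(0)$ (at $r=0$) to $\Gamma$ (at $r=1$). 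Its adjoint factors as $\exd{H}(r,\theta,s) = \exd{h}(\phi_r(\theta),s)$, that is, through the two-dimensional square via $(r,\theta)\mapsto\phi_r(\theta)$; hence $\mathrm{rank}(\mathrm{d}\,\exd{H}) \le \mathrm{rank}(\mathrm{d}\,\exd{h}) \le 2$ pointwise, so $H$ is a rank-two-homotopy.

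Finally I would conclude. The endpoint $\Gamma_0$ is the constant loop at $\gamma_1(0)\in LX$; its adjoint is independent of the outer $S^1$, so $\Gamma_0$ is thin and has vanishing holonomy by condition (i) (a constant loop has trivial holonomy for any connection in any case). Since $\Gamma$ and $\Gamma_0$ are rank-two-homotopic, condition (ii) of Definition \ref{def:superficial} gives that they have equal holonomy, whence $\Gamma$ has vanishing holonomy and the square commutes. The only genuinely delicate point is the construction of the shrinking family $\phi_r$ with the correct sitting instants and smoothness so that $H$ is an honest path in $LLX$; once the factorisation of $\exd{H}$ through the square is in place, the rank bound is automatic and everything else is formal.
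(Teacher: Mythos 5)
Your proof is correct, and it uses the same two ingredients as the paper's: reduce the commutativity of the square to the vanishing of the holonomy of the boundary loop of the homotopy square, then invoke condition (ii) of Definition \ref{def:superficial} along a rank-two homotopy that degenerates this loop. The implementations differ, though. The paper first proves the fixed-end-loop case by sweeping one half of the boundary loop across the square onto the other half -- the family $t \mapsto l(h(t),\gamma_2)$ runs from $l(\gamma_1,\gamma_2)$ to $l(\gamma_2,\gamma_2)$, and the latter is thin-null-homotopic -- and then reduces the general case to this one by replacing $\gamma_1$ with the reparametrization $\id \pcomp \gamma_1 \pcomp \id$, which is rank-two-homotopic with fixed end-loops to $(\prev{h_1}\pcomp\gamma_2)\pcomp h_0$. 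You instead contract the whole boundary loop $\Gamma = \prev{h_0}\pcomp\prev{\gamma_2}\pcomp h_1\pcomp\gamma_1$ radially to a corner of the square in a single step. Both homotopies have rank two for the same reason, namely that their adjoints factor through $[0,1]^2 \times S^1$ followed by $\exd h$, and both terminate at a loop with trivially vanishing holonomy. Your route avoids the case split and the reparametrization argument; the paper's route isolates the fixed-end-loop statement, which is the form most often cited later, but your argument recovers it as an immediate special case, so nothing is lost. The one point you leave implicit -- the smooth shrinking family $\phi_r$ with sitting instants, and the fact that $h\circ\phi_1$ agrees with $\Gamma$ up to a reparametrization (hence up to a thin homotopy, under which holonomy is invariant) -- is of the same nature as constructions the paper itself suppresses, so I do not regard it as a gap.
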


\begin{proof}
We first prove the statement for a homotopy $h$ that fixes the end-loops. Then we can form, for each $t$, a loop $h'(t) \df  l(h(t),\id_{\gamma_2}(t)) \in LLX$. These loops form a path $h' \in PLLX$. Its adjoint map still has rank two, so that condition (ii) implies that the holonomies of the loops $h'(0)$ and $h'(1)$ coincide. We calculate
\begin{equation}
\label{eq:loophol}
\mathrm{Hol}_P(l(\gamma_1,\gamma_2)) = \mathrm{Hol}_P(h'(0)) = \mathrm{Hol}_P(h'(1)) = \mathrm{Hol}_P(l(\gamma_2,\gamma_2))=1\text{.}
\end{equation}
The last step uses that the loop $l(\gamma_2,\gamma_2)$ in $LX$ is thin homotopic to a constant loop (which has vanishing holonomy), and
the holonomy of a connection depends only on the thin homotopy class of the loop (this is well-known for smooth bundles; for diffeological bundles see \cite[Proposition 3.2.10]{waldorf9}. As a consequence of \erf{eq:loophol}, we obtain $\ptr{\gamma_1}=\ptr{\gamma_2}$.

Now we deduce  the general statement. 
On the one hand, we  have $\ptr{\gamma_1} = \ptr{\id \pcomp \gamma_1 \pcomp \id}$, since these paths are thin homotopic. On the other hand,  the paths $(\id \pcomp \gamma_1) \pcomp \id$ and $(\prev{h_1} \pcomp \gamma_2) \pcomp h_0$ are rank-two-homotopic and have the same end-loops, so that the commutativity of the diagram follows. 
\end{proof}

\begin{remark}
\label{rem:rank}
The parallel transport of \emph{any} connection on a principal bundle over \emph{any} diffeological space  only depends on the thin homotopy class of the path.  Lemma \ref{lem:pointwisethin} shows that for a \emph{superficial} connection this independence is extended to \emph{larger} equivalence classes of paths. Indeed, consider $\gamma_1,\gamma_2\in PLX$. Let $h\maps [0,1] \to PLX$ be a thin homotopy.  Then, its adjoint $\exd h\maps [0,1]^2 \times S^1 \to X$ has rank two. The converse is in general not true. 
\end{remark}

An important feature of superficial connections is their relation to equivariant structures. In the context of transgression, this relation explains the absence of equivariant structures in our definition of fusion bundles, see Remark \ref{re:equiv}. Let $\diff^{+}(S^1)$ denote the group of orientation-preserving diffeomorphisms of $S^1$.

\begin{proposition}
\label{prop:diffequiv}
Let $P$ be a principal $A$-bundle over $LX$ with superficial connection. Then, parallel transport determines a connection-preserving, $\diff^{+}(S^1)$-equivariant structure on $P$.
\end{proposition}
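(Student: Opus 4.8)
The plan is to build the lift of the action directly out of parallel transport, using thin paths to guarantee that it is well defined and Lemma~\ref{lem:pointwisethin} to see that it preserves the connection. Let $\diff^{+}(S^1)$ act on $LX$ by $r_{\phi}(\beta) \df \beta \circ \phi^{-1}$. Since $\diff^{+}(S^1)$ is path-connected, for every $\phi$ I can choose a smooth path $\phi_{\bullet}\maps [0,1] \to \diff^{+}(S^1)$ with $\phi_0 = \id$ and $\phi_1 = \phi$ (inserting sitting instants, which only alters the resulting path in $LX$ within its thin homotopy class). For $\beta \in LX$ this produces a path $c^{\phi}_{\beta}\maps t \mapsto \beta \circ \phi_t^{-1}$ in $LX$ from $\beta$ to $r_{\phi}(\beta)$, and I define $\Phi_{\phi}\maps P_{\beta} \to P_{r_{\phi}(\beta)}$ to be the parallel transport $\ptr{c^{\phi}_{\beta}}$. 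The crucial point is that $c^{\phi}_{\beta}$ is \emph{thin}: its adjoint $(t,s) \mapsto \beta(\phi_t^{-1}(s))$ factors through $\beta\maps S^1 \to X$ and hence has rank at most one. Consequently, any two choices of $\phi_{\bullet}$ give thin paths with the same endpoints, so Lemma~\ref{lem:thinpath} shows that $\Phi_{\phi}$ is independent of all choices.

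Next I would verify the structural properties. Parallel transport is $A$-equivariant, so each $\Phi_{\phi}$ is a morphism of principal $A$-bundles covering $r_{\phi}$, and $\Phi_{\id} = \id$ is transport along the constant path. For the action law $\Phi_{\phi_1} \circ \Phi_{\phi_2} = \Phi_{\phi_1 \phi_2}$, the composite is parallel transport along the concatenation of $c^{\phi_2}_{\beta}$ with $c^{\phi_1}_{r_{\phi_2}(\beta)}$; both pieces, and hence the concatenation, factor through $\beta$ and are therefore thin, so the composite agrees with $\Phi_{\phi_1\phi_2}$ by Lemma~\ref{lem:thinpath}. Connection-preservation is where Lemma~\ref{lem:pointwisethin} enters: given any path $\delta$ in $LX$, I would consider the homotopy $h(u)(v) \df \delta(v) \circ \phi_u^{-1}$, whose adjoint $(u,v,s) \mapsto \exd{\delta}(v,\phi_u^{-1}(s))$ factors through $\exd{\delta}\maps [0,1]\times S^1 \to X$ and so has rank at most two. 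Its boundary paths $h_0,h_1$ are exactly the defining paths of $\Phi_{\phi}$ at $\delta(0)$ and $\delta(1)$, so the commuting square of Lemma~\ref{lem:pointwisethin} reads $\Phi_{\phi} \circ \ptr{\delta} = \ptr{r_{\phi} \circ \delta} \circ \Phi_{\phi}$. Thus $\Phi_{\phi}$ intertwines all parallel transports, which is precisely the statement that it is connection-preserving.

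The remaining and, I expect, most delicate point is smoothness of the whole assignment $\diff^{+}(S^1) \times P \to P$ as a map of diffeological spaces, since the definition of $\Phi_{\phi}$ rested on a non-canonical choice of path. The plan is to reduce it to the smooth dependence of parallel transport on smoothly varying families of paths, established in Part~I~\cite{waldorf9}. Concretely, given a plot $u \mapsto (\phi^{u},\beta^{u})$, it suffices to exhibit a \emph{jointly} smooth family $(u,t) \mapsto \phi^{u}_{t}$ of paths from $\id$ to $\phi^{u}$; then $u \mapsto c^{\phi^{u}}_{\beta^{u}}$ is a plot of $PLX$ and the transport varies smoothly. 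Such local families are obtained from a chart of the (Fr\'echet, hence diffeological) Lie group $\diff^{+}(S^1)$ around any point, contracting straight-line-wise in the chart and using right translation; the already-proven independence of $\Phi_{\phi}$ of the path choice guarantees that these local definitions glue to a globally smooth structure. Granting this, $\Phi$ is a smooth, $A$-equivariant, connection-preserving action of $\diff^{+}(S^1)$ lifting $r$, i.e.\ the desired equivariant structure.
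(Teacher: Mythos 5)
Your proposal is correct and follows essentially the same route as the paper's proof: the lift is defined by parallel transport along the thin path $L\beta\circ\gamma_\phi$, well-definedness and the action law follow from Lemma \ref{lem:thinpath}, connection-preservation from the rank-two homotopy and Lemma \ref{lem:pointwisethin}, and smoothness from the smooth dependence of parallel transport on locally smooth choices of the paths $\gamma_\phi$. The only differences are cosmetic: you use $\beta\circ\phi^{-1}$ to get a genuine left action where the paper works with $\beta\circ\phi$, and your homotopy $h(u)(v)\df\delta(v)\circ\phi_u^{-1}$ has its two parameters transposed relative to the convention of Lemma \ref{lem:pointwisethin}, which should read $h(v)(u)$ so that $h_0,h_1$ are the boundary paths in the sense of that lemma.
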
 

\begin{proof}
For every $\phi \in \diff^{+}(S^1)$ and every $\beta\in LX$ the loops $\beta$ and $\beta \circ \phi$ are \emph{thin homotopic}. Indeed, since $\diff^{+}(S^1)$ is connected, there exists a path $\gamma_{\phi} \in P(LS^1)$ with $\gamma_{\phi}(0)= \id_{S^1}$ and $\gamma_\phi(1)=\phi$. Then, the path $L\beta \circ \gamma_{\phi} \in PLX$ connects $\beta$ with $\beta \circ \phi$ and is thin.

Lemma \ref{lem:thinpath} implies that the parallel transport $\tau_{L\beta \circ \gamma_{\phi}}$ of the superficial connection on $P$ along the thin path $L\beta \circ \gamma_{\phi} \in PLX$ is independent of the choice of the path $\gamma_{\phi}$. Thus we have a well-defined map
\begin{equation*}
E\maps \diff^{+}(S^1) \times P \to P\maps (\phi,x) \mapsto \ptr{\gamma_{Lp(x) \circ \gamma_{\phi}}}(x)
\end{equation*}
where $p(x) \in LX$ is the projection of $x\in P$ to the base. The map $E$ is smooth because parallel transport depends smoothly on the path \cite[Proposition 3.2.10]{waldorf9} and the paths $\gamma_{\phi}$ can be chosen \emph{locally} in a smooth way. Further,  $E$ is an action of $\diff^{+}(S^1)$ on $P$: for $\phi,\psi \in \diff^{+}(S^1)$ and paths $\gamma_{\phi},\gamma_{\psi},\gamma_{\psi \circ \varphi} \in PLS^1$ we have the thin paths
\begin{equation*}
L\beta \circ \gamma_{\psi \circ \varphi}
\quand
(L(\beta \circ \phi) \circ \gamma_{\psi}) \pcomp (L\beta \circ \gamma_{\varphi})
\end{equation*}
both connecting $\beta$ with $\beta \circ \psi \circ \varphi$. Thus, the induced parallel transport maps coincide, and the functorality of parallel transport gives
\begin{equation*}
\tau_{L\beta \circ \gamma_{\psi \circ \varphi}} = \tau_{L(\beta \circ \phi) \circ \gamma_{\psi}} \circ \tau_{L\beta \circ \gamma_{\varphi}}\text{.}
\end{equation*}
This shows  $E(\psi,E(\varphi,x)) = E(\psi \circ \varphi,x)$. Finally, the action $E$ covers the action of $\diff^{+}(S^1)$ on $LX$. Thus, $E$ is an equivariant structure on $P$.

The claim that $E$ is connection-preserving means that the bundle automorphism $E_{\phi}\maps P \to P$ is connection-preserving for all $\phi \in \diff^{+}(S^1)$.  If $\gamma \in PLX$ is a path, we denote by $\phi(\gamma)$ the path $\phi(\gamma)(t) \df  \gamma(t) \circ \phi$. Any choice of a path $\gamma_{\phi}$ from $\id_{S^1}$ to $\phi$ induces a rank-two-homotopy between $\gamma$ and $\phi(\gamma)$, and Lemma \ref{lem:pointwisethin} implies that the diagram
\begin{equation*}
\alxydim{@=1.2cm}{P_{\gamma(0)} \ar[d]_{E_{\phi}} \ar[r]^{\ptr\gamma} & P_{\gamma(1)} \ar[d]^{E_{\phi}} \\ P_{\gamma^{\phi}(0)} \ar[r]_{\ptr{\phi(\gamma)}} & P_{\gamma^{\phi}(1)}}
\end{equation*}
is commutative. This shows that $E_{\varphi}$ is connection-preserving. 
\end{proof}

Now we have completed the definition of 
fusion bundles with superficial connection and have thus explained all details of Definition \ref{def:fusbuncon}. 
\label{sec:fusionforms}
It is interesting to look at superficial connections on the \emph{trivial} fusion bundle, i.e. the trivial bundle over $LX$ with the identity fusion product. Like in the introduction, we shall restrict to $A=\ueins$ with the Lie algebra identified with $\R$.   Superficial connections on the trivial fusion bundle form a group that we denote by $\Omegafus^1(LX)$; it consists of 1-forms $\eta \in \Omega^1(LX)$ which 
\begin{enumerate}[(a)]

\item 
are \emph{compatible} with the identity fusion product:
\begin{equation*}
e_{12}^{*}\eta + e_{23}^{*}\eta = e_{13}^{*}\eta\text{,}
\end{equation*}
with the maps $e_{ij} \df  l \circ \mathrm{pr}_{ij}\maps PX^{[3]} \to LX$.  

\item
\emph{symmetrize} the identity fusion product:
\begin{equation*}
\int_{H_{\pi}(l(\gamma_1,\gamma_2))} \eta \;+\; \int_{H_{\pi}(l(\gamma_2,\gamma_3))} \eta \;=\; \int_{H_{\pi}(l(\gamma_1,\gamma_3))} \eta \quad\mod\quad \Z\text{,}
\end{equation*}
for all $(\gamma_1,\gamma_2,\gamma_3) \in PX^{[3]}$, where $H_\pi({\beta})\in PLX$ denotes the homotopy that rotates a loop $\beta$ by an angle of $\pi$.  

\item
are \emph{superficial}:
\begin{equation*}
\int_{\tau} \eta \in \Z
\quand
\int_{\tau_1}  \eta \;=\; \int_{\tau_2} \eta \quad\mod\quad \Z
\end{equation*}
for all thin loops $\tau \in LLX$ and all pairs $(\tau_1,\tau_2)$ of rank-two-homotopic loos in $LX$.
\end{enumerate}
The formulas in (b) and (c) come from the fact that the parallel transport in the trivial bundle is given by  multiplication with the exponential of the integral of the connection 1-form, so that a coincidence of parallel transport maps is equivalent to the integrality of the difference of the integrals. We close this section with a fact used in Corollary \ref{co:forms}: 

\begin{lemma}
\label{lem:dlogfus}
For $\eta_1,\eta_2\in \Omegafus^1(LX)$, there is a bijection
\begin{equation*}
\hom (\trivlin_{\eta_1},\trivlin_{\eta_2}) \cong  \left \lbrace  f \in \fus \ueins {LX} \;|\; \eta_2 = \eta_1 + \mathrm{dlog}(f) \right \rbrace\text{.}
\end{equation*} 
\end{lemma}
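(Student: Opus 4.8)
The plan is to unwind the two conditions defining a morphism in $\fusbunconsf \ueins {LX}$ between the trivial objects and to match them with the two conditions cutting out the set on the right. Since $\eta_1,\eta_2 \in \Omegafus^1(LX)$, both $\trivlin_{\eta_1}$ and $\trivlin_{\eta_2}$ are genuine objects of this category, so by definition a morphism $\trivlin_{\eta_1} \to \trivlin_{\eta_2}$ is a bundle isomorphism of the underlying trivial $\ueins$-bundles over $LX$ that is connection-preserving and fusion-preserving.

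First I would recall that every self-morphism of the trivial $\ueins$-bundle over $LX$ is given by multiplication with a unique smooth map $f\maps LX \to \ueins$, so that bundle morphisms $\trivlin \to \trivlin$ are identified with $C^{\infty}(LX,\ueins)$. With $\mathrm{Lie}(\ueins)$ identified with $\R$ as in the statement, the standard gauge-transformation formula shows that the morphism attached to $f$ is connection-preserving, as a map from the connection $\eta_1$ to the connection $\eta_2$, if and only if $\eta_2 = \eta_1 + \mathrm{dlog}(f)$. This is the first of the two conditions defining the set on the right.

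Next I would spell out the fusion-preserving condition. By Definition \ref{def:fusionbundle} the morphism attached to $f$ is fusion-preserving precisely when its pullback along $l$ is product-preserving, in the sense of Definition \ref{def:product}, with respect to the identity products carried by $l^{*}\trivlin$. Writing $l^{*}\varphi$ as multiplication with $f \circ l$ and pulling back along the projections $\pi_{ij}$, the commutative square of Definition \ref{def:product} with $\lambda = \lambda' = \id$ collapses to the single equation $e_{12}^{*}f \cdot e_{23}^{*}f = e_{13}^{*}f$ over $PX^{[3]}$, which is exactly the defining equation for $f$ to be a fusion map, i.e. $f \in \fus \ueins {LX}$. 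This is the second condition defining the set on the right.

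Combining the two conditions yields the asserted bijection: the morphisms $\trivlin_{\eta_1} \to \trivlin_{\eta_2}$ correspond bijectively to the maps $f \in \fus \ueins {LX}$ with $\eta_2 = \eta_1 + \mathrm{dlog}(f)$. No additional constraint arises from the symmetrizing or superficial properties, as these are properties of the objects $\trivlin_{\eta_i}$ already encoded in $\eta_i \in \Omegafus^1(LX)$, and a connection-preserving morphism automatically commutes with the rotation automorphisms $R_\pi$ built from parallel transport. The only step requiring care is the reduction of the product-preserving square to the multiplicative fusion equation; as a consistency check, applying $\mathrm{dlog}$ to $e_{12}^{*}f \cdot e_{23}^{*}f = e_{13}^{*}f$ returns the compatibility condition (a) for $\eta_2 - \eta_1$, in agreement with both $\eta_i$ satisfying (a).
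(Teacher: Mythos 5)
Your proof is correct and follows exactly the same route as the paper's own argument: identify morphisms of the trivial bundle with smooth maps $f\maps LX \to \ueins$, translate connection-preservation into $\eta_2 = \eta_1 + \mathrm{dlog}(f)$, and translate fusion-preservation into the fusion-map equation. The paper states these three steps in three sentences; you have merely spelled out the same reductions in more detail.
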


\begin{proof}
A morphism between trivial bundles over $LX$ is the same as a smooth map $f\maps LX \to A$. It preserves  connection 1-forms $\eta_1$ and $\eta_2$ if and only if $\eta_2 \nobr = \nobr \eta_1 \nobr + \nobr \mathrm{dlog}(f)$.  It preserves the trivial fusion products if and only $f$ is a fusion map.
\end{proof}

\setsecnumdepth{2}

\section{Diffeological Bundle Gerbes}

\label{sec:diffgrb}

\label{sec:gerbeconnections}

\subsection{Bundle Gerbes}

Let $X$ be a diffeological space.

\begin{definition}
A \emph{diffeological $A$-bundle gerbe} over $X$ is a subduction $\pi\maps Y \to X$ and a bundle $P$ over $Y^{[2]}$ with a product $\lambda$ in the sense of Definition \ref{def:product}. 
\end{definition}

Compared to the classical definition of a bundle gerbe \cite{murray}, we have made two independent generalizations. First we have admitted diffeological spaces and diffeological bundles instead of smooth manifolds and smooth bundles. Secondly, we have admitted a general abelian Lie group $A$ instead of $\C^{\times}$ or $\ueins$. Almost all  statements about $\ueins$-bundle gerbes over smooth manifolds extend without changes to our generalized version of bundle gerbes, mostly due to the fact that principal $A$-bundles over diffeological spaces have all the features of principal $\ueins$-bundles over smooth manifolds \cite[Section 3]{waldorf9}. A list of such  statements that we will use later is:
\begin{lemma}
\label{lem:diffgrb}
\begin{enumerate}[(i)]
\item 
Diffeological $A$-bundle gerbes over a diffeological space $X$ form a monoidal 2-groupoid $\diffgrb A X$. The tensor unit is denoted $\mathcal{I}$: it has the identity subduction and the trivial bundle with the trivial product.

\item The assignment  $X \mapsto \diffgrb AX$ defines a sheaf of 2-groupoids over the site of diffeological spaces; in particular, there are coherent pullback 2-functors.

\item
Let $\mathcal{G}_1$ and $\mathcal{G}_2$ be diffeological bundle gerbes over a diffeological space $X$. Then, the Hom-category $\hom(\mathcal{G}_1,\mathcal{G}_2)$ is a torsor category over the monoidal groupoid $\diffbun A X$ of diffeological principal $A$-bundles over $X$. 

\item
If $f\maps Y_1 \to Y_2$ is a smooth map between subductions over $X$, and $\mathcal{G}$ is a diffeological bundle gerbe with subduction $\pi_2\maps Y_2 \to X$, one obtains via pullback of the bundle and its product another diffeological bundle gerbe $\mathrm{res}_f(\mathcal{G})$ with subduction $\pi_1\maps Y_1 \to X$, together with an isomorphism $\mathrm{res}_f(\mathcal{G}) \cong \mathcal{G}$.
\end{enumerate}
\end{lemma}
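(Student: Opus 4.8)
The plan is to treat all four statements as the diffeological, $A$-banded analogues of standard facts about $\ueins$-bundle gerbes over smooth manifolds, and to argue that the classical proofs transfer with essentially no change. The only structure of the coefficients these proofs exploit is that, for each diffeological space $X$, the diffeological principal $A$-bundles over $X$ form a monoidal groupoid with duals that is a \emph{sheaf} in the Grothendieck topology of subductions and admits coherent pullback along smooth maps; this is exactly \cite[Theorems 3.1.5 and 3.1.6]{waldorf9} together with \cite[Section 3]{waldorf9}. Accordingly I would organise the proof as four short arguments, each reducing a gerbe-level assertion to the corresponding bundle-level fact just quoted; the morphism and $2$-morphism conventions I would follow are those of \cite{waldorf1}.

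For (i) I would recall that a $1$-morphism $\mathcal{G}_1 \to \mathcal{G}_2$ is a diffeological $A$-bundle over (a refinement of) the fibre product $Y_1 \ttimes X Y_2$ equipped with an isomorphism intertwining the pulled-back products $\lambda_1$ and $\lambda_2$, and a $2$-morphism is a bundle morphism compatible with these intertwiners. The tensor product of gerbes is the fibre product of subductions over $X$ together with the tensor product of bundles, and the unit $\mathcal{I}$ is as stated. That every $1$-morphism is invertible and every $2$-morphism an isomorphism, and that the coherence data of the monoidal structure exist and satisfy the pentagon and triangle, are all inherited verbatim from the corresponding properties of $\diffbun A X$ (groupoid, duals, associativity and unit constraints of $\otimes$). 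For (ii), the sheaf property is the assertion that gerbes and their morphisms glue along a subduction; since the bundle $P$ and the product $\lambda$ are descent data whose cocycle conditions are precisely the product and associativity axioms of Definition \ref{def:product}, gluing follows from the sheaf property of $\diffbun A {-}$. Coherent pullback $2$-functors arise by pulling the subduction $\pi\maps Y \to X$ back along a smooth map $g\maps X' \to X$ to the subduction $Y \ttimes X X' \to X'$ and pulling $P$, $\lambda$ back along the induced maps of fibre products.

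For (iii), given a diffeological $A$-bundle $R$ over $X$ and a $1$-morphism represented by a bundle $Q$ over $Y_1 \ttimes X Y_2$, the action is $Q \mapsto Q \otimes \mathrm{pr}_X^{*}R$; the product-compatibility survives because $R$, being pulled back from $X$, contributes the same pullback to both sides of the intertwiner condition and hence cancels. Freeness and transitivity of this action reduce, exactly as in the smooth case, to forming the \quot{difference bundle} of two $1$-morphisms over $Y_1 \ttimes X Y_2$ and descending it to $X$ via the sheaf property. For (iv), with $f\maps Y_1 \to Y_2$ satisfying $\pi_2 \circ f = \pi_1$ one sets $\mathrm{res}_f(\mathcal{G})$ to have subduction $\pi_1$, bundle $(f^{[2]})^{*}P$ over $Y_1^{[2]}$, and product $(f^{[3]})^{*}\lambda$, whose associativity over $Y_1^{[4]}$ is the pullback of that of $\lambda$. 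The canonical isomorphism $\mathrm{res}_f(\mathcal{G}) \cong \mathcal{G}$ is the $1$-morphism whose bundle over $Y_1 \ttimes X Y_2$ is the pullback of $P$ along $(y_1,y_2) \mapsto (f(y_1),y_2)$, its intertwiner being supplied by $\lambda$.

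The genuine work is not in any single construction but in the diffeological bookkeeping common to all of them: that fibre products of subductions are again subductions onto $X$, that the iterated fibre products $Y^{[k]}$ and the projections between them are smooth, and that the sheaf condition for bundles applies on these spaces. All of this is furnished by the theory of subductions and of $\diffbun A {-}$ from Part I \cite{waldorf9}; once it is in hand the lemma is a transcription of the smooth-manifold theory, and this bookkeeping is the only point I expect to need care — it is precisely what justifies asserting the statement \quot{without changes}.
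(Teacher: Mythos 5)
Your proposal is correct and follows exactly the route the paper itself takes: the paper offers no proof of this lemma, asserting only that the classical statements about $\ueins$-bundle gerbes over manifolds \quot{extend without changes} because diffeological principal $A$-bundles have all the features of smooth $\ueins$-bundles established in Part I, and your write-up is precisely that reduction made explicit item by item. The constructions you sketch (tensor product via fibre products of subductions, the action $Q \mapsto Q \otimes \mathrm{pr}^{*}R$ with difference bundles for (iii), and the pullback description of $\mathrm{res}_f$ for (iv)) are the standard ones from the smooth theory that the paper is implicitly invoking.
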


\noindent
Over smooth manifolds, \emph{smooth} and \emph{diffeological} bundle gerbes are  the same

\begin{theorem}
\label{th:diffvssmoothgrb}
Let $M$ be a smooth manifold. Then, the inclusion
\begin{equation*}
 \grb A M \to \diffgrb A M
\end{equation*}
of smooth bundle gerbes into diffeological bundle gerbes is an equivalence of 2-categories. 
\end{theorem}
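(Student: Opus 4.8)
The plan is to verify the standard criterion for an equivalence of bicategories: the inclusion 2-functor must be (a) essentially surjective on objects and (b) a local equivalence, i.e.\ for each pair of smooth bundle gerbes $\mathcal{G}_1,\mathcal{G}_2$ the induced functor on the Hom-categories $\hom(\mathcal{G}_1,\mathcal{G}_2)$ must be an equivalence of $1$-categories. I would reduce every one of the resulting three conditions (essential surjectivity on objects, essential surjectivity on $1$-morphisms, and bijectivity on $2$-morphisms) to two inputs. The first is the purely $1$-categorical comparison result from Part I that over a smooth manifold the inclusion of smooth principal $A$-bundles into diffeological principal $A$-bundles is an equivalence of categories \cite{waldorf9}. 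The second is the elementary observation that any subduction $\pi\maps Y\to M$ onto a manifold admits an open cover $\{U_i\}$ of $M$ with smooth local sections $s_i\maps U_i\to Y$, so that $Y'\df\coprod_i U_i$ is a manifold and $f\df\coprod_i s_i\maps Y'\to Y$ is a smooth map of subductions over $M$.

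For essential surjectivity on objects I start from an arbitrary diffeological $A$-bundle gerbe $\mathcal{G}$ with subduction $\pi\maps Y\to M$, bundle $P$, and product $\lambda$. Applying the restriction $2$-functor of Lemma \ref{lem:diffgrb}(iv) along $f$ produces an isomorphic bundle gerbe $\mathrm{res}_f(\mathcal{G})$ whose subduction is $Y'\to M$ with $Y'$ a manifold. The fibre products $(Y')^{[k]}$ are then manifolds as well, so the $1$-categorical comparison result lets me replace the pulled-back diffeological bundle over $(Y')^{[2]}$ by an isomorphic smooth bundle and transport $\lambda$ along this isomorphism. The outcome is a genuine smooth bundle gerbe whose image under the inclusion is isomorphic to $\mathcal{G}$.

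For the local equivalence I would exploit the torsor structure of Lemma \ref{lem:diffgrb}(iii): $\hom(\mathcal{G}_1,\mathcal{G}_2)$ in $\diffgrb A M$ is a torsor category over $\diffbun A M$, while the analogous classical statement holds in $\grb A M$ over the category $\bun A M$ of smooth bundles \cite{waldorf1}, and the inclusion functor is a module functor over $\bun A M\hookrightarrow\diffbun A M$. Choosing a base object in the smooth Hom-category trivializes both torsors compatibly, giving a commuting square whose two horizontal arrows are the equivalences furnished by the torsor property and whose left vertical arrow is the $1$-categorical bundle equivalence; it then follows that the right vertical arrow, namely the Hom-functor, is an equivalence, provided the smooth Hom-category is non-empty exactly when the diffeological one is. Bijectivity on $2$-morphisms is subsumed here, since after passing to a common manifold refinement a $2$-morphism is simply a morphism of smooth bundles over a manifold, and the comparison result is full and faithful.

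The step I expect to be the genuine obstacle is precisely the non-emptiness input, equivalently the smoothing of a single $1$-morphism together with its product-compatibility data. Objects and $2$-morphisms reduce almost mechanically to the bundle comparison result, but a $1$-morphism in this bundle-gerbe formalism lives over a common refinement of the two subductions and carries compatibility isomorphisms with \emph{both} gerbe products; one must refine this possibly non-manifold intermediate space to an open cover, smooth the morphism bundle there, and check that the two compatibility equations survive, which they do because they transport along the bundle isomorphism. The real work lies not in any deep geometric input but in the bookkeeping of these nested refinements and in verifying that the compatibility data remains intact throughout.
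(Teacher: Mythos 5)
Your proposal follows essentially the same route as the paper: essential surjectivity on objects is obtained by choosing local sections $s_\alpha\maps U_\alpha\to Y$, restricting along $\coprod_\alpha s_\alpha$ via Lemma \ref{lem:diffgrb}~(iv), and invoking the smooth-vs-diffeological bundle comparison of Part I, while the local equivalence on Hom-categories is deduced from their torsor structure (Lemma \ref{lem:diffgrb}~(iii)) over the equivalent categories $\bun AM$ and $\diffbun AM$. The non-emptiness issue you flag (smoothing a single $1$-isomorphism with its product-compatibility data) is real but is left implicit in the paper's one-line Hom-category argument; your sketch of how to handle it by refining the intermediate subduction is the right fix.
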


\begin{proof}
 It is an equivalence between the Hom-categories because these are torsor categories over $\bun AX$ and $\diffbun AX$, respectively, and both categories are isomorphic \cite[Theorem 3.1.7]{waldorf9}. 
It remains to prove that it is essentially surjective on objects. Suppose $\mathcal{G}$ is a diffeological bundle gerbe over $M$. Let $M$ be covered by open sets $U_{\alpha}$ with smooth sections $s_{\alpha}\maps   U_{\alpha} \to Y$. For $U$ the disjoint union of the open sets $U_{\alpha}$, the sections define a smooth map $s\maps   U \to Y$, covering the identity on $M$. Then, $\mathrm{res}_{s}(\mathcal{G})$ is a \emph{smooth} bundle gerbe over $M$, again using \cite[Theorem 3.1.7]{waldorf9}. By Lemma \ref{lem:diffgrb} (iv), $\mathrm{res}_s(\mathcal{G})$ is  isomorphic to $\mathcal{G}$.
\end{proof}

\begin{corollary}
Isomorphism classes of diffeological $A$-bundle gerbes over a smooth manifold $M$ are in bijection to the \v Cech cohomology group $\check \h^2(M,\sheaf A)$.
\end{corollary}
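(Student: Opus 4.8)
The plan is to reduce to smooth bundle gerbes and then produce the Dixmier--Douady class by a \v Cech cocycle construction; the passage from $\C^\times$ to an arbitrary abelian $A$ and from smooth to diffeological bundles is harmless, since every bundle-theoretic fact used below holds for principal $A$-bundles over diffeological spaces by \cite[Section 3]{waldorf9}. By Theorem \ref{th:diffvssmoothgrb} the inclusion $\grb A M \to \diffgrb A M$ is an equivalence of 2-categories, so the two sides have the same isomorphism classes of objects, and it suffices to classify \emph{smooth} $A$-bundle gerbes over $M$ (for $A = \C^\times$ this is Murray's theorem \cite{murray}, and the general case is the same argument).

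To assign a class to a smooth gerbe $\mathcal{G}$ with subduction $\pi\maps Y \to M$, bundle $P$ and product $\lambda$, I would first choose a good open cover $\{U_\alpha\}$ of $M$ fine enough to admit smooth local sections $s_\alpha\maps U_\alpha \to Y$ of the surjective submersion $\pi$. The pairs $(s_\alpha,s_\beta)$ define smooth maps $U_{\alpha\beta} \to Y^{[2]}$; pulling back $P$ gives principal $A$-bundles $P_{\alpha\beta}$ over $U_{\alpha\beta}$, each of which is trivial by goodness of the cover. Choosing sections $\sigma_{\alpha\beta}$ and evaluating $\lambda$ on triple overlaps produces smooth functions $a_{\alpha\beta\gamma}\maps U_{\alpha\beta\gamma} \to A$ through $\lambda(\sigma_{\alpha\beta} \otimes \sigma_{\beta\gamma}) = \sigma_{\alpha\gamma} \cdot a_{\alpha\beta\gamma}$, and the associativity square of Definition \ref{def:product} is precisely the \v Cech cocycle identity for $a$. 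Thus $a$ represents a class in $\check \h^2(M,\sheaf A)$.

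Next I would verify that this class is well defined. Replacing the $\sigma_{\alpha\beta}$ changes $a$ by a coboundary, and refining the cover (or changing the $s_\alpha$) yields a cohomologous cocycle after passing to a common refinement. Invariance under 1-isomorphisms is the place where Lemma \ref{lem:diffgrb}(iii) enters: a morphism $\mathcal{G} \to \mathcal{G}'$ is controlled by a principal $A$-bundle over $M$, and the transition functions of that bundle realize exactly the coboundary relating the two cocycles. This produces a well-defined map from isomorphism classes of gerbes to $\check \h^2(M,\sheaf A)$.

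It remains to prove bijectivity. Surjectivity is easy: given a cocycle $a$, I would take $Y \df \coprod_\alpha U_\alpha$ with the evident projection to $M$, so that $Y^{[2]} = \coprod_{\alpha\beta} U_{\alpha\beta}$, equip $Y^{[2]}$ with the trivial $A$-bundle, and define the product by multiplication with $a$; the cocycle condition is exactly associativity, and recomputing the class of this gerbe returns $[a]$. The hard part will be injectivity. The subtlety is that \emph{isomorphic} means 1-isomorphic in the 2-category $\diffgrb A M$, while two gerbes with cohomologous cocycles may carry entirely unrelated subductions. The strategy is to use Lemma \ref{lem:diffgrb}(iv) to restrict both gerbes along their local sections to the \emph{common} submersion $\coprod V_i \to M$ over a common refinement $\{V_i\}$ of the two covers, without altering their isomorphism classes; both are then of the explicit form above, with cocycles $a$ and $a'$ satisfying $a' = a \cdot \delta b$. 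The coboundary $b$ then supplies, via Lemma \ref{lem:diffgrb}(iii), the bundle datum of an explicit 1-isomorphism between them, its compatibility with the products being exactly the equation $a' = a \cdot \delta b$. Carrying out this reduction carefully, and bookkeeping the combinatorics across refinements, is the only genuinely laborious step.
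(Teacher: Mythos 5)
Your proposal is correct and follows essentially the same route as the paper: the corollary is stated as an immediate consequence of Theorem \ref{th:diffvssmoothgrb}, which reduces the classification to smooth $A$-bundle gerbes, whose identification with $\check \h^2(M,\sheaf A)$ is the standard Dixmier--Douady/\v Cech argument of Murray that you spell out. The extra detail you supply (extracting the cocycle from local sections, checking well-definedness via Lemma \ref{lem:diffgrb}(iii), and proving bijectivity via restriction along Lemma \ref{lem:diffgrb}(iv)) is exactly what the paper leaves implicit.
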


Here $\sheaf A$ denotes the sheaf of smooth $A$-valued functions on $M$. We denote the class that represents a bundle gerbe $\mathcal{G}$ in cohomology by $\check c(\mathcal{G}) \in \check \h^2(M,\sheaf A)$.

One important difference between $\ueins$-bundle gerbes and general $A$-bundle gerbes  is the following.
For $\ueins$-bundle gerbes over a smooth manifold $M$ one can use the isomorphism $\check \h^2(M,\sheaf \ueins) \cong \h^3(M,\Z)$, and regard the image of the  class $\check c(\mathcal{G})$ in $\h^3(M,\Z)$ as a characteristic class. For a general $A$-bundle gerbes, this is not possible. In particular, while every $\ueins$-bundle gerbe over a surface trivializes by dimensional reasons, there are  non-trivial $A$-bundle gerbes over surfaces.

Our discussion of connections on diffeological bundle gerbes uses the fact that differential forms on diffeological spaces and connections on diffeological principal bundles have almost\footnote{One difference is that not every  principal bundle over every diffeological space admits a connection. Note that this is already the case over some infinite-dimensional (Fréchet) manifolds.} all features of differential forms and connections over smooth manifolds \cite[Sections 3.2 and A.3]{waldorf9}.

\begin{definition}
\label{def:conn}
Let $\mathcal{G}$ be a diffeological $A$-bundle gerbe over $X$, consisting of a subduction $\pi\maps  Y \to X$ and $A$-bundle $P$ with product $\lambda$. A \emph{connection} on $\mathcal{G}$ is a connection on $P$ compatible with $\lambda$, and a 2-form $B \in \Omega^2_{\mathfrak{a}}(Y)$ such that 
\begin{equation*}
\mathrm{curv}(P) = \pi_2^{*}B- \pi_1^{*}B\text{.}
\end{equation*}
\end{definition}
Here we have denoted by $\Omega^k_{\mathfrak{a}}(X)$ the vector space of $\mathfrak{a}$-valued $k$-forms on $X$, where $\mathfrak{a}$ is the Lie algebra of $A$. The 2-form $B$ is called the \emph{curving} of the connection.
Every diffeological $A$-bundle gerbe $\mathcal{G}$ with connection over $X$ has a curvature $\mathrm{curv}(\mathcal{G}) \in \Omega^3_{\mathfrak{a}}(X)$, which is defined as the unique 3-form whose pullback along the subduction of $\mathcal{G}$ equals $\mathrm{d}B$. If the Lie group $A$ is discrete, a connection is  \emph{no} information, i.e. every $A$-bundle gerbe has exactly one connection, which is flat.

Many familiar facts  generalize from smooth $\ueins$-bundle gerbes with connection to diffeological $A$-bundle gerbes with connection. For example, a connection on the trivial bundle gerbe $\mathcal{I}$ can be identified with its curving, i.e. with a 2-form $\rho \in \Omega^2_{\mathfrak{a}}(X)$. We denote this bundle gerbe with connection by $\mathcal{I}_{\rho}$. Further, we have:

\begin{lemma}
\label{lem:diffgrbcon}
\begin{enumerate}[(i)]
\item
Diffeological $A$-bundle gerbes with connection over $X$ form a monoidal 2-groupoid $\diffgrbcon AX$, whose tensor unit is $\mathcal{I}_0$.

\item
The assignment $X \mapsto \diffgrbcon AX$ defines a sheaf of 2-groupoids over the site of diffeological spaces; in particular, there are coherent pullback 2-functors.

\item
Let $\mathcal{G}_1$ and $\mathcal{G}_2$ be diffeological $A$-bundle gerbes with connection over a diffeological space $X$. Then, the Hom-category $\hom(\mathcal{G}_1,\mathcal{G}_2)$ is a torsor category over the monoidal groupoid $\diffbunconflat A X$ of flat diffeological principal $A$-bundles over $X$. 
\end{enumerate}
\end{lemma}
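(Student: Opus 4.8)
The plan is to treat a connection as extra structure layered on top of the data already classified in Lemma \ref{lem:diffgrb}, and to check that every construction appearing there admits a connection-preserving refinement. All the verifications reduce to the corresponding statements for the smooth $\ueins$-theory of bundle gerbes with connection \cite{waldorf5,waldorf1}, which transfer essentially verbatim because $\mathfrak a$-valued forms and connections on diffeological principal $A$-bundles share all the formal properties of $\R$-valued forms and connections on $\ueins$-bundles over manifolds \cite[Sections 3.2 and A.3]{waldorf9}.

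For (i), I would describe a $1$-morphism $\mathcal{G}_1 \to \mathcal{G}_2$ as the connection-preserving version of the $1$-morphism underlying Lemma \ref{lem:diffgrb}: over $W \df Y_1 \times_X Y_2$ a principal $A$-bundle $Q$ carrying a connection with $\mathrm{curv}(Q) = \mathrm{pr}_2^{*}B_2 - \mathrm{pr}_1^{*}B_1$, together with a connection-preserving, product-compatible isomorphism over $W^{[2]}$; a $2$-morphism is a connection-preserving bundle morphism of the $Q$'s intertwining these isomorphisms. Composition tensors the bundles $Q$ over common refinements, whereupon the curving conditions add and the connections tensor, and every $1$- and $2$-morphism is invertible because principal $A$-bundles are, so one obtains a $2$-groupoid. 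The monoidal structure tensors the gerbe data (subductions fibre over $X$, bundles tensor, curvings add), with unit the trivial gerbe $\mathcal{I}_0$ carrying the zero curving; the coherence diagrams are exactly those checked in the $\ueins$-case \cite{waldorf5}.

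For (ii), I would invoke that over the site of diffeological spaces $\mathfrak a$-valued differential forms form a sheaf, and that principal $A$-bundles with connection form a sheaf of groupoids \cite[Section 3.2]{waldorf9}. Since a bundle gerbe with connection is a compatible package of such local data over a subduction, gluing a matching family along a cover of $X$ by subductions produces descent, refining Lemma \ref{lem:diffgrb}(ii). The pullback $2$-functor along a smooth map $f\maps X' \to X$ pulls back the subduction by fibre product and transports the bundle, the curving, and the connection; its coherence isomorphisms for iterated pullbacks are inherited from those of the underlying bundles and forms.

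For (iii), I would first define the action of $\diffbunconflat A X$: a flat bundle $R$ over $X$ sends a $1$-isomorphism with bundle $Q$ to the one with bundle $Q \otimes (\text{pullback of }R)$, and flatness of $R$ leaves the curving condition $\mathrm{curv}(Q) = \mathrm{pr}_2^{*}B_2 - \mathrm{pr}_1^{*}B_1$ intact, so the result is again a connection-preserving $1$-isomorphism. To see the action is free and transitive, I would start from Lemma \ref{lem:diffgrb}(iii): two $1$-isomorphisms differ, after forgetting connections, by tensoring with a bundle $R$ over $X$. The two connections on the respective $Q$'s then equip $R$ with a connection, and subtracting the two curving equations forces $\mathrm{curv}(R) = 0$; uniqueness up to a unique connection-preserving isomorphism again follows from the bundle torsor result with connections. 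The main obstacle is precisely this last step: verifying that the difference of two connection-preserving gerbe isomorphisms is classified by a \emph{flat} rather than an arbitrary bundle, and that the assignment respects $2$-morphisms, so that one genuinely obtains an equivalence of groupoids and not merely a bijection on isomorphism classes.
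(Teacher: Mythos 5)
Your proposal is correct and follows exactly the route the paper intends: the paper states Lemma \ref{lem:diffgrbcon} without proof, as a routine generalization of the smooth $\ueins$-theory of \cite{waldorf5,waldorf1} justified by the fact that diffeological principal $A$-bundles with connection and $\mathfrak{a}$-valued forms have all the formal features of their smooth counterparts \cite[Sections 3.2 and A.3]{waldorf9}. Your write-up simply supplies the details the paper omits, and the key step in (iii) — that the difference bundle inherits a connection whose curvature vanishes by subtracting the two curving equations along the subduction — is the right argument.
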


\noindent
A straightforward analog of Theorem \ref{th:diffvssmoothgrb} is the following
\begin{theorem}
\label{th:diffvssmoothgrbcon}
The inclusion 
\begin{equation*}
\grbcon A M \to \diffgrbcon A M
\end{equation*}
of smooth bundle gerbes with connection into diffeological bundle gerbes with connection is an equivalence of 2-categories.
\end{theorem}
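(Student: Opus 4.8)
The plan is to mirror the proof of Theorem~\ref{th:diffvssmoothgrb}, upgrading each step so that it respects connections. As there, I would treat the claim in two parts: that the inclusion is an equivalence on Hom-categories, and that it is essentially surjective on objects.

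For the \textbf{Hom-categories}, by Lemma~\ref{lem:diffgrbcon}~(iii) the category $\hom(\mathcal{G}_1,\mathcal{G}_2)$ between two diffeological $A$-bundle gerbes with connection over $M$ is a torsor category over the monoidal groupoid $\diffbunconflat A M$ of \emph{flat} diffeological principal $A$-bundles, and the analogous statement holds on the smooth side over $\bunconflat A M$. Since the inclusion of flat smooth bundles into flat diffeological bundles over a manifold is an isomorphism of monoidal groupoids -- the connection-aware refinement of \cite[Theorem 3.1.7]{waldorf9} from Part~I, used in the torsor argument of Lemma~\ref{lem:fusiontorsor} via \cite[Lemma 3.2.4]{waldorf9} -- the inclusion induces an equivalence between the Hom-categories.

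For \textbf{essential surjectivity}, I would repeat the restriction construction of Theorem~\ref{th:diffvssmoothgrb} verbatim, now carrying the connection data along. Given a diffeological gerbe $\mathcal{G}$ with connection over $M$, consisting of $\pi\maps Y \to M$, a bundle $P$ with product $\lambda$, a compatible connection, and a curving $B\in\Omega^2_{\mathfrak a}(Y)$, cover $M$ by open sets $U_\alpha$ admitting smooth sections $s_\alpha\maps U_\alpha \to Y$, set $U \df \coprod_\alpha U_\alpha$, and let $s\maps U \to Y$ be the induced smooth map over $M$. The restricted gerbe $\mathrm{res}_s(\mathcal{G})$ of Lemma~\ref{lem:diffgrb}~(iv) then additionally carries the pulled-back connection on the restriction of $P$ together with the curving $s^{*}B\in\Omega^2_{\mathfrak a}(U)$; since a diffeological connection over the manifold $U^{[2]}$ is smooth and $s^{*}B$ is a smooth $2$-form, $\mathrm{res}_s(\mathcal{G})$ is a genuine \emph{smooth} bundle gerbe with connection. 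The curving identity of Definition~\ref{def:conn} is preserved under pullback, so these restricted data indeed constitute a connection.

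The \textbf{main obstacle} is to verify that the canonical isomorphism $\mathrm{res}_s(\mathcal{G}) \cong \mathcal{G}$ of Lemma~\ref{lem:diffgrb}~(iv) is \emph{connection-preserving}. In general, transporting a connection across a gerbe $1$-isomorphism is delicate, because it requires a connection on the bundle underlying the isomorphism, which need not exist. Here, however, the isomorphism attached to the refinement map $s$ is tautological: one may take the bundle underlying it to be trivial and equip it with the zero connection, whereupon the connection-preservation condition collapses to the compatibility of the two curvings, namely that the curving of $\mathrm{res}_s(\mathcal{G})$ equals $s^{*}B$ -- which holds by the very definition of the restricted connection. Thus the restricted connection matches that of $\mathcal{G}$ up to this canonical connection-preserving isomorphism, which establishes essential surjectivity and completes the proof.
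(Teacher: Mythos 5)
Your proposal is correct and is exactly the ``straightforward analog'' of the proof of Theorem \ref{th:diffvssmoothgrb} that the paper intends: the Hom-categories are handled via the torsor structure of Lemma \ref{lem:diffgrbcon}~(iii) over flat bundles, and essential surjectivity via restriction along local sections, with the curving pulled back. Your treatment of the only genuinely new point -- that the refinement isomorphism of Lemma \ref{lem:diffgrb}~(iv) can be realized with a trivial bundle carrying the zero connection, so that connection-preservation reduces to the curving identity -- is precisely the subtlety the author had in mind, and it checks out.
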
 

\begin{corollary}
Isomorphism classes of diffeological $A$-bundle gerbes with connection over a smooth manifold $M$ are in bijection with the Deligne cohomology group $\check \h^2(M,\mathcal{D}_A(2))$.
\end{corollary}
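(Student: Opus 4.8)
The plan is to reduce the statement to smooth bundle gerbes by means of Theorem~\ref{th:diffvssmoothgrbcon}, and then to invoke the classical Deligne classification of gerbes with connection, adapted to a general abelian Lie group $A$.

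First I would observe that an equivalence of 2-categories is essentially surjective on objects and induces equivalences on all Hom-categories; in particular it induces a bijection between the sets of isomorphism classes of objects on the two sides. By Theorem~\ref{th:diffvssmoothgrbcon} it therefore suffices to establish a bijection between isomorphism classes of objects of $\grbcon AM$ and $\check\h^2(M,\mathcal{D}_A(2))$, i.e. to classify \emph{smooth} $A$-bundle gerbes with connection.

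The classifying map is obtained by extracting local Deligne data. Choosing a good open cover $\{U_\alpha\}$ of $M$ together with smooth local sections $s_\alpha \maps U_\alpha \to Y$ of the subduction, the curving pulls back to local forms $B_\alpha \df s_\alpha^{*}B \in \Omega^2_{\mathfrak{a}}(U_\alpha)$; over double overlaps $(s_\alpha,s_\beta)$ pulls $P$ back to a principal $A$-bundle with connection, which trivializes over the good cover and produces connection $1$-forms $A_{\alpha\beta} \in \Omega^1_{\mathfrak{a}}(U_{\alpha\beta})$; and over triple overlaps the product $\lambda$ supplies transition functions $g_{\alpha\beta\gamma}\maps U_{\alpha\beta\gamma} \to A$. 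Associativity of $\lambda$, its compatibility with the connection, and the curving condition $\mathrm{curv}(P)=\pi_2^{*}B-\pi_1^{*}B$ are precisely the conditions that $(g_{\alpha\beta\gamma},A_{\alpha\beta},B_\alpha)$ be a \v Cech--Deligne $2$-cocycle for the complex $\mathcal{D}_A(2)\maps \sheaf A \xrightarrow{\mathrm{dlog}} \Omega^1_{\mathfrak{a}} \xrightarrow{\mathrm d} \Omega^2_{\mathfrak{a}}$, where $\mathrm{dlog}$ denotes the logarithmic derivative into $\mathfrak{a}$-valued forms.

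Finally I would check bijectivity exactly as in the corollary preceding Theorem~\ref{th:diffvssmoothgrbcon}: altering the sections, altering the trivializations, or refining the cover changes the cocycle only by a \v Cech--Deligne coboundary, and an isomorphism of gerbes with connection induces a coboundary in the same fashion, which gives well-definedness on isomorphism classes together with injectivity; surjectivity follows by the standard reconstruction of a gerbe with connection from a cocycle, taking $Y=\coprod_\alpha U_\alpha$, gluing trivial bundles carrying the $1$-forms $A_{\alpha\beta}$ along $g_{\alpha\beta\gamma}$ to build $(P,\lambda)$, and using $B_\alpha$ as curving. The only point demanding attention beyond the classical $\ueins$-case is that the whole construction must be carried out for an arbitrary abelian $A$, possibly discrete or non-compact; this causes no essential difficulty, since principal $A$-bundles with connection over diffeological spaces obey the same local classification already used in Theorem~\ref{th:diffvssmoothgrb}. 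For discrete $A$ one has $\mathfrak{a}=0$, the complex collapses to $\sheaf A$, and the statement specializes to the previous corollary, consistent with the fact that a discrete gerbe carries a unique, flat connection.
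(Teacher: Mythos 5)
Your proposal is correct and follows essentially the route the paper intends: reduce to smooth bundle gerbes via Theorem \ref{th:diffvssmoothgrbcon}, then apply the standard \v Cech--Deligne classification (which the paper does not spell out but delegates to the literature, e.g.\ \cite[Section 3]{gawedzki8}), adapted from $\ueins$ to a general abelian $A$. Your write-up merely makes explicit the cocycle extraction and the coboundary/reconstruction arguments that the paper treats as known.
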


Deligne cohomology (with $A = \C^{\times}$) is discussed in detail in \cite{brylinski1}. In our case, $\mathcal{D}_A(2)$ denotes the complex
\begin{equation*}
\alxydim{}{\sheaf A  \ar[r]^-{\mathrm{dlog}} & \sheaf\Omega^1_{\mathfrak{a}} \ar[r]^-{\mathrm{d}} & \sheaf\Omega^2_{\mathfrak{a}}}
\end{equation*}
of sheaves over $M$. Here, $\sheaf\Omega^k_{\mathfrak{a}}$ denotes the sheaf of $k$-forms with values in the Lie algebra $\mathfrak{a}$ of $A$, $\mathrm{d}$ denotes the exterior derivative, and $\mathrm{dlog}(f)$ denotes the pullback of the Maurer-Cartan form $\theta\in\Omega^1_{\mathfrak{a}}(A)$ along a smooth map $f\maps  U \to A$.  There is a projection
\begin{equation*}
\check \h^2(M,\mathcal{D}_A(2)) \to \check \h^2(M,\sheaf A)
\end{equation*}
that reproduces the characteristic class $\check c(\mathcal{G})$ of the underlying bundle gerbe. We also recall that there is an exact sequence
\begin{equation}
\label{eq:deligne}
\alxydim{@C=1.2cm}{0 \ar[r] & \h^2(M,A) \ar[r] & \check \h^2(M,\mathcal{D}_A(2)) \ar[r]^-{\mathrm{curv}} & \Omega_{\mathfrak{a}}^3(M)}
\end{equation}
saying that diffeological $A$-bundle gerbes with \emph{flat} connection are parameterized by the (singular) cohomology group $\h^2(M,A)$.

If one chooses a \v Cech resolution involving an open cover $\mathscr{U}$ of $M$, then a cocycle $c$ for a gerbe with connection is a tuple $c=(B,A,g)$, with smooth functions $g_{\alpha\beta\gamma} \maps U_\alpha \nobr \cap\nobr U_\beta \nobr\cap\nobr U_\gamma \to A$  and differential forms $A_{\alpha\beta} \in \Omega^1_{\mathfrak{a}}(U_{\alpha} \cap U_{\beta})$ and $B_\alpha \in \Omega^2_{\mathfrak{a}}(U_\alpha)$. We refer to \cite[Section 3]{gawedzki8} 
 for   a detailed discussion of the relation between Deligne cohomology and bundle gerbes.

\subsection{Trivializations}

\label{app:trivs}

Trivializations play an important role in this article because the serve as \quot{boundary conditions} for the surface holonomy that we explain in Section \ref{sec:holonomy}. 

\begin{definition}
\label{def:triv}
A \emph{trivialization} of a diffeological bundle gerbe $\mathcal{G}$ with connection over a diffeological space $X$ is  a 2-form $\rho\in  \Omega_{\mathfrak{a}}^2(X)$ and a 1-isomorphism $\mathcal{T}\maps  \mathcal{G} \to \mathcal{I}_{\rho}$ in the 2-category $\diffgrbcon AX$ of diffeological bundle gerbes with connection over $X$.
\end{definition}

In more detail, if $\pi\maps Y \to X$ is the subduction of $\mathcal{G}$,  $P$ is its bundle with product $\lambda$, and $B$ is its curving, a trivialization  $\mathcal{T}$ is a pair $(T,\tau)$  consisting of a principal $A$-bundle $T$ with connection over $Y$ of curvature $\mathrm{curv}(T) = B-\pi^{*}\rho$, and of a connection-preserving isomorphism
\begin{equation*}
\tau\maps  P \otimes \pi_2^{*}T \to \pi_1^{*}T
\end{equation*}
of bundles over $Y^{[2]}$ that is compatible with the product $\lambda$ in the sense that the diagram
\begin{equation*}
\alxydim{@C=1.5cm@R=1.2cm}{\pi_{12}^{*}P \otimes \pi_{23}^{*}P \otimes \pi_3^{*}T \ar[r]^-{\id \otimes \pi_{23}^{*}\tau} \ar[d]_{\lambda \otimes \id} & \pi_{12}^{*}P \otimes \pi_2^{*}T \ar[d]^{\pi_{12}^{*}\tau} \\ \pi_{13}^{*}P \otimes \pi_3^{*}T \ar[r]_-{\tau} & \pi_1^{*}T }
\end{equation*}
of bundle morphisms over $Y^{[3]}$ is commutative. A 2-isomorphism $\varphi\maps  \mathcal{T}_1 \Rightarrow \mathcal{T}_2$ between two trivializations $\mathcal{T}_i = (T_i,\tau_i)$ is a connection-preserving bundle isomorphism $\varphi\maps  T_1 \to T_2$ such that $\pi_1^{*}\varphi \circ \tau = \tau \circ (\id \otimes \pi_2^{*}\varphi)$.

In Deligne cohomology, a trivialization $\mathcal{T}\maps \mathcal{G} \to \mathcal{I}_{\rho}$ of a bundle gerbe $\mathcal{G}$ with connection and with a Deligne cocycle $c$ allows to extract a 1-cochain $t$ such that $c \eq \mathrm{D}(t) + r(\rho)$, where $\mathrm{D}$ is the differential of the Deligne complex, and $r\maps  \Omega^2_{\mathfrak{a}}(M) \to \sheaf\Omega^2_{\mathfrak{a}}$ is the restriction map. A 2-isomorphism $\varphi\maps  \mathcal{T}_1 \Rightarrow \mathcal{T}_2$ defines  a  Deligne 0-cochain $f$ such that $t_1 = t_2 + \mathrm{D}(f)$.

The action of the groupoid $\diffbunconflat AX$ on the Hom-categories of $\diffgrbcon AX$  from Lemma \ref{lem:diffgrbcon} (iii) restricts to an action on trivializations with fixed $\rho$, which we denote by $\mathcal{T} \otimes P$ for  a trivialization $\mathcal{T}$ and  a flat principal $A$-bundle $P$ over $X$.
 For $\mathcal{T}=(T,\tau)$  we have $\mathcal{T} \otimes P \df  (T \otimes \pi^{*}P, \tau \otimes \id)$. For $\varphi\maps  \mathcal{T}_1 \Rightarrow \mathcal{T}_2$ a 2-isomorphism, and $\beta\maps  P_1 \to P_2$ a connection-preserving isomorphism between principal $A$-bundles over $X$, we  have  $\varphi \otimes \beta \df  \varphi \otimes \pi^{*}\beta$.

We shall provide  two facts about trivializations.

\begin{lemma}
\label{lem:trivexist}
Suppose $W$ is a smooth manifold with $\h^3_{\mathrm{dR}}(W) = 0$ and $\h^2(W,A) = 0$, of which the first is de Rham
 cohomology and the second is singular cohomology. Then, every diffeological $A$-bundle gerbe with connection over $W$ admits a trivialization.
\end{lemma}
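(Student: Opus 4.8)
The plan is to reduce everything to the exact sequence \erf{eq:deligne} by first using de Rham cohomology to remove the curvature and then using singular cohomology to remove the remaining flat gerbe. Since $W$ is a smooth manifold, the corollary following Theorem \ref{th:diffvssmoothgrbcon} identifies the isomorphism class of any diffeological $A$-bundle gerbe with connection $\mathcal{G}$ over $W$ with an element of $\check\h^2(W,\mathcal{D}_A(2))$, and this identification is compatible both with the monoidal (group) structure on isomorphism classes and with the curvature homomorphism occurring in \erf{eq:deligne}. I will work throughout at the level of isomorphism classes in this group.

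First I would observe that $\mathrm{curv}(\mathcal{G}) \in \Omega^3_{\mathfrak{a}}(W)$ is closed: its pullback along the subduction equals $\mathrm{d}B$, which is closed, and pullback along a subduction is injective on forms, so $\mathrm{d}\,\mathrm{curv}(\mathcal{G}) = 0$. As $\mathfrak{a}$ is a real vector space we have $\h^3_{\mathrm{dR}}(W;\mathfrak{a}) \cong \h^3_{\mathrm{dR}}(W)\otimes_{\R}\mathfrak{a} = 0$ by hypothesis, so there is a $2$-form $\rho \in \Omega^2_{\mathfrak{a}}(W)$ with $\mathrm{curv}(\mathcal{G}) = \mathrm{d}\rho$. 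Next I would form the tensor product $\mathcal{G} \otimes \mathcal{I}_{-\rho}$. Since curvature is additive under the monoidal product and $\mathrm{curv}(\mathcal{I}_{-\rho}) = -\mathrm{d}\rho$, the gerbe $\mathcal{G} \otimes \mathcal{I}_{-\rho}$ has vanishing curvature, i.e. it carries a flat connection.

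By exactness of \erf{eq:deligne}, the class of any gerbe with flat connection lies in the image of $\h^2(W,A)$; since $\h^2(W,A) = 0$ by hypothesis, the class of $\mathcal{G} \otimes \mathcal{I}_{-\rho}$ is the identity element, so $\mathcal{G} \otimes \mathcal{I}_{-\rho} \cong \mathcal{I}_0$. Because $\mathcal{I}_{-\rho} \otimes \mathcal{I}_{\rho} \cong \mathcal{I}_0$, the object $\mathcal{I}_{\rho}$ is the monoidal inverse of $\mathcal{I}_{-\rho}$, and tensoring the previous isomorphism with $\mathcal{I}_{\rho}$ yields $\mathcal{G} \cong \mathcal{I}_{\rho}$. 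A $1$-isomorphism $\mathcal{G} \to \mathcal{I}_{\rho}$ in $\diffgrbcon A W$ is exactly a trivialization in the sense of Definition \ref{def:triv}, which proves the lemma.

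The only non-formal inputs are the exactness of \erf{eq:deligne} and the additivity of curvature under the monoidal product; both are standard for $\ueins$-bundle gerbes and, as emphasized in the excerpt, transfer verbatim to diffeological $A$-bundle gerbes. I therefore expect the main point requiring care to be the justification that the classifying bijection of the corollary to Theorem \ref{th:diffvssmoothgrbcon} genuinely intertwines the monoidal structure with the group law on $\check\h^2(W,\mathcal{D}_A(2))$ and sends the isomorphism class to its curvature, so that the additive manipulations with $\rho$ and $-\rho$ above are legitimate on isomorphism classes. Once that compatibility is in hand, the argument is purely formal.
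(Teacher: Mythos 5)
Your proof is correct and follows essentially the same route as the paper's: find $\rho$ with $\mathrm{curv}(\mathcal{G})=\mathrm{d}\rho$ using $\h^3_{\mathrm{dR}}(W)=0$, observe that $\mathcal{G}\otimes\mathcal{I}_{-\rho}$ is flat with characteristic class in $\h^2(W,A)=0$ hence trivializable, and conclude that $\mathcal{G}$ is trivializable. The paper's proof is just a terser version of the same argument; your extra care about closedness of the curvature and the compatibility of the Deligne classification with the monoidal structure fills in details the paper leaves implicit.
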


\begin{proof}
Consider the curvature $H \in \Omega^3_{\mathfrak{a}}(W)$ of the given bundle gerbe $\mathcal{G}$. By assumption, $H=\mathrm{d}\rho$ for some 2-form $\rho \in \Omega_{\mathfrak{a}}^2(W)$. Now, the new bundle gerbe $\mathcal{G} \otimes \mathcal{I}_{-\rho}$ is flat and has a  characteristic class in $\h^2(W,A)=0$. But if $\mathcal{G} \otimes \mathcal{I}_{-\rho}$ has a trivialization, then also $\mathcal{G}$ has one. 
\end{proof}

\begin{lemma}
\label{lem:sectriv}
Suppose $\mathcal{G}$ is a diffeological bundle gerbe with connection with subduction $\pi\maps Y \to X$, curving $B \in \Omega^2_{\mathscr{a}}(Y)$ and a bundle $P$ with product $\lambda$. Then, every smooth section $\sigma\maps  M \to Y$ determines a  trivialization $\mathcal{T}_\sigma\maps  \mathcal{G} \to \mathcal{I}_{\sigma^{*}B}$. Moreover:
\begin{enumerate}[(a)]
\item 
If $\sigma'$ is another section, the product $\lambda$ defines a 2-isomorphism
\begin{equation*}
\mathcal{T}_{\sigma} \otimes R_{\sigma,\sigma'} \Rightarrow \mathcal{T}_{\sigma'}\text{,}
\end{equation*}
where $R_{\sigma,\sigma'}$ is the pullback of $P$ along $(\sigma,\sigma')\maps M \to Y^{[2]}$.

\item
If $\mathcal{T}\maps \mathcal{G} \to \mathcal{I}_{\rho}$  is a trivialization,  the isomorphism $\tau$ of $\mathcal{T}$ defines a 2-iso\-mor\-phism
\begin{equation*}
\mathcal{T}_{\sigma} \otimes \sigma^{*}T \Rightarrow \mathcal{T}\text{,}
\end{equation*}
where $T$ is the principal $A$-bundle of $\mathcal{T}$. 

\end{enumerate}
\end{lemma}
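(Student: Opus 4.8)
The plan is to construct $\mathcal{T}_\sigma$ directly from the bundle gerbe product $\lambda$, and then to obtain both refinements (a) and (b) by reading off suitable faces of $\lambda$ over higher fibre products. Write $\hat\sigma \df \sigma \circ \pi\maps Y \to Y$; since $\pi \circ \hat\sigma = \pi$, the map $(\id_Y,\hat\sigma)\maps Y \to Y^{[2]}$ is smooth. I would set $T_\sigma \df (\id_Y,\hat\sigma)^{*}P$ with the pulled-back connection, so that its fibre over $y$ is $P_{(y,\sigma(\pi(y)))}$. By Definition \ref{def:conn} the curvature of $P$ is $\pi_2^{*}B - \pi_1^{*}B$, and pulling this back along $(\id_Y,\hat\sigma)$ yields $\mathrm{curv}(T_\sigma) = B - \pi^{*}\sigma^{*}B$, which is exactly the curvature a trivialization with target $\mathcal{I}_{\sigma^{*}B}$ must have. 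For the isomorphism $\tau_\sigma\maps P \otimes \pi_2^{*}T_\sigma \to \pi_1^{*}T_\sigma$ I would take $\lambda$ itself: over $(y_1,y_2)\in Y^{[2]}$ both points share the value $s = \sigma(\pi(y_1)) = \sigma(\pi(y_2))$, and $\lambda$ on the triple $(y_1,y_2,s)\in Y^{[3]}$ is precisely a map $P_{(y_1,y_2)} \otimes P_{(y_2,s)} \to P_{(y_1,s)}$. It is connection-preserving because the connection is compatible with $\lambda$ (Definition \ref{def:conn}), and the compatibility square relating $\tau_\sigma$ to $\lambda$, evaluated at $(y_1,y_2,y_3)$ with all three mapping to the common value $s$, is nothing but the associativity square of $\lambda$ on the quadruple $(y_1,y_2,y_3,s)\in Y^{[4]}$ from Definition \ref{def:product}.

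For part (a), recall $R_{\sigma,\sigma'} = (\sigma,\sigma')^{*}P$ is a bundle over $M$ with fibre $P_{(\sigma(x),\sigma'(x))}$, and $\mathrm{curv}(R_{\sigma,\sigma'}) = \sigma'^{*}B - \sigma^{*}B$; tensoring $\mathcal{T}_\sigma$ by it therefore shifts the target curving from $\sigma^{*}B$ to $\sigma'^{*}B$, matching $\mathcal{T}_{\sigma'}$. A $2$-isomorphism $\mathcal{T}_{\sigma} \otimes R_{\sigma,\sigma'} \Rightarrow \mathcal{T}_{\sigma'}$ is a connection-preserving isomorphism $\varphi\maps T_\sigma \otimes \pi^{*}R_{\sigma,\sigma'} \to T_{\sigma'}$ over $Y$ intertwining the two $\tau$'s. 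Over $y$, writing $s = \sigma(\pi(y))$ and $s' = \sigma'(\pi(y))$, the domain is $P_{(y,s)} \otimes P_{(s,s')}$ and the target is $P_{(y,s')}$, so I would take $\varphi$ to be $\lambda$ on the triple $(y,s,s')\in Y^{[3]}$. The intertwining condition is then again an instance of associativity of $\lambda$, now over $Y^{[3]}$ carrying the two auxiliary coordinates $s$ and $s'$.

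For part (b), let $\mathcal{T} = (T,\tau)\maps \mathcal{G} \to \mathcal{I}_\rho$. Since $\mathrm{curv}(T) = B - \pi^{*}\rho$ and $\pi \circ \sigma = \id_M$, the bundle $\sigma^{*}T$ over $M$ has curvature $\sigma^{*}B - \rho$, so $\mathcal{T}_\sigma \otimes \sigma^{*}T$ once more targets $\mathcal{I}_\rho$ and its trivializing bundle $T_\sigma \otimes \pi^{*}\sigma^{*}T$ has curvature $B - \pi^{*}\rho = \mathrm{curv}(T)$. The required $2$-isomorphism $\varphi\maps T_\sigma \otimes \pi^{*}\sigma^{*}T \to T$ over $Y$ is, fibrewise, the map $P_{(y,s)} \otimes T_s \to T_y$, which is exactly $\tau$ on the pair $(y,s)$; its compatibility with $\tau_\sigma = \lambda$ is the commuting square relating $\tau$ and $\lambda$ in the definition of a trivialization, evaluated on $(y_1,y_2,s)$.

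The only genuine work is coherence bookkeeping: verifying that each compatibility square — the one defining $\tau_\sigma$, and the two defining the $2$-isomorphisms — commutes. I expect this to be the main (though routine) obstacle, since in every case one must substitute the section values $s$, $s'$ as additional fibre-product coordinates and then recognize the resulting diagram as the single associativity axiom of $\lambda$ over $Y^{[3]}$ or $Y^{[4]}$; no new identity is needed. Secondary care is required with the sign conventions so that the target curvings come out as $\sigma^{*}B$ (respectively $\rho$) rather than their negatives, which is what dictates whether one pulls back $P$ or its dual in the definition of $T_\sigma$.
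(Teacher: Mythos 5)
Your construction is exactly the paper's: the paper sets $T \df \sigma_1^{*}P$ and $\tau \df \sigma_2^{*}\lambda$ for the maps $\sigma_k(y_1,\dots,y_k) = (y_1,\dots,y_k,\sigma(\pi(y_1)))$, which are precisely your $(\id_Y,\hat\sigma)$ and your $\lambda$ on triples $(y_1,y_2,s)$, and it likewise derives the axioms from the curvature relation and from associativity pulled back along $\sigma_3$, with (a) and (b) read off from $\lambda$ and $\tau$ just as you do. The proof is correct and takes the same route, only spelling out more of the coherence checks (and the sign bookkeeping you flag) than the paper does.
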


\begin{proof}
Consider the smooth maps 
\begin{equation*}
\sigma_k\maps Y^{[k]} \to Y^{[k+1]}\maps (y_1,...,y_k) \mapsto (y_1,...,y_k,\sigma(\pi(y_1)))\text{.}
\end{equation*}
The trivialization $\mathcal{T}_{\sigma}$ has the principal $A$-bundle $T \df  \sigma_1^{*}P$ over $Y$, and the isomorphism $\tau \df  \sigma_2^{*}\lambda$ over $Y^{[2]}$. The axioms for $\mathcal{T}_{\sigma} \df (T,\tau)$ follow from the relation between the curvature of $P$ and the curving $B$, and from the associativity of $\lambda$ under pullback along $\sigma_3$.  (a) and (b) follow   from this construction.
\end{proof}

\subsection{Surface Holonomy}

\label{sec:holonomy}

Holonomy for bundle gerbes with structure groups different from $\ueins$ requires special attention. 
It is defined in the following situation. Let $\mathcal{G}$ be a diffeological $A$-bundle gerbe with connection over a diffeological space $X$, let $\Sigma$ be a closed oriented two-dimensional smooth manifold and let $\phi\maps \Sigma \to X$ be a smooth map. The holonomy of $\mathcal{G}$ around $\Sigma$,
\begin{equation*}
\mathrm{Hol}_{\mathcal{G}}(\phi) \in A\text{,}
\end{equation*}
is defined as the pairing between the fundamental class of $\Sigma$ and the class in $\h^2(\Sigma,A)$ that corresponds to the flat gerbe $\phi^{*}\mathcal{G}$, see \erf{eq:deligne}. There are two well-known reformulations:
\begin{enumerate}[(a)]

\item
If $X$ is a smooth manifold $M$, one can use the \emph{Alvarez-Gaw\c edzki formula} \cite{alvarez1,gawedzki3}. Choose an open covering  $\mathscr{U}=\left \lbrace U_{\alpha} \right \rbrace_{\alpha\in A}$ of $M$ over which $\mathcal{G}$ allows to extract a Deligne cocycle $c = (B,A,g)$. Let $T$ be a triangulation of $\Sigma$ subordinate to $\phi^{-1}(\mathscr{U})$, i.e. there is a map $\alpha\maps T \to A$ that assigns to each simplex $\sigma \in T$ an index $\alpha(\sigma)$ such that $\phi(\sigma) \subset U_{\alpha(\sigma)}$.
Then, 
\begin{equation}
\label{eq:aghol}
\mathrm{Hol}_{\mathcal{G}}(\phi)= \mathrm{AG}(\phi^{*}c)
\end{equation}
with
\begin{equation*}
\mathrm{AG}(\phi^{*}c) \df \! \prod_{f \in T} \exp \left ( \int_f \phi^{*}B_{\alpha(f)} \right ) \cdot\! \prod_{e \in \partial f} \exp\left( \int_e \phi^{*}A_{\alpha(f)\alpha(e)} \right )\cdot\! \prod _{v\in \partial e} g^{\varepsilon(f,e,v)}_{\alpha(f)\alpha(e)\alpha(v)}(\phi(v))\text{,}
\end{equation*}
where $\epsilon(f,e,v)$ is $+1$ if $v$ is the end of the edge $e$ in the orientation induced from the face $f$, and $-1$ else. Employing the cocycle condition for $c$ one can show explicitly that $\mathrm{AG}(\phi^{*}c)$ does not depend on the choice of the triangulation, and not on the choice of the representative $c$. Further, by going to a triangulation on whose faces the Poincaré Lemma holds and then using Stokes' Theorem, one can transform $\mathrm{AG}(\phi^{*}c)$ into an expression only involving the product over vertices. This way one proves the coincidence \erf{eq:aghol}.

\item 
Suppose the pullback $\phi^{*}\mathcal{G}$ is trivializable (e.g. for $A=\ueins$, where $\check \h^2(\Sigma,\sheaf{\ueins})=\h^3(\Sigma,\Z)=0$). Then one can choose any  trivialization $\mathcal{S}\maps  \phi^{*}\mathcal{G} \to \mathcal{I}_{\rho}$ with some $\rho\in\Omega^2_{\mathfrak{a}}(\Sigma)$ and finds
\begin{equation}
\mathrm{Hol}_{\mathcal{G}}(\phi) = \exp \left ( \int_{\Sigma} \rho \right )\text{.}
\end{equation}
The coincidence between (a) and (b) has been shown explicitly in \cite{carey2}.
\end{enumerate} 

We will frequently use the following property of  the holonomy of bundle gerbes with connection over smooth manifolds. Though quite fundamental, I was unable to find this statement in the literature.
\begin{proposition}
\label{prop:thinhol}
Let $M$ be a smooth manifold and let $\mathcal{G}$ be a diffeological $A$-bundle gerbe with connection over $M$. Suppose that $\Sigma$ is a closed oriented surface, and that $\phi\maps  \Sigma \to M$ is a smooth rank one map. Then, the holonomy of $\mathcal{G}$ around $\Sigma$ vanishes.
\end{proposition}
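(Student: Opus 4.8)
The plan is to exploit two facts: that the holonomy of a gerbe with connection changes under a homotopy of $\phi$ only by the exponential of the integral of the curvature over the homotopy, and that the curvature $H \df \mathrm{curv}(\mathcal{G}) \in \Omega^3_{\mathfrak{a}}(M)$ is a \emph{3-form}. Concretely, for a smooth homotopy $\Phi\maps \Sigma \times [0,1] \to M$ between maps $\phi_0$ and $\phi_1$, the Alvarez-Gaw\c edzki formula \erf{eq:aghol} together with Stokes' Theorem yields
\[
\mathrm{Hol}_{\mathcal{G}}(\phi_1) = \mathrm{Hol}_{\mathcal{G}}(\phi_0) \cdot \exp\left( \int_{\Sigma \times [0,1]} \Phi^{*}H \right)\text{.}
\]
First I would establish this formula; it is standard for gerbe holonomy and can be read off from \erf{eq:aghol} on a common refinement of triangulations. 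The decisive point is that whenever $\Phi$ has rank at most two, the pullback of the 3-form $H$ vanishes identically, so that $\mathrm{Hol}_{\mathcal{G}}$ is \emph{invariant} under rank-$\le$-two homotopies of $\phi$.

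Granting this invariance, it suffices to connect the rank-one map $\phi$ by a rank-$\le$-two homotopy to a map whose holonomy manifestly vanishes, the natural target being one that factors through a one-dimensional manifold. Indeed, if $\psi = g \circ f$ with $f\maps \Sigma \to N$ smooth and $N$ a one-dimensional manifold, then by naturality of holonomy $\mathrm{Hol}_{\mathcal{G}}(\psi) = \mathrm{Hol}_{g^{*}\mathcal{G}}(f)$, and the gerbe $g^{*}\mathcal{G}$ over the one-manifold $N$ is trivial because by dimensional reasons $\check\h^2(N,\sheaf A) = 0$. Hence $\psi^{*}\mathcal{G}$ is trivial and the pairing defining its holonomy is the identity of $A$. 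I emphasise that, unlike the case $A = \ueins$, for a general abelian group $A$ the gerbe $\phi^{*}\mathcal{G}$ need not trivialise over $\Sigma$ itself, since $\check\h^2(\Sigma,\sheaf A) \cong A \neq 0$; this is exactly why reformulation (b) cannot be applied on $\Sigma$ directly, and why the reduction to a \emph{one}-dimensional target, where triviality is automatic, is essential.

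The heart of the proof — and the step I expect to be the main obstacle — is the construction of such a rank-$\le$-two homotopy out of $\phi$ itself. The guiding idea is that a rank-one map crushes $\Sigma$ onto a one-dimensional image (every 2-form pulls back to zero, since the rank of $d\phi$ is at most one), so that $\Sigma$ ought to collapse, along the fibres of $\phi$, onto a one-manifold model $N$ of that image. I would first carry out this collapse on the open locus where $\phi$ has locally constant rank one, where the constant-rank theorem factors $\phi$ through curves. The difficulties are concentrated at the points where the rank drops to zero and where the image is a singular one-complex rather than a manifold; these must be resolved while keeping the interpolating homotopy smooth and of rank $\le 2$ throughout. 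Here the extra homotopy direction is precisely what allows singular crossings of the image to be pushed apart into a genuine one-manifold, and the stability of the rank bound under gluing \cite[Lemma 2.1.3]{waldorf9} is the tool for assembling the local homotopies into a global one.
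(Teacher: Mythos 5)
Your overall strategy---deform $\phi$ through rank-$\le 2$ homotopies to a map factoring through a one-manifold, where everything vanishes for dimensional reasons---is coherent in its outer steps, and your step 1 (invariance of holonomy under rank-two homotopies, via the curvature integral and Stokes) is exactly the argument the paper itself uses in Lemma \ref{lem:dbraneholprop} (b). But the step you yourself flag as ``the main obstacle'' is a genuine gap, not a deferred routine verification. A rank-one map $\phi\maps\Sigma\to M$ need not admit, in any evident way, a rank-$\le 2$ homotopy onto a map factoring through a one-manifold: the locus where the rank equals one is open, but its complement (the rank-zero locus) can be an arbitrary closed subset, the level sets need not form a foliation near that locus, and the image $\phi(\Sigma)$ can fail to be locally connected or to carry any one-manifold structure at all. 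The constant-rank theorem only controls the situation locally on the open rank-one stratum, and ``pushing apart singular crossings in the extra homotopy direction'' is precisely the construction that would have to be carried out; nothing in the proposal indicates how to do this while keeping the homotopy smooth and of rank $\le 2$ across the rank-zero locus. Without that construction the proof does not close.

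The paper sidesteps this entirely by arguing dimension-theoretically rather than homotopy-theoretically: Sard's theorem for rank-one maps gives $\dim_{H}(\phi(\Sigma))\le 1$ for the Hausdorff dimension, the Hurewicz--Wallman inequality converts this into topological dimension $\le 1$, and the covering-dimension characterization then produces a refinement $\mathscr{V}$ of the cover with no non-trivial three-fold intersections; feeding $\phi^{-1}(\mathscr{V})$ into the Alvarez--Gaw\c edzki formula kills the holonomy directly, with no deformation of $\phi$ whatsoever. One further small correction to your step 3: triviality of $g^{*}\mathcal{G}$ as a bare gerbe (from $\check\h^2(N,\sheaf A)=0$) is not by itself what makes the holonomy vanish, since the holonomy is defined as the pairing with the \emph{flat} class in $\h^2(\Sigma,A)$; what you actually need is $\h^2(N,A)=0$ and $\h^3_{\mathrm{dR}}(N)=0$ together with $\Omega^2_{\mathfrak{a}}(N)=0$, so that Lemma \ref{lem:trivexist} yields a trivialization $g^{*}\mathcal{G}\cong\mathcal{I}_0$ as gerbes \emph{with connection}. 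This does hold for a one-manifold, so that part of your argument is fixable; the missing homotopy is not.
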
       

\begin{proof}
By Sard's Theorem the image $\phi(\Sigma)$ of  $\phi$ has Hausdorff dimension $\dim_H(\phi(\Sigma)) \leq 1$. With \cite[Theorem VII.2]{hurewicz1},  $\dim_H(X) \geq \dim(X)$
for any topological space $X$, where $\dim(X)$ denotes its topological dimension. \footnote{I thank Martin Olbermann for bringing up this argument.} Now suppose $\mathscr{U}$ is a covering of $M$ by open sets that admits to extract a Deligne cocycle $c$ for $\mathcal{G}$. Since $\phi(\Sigma)$ is compact, it is already covered by finitely many open sets of $\mathscr{U}$. In this case, \cite[Theorem V.1]{hurewicz1} together with the above bound $\dim(X) \leq 1$ guarantees the existence of a refinement $\mathscr{V}$ which has no non-trivial three-fold intersections. 
Using $\phi^{-1}(\mathscr{V})$ in the Alvarez-Gaw\c edzki formula shows that $\mathrm{Hol}_{\mathcal{G}}(\phi)=1$. 
\end{proof}

There is also a version of holonomy for surfaces with boundary, and it will play an important role in this article. Suppose $\Sigma$ is a compact oriented surface with boundary components $b_1,...,b_n$.  Suppose a smooth map $\phi\maps  \Sigma \to M$ is given, and for each boundary component $b_i$ a trivialization $\mathcal{T}_i$ of the restriction of $\phi^{*}\mathcal{G}$ to  $b_i$.  In this situation, holonomy is a number
\begin{equation}
\label{eq:dbranehol}
\mathscr{A}_{\mathcal{G}}(\phi,\mathcal{T}_1,...,\mathcal{T}_n) \in A
\end{equation}
defined in the following way, see e.g. \cite{gawedzki1,carey2,gawedzki4}. One chooses an open cover $\mathscr{U}$ of $M$,  a triangulation of $\Sigma$ subordinate to $\phi^{-1}(\mathscr{U})$, and a Deligne cocycle $c$ representing the bundle gerbe $\mathcal{G}$. Then one chooses, for each boundary component $b_i$, a Deligne 1-cochain $t^i$ representing the trivialization $\mathcal{T}_i$, i.e. $\phi^{*}c|_{b_i} = \mathrm{D}(t^i)$. In the \v Cech resolution with $c=(B,A,g)$,  these cochains are pairs $t^i = (\Pi^i,\chi^i)$ consisting of smooth functions $\chi^i_{\alpha\beta}\maps U_{\alpha} \cap U_{\beta} \to A$ and 1-forms $\Pi^i_{\alpha} \in \Omega^1_{\mathfrak{a}}(U_{\alpha})$. Then, 
\begin{equation}
\label{eq:agwithboundary}
\mathscr{A}_{\mathcal{G}}(\phi,\mathcal{T}_1,...,\mathcal{T}_n) \df  \mathrm{AG}(\phi^{*}c) \cdot   \prod_{i=1}^n \mathrm{BC}(t^i)^{-1}\text{,}
\end{equation}
where the boundary contributions are
\begin{equation}
\label{eq:boundarycontrib}
\mathrm{BC}(t^i) \df  \prod_{e \in b_i} \exp\left (  \int_{e} \Pi_{\alpha(e)}^{i}   \right ) \cdot \prod_{v \in \partial e} \left (\chi^{i}_{\alpha(e)\alpha(v)}(v) \right )^{\varepsilon(e,v)}\text{,}
\end{equation}
and $\varepsilon(e,v)$ is $+1$ when $v$ is the endpoint of $e$ in the induced orientation of $\partial\Sigma$ and $-1$ else. Employing the various relations between the Deligne cochains one can prove that \erf{eq:agwithboundary} is independent of the choice of the triangulation and of the choices of representatives $c$ and $t^i$. It is clear that for a closed surface  $\mathscr{A}_{\mathcal{G}}(\phi)=\mathrm{Hol}_{\mathcal{G}}(\phi)$.

In the situation where a trivialization $\mathcal{S}\maps  \phi^{*}\mathcal{G} \to \mathcal{I}_{\rho}$ over all of $\Sigma$ exists, one has $\mathrm{BC}(t^i) = \mathrm{Hol}_{P_i}(b_i)$, where $P_i$ is a (flat) principal $A$-bundle with connection over $b_i$ such that $\mathcal{T}_i \otimes P_i \cong \mathcal{S}|_{b_i}$. The bundle $P_i$ exists and is unique up to isomorphism due to Lemma \ref{lem:diffgrbcon} (iii). Thus,
\begin{equation}
\label{eq:holtriv}
\mathscr{A}_{\mathcal{G}}(\phi,\mathcal{T}_1,...,\mathcal{T}_n) = \exp \left ( \int_{\Sigma} \rho \right )
\cdot   \prod_{i=1}^n \mathrm{Hol}_{P_i}(b_i)^{-1} \text{,}
\end{equation}
These issues are discussed in detail in the literature \cite{gawedzki1,carey2,gawedzki4}. In \cite{brylinski1} there is also an interpretation of $\mathscr{A}_{\mathcal{G}}$ as a section of a certain bundle over the mapping space $C^{\infty}(\Sigma,M)$. We will come back to this point of view in Section \ref{sec:constrconn}.

\begin{lemma}
\label{lem:dbraneholprop}
Holonomy for surfaces with boundary has the following properties:
\begin{enumerate}[(a)]
\item 
It only depends on the isomorphism classes of the trivializations $\mathcal{T}_i$. Further, if $P$ is a principal $A$-bundle over one of the boundary components $b_i$, then
\begin{equation*}
\mathscr{A}_{\mathcal{G}}(\phi,\mathcal{T}_1,...,\mathcal{T}_i \otimes P, ..., \mathcal{T}_n) = \mathrm{Hol}_{P}(b_i)^{-1} \cdot \mathscr{A}_{\mathcal{G}}(\phi,\mathcal{T}_1,...,\mathcal{T}_n)\text{.}
\end{equation*}

\item
It only depends on the thin homotopy class of the map $\phi$: if $h\maps  [0,1] \times \Sigma \to X$ is a smooth rank two map that fixes the boundary of $\Sigma$ point-wise, and $\phi_t(s) \df  h(t,s)$,  then 
\begin{equation*}
\mathscr{A}_{\mathcal{G}}(\phi_0,\mathcal{T}_1,...,\mathcal{T}_n)= \mathscr{A}_{\mathcal{G}}(\phi_1,\mathcal{T}_1,...,\mathcal{T}_n)\text{.}
\end{equation*}

\item
It satisfies the following \quot{gluing law}. Suppose $\beta\maps  S^1 \to \Sigma$ is a simple loop, so that  $\Sigma' \df  \Sigma \setminus \mathrm{im}(\beta)$ is again a smooth manifold with two new boundary components called $b$ and $\bar b$. If $\phi\maps \Sigma \to X$ is a smooth map, and $\mathcal{T}\maps \phi^{*}\mathcal{G}|_b \to \mathcal{I}_0$ is any trivialization, then
\begin{equation*}
\mathscr{A}_{\mathcal{G}}(\phi,\mathcal{T}_1,...,\mathcal{T}_n) = \mathscr{A}_{\mathcal{G}}(\phi',\mathcal{T}_1,...,\mathcal{T}_n,\mathcal{T},\mathcal{T})\text{,}
\end{equation*}
where $\phi'$ is the restriction of $\phi$ to $\Sigma'$.

\item
It is invariant under orientation-preserving diffeomorphisms: if $\varphi\maps  \Sigma_1 \to \Sigma_2$ is an orientation-preserving diffeomorphism (also preserving the labelling of the boundary components), and $\phi\maps \Sigma_2 \to X$ is a smooth map, then
\begin{equation*}
\mathscr{A}_{\mathcal{G}}(\phi \circ \varphi,\varphi^{*}\mathcal{T}_1,...,\varphi^{*}\mathcal{T}_2) = \mathscr{A}_{\mathcal{G}}(\phi,\mathcal{T}_1,...,\mathcal{T}_n)\text{.}
\end{equation*}

\end{enumerate}
\end{lemma}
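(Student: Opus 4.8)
The plan is to work throughout with the explicit Alvarez--Gaw\c edzki expression \erf{eq:agwithboundary}, together with the independence of $\mathscr{A}_{\mathcal{G}}$ from the auxiliary choices (open cover, triangulation, and representing cochains $c$ and $t^i$) that was recorded when that formula was introduced. Each of the four properties then reduces to a manipulation of the bulk factor $\mathrm{AG}(\phi^{*}c)$ and of the boundary factors $\mathrm{BC}(t^i)$ from \erf{eq:boundarycontrib}, and the only genuinely new analytic input is required for (b).

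For (a) the map $\phi$ is fixed, so $\mathrm{AG}(\phi^{*}c)$ is unchanged and only the boundary factors move. A 2-isomorphism $\mathcal{T}_i \Rightarrow \mathcal{T}'_i$ changes the Deligne $1$-cochain by a coboundary, $t'^i = t^i + \mathrm{D}(f^i)$; substituting into \erf{eq:boundarycontrib} and summing the extra terms around the \emph{closed} loop $b_i$, the edge and vertex contributions telescope and cancel, so $\mathrm{BC}(t'^i)=\mathrm{BC}(t^i)$ and $\mathscr{A}_{\mathcal{G}}$ depends only on the isomorphism class. For the tensor formula, the cochain of $\mathcal{T}_i \otimes P$ is $t^i$ augmented by a Deligne cocycle representing the connection on $P$; since $\mathrm{BC}$ is multiplicative in the cochain and the boundary factor of a pure bundle cocycle around $b_i$ is exactly the Alvarez holonomy $\mathrm{Hol}_P(b_i)$, the stated factor $\mathrm{Hol}_P(b_i)^{-1}$ drops out. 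Property (d) is equally direct: an orientation-preserving diffeomorphism $\varphi$ transports a cover, triangulation and cochains for $\Sigma_2$ to compatible data on $\Sigma_1$, every integral transforms by change of variables, and orientation preservation guarantees that all the sign exponents $\varepsilon$ in $\mathrm{AG}$ and $\mathrm{BC}$ agree; hence the two expressions for $\mathscr{A}_{\mathcal{G}}$ coincide term by term.

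For the gluing law (c) I would choose a triangulation of $\Sigma$ in which the simple loop $\beta$ is a subcomplex, so that it restricts to a triangulation of $\Sigma' = \Sigma \setminus \mathrm{im}(\beta)$. Cutting along $\beta$ leaves the set of $2$-faces and their labels unchanged, and each edge (resp.\ vertex) formerly interior along $\beta$ is merely duplicated into one copy in $b$ and one in $\bar b$, each still adjacent to the same face as before; consequently $\mathrm{AG}(\phi^{*}c) = \mathrm{AG}(\phi'^{*}c)$. The two expressions for $\mathscr{A}_{\mathcal{G}}$ therefore differ only by the two new boundary factors $\mathrm{BC}(t)^{-1}$ attached to $b$ and $\bar b$. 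Because $b$ and $\bar b$ carry opposite induced orientations while $\phi$ and the chosen trivialization $\mathcal{T}\maps \phi^{*}\mathcal{G}|_b \to \mathcal{I}_0$ agree on them, reversing orientation in \erf{eq:boundarycontrib} inverts every edge integral and flips every sign $\varepsilon(e,v)$, so $\mathrm{BC}_{\bar b}(t) = \mathrm{BC}_{b}(t)^{-1}$ and the two extra factors cancel, giving the claimed equality.

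The main obstacle is (b), the thin-homotopy invariance, which is the only part needing input beyond bookkeeping. Since $h$ fixes $\partial\Sigma$ pointwise, the boundary maps $\phi_t|_{b_i}$ and the trivializations $\mathcal{T}_i$ are independent of $t$, so all boundary factors $\mathrm{BC}(t^i)$ are identical for $\phi_0$ and $\phi_1$; thus $\mathscr{A}_{\mathcal{G}}(\phi_1,\dots)\,\mathscr{A}_{\mathcal{G}}(\phi_0,\dots)^{-1} = \mathrm{AG}(\phi_1^{*}c)\,\mathrm{AG}(\phi_0^{*}c)^{-1}$. I would then run the same Stokes argument that underlies the triangulation-independence of $\mathrm{AG}$, now on the $3$-dimensional trace $[0,1]\times\Sigma$: refining the cover so that the Poincar\'e lemma holds on each simplex and using $\mathrm{d}B_\alpha = H|_{U_\alpha}$ for the curvature $H = \mathrm{curv}(\mathcal{G})$, the ratio collapses to $\exp\bigl(\int_{[0,1]\times\Sigma} h^{*}H\bigr)$, the boundary and seam contributions of the computation cancelling because $\partial\Sigma$ is fixed. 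The decisive point is then the same rank observation used in Proposition \ref{prop:thinhol}: a map of rank at most two pulls back the degree-three form $H$ to zero, so $h^{*}H = 0$ and the ratio is $1$. I expect the care needed here --- isolating the bulk term cleanly from the cancelling boundary and seam terms in the Stokes computation --- to be the most delicate step.
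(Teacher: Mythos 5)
Your proposal is correct and follows essentially the same route as the paper's proof: all four parts are handled by direct manipulation of the Alvarez--Gaw\c edzki formula \erf{eq:agwithboundary}, with (a), (c) and (d) reduced to bookkeeping of the boundary contributions and triangulations, and (b) reduced via Stokes' Theorem to the vanishing of $h^{*}H$ for the rank-two homotopy $h$ and the $3$-form $H=\mathrm{curv}(\mathcal{G})$. The paper's own proof is terser but makes exactly these moves, so no further comment is needed.
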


\begin{proof}
(a) is clear since 2-isomorphic trivializations have cohomologous 1-cochains, and the tensor factor $P$ contributes exactly the holonomy of $P$ to the formula \erf{eq:agwithboundary}. (b) is a standard argument: the difference of the two holonomies is the integral of the pullback $\phi^{*}H$ of the curvature $H$ of $\mathcal{G}$ over $[0,1] \times \Sigma$; but since $\phi$ has rank two, $\phi^{*}H=0$. To see (c) one uses the same triangulation for $\Sigma$ and $\Sigma'$. Then, the two additional terms in $\mathscr{A}_{\mathcal{G}}(\phi',\mathcal{T}_1,...,\mathcal{T}_n,\mathcal{T},\mathcal{T})$ cancel due to the opposite orientations of $b$ and $\bar b$ as part of the boundary of $\partial\Sigma$. (e) is shown by pulling back a triangulation of $\Sigma_2$ to $\Sigma_1$. 
\end{proof}

In a few parts of this article, we need a yet more advanced version of surface holonomy, namely for surfaces whose boundaries have corners, like the unit square $Q \df  [0,1]^2$. Here we require for each boundary component $b$ a \emph{boundary record} $\mathscr{B} = \left \lbrace \mathcal{T}^{\,r}, \varphi^{rs} \right \rbrace$, a structure which has a trivialization $\mathcal{T}^{\,r}\maps  \phi^{*}\mathcal{G}|_{b^r} \to \mathcal{I}_0$ for each smooth part $b^r$ of the boundary component $b$, and a 2-isomorphism $\varphi^{rs}\maps  \mathcal{T}^{\,r}|_v \Rightarrow \mathcal{T}^{s}|_v$ over each corner $v$ that separates $b^{r}$ from $b^{s}$, in the direction of the orientation of $b$. If a boundary component has no corners, a boundary record is just a trivializations, as before. 
In order to define the holonomy $\mathscr{A}_{\mathcal{G}}$, one chooses Deligne 1-cochains $t^{r}=(G^{r},\Pi^{r})$ for each trivialization $\mathcal{T}^{\,r}$ as before. Then one obtains  Deligne 2-cochains $f^{\,rs}$ for the 2-isomorphisms $\varphi^{rs}$ satisfying $t^{r}|_v = t^{s}|_v + \mathrm{D}f^{\,rs}$. One chooses a triangulation such that the corners are vertices, and replaces the former boundary contribution \erf{eq:boundarycontrib} of the component $b$ by
\begin{equation*}
\mathrm{BC} \left (\left \lbrace t^{r} \right \rbrace, \left \lbrace f^{rs} \right \rbrace \right ) \df  \prod_{b^r \subset b} \mathrm{BC}(t^r) \cdot  \prod_{v = b^r\cap b^s} f_{\alpha(v)}^{\,rs}(v) \text{.}
\end{equation*}
Employing all the conditions between the cochains $c$, $t^r$ and $f^{\,rs}$ one can show that 
\begin{equation*}
\mathscr{A}_{\mathcal{G}}(\phi,\mathscr{B}_1,...,\mathscr{B}_n) \df  \mathrm{AG}(\phi^{*}c) \cdot \prod_{i=1}^{n} \mathrm{BC} \left (\left \lbrace t^{\,i,r} \right \rbrace , \left \lbrace f^{\,i,rs} \right \rbrace \right)^{-1}
\end{equation*}
is independent of all choices.

All statements of Lemma \ref{lem:dbraneholprop} stay true for boundaries with corners. We will only need the following refinement of (a). Let $\mathscr{B}$ be a boundary record for some boundary component $b$. Suppose for a smooth component $b^r$ we have a principal $A$-bundle $P$ with connection over $b^r$ together with  trivializations $\psi_0,\psi_1\maps  P \to \trivlin_0$ of $P$ over the endpoints of $b^r$.
We denote by $\mathrm{PT}(P,\psi_0,\psi_1) \in A$ the number defined by
\begin{equation}
\label{eq:defbigpt}
\ptr{\gamma}(\psi_0(\gamma(0))) \cdot \mathrm{PT}(P,\psi_0,\psi_1) = \psi_1(\gamma(1))\text{,}
\end{equation}
where $\gamma\maps  [0,1] \to b^r$ is some orientation-preserving parameterization of $b^r$, and $\ptr\gamma$ denotes the parallel transport in $P$ along $\gamma$. Let $\mathcal{T}^r$ be the trivialization over $b^r$ in the boundary record $\mathscr{B}$, and let $\mathcal{S}^r$ be some other trivializations. Suppose $\phi\maps  \mathcal{T}^r \otimes P \Rightarrow \mathcal{S}^r$ is a 2-isomorphism. In this situation, we form a new boundary record $\mathscr{B}\,'$, in which $\mathcal{T}^r$ is replaced by $\mathcal{S}^r$, the 2-isomorphism  $\varphi^{r-1,r}$ is replaced by $\phi \circ (\varphi^{r-1,r} \otimes \psi_0^{-1})$ and $\varphi^{r,r+1}$ is replaced by $(\varphi^{r,r+1} \otimes \psi_1) \circ  \phi^{-1}$. Then, the refined version of Lemma \ref{lem:dbraneholprop} (a) is
\begin{equation}
\label{eq:trivrep}
\mathscr{A}_{\mathcal{G}}(\phi,\mathscr{B}_1,...,\mathscr{B},...,\mathscr{B}_n) = \mathrm{PT}(P,\psi_0,\psi_1)^{-1} \cdot  \mathscr{A}_{\mathcal{G}}(\phi,\mathscr{B}_1,...,\mathscr{B}\,',... \mathscr{B}_n)\text{.}
\end{equation}
If the boundary component $b$ has no corners,  $\mathrm{PT}(P,\psi_0,\psi_1) = \mathrm{Hol}_P(b)$, and \erf{eq:trivrep} coincides with Lemma \ref{lem:dbraneholprop} (a).

\setsecnumdepth{2}

\section{Transgression}

\label{sec:trans}

In this section we define the transgression functor
\begin{equation*}
\trcon\maps  \diffgrbcon A M \to \fusbunconsf A{LM}\text{.}
\end{equation*}
As long as possible, we will work over a general diffeological space $X$. In order to define the functor $\trcon$, we first associate a  principal $A$-bundle $L\mathcal{G}$ over $LX$ to a  diffeological $A$-bundle gerbe $\mathcal{G}$ over $X$ with connection. Then, we  equip $L\mathcal{G}$ successively with a fusion product and a compatible, symmetrizing and superficial connection. Only for the connection we have to restrict the construction to  smooth manifolds.

\subsection{The Principal Bundle over $LX$}

\label{sec:transgression_bundle}

\def\lg#1{L#1}

Our construction is inspired from Brylinski's transgression  \cite{brylinski1}, and its adaption to bundle gerbes  \cite{waldorf5}.
Let $\mathcal{G}$ be a diffeological bundle gerbe over $X$ with connection. The diffeological principal $A$-bundle $\lg{\mathcal{G}}$ over $LX$ is defined as follows. Its fibre over a loop $\tau \in LX$ is 
\begin{equation*}
\lg{\mathcal{G}}_{\,\tau} \df  \hc 0 \hom(\tau^{*}\mathcal{G},\mathcal{I}_0)\text{;}
\end{equation*}
i.e. the set of isomorphism classes of  trivializations of the pullback $\tau^{*}\mathcal{G}$ of $\mathcal{G}$ to the circle. 
Next we define a \emph{diffeology} on the disjoint union
\begin{equation*}
\lg{\mathcal{G}} \df  \bigsqcup_{\tau\in LX} \lg{\mathcal{G}}_{\,\tau}\text{,}
\end{equation*}
making it into a diffeological space. For this purpose, we have to select a class of \emph{plots}: maps $c\maps U \to \lg{\mathcal{G}}$, where $U \subset \R^n$ is  open  and $n \in \N_0$ is arbitrary. We define a map $c\maps  U \to L\mathcal{G}$ to be a plot, if the following two conditions are satisfied: 
\begin{enumerate}
\item 
Its projection $\mathrm{pr} \circ c\maps  U \to LX$ is smooth.

\item
For all $u\in U$ there exists an open neighborhood $W \subset U$ of $u$ and a  trivialization $\mathcal{T}\maps  c_W^{*}\mathcal{G} \to \mathcal{I}_{\rho}$, where $c_W$ is the smooth map
\begin{equation*}
\alxydim{}{S^1 \times W \ar[r]^-{\id \times c} & S^1 \times \lg{\mathcal{G}} \ar[r]^-{\id \times \mathrm{pr}} & S^1 \times LX \ar[r]^-{\ev} &X\text{,}}
\end{equation*}
such that $c(w) = \iota_w^{*}\mathcal{T}$ for all $w\in W$, where the map $\iota_w\maps  S^1 \to S^1 \times W$ is defined by $\iota_w(z) \df  (z,w)$. 
\end{enumerate}
We check that this defines a diffeology on $L{\mathcal{G}}$. Firstly, if $c$ is constant, put $W \df  U$ and choose a trivialization $\mathcal{S}$ representing $c(u) \in L\mathcal{G}$. For $\mathrm{pr}_1\maps S^1 \times W \to S^1$ the projection, $\mathcal{T} \df  \mathrm{pr}_1^{*}\mathcal{S}$ is a trivialization of $c_W^{*}\mathcal{G}$ satisfying $c(w) = \mathcal{S} = \iota_w^{*}\mathcal{T}$. Secondly, if $f\maps V \to U$ is a smooth map, 
 a trivialization $\mathcal{T}$ of $c_W^{*}\mathcal{G}$ pulls back along $\id \times f\maps  S^1 \times U' \to S^1 \times  U$, and thus implies that $c \circ f$ is also a plot.  Thirdly, there is a gluing axiom for plots which is  satisfied since our definition is manifestly local. 

Our next objective is to prove that $L\mathcal{G}$ is a diffeological principal $A$-bundle  over $LX$. 
It is clear that our diffeology on $L\mathcal{G}$  makes the projection $\mathrm{pr}\maps  \lg{\mathcal{G}} \to LX$ a smooth map. Moreover, it  has to be a subduction. Subduction means that every plot $c\maps  U \to LX$ of the base lifts locally to a plot of $L\mathcal{G}$. Indeed, for $u \in U$, choose a contractible open neighborhood $W$ of $u$ and a trivialization $\mathcal{T}$ of the pullback of $\mathcal{G}$ along \begin{equation*}
\alxydim{}{S^1 \times W \ar[r]^-{\id \times c} & S^1 \times LX \ar[r]^-{\ev} & X\text{.}}
\end{equation*}
Such trivializations exist by Lemma \ref{lem:trivexist}.
Then, $\tilde c\maps  W \to L\mathcal{G}\maps  w \mapsto \iota_w^{*}\mathcal{T}$ defines a plot of $\lg{\mathcal{G}}$ such that $\mathrm{pr} \circ \tilde c = c|_W$.  Next is the definition of an action of $A$ on $\lg{\mathcal{G}}$. Principal $A$-bundles with (flat) connection over $S^1$ can be identified up to isomorphism with their holonomy evaluated around $S^1$, i.e. there is a canonical group isomorphism
\begin{equation}
\label{eq:holid}
\hc 0 \bunconflat A {S^1}\cong A\text{.}
\end{equation}
We denote by $P_a$ a flat principal $A$-bundle over $S^1$ associated to $a\in A$ by \erf{eq:holid}. The action of $A$ on $\lg{\mathcal{G}}$ is defined by $(\mathcal{T},a)\mapsto \mathcal{T} \otimes P_a$. Isomorphic choices of $P_a$ or of a representative $\mathcal{T}$ give isomorphic trivializations. 
 In order to verify that $\lg{\mathcal{G}}$ is a diffeological principal $A$-bundle over $LX$, it  remains to check that the map
\begin{equation*}
\tau\maps  \lg{\mathcal{G}}\times A \to \lg{\mathcal{G}} \times_{LX} \lg{\mathcal{G}}\maps  (\mathcal{T},a)\mapsto (\mathcal{T}, \mathcal{T} \otimes P_a)
\end{equation*}
is a diffeomorphism. This is straightforward and left as an exercise.
Summarizing, we have constructed a principal $A$-bundle $L\mathcal{G}$ over $LX$ from a diffeological bundle gerbe $\mathcal{G}$ with connection. 

We also want to transgress morphisms between bundle gerbes.  If $\mathcal{A}\maps \mathcal{G}_1 \to \mathcal{G}_2$
is a 1-isomorphism, we define 
\begin{equation*}
L\mathcal{A} \maps L\mathcal{G}_1 \to L\mathcal{G}_2 \maps \mathcal{T} \mapsto \mathcal{T} \circ \mathcal{A}^{-1}\text{.}
\end{equation*}
This is well-defined, fibre-preserving and $A$-equivariant. In order to prove that $L\mathcal{A}$ is smooth, one considers a plot $c\maps U \to L\mathcal{G}_1$ and proves in a straightforward way  that $c' \df  L\mathcal{A} \circ c$ is a plot of $L\mathcal{G}_2$. The compatibility with composition, identities and  2-isomorphisms  can literally be taken from \cite[Proposition 3.1.3]{waldorf5}.

Summarizing this section, we have constructed a  functor
\begin{equation*}
L\maps \hc  1 \diffgrbcon AX \to \diffbun A{LX}\text{.}
\end{equation*}
\cite[Proposition 3.1.4]{waldorf5} shows that $L$ is monoidal. Furthermore, it is natural with respect to smooth maps, see \cite[Eq. 3.1.4]{waldorf5}.

\subsection{The Fusion Product}

\label{sec:constrfusion}

In this section we define a  fusion product on $L\mathcal{G}$.
Consider a triple $(\gamma_1,\gamma_2,\gamma_3) \in \p X^{[3]}$, and the  loops $\tau_{ij}\df l(\gamma_i,\gamma_j)$, for $ij \in \left \lbrace 12,23,13 \right \rbrace$. Let $\mathcal{T}_{ij}\maps \tau_{ij}^{*}\mathcal{G} \to \mathcal{I}_0$ be trivializations over these three loops. Consider the two maps   $\iota_1,\iota_2\maps [0,1] \to S^1$  defined by $\iota_1(t)\df  \frac{1}{2}t$ and $\iota_2(t) \df  1-\frac{1}{2}t$.
We define a relation $\sim_{\mathcal{G}}$ with 
\begin{equation}
\label{eq:relfus}
(\mathcal{T}_{12},\mathcal{T}_{23}) \sim_{{\mathcal{G}}} \mathcal{T}_{13}
\end{equation}
if and only if there exist 2-isomorphisms
\begin{equation}
\label{eq:2isos}
\phi_1\maps \iota_1^{*}\mathcal{T}_{12} \Rightarrow \iota_1^{*}\mathcal{T}_{13}
\quomma
\phi_2\maps \iota_2^{*}\mathcal{T}_{12} \Rightarrow \iota_1^{*}\mathcal{T}_{23}
\quand
\phi_3\maps \iota_2^{*}\mathcal{T}_{23} \Rightarrow \iota_2^{*}\mathcal{T}_{13}
\end{equation}
between trivializations of bundle gerbes over the interval $[0,1]$ such that
\begin{equation}
\label{eq:condfus2}
\phi_1|_0 = \phi_{3}|_0 \bullet \phi_{2}|_0
\quand
\phi_1|_1 = \phi_{3}|_1 \bullet \phi_{2}|_1\text{.}
\end{equation}
These  are equations between restrictions (i.e. pullbacks) of the 2-isomorphisms \erf{eq:2isos} to the points $0$ and $1$ in $[0,1]$, respectively. The symbol \quot{$\bullet$} denotes the vertical composition of 2-isomorphisms in the 2-groupoid $\diffgrbcon AX$.  Below we prove that the relation $\sim_{\mathcal{G}}$ is the graph of a (unique) fusion product $\lambda_{\mathcal{G}}$ on $L\mathcal{G}$, i.e.
\begin{equation*}
\lambda_{\mathcal{G}} (\mathcal{T}_{12} \otimes \mathcal{T}_{23}) = \mathcal{T}_{13}
\qquad\text{ if and only if}\qquad
(\mathcal{T}_{12},\mathcal{T}_{23}) \sim_{\mathcal{G}} \mathcal{T}_{13}\text{.}
\end{equation*}

Before that we mention that the fusion product $\lambda_{\mathcal{G}}$ extends the transgression functor $L$ from the previous section  to a functor
\begin{equation}
\label{eq:functorfus}
L\maps \hc  1 \diffgrbcon AX \to \fusbun A{LX}\text{.}
\end{equation}
Indeed,  that a morphism $L\mathcal{A}:L\mathcal{G}_1 \to L\mathcal{G}_2$ is fusion-preserving follows by composing  the 2-isomorphisms \erf{eq:2isos} horizontally from the right with the 2-isomorphisms $\id_{\mathcal{A}^{-1}}$. Similarly, one checks that the new functor \erf{eq:functorfus} is still monoidal and natural in $M$.

Now we prove that the relation $\sim_{\mathcal{G}}$ defines a fusion product on $L\mathcal{G}$. We will be very detailed since this fusion product has -- so far as I know -- not been described anywhere else. However, in all  calculations that involve  $\lambda_{\mathcal{G}}$ we  only use  the relation $\sim_{\mathcal{G}}$; thus it is well possible to skip the following and to continue reading with Section \ref{sec:constrconn}.

\label{sec:prooffusion}

We need the following lemma about 1-isomorphisms and 2-isomorphisms between gerbes over points and intervals. We denote the trivial principal $A$-bundle with the trivial flat connection by $\trivlin_0$, and denote by $m_a$ its automorphism that simply multiplies with an element $a\in A$. 

\begin{lemma}
\label{lem:gerbehomspoint}
Let $W$ be a contractible smooth manifold. Let $\mathcal{G}_1$ and $\mathcal{G}_2$ be diffeological $A$-bundle gerbes with connection over $W$, and let $\mathcal{A}$, $\mathcal{B}:\mathcal{G}_1 \to \mathcal{G}_2$ be 1-isomorphisms. 
\begin{enumerate}[(a)]
\item 
There exists a 2-isomorphism $\varphi\maps \mathcal{A} \Rightarrow \mathcal{B}$.

\item
If $\varphi_1$ and $\varphi_2$ are  2-isomorphisms, there is a unique  $a\in A$ such that $\varphi_2 = \varphi_1 \otimes m_a$.
\end{enumerate}
\end{lemma}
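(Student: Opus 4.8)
The plan is to derive both parts formally from the torsor-category structure of the Hom-categories recorded in Lemma \ref{lem:diffgrbcon}~(iii), using only that a contractible manifold admits no nontrivial flat bundles. Write $T \df \hom(\mathcal{G}_1,\mathcal{G}_2)$ for the Hom-category in $\diffgrbcon AW$, whose objects are the 1-isomorphisms and whose morphisms are the 2-isomorphisms. By Lemma \ref{lem:diffgrbcon}~(iii) this is a torsor category over the monoidal groupoid $\mathcal{C} \df \diffbunconflat AW$ of flat diffeological $A$-bundles over $W$, with action denoted $\otimes$ and unit object $\trivlin_0$. Before touching the torsor structure I would record two elementary facts about $W$. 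First, since $W$ is contractible it is simply connected, so parallel transport in any flat bundle is path-independent and produces a global flat section; hence every object of $\mathcal{C}$ is isomorphic to $\trivlin_0$. Second, since $W$ is connected, the connection-preserving automorphisms of $\trivlin_0$ are precisely the constant maps $W \to A$, i.e. $\mathrm{Aut}(\trivlin_0) \cong A$ via $a \mapsto m_a$; this is the locally-constant description of \cite[Lemma 3.2.4]{waldorf9} used already in the proof of Lemma \ref{lem:fusiontorsor}, specialized to connected $W$.

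Part (a) is then immediate from transitivity. Since $T$ is nonempty (it contains $\mathcal{A}$), the torsor action supplies a flat bundle $R$ over $W$ together with a 2-isomorphism $\mathcal{A} \otimes R \Rightarrow \mathcal{B}$. By the first fact $R \cong \trivlin_0$, and the unitor gives $\mathcal{A} \otimes R \cong \mathcal{A} \otimes \trivlin_0 \cong \mathcal{A}$; composing, I obtain a 2-isomorphism $\varphi\maps \mathcal{A} \Rightarrow \mathcal{B}$.

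For part (b) I would use freeness. By definition of a torsor category the functor $T \times \mathcal{C} \to T \times T$, $(\mathcal{A}, R) \mapsto (\mathcal{A}, \mathcal{A} \otimes R)$, is an equivalence, hence fully faithful; restricting to endomorphisms of the unit it identifies $\mathrm{Aut}_T(\mathcal{A})$ with $\mathrm{Aut}(\trivlin_0) \cong A$, where $a$ acts as $\id_{\mathcal{A}} \otimes m_a$ (read through the unitor $\mathcal{A} \otimes \trivlin_0 \cong \mathcal{A}$). Now fix the $\varphi_1$ produced in part (a); since it is invertible, $\varphi_2 \mapsto \varphi_1^{-1} \bullet \varphi_2$ is a bijection from 2-isomorphisms $\mathcal{A} \Rightarrow \mathcal{B}$ onto $\mathrm{Aut}_T(\mathcal{A})$. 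Writing the image as $\id_{\mathcal{A}} \otimes m_a$ for the unique $a \in A$ and composing back, functoriality of the action yields $\varphi_1 \bullet (\id_{\mathcal{A}} \otimes m_a) = (\varphi_1 \otimes \id_{\trivlin_0}) \bullet (\id_{\mathcal{A}} \otimes m_a) = \varphi_1 \otimes m_a$, so that $\varphi_2 = \varphi_1 \otimes m_a$ with $a$ unique.

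The only delicate point, and the one I would spend care on, is the coherence bookkeeping in that last display: one must check that the abstract freeness bijection $\mathrm{Aut}(\trivlin_0) \cong \mathrm{Aut}_T(\mathcal{A})$ is realized on the nose by $\varphi \mapsto \varphi \otimes m_a$ after inserting the unitors, and that the action functor $\otimes$ is compatible with vertical composition $\bullet$ in the way used above. This is a naturality check for the action rather than a computation, and it is exactly what upgrades the conclusion from ``$\varphi_1$ and $\varphi_2$ differ by \emph{some} automorphism'' to the precise ``$\varphi_2 = \varphi_1 \otimes m_a$''. Everything else reduces to the two facts about $W$ isolated at the start.
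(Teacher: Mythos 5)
Your proof is correct and follows essentially the same route as the paper: the paper likewise reduces both claims to the torsor-category structure of Lemma \ref{lem:diffgrbcon}~(iii) together with the observation that the functor $\mathcal{B}A \to \bunconflat{A}{W}$ sending the unique object to $\trivlin_0$ and $a$ to $m_a$ is an equivalence over contractible $W$, which is exactly your two preliminary facts (essential surjectivity and full faithfulness) packaged as one statement. Your explicit unwinding of transitivity for (a) and freeness plus the unitor bookkeeping for (b) is just a more detailed rendering of the paper's ``the claims follow.''
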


\begin{proof}
We recall  that the category $\hom(\mathcal{G}_1,\mathcal{G}_2)$ of 1-isomorphisms and 2-isomorphisms between $\mathcal{G}_1$ and $\mathcal{G}_2$ is a torsor category over the category $\bunconflat AW $ of  flat principal $A$-bundles over $W$ (see Lemma \ref{lem:diffgrbcon} (iii)). The claims follow then from the fact that the functor
$\mathcal{B}A  \to \bunconflat A W$
that sends the single object of $\mathcal{B}A$ to   $\trivlin_0$ and  a morphism $a\in A$ to $m_a$, is an equivalence of categories. 
\end{proof}

Now we proceed in four steps.

\paragraph{First step.}
We show that the relation $\sim_{\mathcal{G}}$ is the graph of a map
\begin{equation*}
\lambda_{\mathcal{G}}\maps  e_{12}^{*}\lg{\mathcal{G}} \times e_{23}^{*}\lg{\mathcal{G}} \to e_{13}^{*}\lg{\mathcal{G}}\text{.}
\end{equation*}
Let  trivializations $\mathcal{T}_{12}$ and $\mathcal{T}_{23}$ be given. We have to show that: 
\begin{enumerate}
\item 
there exists a trivialization $\mathcal{T}_{13}$ such that $(\mathcal{T}_{12},\mathcal{T}_{23}) \sim_{\mathcal{G}} \mathcal{T}_{13}$.

\item
two trivializations $\mathcal{T}_{13}$ and $\mathcal{T}_{13}'$ with that property are 2-isomorphic.
\end{enumerate}
To see 1., choose some $\mathcal{T}_{13}$ and any 2-isomorphisms $\phi_i$ as in \erf{eq:2isos}. These exist because of Lemma \ref{lem:gerbehomspoint} (a).  The identities \erf{eq:condfus2} may not be satisfied. The error can by Lemma \ref{lem:gerbehomspoint} (b) be expressed by group elements $a_k\in A$, for $k=0,1$, say $\phi_1|_k \otimes m_{a_k} = \phi_3|_k \bullet \phi_2|_k$. Pick a principal $A$-bundle $P$ over $S^1$ with global holonomy $a_0^{-1}a_1$. Then, we have
\begin{equation}
\label{eq:claimexistence}
(\mathcal{T}_{12},\mathcal{T}_{23}) \sim_{\mathcal{G}} \mathcal{T}_{13} \otimes P\text{,}
\end{equation} 
which shows the existence. 

In order to see 2., we consider two trivializations $\mathcal{T}_{13}$ and $\mathcal{T}_{13}^{\,\prime}$  satisfying $(\mathcal{T}_{12},\mathcal{T}_{23}) \sim_{{\mathcal{G}}} \mathcal{T}_{13}$ and $(\mathcal{T}_{12},\mathcal{T}_{23}) \sim_{{\mathcal{G}}} \mathcal{T}_{13}^{\,\prime}$. This means we have triples $(\phi_1,\phi_2,\phi_3)$ and $(\phi_1',\phi_2',\phi_3')$ of 2-iso\-mor\-phisms both satisfying the identity \erf{eq:condfus2}. We may assume $\phi_2=\phi_2'$ after possibly changing the maps $\phi_i$.
Let $P$ be a principal $A$-bundle over $S^1$ that represents the difference between $\mathcal{T}_{13}$ and $\mathcal{T}_{13}'$ in terms of a 2-isomorphism $\varphi\maps \mathcal{T}_{13} \otimes P \Rightarrow \mathcal{T}_{13}'$. We claim that $P$ is isomorphic to the trivial bundle over $S^1$, so that $\mathcal{T}_{13} \cong \mathcal{T}_{13}'$.  Indeed, for the pullbacks of $\varphi$ along $\iota_1$ and $\iota_2$ we obtain flat trivializations $t_k\maps \iota_k^{*}P \to \trivlin_0$ by requiring the diagrams
\begin{equation*}
\alxydim{@C=1.5cm@R=1.2cm}{\iota_1^{*}\mathcal{T}_{13} \otimes \iota_1^{*}P \ar@{=>}[r]^-{\iota_1^{*}\varphi} \ar@{=>}[d]_{\phi_1^{-1} \otimes \id} &  \iota_1^{*}\mathcal{T}_{13}' \ar@{=>}[d]^{\phi_1'^{-1}} \\ \iota_1^{*}\mathcal{T}_{12} \otimes \iota_1^{*}P \ar@{=>}[r]_-{\id \otimes t_1} & \iota_1^{*}\mathcal{T}_{12}}
\quand
\alxydim{@C=1.5cm@R=1.2cm}{\iota_2^{*}\mathcal{T}_{13} \otimes \iota_2^{*}P \ar@{=>}[r]^-{\iota_2^{*}\varphi} \ar@{=>}[d]_{\phi_3^{-1} \otimes \id} &  \iota_2^{*}\mathcal{T}_{13}' \ar@{=>}[d]^{\phi_3'^{-1}} \\ \iota_2^{*}\mathcal{T}_{23} \otimes \iota_2^{*}P \ar@{=>}[r]_-{\id \otimes t_2} & \iota_2^{*}\mathcal{T}_{23}}
\end{equation*}
to commute. Restricting these diagrams in turn to the points $k=0,1$, and using the identities \erf{eq:condfus2} one obtains commutative diagrams
\begin{equation*}
\alxydim{@C=1.5cm@R=1.2cm}{\mathcal{T}_{12}|_{k} \otimes P|_{k} \ar@{=>}[r]^-{t_1|_{k}} \ar@{=>}[d]_{\phi_2|_{k} \otimes \id} &  \mathcal{T}_{12}|_{k} \ar@{=>}[d]^{\phi_2|_{k}} \\ \mathcal{T}_{23}|_{k} \otimes P|_{k} \ar@{=>}[r]_-{\id \otimes t_2|_{k}} & \mathcal{T}_{23}|_{k}}
\end{equation*}
for $k=0,1$. These show that $t_1|_{k}=t_2|_{k}$ for both $k=0,1$. This means for the holonomy of $P$ that  $\mathrm{Hol}_P(S^1)=1$, which implies $P \cong \trivlin_0$. 

\paragraph{Second step.}
We show that  $\lambda_{\mathcal{G}}$ respects the $A$-action on $L\mathcal{G}$ in the sense that for $\mathcal{T}_{12}$ and $\mathcal{T}_{23}$  trivializations, and  $a_{12},a_{23}\in A$,
\begin{equation}
\label{eq:fusionequiv}
\lambda_{\mathcal{G}}(\mathcal{T}_{12} \cdot a_{12},  \mathcal{T}_{23} \cdot a_{23}) = \lambda_{\mathcal{G}}(\mathcal{T}_{12} , \mathcal{T}_{23} ) \cdot a_{12} \cdot a_{23} \text{.}
\end{equation}
This implies simultaneously that $\lambda_{\mathcal{G}}$ descends to the tensor product $e_{12}^{*}\lg{\mathcal{G}} \otimes e_{23}^{*}\lg{\mathcal{G}}$, and that it is there $A$-equivariant. To prove \erf{eq:fusionequiv},
let $P_{12}$ and $P_{23}$ be principal $A$-bundles with connections over $S^1$ of holonomy $a_{12}$ and $a_{23}$, respectively. Further, we choose a trivialization $\mathcal{T}_{13}$ such that $(\mathcal{T}_{12},\mathcal{T}_{23}) \sim_{\mathcal{G}} \mathcal{T}_{13}$, together with 2-isomorphisms $\phi_1$, $\phi_2$ and $\phi_3$ satisfying \erf{eq:condfus2}. It is to show that
\begin{equation}
\label{eq:new2isos}
(\mathcal{T}_{12} \otimes P_{12}, \mathcal{T}_{23} \otimes P_{23}) \sim_{\mathcal{G}} \mathcal{T}_{13} \otimes P_{12} \otimes P_{23}\text{.}
\end{equation} 
Indeed, let $t\maps \iota_1^{*}P_{23} \to \trivlin_0$ and $s:\iota_2^{*}P_{12} \to \trivlin_0$ be flat trivializations. Then, the 2-isomorphisms
\begin{equation*}
\phi_1' \df  \phi_1 \otimes t^{-1}
\quomma
\phi_2' \df  \phi_2 \otimes s \otimes t^{-1}
\quand
\phi_3' \df  \phi_3  \otimes s^{-1}
\end{equation*}
satisfy \erf{eq:condfus2} and thus prove \erf{eq:new2isos}.

\paragraph{Third step.}
We show that  $\lambda_{\mathcal{G}}$ is smooth. We start with a plot $c\maps U \to e_{12}^{*}L\mathcal{G} \times e_{23}^{*}L\mathcal{G}$, and have to show that $\lambda_{\mathcal{G}} \circ c\maps U \to e_{13}^{*}L\mathcal{G}$ is again a plot. Consider the projection
\begin{equation}
\label{eq:baseproj}
\alxydim{}{U \ar[r]^-{c} & e_{12}^{*}L\mathcal{G} \times e_{23}^{*}L\mathcal{G} \ar[r] & PX^{[3]} \ar[r]^-{e_{13}} & PX^{[2]} \ar[r]^{l} & LX\text{,}}
\end{equation} 
which is a plot of $LX$. Since the projection $p\maps L\mathcal{G} \to LX$ is a subduction, every point $u\in U$ has an open neighborhood $W$ such that the restriction of \erf{eq:baseproj} to $W$ lifts to a plot $c_{13}\maps W \to e_{13}^{*}L\mathcal{G}$. Consider  the plots $c_{12}\maps W \to e_{12}^{*}L\mathcal{G}$  and $c_{23} \maps W \to e_{23}^{*}L\mathcal{G}$ obtained from $c$ by the projections to the  factors and restriction to $W$. The definition of the diffeology on $L\mathcal{G}$ allows us to assume (after a possible refinement of $W$ to a contractible open neighborhood of $u$) that the plots $c_{ij}$ of $e_{ij}^{*}L\mathcal{G}$ are defined by trivializations $\mathcal{T}_{ij}$ of the pullbacks of $\mathcal{G}$ to $W \times S^1$. Now we work like in   Step one. By Lemma \ref{lem:gerbehomspoint} there exist 2-isomorphisms \begin{equation*}
\phi_1\maps \iota_1^{*}\mathcal{T}_{12} \Rightarrow \iota_1^{*}\mathcal{T}_{13}
\quomma
\phi_2\maps \iota_2^{*}\mathcal{T}_{12} \Rightarrow \iota_1^{*}\mathcal{T}_{23}
\quand
\phi_3\maps \iota_2^{*}\mathcal{T}_{23} \Rightarrow \iota_2^{*}\mathcal{T}_{13}
\end{equation*}
of trivializations over $W \times [0,1]$, for which the identities \erf{eq:condfus2} are satisfied up to an error captured by elements $a_k\in A$. For $P$ a principal $A$-bundle over $S^1$ with holonomy $a_0^{-1}a_1$, consider the new trivialization $\mathcal{T}_{13}' \df  \mathcal{T}_{13} \otimes \mathrm{pr}_2^{*}P$, where $\mathrm{pr}_2:W \times S^1 \to S^1$ is the projection. We claim that the map $c_{13}':W \to e_{13}^{*}L\mathcal{G}:w \mapsto \iota_w^{*}\mathcal{T}_{13}$ is (a) a plot of $e_{13}^{*}L\mathcal{G}$, and (b)
equal to the composition $\lambda_{\mathcal{G}} \circ c$. 
Indeed, (a) is clear since this is exactly the definition of plots of $L\mathcal{G}$, and (b) follows from checking the identities \erf{eq:condfus2} point-wise using the argument of  step one.

\paragraph{Fourth step.}
We check that $\lambda_{\mathcal{G}}$ is associative.
 We assume in the obvious notation trivializations $\mathcal{T}_{12}$, $\mathcal{T}_{23}$ and $\mathcal{T}_{34}$. We assume further trivializations $\mathcal{T}_{13}$, $\mathcal{T}_{14}$ and $\mathcal{T}_{24}$ such that $(\mathcal{T}_{12},\mathcal{T}_{23}) \sim_{\mathcal{G}} \mathcal{T}_{13}$, $(\mathcal{T}_{13},\mathcal{T}_{34}) \sim_{\mathcal{G}} \mathcal{T}_{14}$ and $(\mathcal{T}_{23},\mathcal{T}_{34}) \sim_{\mathcal{G}} \mathcal{T}_{24}$. We have to show that 
\begin{equation}
\label{eq:fusasso}
(\mathcal{T}_{12},\mathcal{T}_{24}) \sim_{\mathcal{G}} \mathcal{T}_{14}\text{.}
\end{equation}
Employing the definition of the relation $\sim_{\mathcal{G}}$, we have for $(ijk) \in \left \lbrace (123),(134),(234) \right \rbrace$ 2-isomorphisms
\begin{equation}
\phi_1^{ijk}\maps \iota_1^{*}\mathcal{T}_{ij} \Rightarrow \iota_1^{*}\mathcal{T}_{ik}
\quomma
\phi_2^{ijk}\maps \iota_2^{*}\mathcal{T}_{ij} \Rightarrow \iota_1^{*}\mathcal{T}_{jk}
\quand
\phi_3^{ijk}\maps \iota_2^{*}\mathcal{T}_{jk} \Rightarrow \iota_2^{*}\mathcal{T}_{ik}
\end{equation}
satisfying \erf{eq:condfus2} for each $(ijk)$. Like in Step one, we may  assume that $\phi_2^{234} = \phi^{134}_2 \circ \phi_2^{123}$. Then, consider the 2-isomorphisms
\begin{equation}
\phi^{124}_1 \df  \phi_1^{134} \bullet \phi^{123}_1
\quomma
\phi^{124}_2 \df  \phi_1^{124} \bullet \phi_2^{123}
\quand
\phi^{124}_3 \df  \phi_3^{134} \bullet (\phi_3^{124})^{-1}\text{.}
\end{equation}
They satisfy \erf{eq:condfus2} and hence prove \erf{eq:fusasso}.

\subsection{The Superficial Connection}

\label{sec:constrconn}

In this section we define a connection on $L\mathcal{G}$. We restrict ourselves to a smooth manifold $M$ instead  of a general diffeological space.
We first give an interpretation of the holonomy $\mathscr{A}_{\mathcal{G}}$ for surfaces with boundary defined in Section \ref{sec:holonomy} in terms of the principal $A$-bundle $L\mathcal{G}$. Then we use $\mathscr{A}_{\mathcal{G}}$ to define a connection on $L\mathcal{G}$. The discussion  follows \cite[Section 6.2]{brylinski1}. 

Suppose $\Sigma$ is a compact oriented surface with boundary divided into components $b_1,...,b_n$, and suppose  $f_i\maps S^1 \to \Sigma$ are smooth, orientation-preserving parameterizations of $b_{i}$. 
We denote by $D^{\infty}(\Sigma,M)$ the diffeological space of smooth maps $\phi:\Sigma \to M$.
Consider the smooth maps
\begin{equation*}
c_i\maps D^{\infty}(\Sigma,M) \to LM \maps \phi \mapsto \phi \circ f_i
\end{equation*}
and the diffeological space
\begin{equation*}
P_{\Sigma} \df  c_1^{*}L\mathcal{G} \times_{D^{\infty}(\Sigma,M)} ... \times_{D^{\infty}(\Sigma,M)} c_n^{*}L\mathcal{G}
\end{equation*}
A point in $P_{\Sigma}$ is a  smooth map $\phi:\Sigma \to M$ and for each $i=1,...,n$ an isomorphism class of trivializations $\mathcal{T}_i\maps c_i(\phi)^{*}\mathcal{G} \to \mathcal{I}_0$. Applying the holonomy for surfaces with boundary point-wise, we obtain a map
\begin{equation*}
\mathscr{A}_{\mathcal{G}} \maps P_{\Sigma} \to A \text{.} 
\end{equation*}
Here we have already used that $\mathscr{A}_{\mathcal{G}}(\phi,\mathcal{T}_1,...,\mathcal{T}_n)$ only depends on the isomorphism classes of the trivializations, see Lemma \ref{lem:dbraneholprop} (a). 

\begin{lemma}
\label{lem:holsmooth}
The  map $\mathscr{A}_{\mathcal{G}}$ is smooth.
\end{lemma}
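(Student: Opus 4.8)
The plan is to verify smoothness directly from the diffeological definition: I must show that for every plot $c \maps U \to P_{\Sigma}$ the composite $\mathscr{A}_{\mathcal{G}} \circ c \maps U \to A$ is smooth, and since smoothness is a local property it suffices to treat a neighborhood of an arbitrary point $u \in U$. First I would unwind what such a plot is. Its component in $D^{\infty}(\Sigma,M)$ is a plot of the mapping space, which by the exponential law for the functional diffeology \cite{baez6,iglesias1} is the same datum as a single smooth map $\Phi \maps U \times \Sigma \to M$; write $\Phi_w \df \Phi(w,-)$. Its remaining components are plots of the $c_i^{*}L\mathcal{G}$ lying over $w \mapsto \Phi_w \circ f_i$. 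By the very definition of the diffeology on $L\mathcal{G}$ given in Section~\ref{sec:transgression_bundle}, after shrinking to a neighborhood $W$ of $u$ each such plot is represented by an honest trivialization $\mathcal{T}_i \maps \Phi_{b_i}^{*}\mathcal{G} \to \mathcal{I}_{\rho_i}$ over $S^1 \times W$, where $\Phi_{b_i}(z,w) \df \Phi(w,f_i(z))$, with the property that the plot value at $w$ is the isomorphism class $\iota_w^{*}\mathcal{T}_i$ (here $\iota_w^{*}\rho_i = 0$ automatically, as $S^1$ carries no $2$-forms).

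The key geometric step is to produce combinatorial data valid uniformly in $w$. Fix an open cover $\mathscr{U} = \{U_{\alpha}\}$ over which $\mathcal{G}$ admits a Deligne cocycle $c = (B,A,g)$. Since $\Sigma$ is compact and $\Phi$ is continuous, a Lebesgue-number argument applied to the open cover $\{\Phi^{-1}(U_{\alpha})\}$ of the compact set $\overline{W} \times \Sigma$ lets me shrink $W$ and choose a triangulation $T$ of $\Sigma$ — with each boundary component $b_i$ a subcomplex — together with an index map $\alpha \maps T \to A$ such that $\Phi(W \times \sigma) \subset U_{\alpha(\sigma)}$ for every simplex $\sigma \in T$. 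This single triangulation is then subordinate to $\Phi_w^{-1}(\mathscr{U})$ simultaneously for all $w \in W$, so that the Alvarez--Gaw\c{e}dzki formula \erf{eq:aghol} and the boundary contributions \erf{eq:boundarycontrib} may be evaluated with $w$-independent combinatorics. I would also extract, once and for all over $S^1 \times W$, Deligne $1$-cochains $t^{i}$ representing the $\mathcal{T}_i$; their restrictions to $\iota_w(S^1)$ represent $\iota_w^{*}\mathcal{T}_i$.

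With these choices fixed, the expression \erf{eq:agwithboundary} for $\mathscr{A}_{\mathcal{G}}(\Phi_w,\iota_w^{*}\mathcal{T}_1,\dots,\iota_w^{*}\mathcal{T}_n)$ becomes a finite product whose factors are of exactly two kinds. The first kind are exponentials of integrals over a fixed simplex, namely $\exp\!\big(\int_f \Phi_w^{*}B_{\alpha(f)}\big)$, $\exp\!\big(\int_e \Phi_w^{*}A_{\alpha(f)\alpha(e)}\big)$, and the analogous boundary integrals of the $1$-forms $\Pi^{i}_{w}$; each depends smoothly on $w$ because integrating a smoothly varying family of differential forms over a fixed compact domain is a smooth operation (differentiation under the integral sign). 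The second kind are evaluations $g_{\ldots}(\Phi_w(v))$ and $\chi^{i}_{\ldots}(\Phi_w(v))$ of smooth functions at the point $\Phi_w(v) = \Phi(w,v)$, which varies smoothly in $w$; these are smooth by the chain rule. A finite product of smooth $A$-valued functions is smooth, so $\mathscr{A}_{\mathcal{G}} \circ c$ is smooth on $W$.

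Finally, because the value $\mathscr{A}_{\mathcal{G}}$ is independent of all the auxiliary choices (cover, cocycle, triangulation, and cochain representatives), as recorded after \erf{eq:agwithboundary}, the locally defined smooth functions agree with $\mathscr{A}_{\mathcal{G}} \circ c$ on the overlaps of the neighborhoods $W$; since $u$ was arbitrary, $\mathscr{A}_{\mathcal{G}} \circ c$ is globally smooth, and hence $\mathscr{A}_{\mathcal{G}}$ is a smooth map. I expect the one genuinely non-formal point to be this uniform triangulation step: everything else is bookkeeping, but arranging a single triangulation and index assignment that works for all nearby $w$ and is adapted to the boundary is exactly where the compactness of $\Sigma$ is essential.
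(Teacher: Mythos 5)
Your proof is correct and follows essentially the same route the paper takes (the published text defers to Brylinski's Theorem 6.2.4 and calls the adaptation straightforward, but the intended argument is exactly yours): unwind the plot into a smooth map $\Phi\maps U \times \Sigma \to M$ and local trivializations over $S^1 \times W$, use compactness of $\Sigma$ to fix one triangulation and index assignment valid for all $w$ in a neighborhood, and observe that the Alvarez--Gaw\c{e}dzki formula with boundary terms then becomes a finite product of fibre integrals of smooth forms and evaluations of smooth functions, hence smooth in $w$. No gaps.
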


For $A=\ueins$ and with a Fréchet manifold structure on $P_{\Sigma}$, this is essentially \cite[Theorem 6.2.4]{brylinski1}. The adaption to the general group $A$ and the diffeological setting is straightforward and left out for brevity.

We use the smooth map $\mathscr{A}_{\mathcal{G}}$ to equip the principal $A$-bundle $\lg{\mathcal{G}}$ over $LM$ with a connection. Using a general theory developed in \cite{schreiber3,waldorf9} and summarized in Appendix \ref{app:functorsandforms}, one can define a connection on $L\mathcal{G}$ by specifying a smooth function  $F\maps PL\mathcal{G} \to A$ obeying  two conditions: the first condition is that $F$ is an object in a certain category $\fun {L\mathcal{G}}A$ which is isomorphic to a category of $\mathfrak{a}$-valued 1-forms on $L\mathcal{G}$. The second condition is the usual condition for connection 1-forms. The relation between a connection defined in this way and the  map $F$ is as follows: if $\gamma \in PLM$ is a path,  $\tilde\gamma \in PL\mathcal{G}$ is a lift of $\gamma$ to $L\mathcal{G}$,  and $q \df  \tilde\gamma(0)$, then the parallel transport $\ptr\gamma$ of the connection defined by $F$ satisfies
\begin{equation}
\label{eq:partransLG}
\ptr\gamma(q) = \tilde\gamma(1) \cdot F(\tilde\gamma)\text{.}
\end{equation}

In the following we define such a map $F$. Let $\tilde \gamma \in P\lg{\mathcal{G}}$ be a path with a projection $\gamma \in PL M$, and with an adjoint map $\exd\gamma\maps C_{0,1} \to M$, where $C_{t_1,t_2} \df  [t_1,t_2] \times S^1$ is the standard cylinder. Cylinders will always be oriented such that the orientation of the \emph{end}-loop $t \mapsto (t_2,t)$ coincides with the induced orientation on the boundary.
We put
\begin{equation*}
F(\tilde\gamma) \df  \mathscr{A}_{\mathcal{G}}(\gamma,\tilde\gamma(0),\tilde\gamma(1))\text{.}
\end{equation*}
In order to prove that $F$ is an object in the category $\fun {L\mathcal{G}}A$ we have to check three conditions. First we infer from Lemma \ref{lem:dbraneholprop} (b) that $F(\tilde\gamma)$ only depends on the thin homotopy class of the map $\exd\gamma\maps C_{0,1} \to M$, and thus by Remark \ref{rem:rank} only on the thin homotopy class of $\gamma \in PLM$. Secondly, since the map $PL\mathcal{G} \to P_{C_{0,1}}\maps \tilde\gamma \mapsto (\phi(\gamma),\tilde\gamma(0),\tilde\gamma(1))$ is smooth, $F$ is with Lemma \ref{lem:holsmooth} a composition of smooth maps and thus smooth. The third condition concerns the compatibility of $F$ with the composition of paths. 

 Suppose $\tilde\gamma_1$ and $\tilde\gamma_2$ are composable paths in $L\mathcal{G}$. Since we have orientation-preserving diffeomorphisms $\varphi_1:C_{0,1/2} \to C_{0,1}$ and $\varphi_2\maps C_{1/2,1}\to C_{0,1}$, the diffeomorphism invariance of $\mathscr{A}_{\mathcal{G}}$ (see Lemma \ref{lem:dbraneholprop} (d)) implies
\begin{eqnarray*}
F(\tilde\gamma_1) \,=\, \mathscr{A}_{\mathcal{G}}(\gamma_1,\tilde\gamma_1(0),\tilde\gamma_1(1)) &=& \mathscr{A}_{\mathcal{G}}(\gamma_1 \circ \varphi_1, \tilde\gamma_1(0),\tilde\gamma_1(1))
\\
F(\tilde\gamma_2) \,= \,\mathscr{A}_{\mathcal{G}}(\gamma_2,\tilde\gamma_2(0),\tilde\gamma_2(1)) &=& \mathscr{A}_{\mathcal{G}}(\gamma_2 \circ \varphi_2, \tilde\gamma_2(0),\tilde\gamma_2(1))\text{.}
\end{eqnarray*}
Since $\tilde\gamma_1(1)=\tilde\gamma_2(0)$, the gluing property (Lemma \ref{lem:dbraneholprop} (c)) implies
\begin{equation*}
F(\tilde\gamma_2 \pcomp \tilde\gamma_1) = F(\tilde\gamma_2) \cdot F(\tilde\gamma_1)\text{.}
\end{equation*}
All this proves that $F$ is an object in the groupoid $\fun {L\mathcal{G}} A$, and thus defines a Lie-algebra valued 1-form $\omega_{\mathcal{G}}\in\Omega^1_{\mathfrak{a}}(\lg{\mathcal{G}})$. 
\begin{proposition}
\label{prop:connectionprop1}
The 1-form $\omega_{\mathcal{G}}\in\Omega^1_{\mathfrak{a}}(\lg{\mathcal{G}})$ is a connection on $\lg{\mathcal{G}}$. Its holonomy around a loop $\tau\in LLM$ is inverse to the surface holonomy of $\mathcal{G}$ around the associated torus $\exd\tau\maps S^1 \times S^1 \to M$.
Its curvature is given by 
\begin{equation*}
\mathrm{curv}(\omega_{\mathcal{G}}) = - \int_{S^1} \ev^{*}\mathrm{curv}(\mathcal{G}) \in \Omega^2_{\mathfrak{a}}(\lg{\mathcal{G}})  \text{.}
\end{equation*}
\end{proposition}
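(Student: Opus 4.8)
The plan is to verify the three assertions in turn, each time reducing a statement about $\omega_{\mathcal{G}}$ to a property of the surface holonomy $\mathscr{A}_{\mathcal{G}}$ recorded in Lemma \ref{lem:dbraneholprop} and Proposition \ref{prop:thinhol}, via the defining relation \erf{eq:partransLG}.

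First I would show that $\omega_{\mathcal{G}}$ is a connection by checking, in the functorial description, that the map $F$ obeys the verticality and equivariance conditions. For equivariance, given $a\in A$ and a path $\tilde\gamma\in PL\mathcal{G}$, the path $\tilde\gamma\cdot a$ has the same projection $\gamma$ but boundary trivializations $\tilde\gamma(0)\cdot a$ and $\tilde\gamma(1)\cdot a$. Applying Lemma \ref{lem:dbraneholprop} (a) at both boundary loops of the cylinder $\exd\gamma$, and using that the start-loop and end-loop carry \emph{opposite} induced orientations, the two holonomy factors are $a^{-1}$ and $a$, so they cancel and $F(\tilde\gamma\cdot a)=F(\tilde\gamma)$; this is exactly the $A$-invariance needed for \erf{eq:partransLG} to be $A$-equivariant. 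For verticality, a path over a constant base loop $\beta$ with fibre displacement $a$ has a rank-one (degenerate) adjoint $\exd\gamma$; Proposition \ref{prop:thinhol}, applied through the gluing law to the rank-one torus obtained by closing the cylinder, forces $\mathscr{A}_{\mathcal{G}}(\exd\gamma,\mathcal{T},\mathcal{T})=1$, and Lemma \ref{lem:dbraneholprop} (a) then gives $F(\tilde\gamma)=a^{-1}$. This is precisely the normalization over vertical paths required of a connection and is consistent with \erf{eq:partransLG}, whence $\omega_{\mathcal{G}}$ is a connection.

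Next I would compute the holonomy around a loop $\tau\in LLM$. Choosing any lift $\tilde\tau\in PL\mathcal{G}$ of $\tau$ with $\tilde\tau(0)=\mathcal{T}$, one has $\tilde\tau(1)=\mathcal{T}\cdot c$ for some $c\in A$, and \erf{eq:partransLG} gives $\mathrm{Hol}_{L\mathcal{G}}(\tau)=c\cdot F(\tilde\tau)$. Cancelling $c$ against the end-loop factor via Lemma \ref{lem:dbraneholprop} (a) reduces this to $\mathscr{A}_{\mathcal{G}}(\exd\tau,\mathcal{T},\mathcal{T})$, where $\exd\tau$ is now the cylinder that closes into the torus $\exd\tau\maps S^1\times S^1\to M$. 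The gluing law of Lemma \ref{lem:dbraneholprop} (c), run in reverse with the same trivialization $\mathcal{T}$ on the two glued boundary circles, identifies this with the closed-surface holonomy of the torus; tracking the induced boundary orientations of the cylinder against the standard orientation of $\exd\tau$ contributes a single sign, producing the claimed $\mathrm{Hol}_{L\mathcal{G}}(\tau)=\mathrm{Hol}_{\mathcal{G}}(\exd\tau)^{-1}$.

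Finally, for the curvature I would use that the curvature of an abelian connection is the unique basic $2$-form whose integral over any bounding disc recovers the holonomy of the boundary loop. Take a contractible loop $\tau$ in $LM$ filled by $H\maps D^2\to LM$; its adjoint $\exd H=\ev\circ(H\times\id)\maps D^2\times S^1\to M$ is a bounding solid torus for $\exd\tau$, so trivializing $\mathcal{G}$ over the filling and applying Stokes' theorem gives $\mathrm{Hol}_{\mathcal{G}}(\exd\tau)=\exp\big(\int_{D^2\times S^1}(\exd H)^{*}\mathrm{curv}(\mathcal{G})\big)$. Fibre integration over the $S^1$-factor rewrites the exponent as $\int_{D^2}H^{*}\int_{S^1}\ev^{*}\mathrm{curv}(\mathcal{G})$. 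Comparing with $\mathrm{Hol}_{L\mathcal{G}}(\tau)=\exp\big(\int_{D^2}H^{*}\mathrm{curv}(\omega_{\mathcal{G}})\big)$ and the inverse relation from the previous step, and letting $H$ range over all small discs to localize, identifies $\mathrm{curv}(\omega_{\mathcal{G}})=-\int_{S^1}\ev^{*}\mathrm{curv}(\mathcal{G})$. The main obstacle throughout is the orientation bookkeeping in the holonomy step: making precise that the two boundary circles of the cylinder enter $\mathscr{A}_{\mathcal{G}}$ with opposite orientations and that reverse-gluing into the standard torus introduces exactly one sign, so that the holonomy emerges as the inverse of the surface holonomy; the same conventions are then responsible for the minus sign in the curvature formula.
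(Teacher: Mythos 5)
Your overall strategy coincides with the paper's: everything is reduced to properties of the surface holonomy $\mathscr{A}_{\mathcal{G}}$ via the relation \erf{eq:partransLG} and Lemma \ref{lem:dbraneholprop}, and your curvature step in fact supplies more detail than the paper, which disposes of it in one sentence (``the curvature is determined by the holonomy''). Two points of bookkeeping deserve comment. First, in the framework the paper uses, the connection condition is the single identity $g(1)\cdot F(\tilde\gamma g)=F(\tilde\gamma)\cdot g(0)$ for an \emph{arbitrary} path $g\in PA$; the paper verifies it in one stroke by applying Lemma \ref{lem:dbraneholprop}~(a) at both boundary circles, with $P_{g(0)}$ and $P_{g(1)}$ entering with opposite signs because of the opposite induced orientations. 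Your two special cases (constant $g$, and vertical paths) are exactly the two degenerate instances of this computation, but they do not by themselves imply the general identity at the level of $F$ -- the homotopy decomposing $\tilde\gamma g$ into a vertical path followed by a translate of $\tilde\gamma$ is not thin -- so you should run the same Lemma~(a) argument for general $g$ (it costs nothing extra). Second, and more substantively: the inversion in $\mathrm{Hol}_{L\mathcal{G}}(\tau)=\mathrm{Hol}_{\mathcal{G}}(\exd\tau)^{-1}$ does not come from an orientation sign in the gluing step. The gluing law of Lemma \ref{lem:dbraneholprop}~(c) carries \emph{no} sign -- the two copies of $\mathcal{T}$ on the oppositely oriented new boundary circles cancel exactly -- so your chain, under the usual convention $\ptr{\tau}(q)=q\cdot\mathrm{Hol}(\tau)$, would terminate at $\mathrm{Hol}_{L\mathcal{G}}(\tau)=\mathscr{A}_{\mathcal{G}}(\exd\tau,\mathcal{T},\mathcal{T})=\mathrm{Hol}_{\mathcal{G}}(\exd\tau)$ with the wrong sign, and the ``single sign from reverse-gluing'' you invoke to repair this is not available. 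In the paper the inversion is built into the diffeological holonomy convention (for a loop admitting a closed lift $\tilde\gamma$ one has $\mathrm{Hol}_P(\gamma)=F(\tilde\gamma)^{-1}$, as used again in the proof of Lemma \ref{lem:reconcurving2}); with that convention the inverse appears immediately and your subsequent curvature computation, including the minus sign, goes through as you wrote it.
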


\begin{proof}
In terms of the groupoid $\fun AG$, the condition that the 1-form $\omega_{\mathcal{G}}$ defined by the object $F$ is a connection is
\begin{equation}
\label{eq:connequiv}
g(1) \cdot F(\tilde \gamma g) = F(\tilde \gamma) \cdot g(0)
\end{equation}
for all $g \in PA$ and $\tilde\gamma \in P\lg{\mathcal{G}}$ (see Appendix \ref{app:functorsandforms}). Let  $P_0$ and $P_1$ be principal $A$-bundles over $S^1$ with holonomies $g(0)$ and $g(1)$, respectively. Then, we compute with Lemma \ref{lem:dbraneholprop} (a)
\begin{multline*}
F(\tilde\gamma g) = \mathscr{A}_{\mathcal{G}}(\gamma,(\tilde\gamma g)(0),(\tilde\gamma g)(1)) = \mathscr{A}_{\mathcal{G}}(\gamma,\tilde\gamma(0)  \otimes P_0,\gamma (1) \otimes P_1)\\= \mathscr{A}_{\mathcal{G}}(\gamma,\tilde\gamma(0),\gamma (1)) \cdot g(1)^{-1} \cdot g(0)\text{.}
\end{multline*}
This shows \erf{eq:connequiv}. Comparing the parallel transport prescription \erf{eq:partransLG} with the definition of holonomy in diffeological bundles \cite[Definition 3.2.11]{waldorf9}, we see that
\begin{equation*}
\mathrm{Hol}_{L\mathcal{G}}(\tau) = \mathrm{Hol}_{\mathcal{G}}(\exd\tau)^{-1}\text{,}
\end{equation*}
for loops $\tau\in LLX$, as claimed. The formula for the curvature can be deduced from the fact that the curvature is  determined by the holonomy.   
\end{proof}

We infer from Propositions \ref{prop:thinhol}, \ref{prop:connectionprop1} and Lemma \ref{lem:dbraneholprop} (b):

\begin{corollary}
The connection $\omega_{\mathcal{G}}$ on $L\mathcal{G}$ is superficial.
\end{corollary}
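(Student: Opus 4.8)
The plan is to verify the two defining conditions of a superficial connection (Definition \ref{def:superficial}) separately, in each case translating a statement about loops in $LLM$ into a statement about the surface holonomy of $\mathcal{G}$ over the torus $S^1\times S^1$, by means of the adjunction $\tau\mapsto\exd\tau$ together with the holonomy identity $\mathrm{Hol}_{L\mathcal{G}}(\tau)=\mathrm{Hol}_{\mathcal{G}}(\exd\tau)^{-1}$ furnished by Proposition \ref{prop:connectionprop1}.

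For condition (i), I would start with a thin loop $\tau\in LLM$. By definition its adjoint $\exd\tau\maps S^1\times S^1\to M$ is a rank one map defined on the closed oriented surface $\Sigma=S^1\times S^1$. Proposition \ref{prop:thinhol} then gives $\mathrm{Hol}_{\mathcal{G}}(\exd\tau)=1$, and the identity from Proposition \ref{prop:connectionprop1} immediately yields $\mathrm{Hol}_{L\mathcal{G}}(\tau)=1$. Thus thin loops have vanishing holonomy.

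For condition (ii), let $\tau_1,\tau_2\in LLM$ be rank-two-homotopic, witnessed by a path $h\in PLLM$ whose adjoint $\exd h\maps [0,1]\times S^1\times S^1\to M$ has rank two. Setting $\Sigma=S^1\times S^1$ and regarding $\exd h$ as a homotopy $h'_t\df\exd h(t,-,-)$ with $h'_0=\exd{\tau_1}$ and $h'_1=\exd{\tau_2}$, I would apply Lemma \ref{lem:dbraneholprop} (b). Since $\Sigma$ has empty boundary, the hypothesis that $h'$ fix $\partial\Sigma$ point-wise is vacuous, and the lemma gives $\mathscr{A}_{\mathcal{G}}(\exd{\tau_1})=\mathscr{A}_{\mathcal{G}}(\exd{\tau_2})$. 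For a closed surface $\mathscr{A}_{\mathcal{G}}=\mathrm{Hol}_{\mathcal{G}}$, so $\mathrm{Hol}_{\mathcal{G}}(\exd{\tau_1})=\mathrm{Hol}_{\mathcal{G}}(\exd{\tau_2})$, and one more application of Proposition \ref{prop:connectionprop1} shows $\mathrm{Hol}_{L\mathcal{G}}(\tau_1)=\mathrm{Hol}_{L\mathcal{G}}(\tau_2)$.

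Since both conditions hold, $\omega_{\mathcal{G}}$ is superficial. The argument is essentially bookkeeping: the only points requiring care are the passage through the adjunction (noting that a smooth loop in $LLM$ has a smooth adjoint torus, and that its rank as a loop-in-loop-space agrees with the rank of the adjoint surface), and the observation that the boundary-fixing hypothesis of Lemma \ref{lem:dbraneholprop} (b) is automatically met because $S^1\times S^1$ is closed. There is no genuine analytic obstacle here; all the substantive content has already been isolated in Propositions \ref{prop:thinhol} and \ref{prop:connectionprop1}.
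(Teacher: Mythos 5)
Your proof is correct and follows exactly the route the paper intends: the corollary is stated there as an immediate inference from Propositions \ref{prop:thinhol} and \ref{prop:connectionprop1} together with Lemma \ref{lem:dbraneholprop} (b), and your two-part verification simply spells out that inference. The observations you flag as requiring care (the adjunction matching ranks, and the vacuity of the boundary-fixing hypothesis for the closed torus) are the right ones.
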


In remains to prove that the two  conditions relating connections with fusion products   are satisfied. The first condition is:

\begin{proposition}
The connection $\omega_{\mathcal{G}}$ on $L\mathcal{G}$ is compatible with the fusion product $\lambda_{\mathcal{G}}$. 
\end{proposition}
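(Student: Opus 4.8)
The plan is to unwind the word \emph{compatible} into a single multiplicativity identity for the holonomy function $F$, and then to attack that identity with the surface holonomy of Section \ref{sec:holonomy}. By the definition following Definition \ref{def:product}, compatibility of $\omega_{\mathcal{G}}$ with $\lambda_{\mathcal{G}}$ means that $\lambda_{\mathcal{G}}\maps e_{12}^{*}L\mathcal{G} \otimes e_{23}^{*}L\mathcal{G} \to e_{13}^{*}L\mathcal{G}$ is connection-preserving over $PM^{[3]}$, each factor carrying the pullback of $\omega_{\mathcal{G}}$. Since parallel transport in a pullback bundle is transport of $\omega_{\mathcal{G}}$ along the image path, and transport in a tensor product of $A$-bundles is the product of the transports, this is equivalent to requiring that for every path $\delta\maps [0,1] \to PM^{[3]}$, writing $\gamma_{ij} \df e_{ij} \circ \delta \in PLM$, one has $\ptr{\gamma_{13}}(\lambda_{\mathcal{G}}(q_{12} \otimes q_{23})) = \lambda_{\mathcal{G}}(\ptr{\gamma_{12}}(q_{12}) \otimes \ptr{\gamma_{23}}(q_{23}))$ for all fibre elements $q_{12}, q_{23}$.

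First I would pick lifts $\tilde\gamma_{12}, \tilde\gamma_{23} \in PL\mathcal{G}$ of $\gamma_{12}, \gamma_{23}$ with $\tilde\gamma_{ij}(0) = q_{ij}$, and \emph{define} the third lift by $\tilde\gamma_{13}(t) \df \lambda_{\mathcal{G}}(\tilde\gamma_{12}(t) \otimes \tilde\gamma_{23}(t))$; this is a smooth path in $L\mathcal{G}$ over $\gamma_{13}$ because $\lambda_{\mathcal{G}}$ is a smooth bundle morphism (Step three of Section \ref{sec:prooffusion}). Substituting the parallel transport prescription \erf{eq:partransLG} on both sides and using the $A$-equivariance \erf{eq:fusionequiv} of $\lambda_{\mathcal{G}}$, the displayed identity collapses to the single equation $F(\tilde\gamma_{13}) = F(\tilde\gamma_{12}) \cdot F(\tilde\gamma_{23})$, now to be proved for lifts satisfying $(\tilde\gamma_{12}(t), \tilde\gamma_{23}(t)) \sim_{\mathcal{G}} \tilde\gamma_{13}(t)$ at the endpoints $t = 0,1$ (in fact at every $t$).

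To prove this multiplicativity I would pass to surfaces. Writing $\delta(t) = (\gamma_1^t, \gamma_2^t, \gamma_3^t)$ and setting $R_k(t,s) \df \gamma_k^t(s)$, each $F(\tilde\gamma_{ij}) = \mathscr{A}_{\mathcal{G}}(\exd{\gamma_{ij}}, \tilde\gamma_{ij}(0), \tilde\gamma_{ij}(1))$ is the surface holonomy of the cylinder $\exd{\gamma_{ij}}\maps C_{0,1} \to M$, which is assembled from the rectangle $R_i$ and the reversed rectangle $\overline{R_j}$ glued along the two longitudinal paths $t \mapsto \gamma_k^t(0)$ and $t \mapsto \gamma_k^t(1)$ of common initial and final points. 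The fusion relation \erf{eq:relfus} at $t=0,1$ supplies, over the $\gamma_1$-, $\gamma_2$- and $\gamma_3$-halves of each end-loop, the $2$-isomorphisms $\phi_1^t,\phi_2^t,\phi_3^t$ of \erf{eq:2isos} subject to the corner compatibility \erf{eq:condfus2} at the four junction points. The idea is to read off $F(\tilde\gamma_{12})\cdot F(\tilde\gamma_{23})$ and $F(\tilde\gamma_{13})$ as holonomies over these rectangle pieces: the strip $R_2$ appears in $\exd{\gamma_{12}}$ as $\overline{R_2}$ and in $\exd{\gamma_{23}}$ as $R_2$, so its bulk contributions cancel in the product, while the $\gamma_1$-boundary data are matched between $\exd{\gamma_{12}}$ and $\exd{\gamma_{13}}$ by $\phi_1^t$, the $\gamma_3$-data between $\exd{\gamma_{23}}$ and $\exd{\gamma_{13}}$ by $\phi_3^t$, and the two $\gamma_2$-halves cancel via $\phi_2^t$, leaving exactly $R_1 \cup \overline{R_3} = \exd{\gamma_{13}}$ and the value $1$. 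The tools for this are the gluing, homotopy- and diffeomorphism-invariance of Lemma \ref{lem:dbraneholprop} together with the refined boundary-replacement identity \erf{eq:trivrep}.

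The hard part will be precisely this bookkeeping. One must track the Deligne $1$-cochains of the three families of end-loop trivializations and the $2$-cochains of the $\phi_i^t$ across the shared strips $R_1,R_2,R_3$ and the four corners, and verify that the two oppositely oriented copies of $R_2$ cancel exactly. The interior contributions are controlled by rank-two homotopy invariance (Lemma \ref{lem:dbraneholprop}(b)) and the corner mismatches are absorbed precisely by \erf{eq:condfus2}, so the verification is in principle formal once the decomposition and its boundary records are fixed; but arranging the corner formalism so that each contribution is counted once and the cancellations are clean is the genuinely delicate step.
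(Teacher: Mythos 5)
Your proposal is correct and follows essentially the same route as the paper: reduce compatibility to the multiplicativity $F(\tilde\gamma_{13})=F(\tilde\gamma_{12})\cdot F(\tilde\gamma_{23})$ for paths in $PM^{[3]}$, then establish it by decomposing the three cylinders into the shared rectangles swept by $\gamma_1,\gamma_2,\gamma_3$, cancelling the oppositely oriented copies of the middle strip, and absorbing the boundary and corner data via the $2$-isomorphisms $\phi_i$ and the conditions \erf{eq:condfus2}. The paper carries out the bookkeeping you flag as delicate by choosing compatible triangulations so that the Alvarez--Gaw\c edzki bulk term and the boundary contributions each factor separately, with the corner identity $h_1(k)=h_3(k)\cdot h_2(k)$ on Deligne $2$-cochains doing exactly the job you assign to \erf{eq:condfus2}.
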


\begin{proof}
It suffices to prove that the fusion product commutes with parallel transport along paths $\Gamma \in P(PM^{[3]})$, i.e.
\begin{equation*}
\ptr{\gamma_{13}}(\lambda_{\mathcal{G}}(\mathcal{T}_{12},\mathcal{T}_{23})) = \lambda_{\mathcal{G}}(\ptr{\gamma_{12}}(\mathcal{T}_{12}),\ptr{\gamma_{23}}(\mathcal{T}_{23}))
\end{equation*}
where $\gamma_{ij} \df  P(l \circ e_{ij})(\Gamma) \in PLM$. We recall from the definition of the connection $\omega_{\mathcal{G}}$ that $\mathcal{T}_{ij}' = \ptr{\gamma_{ij}}(\mathcal{T}_{ij})$ means that 
\begin{equation}
\label{eq:parallelcond}
\mathscr{A}_{ij} \df  \mathscr{A}_{C}(\gamma_{ij}, \mathcal{T}_{ij},\mathcal{T}_{ij}')=1\text{.}
\end{equation}
We pick  trivializations $\mathcal{T}_{12}'$ and $\mathcal{T}_{23}'$ satisfying \erf{eq:parallelcond}, i.e. $\mathscr{A}_{12}=\mathscr{A}_{23}=1$, and choose $\mathcal{T}_{13}'$ such that $\lambda_{\mathcal{G}}(\mathcal{T}_{12}',\mathcal{T}_{23}')=\mathcal{T}_{13}'$. Then it remains to show that $\mathscr{A}_{13}=1$.

We use the notation $C=[0,1] \times S^1$ and $Q\df  [0,1] \times [0,1]$.
Consider the maps $\iota_k\maps Q \to C$ defined by $\iota_1(t,s) \df  (t,\mathrm{e}^{\im\pi s})$ and $\iota_2 (t,s)\df  (t,\mathrm{e}^{-\im\pi s})$ that reproduce for  fixed $t$ the maps $\iota_0$, $\iota_1$ from Section \ref{sec:constrfusion}. We  choose triangulations $T_{12}$, $T_{23}$ and $T_{13}$ of $C$ in such a way that $\iota_1^{*}T_{12} = \iota_{1}T_{13}$, $\iota_2^{*}T_{12} = \iota_{1}^{*}T_{23}$ and $\iota_2^{*}T_{23} = \iota_{2}^{*}T_{13}$ and such that $T_{ij}$ is subordinated to the open cover $\gamma_{ij}^{-1}\mathscr{U}$, for $\mathscr{U}$  an open cover of $M$ for which we have a Deligne cocycle $c$ for the bundle gerbe $\mathcal{G}$. Such triangulations can easily be constructed from three appropriate triangulations of $Q$. We  further choose Deligne 1-cochains $t_{12}$, $t_{12}'$ and $t_{23}$, $t_{23}'$ and $t_{13}$, $t_{13}'$ representing the respective trivializations. We claim that
\begin{equation}
\label{eq:multiplicativity}
\mathscr{A}_{13} = \mathscr{A}_{12} \cdot \mathscr{A}_{23} \text{,}
\end{equation}
which in turn proves that $\mathscr{A}_{13}=1$. We recall that each $\mathscr{A}_{ij}$ consists of three factors, namely  the Alvarez-Gaw\c edzki-Formula $\mathrm{AG}(\gamma_{ij}^{*}c)$ and the two boundary contributions $\mathrm{BC}(t_{ij})$ and $\mathrm{BC}(t_{ij}')^{-1}$. The multiplicativity \erf{eq:multiplicativity} holds in fact separately for all three terms. Indeed $\mathrm{AG}(\gamma_{13}^{*}c)= \mathrm{AG}(\gamma_{12}^{*}c) \cdot \mathrm{AG}(\gamma_{23}^{*}c)$ is a simple consequence of our choices of the triangulations. For the boundary parts, it suffices to prove the identity
\begin{equation}
\label{eq:multibc}
\mathrm{BC}(t_{13}) = \mathrm{BC}(t_{12}) \cdot \mathrm{BC}(t_{23})\text{,}
\end{equation}
which is true whenever Deligne 1-cochains $t_{12}$, $t_{23}$ and $t_{13}$ represent trivializations satisfying $(\mathcal{T}_{12},\mathcal{T}_{13}) \sim_{\mathcal{G}} \mathcal{T}_{13}$. Indeed, the 2-isomorphisms $\phi_1$, $\phi_2$ and $\phi_3$ in the latter relation determine Deligne 2-cochains $h_1$, $h_2$ and $h_3$ such that 
\begin{equation*}
\iota_1^{*}\beta_{13} = \iota_1^{*}\beta_{12} + \mathrm{D}(h_1)
\quomma
\iota_1^{*}\beta_{23} = \iota_2^{*}\beta_{12} + \mathrm{D}(h_2)
\quand
\iota_2^{*}\beta_{13} = \iota_{2}^{*}\beta_{23} + \mathrm{D}(h_3)
\end{equation*}
and the identities \erf{eq:condfus2} imply that $h_1(k) = h_3(k) \cdot h_2(k)$ for $k=0,1$. Feeding this into the definition of the boundary terms $\mathrm{BC}(t_{ij})$ shows \erf{eq:multibc}. 
\end{proof}

The second condition is:

\begin{proposition}
\label{prop:symm}
The connection $\omega_{\mathcal{G}}$  on $\lg{\mathcal{G}}$ symmetrizes the fusion product $\lambda_{\mathcal{G}}$. 
\end{proposition}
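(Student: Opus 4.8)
The plan is to prove the symmetrizing identity directly, by showing that the rotated fusion data still satisfies the relation $\sim_{\mathcal{G}}$. Fix $(\gamma_1,\gamma_2,\gamma_3) \in PM^{[3]}$ and trivializations $\mathcal{T}_{12}$, $\mathcal{T}_{23}$, and put $\mathcal{T}_{13} \df \lambda_{\mathcal{G}}(\mathcal{T}_{12} \otimes \mathcal{T}_{23})$, so that $(\mathcal{T}_{12},\mathcal{T}_{23}) \sim_{\mathcal{G}} \mathcal{T}_{13}$ is witnessed by $2$-isomorphisms $\phi_1,\phi_2,\phi_3$ obeying \erf{eq:condfus2}. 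Writing $\mathcal{T}'_{ij} \df R_{\pi}(\mathcal{T}_{ij})$, both sides of the identity in Definition \ref{def:symmetrizing} lie in the fibre of $L\mathcal{G}$ over $r_{\pi}(l(\gamma_1,\gamma_3)) = l(\prev{\gamma_3},\prev{\gamma_1})$, and the triple appearing on the right is exactly $\pi(\gamma_1,\gamma_2,\gamma_3) = (\prev{\gamma_3},\prev{\gamma_2},\prev{\gamma_1})$. Hence the claim reduces to proving $(\mathcal{T}'_{23},\mathcal{T}'_{12}) \sim_{\mathcal{G}} \mathcal{T}'_{13}$.

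The key geometric input is a concrete description of $R_{\pi}$. By construction $R_{\pi}(\mathcal{T})$ is the parallel transport of $\mathcal{T}$ along the rotation homotopy $H_{\pi}(\beta)$, whose adjoint cylinder $(t,s) \mapsto \beta(s\mathrm{e}^{\im t\pi})$ factors through $\beta\maps S^1 \to M$ and is therefore a rank one map. In particular this cylinder is thin, so $R_{\pi}$ is independent of the smoothing used to turn $H_{\pi}(\beta)$ into a path with sitting instants (Lemma \ref{lem:thinpath}), and $R_{\pi}(\mathcal{T})$ is characterised by $\mathscr{A}_{\mathcal{G}}(H_{\pi}(\beta),\mathcal{T},R_{\pi}(\mathcal{T}))=1$. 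The half-rotation shifts the loop parameter by $\tfrac12$; this interchanges the two arcs $\iota_1,\iota_2$ used in the definition of $\sim_{\mathcal{G}}$ and swaps the two distinguished points $s=0$ and $s=\tfrac12$. Restricting the thin rotation cylinder to each arc, the holonomy condition $\mathscr{A}_{\mathcal{G}}=1$ together with the refined boundary formula \erf{eq:trivrep} yields canonical arc-transport $2$-isomorphisms identifying $\iota_1^{*}\mathcal{T}'_{ij}$ with $\iota_2^{*}\mathcal{T}_{ij}$ and $\iota_2^{*}\mathcal{T}'_{ij}$ with $\iota_1^{*}\mathcal{T}_{ij}$, for each pair $ij$.

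Using these identifications I would construct the witnessing $2$-isomorphisms $\psi_1,\psi_2,\psi_3$ for $(\mathcal{T}'_{23},\mathcal{T}'_{12}) \sim_{\mathcal{G}} \mathcal{T}'_{13}$ by conjugating the original ones. Tracking which arc of each rotated loop carries which path shows that $\psi_1$ is built from $\phi_3$, that $\psi_3$ is built from $\phi_1$, and that $\psi_2$ is built from $\phi_2^{-1}$. Because the half-rotation interchanges $s=0$ and $s=\tfrac12$, the coherence identity \erf{eq:condfus2} for the $\psi_i$ at the point $k$ becomes, after transporting back through the arc identifications, precisely the identity \erf{eq:condfus2} for the $\phi_i$ at the opposite point $1-k$; since the latter holds for both $k=0,1$, so does the former. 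This is the conceptual heart of the statement: the symmetrizing condition is exactly the fusion relation $\phi_1 = \phi_3 \bullet \phi_2$ read off after the arc-swap.

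The main obstacle is bookkeeping the arc-transport $2$-isomorphisms at the corner points $s=0,\tfrac12$ and checking that they introduce no spurious factor in $A$ when the vertical composition $\phi_3|_k \bullet \phi_2|_k$ is conjugated. I would control this with superficiality: the three rotation cylinders for $ij \in \{12,23,13\}$ are all cut from the single rank one rotation of the shared configuration, so they restrict consistently over the common endpoints of the $\gamma_i$, and the edge contributions governed by \erf{eq:defpt} and \erf{eq:trivrep} agree on the two arcs meeting at each corner and cancel in the composition, exactly as the boundary terms $\mathrm{BC}$ were shown to be multiplicative in the proof that $\omega_{\mathcal{G}}$ is compatible with $\lambda_{\mathcal{G}}$. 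Once this cancellation is verified, the $\psi_i$ satisfy \erf{eq:condfus2}, establishing $(\mathcal{T}'_{23},\mathcal{T}'_{12})\sim_{\mathcal{G}}\mathcal{T}'_{13}$ and hence the proposition.
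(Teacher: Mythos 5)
Your overall strategy coincides with the paper's: reduce the symmetrizing identity to the relation $(R_{\pi}\mathcal{T}_{23},R_{\pi}\mathcal{T}_{12}) \sim_{\mathcal{G}} R_{\pi}\mathcal{T}_{13}$, and produce the witnessing $2$-isomorphisms by swapping $\phi_1$ with $\phi_3$, inverting $\phi_2$, and precomposing with the orientation reversal $a(t)=1-t$ of the interval, which intertwines $\iota_1$ and $\iota_2$ with the half-rotation via $r_{\pi}\circ\iota_1 = \iota_2\circ a$. Your observation that the coherence condition \erf{eq:condfus2} for the new data at the endpoint $k$ is the old condition at $1-k$ is exactly the paper's punchline.

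The genuine gap is in your treatment of $R_{\pi}$. Thinness of the rotation cylinder (Lemma \ref{lem:thinpath}) only gives well-definedness of $R_{\pi}$; it does not identify the \emph{result} of the parallel transport. The single scalar identity $\mathscr{A}_{\mathcal{G}}(H_{\pi}(\beta),\mathcal{T},R_{\pi}(\mathcal{T}))=1$ over the closed cylinder cannot by itself be disassembled into ``canonical arc-transport $2$-isomorphisms'' over $\iota_1$ and $\iota_2$: extracting those requires choosing trivializations over the cut cylinder and then verifying precisely the corner cancellation you defer to your final paragraph, so the argument is circular at that point, and superficiality is not the tool that resolves it. The paper closes this gap with Lemma \ref{lem:partranspullback}: since $\exd{(L\beta\circ h)} = \beta\circ\exd h$ factors through $S^1$, the whole rotation cylinder carries the pullback trivialization $(\exd h)^{*}\mathcal{T}$, whose restrictions to the two boundary circles are $\mathcal{T}$ and $r_{\pi}^{*}\mathcal{T}$ on the nose, so every term in \erf{eq:holtriv} vanishes and $R_{\pi}(\mathcal{T}) = r_{\pi}^{*}\mathcal{T}$ as elements of $L\mathcal{G}$. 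With that identity the arc restrictions become literal pullbacks, $\iota_1^{*}r_{\pi}^{*}\mathcal{T} = a^{*}\iota_2^{*}\mathcal{T}$ and $\iota_2^{*}r_{\pi}^{*}\mathcal{T} = a^{*}\iota_1^{*}\mathcal{T}$, there are no corner $2$-isomorphisms to track, and the rest of your argument goes through verbatim with $\phi_1'=a^{*}\phi_3$, $\phi_2'=a^{*}\phi_2^{-1}$, $\phi_3'=a^{*}\phi_1$.
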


For the proof we need the following  formula for the parallel transport in $L\mathcal{G}$ along a certain class of paths obtained from orientation-preserving diffeomorphisms $\varphi:S^1 \to S^1$. Notice that each such diffeomorphism is smoothly homotopic to the identity on $S^1$. 
\begin{lemma}
\label{lem:partranspullback}
Let $\varphi:S^1 \to S^1$ be an orientation-preserving diffeomorphism, and let $h\maps [0,1] \to LS^1$ be a smooth map with $h(0)=\id_{S^1}$ and $h(1) = \varphi$. Let $\beta\in LM$ be a loop and let $\mathcal{T} \in L\mathcal{G}$ be a trivialization over $\beta$. Then,
$\ptr{L\beta \circ h}(\mathcal{T}) = \varphi^{*}\mathcal{T}$.
\end{lemma}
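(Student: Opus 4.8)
The plan is to evaluate $\ptr{L\beta \circ h}$ directly from the defining formula \erf{eq:partransLG} of the connection $\omega_{\mathcal{G}}$, by exhibiting an explicit lift of the path $\gamma \df L\beta \circ h \in PLM$ to $L\mathcal{G}$ whose holonomy contribution $F$ is trivial. The conceptual reason this works is that the adjoint of $\gamma$ factors through the circle, so the relevant surface holonomy is computed against a global trivialization on the cylinder.

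First I would set $\tilde\gamma(t) \df h(t)^{*}\mathcal{T}$, where $h(t)\in LS^1$ is read as a smooth map $S^1 \to S^1$ and $h(t)^{*}\mathcal{T}$ is the pullback of the trivialization $\mathcal{T}\maps \beta^{*}\mathcal{G} \to \mathcal{I}_0$ along $h(t)$; this is a trivialization of $(\beta\circ h(t))^{*}\mathcal{G} = h(t)^{*}\beta^{*}\mathcal{G}$, hence a point of $L\mathcal{G}$ over $\gamma(t)=\beta\circ h(t)$, so $\tilde\gamma$ projects to $\gamma$. That $\tilde\gamma$ is a plot of $L\mathcal{G}$ is immediate from the diffeology: the adjoint map $\exd h\maps [0,1]\times S^1 \to S^1$ yields a trivialization $(\exd h)^{*}\mathcal{T}$ of the pullback of $\mathcal{G}$ along $\beta\circ\exd h$, whose restriction to the slice $\{t\}\times S^1$ is precisely $h(t)^{*}\mathcal{T}=\tilde\gamma(t)$. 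By construction $\tilde\gamma(0) = \id_{S^1}^{*}\mathcal{T} = \mathcal{T}$ and $\tilde\gamma(1) = \varphi^{*}\mathcal{T}$. Feeding this into \erf{eq:partransLG} with $q \df \tilde\gamma(0) = \mathcal{T}$ gives $\ptr{L\beta\circ h}(\mathcal{T}) = \varphi^{*}\mathcal{T} \cdot F(\tilde\gamma)$, so the lemma reduces to the claim $F(\tilde\gamma)=1$.

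By definition $F(\tilde\gamma) = \mathscr{A}_{\mathcal{G}}(\gamma,\mathcal{T},\varphi^{*}\mathcal{T})$, the holonomy over the cylinder $C_{0,1}$ whose map is the adjoint $\exd\gamma = \beta \circ \exd h \maps C_{0,1}\to M$. The decisive observation is that $\exd\gamma$ factors through the one-dimensional manifold $S^1$ via $\beta$; consequently $(\exd h)^{*}\mathcal{T}$ is a trivialization of $(\exd\gamma)^{*}\mathcal{G}$ defined over the \emph{entire} cylinder, with target $(\exd h)^{*}\mathcal{I}_0 = \mathcal{I}_0$. I would then apply \erf{eq:holtriv}: since the curving of the target is $0$, the area factor $\exp(\int_{C_{0,1}} 0)$ equals $1$, and the two boundary factors are the inverse holonomies of the flat bundles $P_0,P_1$ determined by $\mathcal{T}_i \otimes P_i \cong ((\exd h)^{*}\mathcal{T})|_{b_i}$. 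Over the start-loop the global trivialization restricts to $\id_{S^1}^{*}\mathcal{T}=\mathcal{T}=\tilde\gamma(0)$ and over the end-loop to $\varphi^{*}\mathcal{T}=\tilde\gamma(1)$; in both cases it agrees with the prescribed boundary trivialization, so each $P_i$ is trivial and contributes holonomy $1$. Hence $F(\tilde\gamma)=1$ and $\ptr{L\beta\circ h}(\mathcal{T})=\varphi^{*}\mathcal{T}$.

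The one step needing care is the matching of boundary data in \erf{eq:holtriv}: one must correctly identify the restriction of $(\exd h)^{*}\mathcal{T}$ to each of the two boundary circles of $C_{0,1}$ with the prescribed trivialization $\tilde\gamma(0)$, resp. $\tilde\gamma(1)$, including the orientation conventions for the start- and end-loops. Everything else is forced by the factorization of $\exd\gamma$ through $S^1$, which simultaneously kills the area contribution and supplies the global trivialization; alternatively, this factorization shows $\exd\gamma$ is a rank-one map, which is the structural reason the holonomy collapses to $1$.
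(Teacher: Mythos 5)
Your proof is correct and follows essentially the same route as the paper: lift $\gamma$ via $\tilde\gamma(t)\df h(t)^{*}\mathcal{T}$, reduce to $\mathscr{A}_{\mathcal{G}}(\beta\circ\exd h,\mathcal{T},\varphi^{*}\mathcal{T})=1$, and observe that $\mathcal{S}\df(\exd h)^{*}\mathcal{T}$ is a global trivialization of $(\exd\gamma)^{*}\mathcal{G}$ over the cylinder restricting to the prescribed boundary data, so all factors in \erf{eq:holtriv} vanish. The extra care you take with the plot condition and the boundary matching only spells out what the paper leaves implicit.
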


\begin{proof}
The path $\gamma \df  L\beta \circ h \in PLM$ lifts to $L\mathcal{G}$ by putting $\tilde\gamma(t) \df  h(t)^{*}\mathcal{T}$. 
Employing \erf{eq:partransLG} and the definition of $F$, we have to show that
\begin{equation}
\label{eq:equivholcomp}
\mathscr{A}_{\mathcal{G}}(\exd{(L\beta \circ h)},\mathcal{T},\varphi^{*}\mathcal{T})=1\text{.}
\end{equation}
Notice that $\exd{(L\beta \circ h)} = \beta \circ \exd h$, so that we  have a trivialization $\mathcal{S} \df  (\exd h)^{*}\mathcal{T}$ of $\mathcal{G}$ over the whole surface $C_{0,1}$. Moreover, $\mathcal{S}|_{\left \lbrace 0\right \rbrace \times S^1} = \mathcal{T}$ and $\mathcal{S}|_{\left \lbrace 1\right \rbrace \times S^1} = \varphi^{*}\mathcal{T}$. It follows that all factors in the holonomy formula \erf{eq:holtriv} vanish. This proves \erf{eq:equivholcomp}. \end{proof}

\begin{proof}[Proof of Proposition \ref{prop:symm}]
With Lemma \ref{lem:partranspullback} applied to $\varphi = r_\pi$, the rotation by an angle of $\pi$, the  condition of Definition \ref{def:symmetrizing} is
\begin{equation}
\label{eq:connfuscomp2}
r_{\pi}^{*}\lambda_{\mathcal{G}}(\mathcal{T}_{12},\mathcal{T}_{23}) = \lambda_{\mathcal{G}}(r_{\pi}^{*}\mathcal{T}_{23},r_{\pi}^{*}\mathcal{T}_{12})\text{.}
\end{equation}
In order to show that this is true, let us choose $\mathcal{T}_{13}$ such that $(\mathcal{T}_{12},\mathcal{T}_{23}) \sim_{\mathcal{G}} \mathcal{T}_{13}$, together with 2-isomorphisms $\phi_1$, $\phi_2$ and $\phi_3$ satisfying the identities \erf{eq:condfus2}. We claim that $(r_{\pi}^{*}\mathcal{T}_{23},r_{\pi}^{*}\mathcal{T}_{12}) \sim_{\mathcal{G}} r_{\pi}^{*}\mathcal{T}_{13}$; this proves \erf{eq:connfuscomp2}. We use the commutative  diagram
\begin{equation*}
\alxydim{@=1.2cm}{[0,1] \ar[r]^-{\iota_1} \ar[d]_{a} & S^1 \ar[d]^{r_{\pi}} \\ [0,1] \ar[r]_-{\iota_2} & S^1\text{,}}
\end{equation*}
in which $a(t) \df  1-t$. It allows us to define new 2-isomorphisms
\begin{multline*}
\phi_1' \df  a^{*}\phi_3 \maps \iota_1^{*}r_{\pi}^{*}\mathcal{T}_{23} \Rightarrow \iota_1^{*}r_{\pi}^{*}\mathcal{T}_{13}
\quomma
\phi_2' \df  a^{*}\phi_2^{-1}\maps \iota_2^{*}r_{\pi}^{*}\mathcal{T}_{23} \Rightarrow \iota_1^{*}r_{\pi}^{*}\mathcal{T}_{12}
\\\quand
\phi_3' \df  a^{*}\phi_1\maps \iota_2^{*}r_{\pi}^{*}\mathcal{T}_{12} \Rightarrow \iota_2^{*}r_{\pi}^{*}\mathcal{T}_{13}\text{.}
\end{multline*}
These satisfy  the identities \erf{eq:condfus2}, and so prove the claim. 
\end{proof}

Summarizing,  the fusion bundle $(L\mathcal{G},\lambda_{\mathcal{G}})$ comes with a  compatible, symmetrizing and superficial connection. We leave it as an exercise to check that  $\omega_{\mathcal{G}}$ is compatible with the tensor product, that it is natural in $M$, and that the isomorphisms $L\mathcal{A}:L\mathcal{G}_1 \to L\mathcal{G}_2$ are connection-preserving. This completes the definition of the transgression functor
\begin{equation*}
\trcon\maps \hc 1 \diffgrbcon AM \to \fusbunconsf A{LM}\text{.}
\end{equation*}

\begin{remark}
\label{re:equiv}
Let us recover the equivariant structure on $\lg{\mathcal{G}}$ of Brylinski and McLaughlin \cite{brylinski1,brylinski4}. Their observation is the following: for $\diff^{+}(S^1)$  the group of orientation-preserving diffeomorphisms of $S^1$,
\begin{equation*}
\widetilde E:\diff^{+}(S^1) \times L\mathcal{G} \to L\mathcal{G}\maps (\varphi,\mathcal{T}) \mapsto \varphi^{*}\mathcal{T}
\end{equation*}
defines an equivariant structure on $L\mathcal{G}$. 
By inspection and Lemma \ref{lem:partranspullback} one checks that the equivariant structure $\widetilde E$ coincides with the equivariant structure $E$ obtained from the superficial connection $\omega_{\mathcal{G}}$ via Proposition \ref{prop:diffequiv}. \end{remark}

\setsecnumdepth{2}

\section{\Untrans}

\label{sec:untrans}

In this section we define the regression functor
\begin{equation*}
\uncon_x\maps \fusbunconsf A{LX} \to \diffgrbcon A X
\end{equation*}
for a general connected diffeological space $X$ with  base point $x\in X$. We first construct an underlying functor in a setup \emph{without} connections, and then promote it to the setup \emph{with} connection. In Section \ref{sec:holreg} we derive formulas for the surface holonomy of a gerbe in the image of $\un_x$.

\subsection{Reconstruction of Bundle Gerbes}

Suppose $\pi\maps Y \to X$ is a subduction. Let $\prodbun AY$ denote the groupoid of principal $A$-bundles over $Y^{[2]}$ with products in the sense of  
Definition \ref{def:product}. A  functor
\begin{equation*}
\mathcal{F}\maps \prodbun A {Y} \to \hc 1 \diffgrb A {X}
\end{equation*}
is defined as follows. To a\ principal $A$-bundle $P$ over $Y^{[2]}$ with  product $\lambda$ it assigns the obvious diffeological bundle gerbe $\mathcal{F}(P)$: its subduction is $\pi:Y \to X$, its bundle is $P$ and its product is $\lambda$.
To a product-preserving isomorphism $\varphi\maps P_1 \to P_2$ it assigns a  1-isomorphism
\begin{equation*}
\mathcal{F}(\varphi)\maps \mathcal{F}(P_1) \to  \mathcal{F}(P_2)\text{.}
\end{equation*}
As an isomorphism between bundle gerbes \cite[Definition 2]{waldorf1}, $\mathcal{F}(\varphi)$ consists of a subduction $\zeta\maps Z \to Y^{[2]}$, a principal $A$-bundle $Q$ over $Z$, and a bundle isomorphism 
\begin{equation*}
\alpha\maps \pi_{13}^{*}P_1 \otimes \zeta_2^{*}Q \to \zeta_1^{*}Q \otimes \pi_{24}^{*}P_2
\end{equation*}
over $Z^{[2]} = Z \times_X Z$, where $\zeta_i\maps Z^{[2]} \to Z$ denote the two projections. Moreover, $\alpha$ is required to be compatible with the products $\lambda_1$ and $\lambda_2$. To define $\mathcal{F}(\varphi)$, we put $Z \df  Y^{[2]}$, $\zeta\df  \id$, and $Q \df  P_2$. The isomorphism $\alpha$ is
\begin{equation*}
\alxydim{@=1.3cm}{\pi_{13}^{*}P_1 \otimes \pi_{34}^{*}P_2 \ar[r]^-{\pi_{13}^{*}\varphi \otimes \id} & \pi_{13}^{*}P_2 \otimes \pi_{34}^{*}P_2 \ar[r]^-{\pi_{134}^{*}\lambda_2} & \pi_{14}^{*}P_2 \ar[r]^-{\pi_{124}^{*}\lambda_2^{-1}} & \pi_{12}^{*}P_2 \otimes \pi_{24}^{*}P_2\text{.}}
\end{equation*}
The compatibility condition with $\lambda_1$ and $\lambda_2$ follows from the fact that $\varphi$ is product-preserving. For two composable, product-preserving isomorphisms $\varphi:P_1 \to P_2$ and $\psi\maps P_2 \to P_3$, a \quot{compositor}  2-isomorphism
$\mathcal{F}(\psi) \circ \mathcal{F}(\varphi) \Rightarrow \mathcal{F}(\psi\circ \varphi)$
can be constructed from the product of $P_3$. This shows that $\mathcal{F}$ is a functor.

Now we consider the  subduction $\ev_1\maps \px Xx \to X\maps \gamma \mapsto \gamma(1)$. The functor
\begin{equation}
\label{eq:regwithoutcon}
\alxydim{}{\fusbun A {L X} \ar[r]^-{l^{*}} & \prodbun A {\px Xx} \ar[r]^-{\mathcal{F}}   & \hc 1 \diffgrb A X}
\end{equation}
defines the regression functor $\un_x$ in a setup without connections.

\subsection{Reconstruction of Connections}

\label{sec:reconcon}

Let $P$ be a fusion bundle with a compatible and superficial connection (it does not have to be symmetrizing for the construction of the regression functor). Compatibility implies that the diffeological $A$-bundle gerbe $\un_x(P) =\mathcal{F}(l^{*}P)$ is already equipped with one part of a connection, namely with a connection on its bundle $l^{*}P$. In order to complete it to a gerbe connection, 
it remains to construct a curving: a 2-form $B_P \nobr\in\nobr \Omega^2_{\mathfrak{a}}(\px Xx)$ such that
\begin{equation}
\label{eq:curvcond}
\mathrm{pr}_2^{*}B_P - \mathrm{pr}_1^{*}B_P = \mathrm{curv}(l^{*}P)\text{.}
\end{equation}

In the following we construct this 2-form using Theorem \ref{th:functorsvsforms}, which is the 2-dimensional analog of the method we have used in Section \ref{sec:constrconn} to define a connection 1-form on $L\mathcal{G}$. 
As explained in more detail in Appendix \ref{app:functorsandforms}, a \emph{bigon} $\Sigma$ in a diffeological space $W$ is a path $\Sigma\maps [0,1] \to PW$ in the space of  paths (with fixed endpoints) in $W$. The space of bigons in $W$ is denoted by $BW$. In order to use Theorem \ref{th:functorsvsforms} we first construct a smooth map 
\begin{equation*}  
G_P\maps B(\px Xx) \to A\text{.}
\end{equation*}
From a given a bigon $\Sigma \in B(\px Xx)$ (see Figure \ref{fig:bigon}) and $t\in [0,1]$ we extract three paths:
\begin{equation}
\label{eq:bigonpaths}
\Sigma^o(t) \df  \Sigma(0)(t)
\quomma
\Sigma^u(t) \df  \Sigma(1)(t)
\quand
\Sigma^m(t) \df  P\ev_t(B\ev_1(\Sigma)) 
\end{equation}
where $\ev_1\maps \px Xx \to X$ and $\ev_t\maps PX \to X$ are evaluation maps, and $P\ev_t$ and $B\ev_1$ are the maps induced on spaces of paths and spaces of bigons, respectively. More explicitly, the third path is given by $\Sigma^m(t)(\sigma) = \Sigma(\sigma)(t)(1)$. \begin{figure}
\begin{center}
\includegraphics{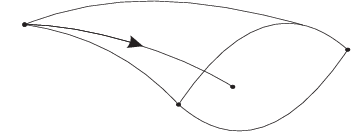}\setlength{\unitlength}{1pt}\begin{picture}(0,0)(188,700)\put(19.15625,745.30063){$x$}\put(142.27625,724.15786){$\Sigma$}\end{picture}
\qquad\quad
\includegraphics{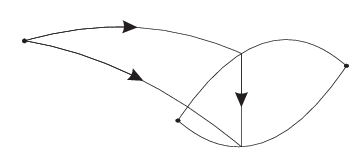}\setlength{\unitlength}{1pt}\begin{picture}(0,0)(187,596)\put(19.15625,640.71043){$x$}\put(73.51909,659.86526){$\Sigma^o(t)$}\put(60.96983,618.04205){$\Sigma^u(t)$}\put(139.24715,616.30951){$\Sigma^m(t)$}\end{picture}
\end{center}
\caption{The picture on the left shows a bigon $\Sigma$ in $\px Xx$: it can be regarded as a bigon in $X$, which has for each of its points  a chosen path connecting $x$ with that point. The picture on the right shows the three paths associated to a bigon $\Sigma$ and $t \in [0,1]$.}
\label{fig:bigon}
\end{figure}
The endpoints of these three paths are:
\begin{multline*}
\ev(\Sigma^o(t)) = (x,\Sigma(0)(t)(1))
\quomma
\ev(\Sigma^u(t)) = (x, \Sigma(1)(t)(1)) 
\\\quand
\ev(\Sigma^m(t)) = (\Sigma(0)(t)(1), \Sigma(1)(t)(1))\text{.}
\end{multline*}
This allows us to define a map
\begin{equation*}
\gamma_{\Sigma}\maps [0,1] \to L X\maps t \mapsto l(\Sigma^m(t) \pcomp \Sigma^o(t),\id \pcomp \Sigma^u(t))\text{,}
\end{equation*}
which is smooth as the composition of smooth maps, and has sitting instants because the bigon $\Sigma$ has (by definition) appropriate sitting instants. Thus, $\gamma_{\Sigma} \in PLX$. 
For $\beta_0 \df \Sigma(0)(0) \in \px Xx$ and $\beta_1 \df  \Sigma(1)(1)\in \px Xx$, the path $\gamma_{\Sigma}$ starts at $\tau_0 \df  l(\id \pcomp \beta_0,\id \pcomp \beta_0)$, and ends at $\tau_1 \df  l(\id \pcomp \beta_1, \id \pcomp \beta_1)$. We recall from Lemma \ref{lem:fusbuntrivflat} that over the points $\tau_0,\tau_1\in LX$ the bundle $P$ has distinguished points $\can_0 \in P_{\tau_0}$ and $\can_1 \in P_{\tau_1}$. Now we define $G_P(\Sigma) \in A$  by
\begin{equation*}
\ptr{\gamma_{\Sigma}}(\can_0) = \can_1 \cdot G_P(\Sigma) \text{,}
\end{equation*}
where $\ptr{\gamma_{\Sigma}}$ denotes the parallel transport in $P$ along the path $\gamma_{\Sigma}$. 

Before we proceed we shall give an alternative formulation of $G_{P}(\Sigma)$. We consider the map $[0,1] \to B\px Mx:\sigma \mapsto \Sigma_{\sigma}$ defined by $\Sigma_\sigma(s) \df  \Sigma(\sigma \phi(s))$, where $\phi$ is a smoothing function assuring that each $\Sigma_{\sigma}$ has appropriate sitting instants. We obtain a smooth map $h \maps [0,1]^2 \to LX$ defined by $h(\sigma,t) \df  \gamma_{\Sigma_\sigma}(t)$. 

\begin{lemma}
\label{lem:GPref}
We have
\begin{equation*}
\displaystyle G_P(\Sigma) =  \exp \left ( - \int_{[0,1]^2} h^{*}\mathrm{curv}(P) \right )\text{.}
\end{equation*}
In particular, $G_P(\Sigma)$  depends only on the curvature of $P$.  
\end{lemma}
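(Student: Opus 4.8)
The plan is to realise $G_P(\Sigma)$ as the inverse holonomy of $P$ around the loop $h|_{\partial([0,1]^2)}$ traced out by the boundary of the square, and then to evaluate that holonomy by an abelian Stokes argument. The first task is to identify the four edges of the square under $h$. Because a bigon has fixed endpoints, one has $\Sigma_\sigma(0)(0)=\Sigma(0)(0)=\beta_0$ and $\Sigma_\sigma(1)(1)=\Sigma(\sigma)(1)=\beta_1$ for every $\sigma$ (using $\phi(0)=0$, $\phi(1)=1$), so that $h(\sigma,0)=\tau_0$ and $h(\sigma,1)=\tau_1$ are independent of $\sigma$: the two vertical edges are \emph{constant}. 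At $\sigma=1$ the bigon $\Sigma_1$ is the reparametrisation $s\mapsto\Sigma(\phi(s))$ of $\Sigma$, hence thin homotopic to it, so $h(1,-)=\gamma_\Sigma$ up to thin homotopy. At $\sigma=0$ the bigon $\Sigma_0$ is constant equal to $\Sigma(0)$, so $\Sigma_0^m(t)$ is a constant path and $\gamma_{\Sigma_0}(t)=l(\Sigma(0)(t),\Sigma(0)(t))=\Delta(\Sigma(0)(t))$ up to thin homotopy; thus the bottom edge is the path $\Delta\circ\delta$, where $\delta\df\Sigma(0)$ is a path in $\px Xx$ running from $\beta_0$ to $\beta_1$.

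The point of this bookkeeping is that the canonical section appears precisely along the bottom edge. By Lemma \ref{lem:fusbuntrivflat} the section $\can$ of $\Delta^{*}P$ is flat, so $\can\circ\delta$ is a parallel section of $P$ along $\Delta\circ\delta$; since parallel transport depends only on the thin homotopy class of a path \cite[Proposition 3.2.10]{waldorf9}, this yields $\ptr{h(0,-)}(\can_0)=\can_1$. Combining this with the defining relation $\ptr{\gamma_\Sigma}(\can_0)=\can_1\cdot G_P(\Sigma)$ and with the fact that the two vertical edges contribute trivially, I would transport the distinguished point $\can_0\in P_{\tau_0}$ once around $\partial([0,1]^2)$ in the counter-clockwise orientation — along the bottom edge, the constant right edge, the reversed top edge, and the constant left edge. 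This gives $\ptr{h|_{\partial([0,1]^2)}}(\can_0)=\can_0\cdot G_P(\Sigma)^{-1}$, i.e. $\mathrm{Hol}_P(h|_{\partial([0,1]^2)})=G_P(\Sigma)^{-1}$.

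It remains to evaluate this holonomy in terms of curvature. Since $[0,1]^2$ is contractible, the pullback $h^{*}P$ is trivialisable and its connection has the form $\mathrm{d}+\eta$ for a global 1-form $\eta\in\Omega^1_{\mathfrak{a}}([0,1]^2)$ with $\mathrm{d}\eta=h^{*}\mathrm{curv}(P)$. Hence $\mathrm{Hol}_P(h|_{\partial([0,1]^2)})=\exp(\int_{\partial([0,1]^2)}\eta)=\exp(\int_{[0,1]^2}h^{*}\mathrm{curv}(P))$ by Stokes' theorem. Comparing with the previous step (and fixing the orientation convention so that the signs match) gives $G_P(\Sigma)=\exp(-\int_{[0,1]^2}h^{*}\mathrm{curv}(P))$, and the final assertion that $G_P(\Sigma)$ depends only on the curvature is then immediate from this formula.

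The genuinely delicate step is the edge analysis of the first paragraph: verifying that the vertical edges are constant, that the bottom edge is the diagonal path $\Delta\circ\delta$ after absorbing the sitting-instant reparametrisations introduced by $\phi$, and that the endpoint values $\can(\beta_0)$ and $\can(\beta_1)$ of the flat section coincide with the distinguished points $\can_0$ and $\can_1$ over $\tau_0$ and $\tau_1$. All these identifications hold only up to thin homotopy, so thin-homotopy invariance of parallel transport must be invoked repeatedly; once the four edges are correctly described, the holonomy computation and the Stokes argument are routine.
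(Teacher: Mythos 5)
Your proof is correct and follows essentially the same route as the paper's: both identify $h$ as a fixed-endpoint homotopy between $\gamma_{\Sigma}$ and the path $o_{\Sigma}(t)=l(\id\pcomp\Sigma^o(t),\id\pcomp\Sigma^o(t))$ factoring through the domain of the flat section $\can$ (so that $\ptr{o_{\Sigma}}(\can_0)=\can_1$), and then compare the two parallel transports via Stokes' theorem applied to $h^{*}\mathrm{curv}(P)$. Your more explicit unpacking of the boundary holonomy through a trivialization of $h^{*}P$ over the contractible square is just a spelled-out version of the one-line Stokes step in the paper.
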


\begin{proof}
The map $h$ is a homotopy with fixed endpoints between the path $o_{\Sigma} \in PLX$ defined by $o_{\Sigma}(t) \df  l(\id \pcomp \Sigma^o(t),\id \pcomp \Sigma^o(t))$ and $\gamma_{\Sigma}$. Stokes' Theorem implies
\begin{equation*}
\ptr{o_{\Sigma}}(\can_0) = \ptr{\gamma_{\Sigma}}(\can_0)  \cdot \exp \left ( \int_{[0,1]^2} h^{*}\mathrm{curv}(P) \right )\text{.}
\end{equation*}
But the path $o_{\Sigma}$ factors through $PX \to PX^{[2]} \to LX$, i.e. through the domain of the flat section $\can:PX \to P$. Thus, $\ptr{o_{\Sigma}}(\can_0)= \can_1$. This shows the claim. 
\end{proof}

Now we come to the main point of the construction of the map $G_P$.

\begin{lemma}
\label{lem:reconcurving}
The map $G_P\maps B\px Xx \to A$ satisfies the conditions of Theorem \ref{th:functorsvsforms}. \end{lemma}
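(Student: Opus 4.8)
The plan is to verify the conditions of Theorem~\ref{th:functorsvsforms} one by one, in direct analogy with the three conditions checked for the connection $1$-form $F$ in Section~\ref{sec:constrconn}: namely that $G_P$ is smooth, that it is invariant under thin (rank-two) homotopies of bigons, and that it is multiplicative under both the horizontal and the vertical composition of bigons. The decisive tool is Lemma~\ref{lem:GPref}, which rewrites $G_P(\Sigma) \eq \exp(-\int_{[0,1]^2} h^{*}\mathrm{curv}(P))$ as the exponential of the integral of the \emph{fixed}, closed $2$-form $\mathrm{curv}(P)\in\Omega^2_{\mathfrak{a}}(LX)$, pulled back along the smooth map $h\maps [0,1]^2 \to LX$ built from $\Sigma$. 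With this identification each condition of Theorem~\ref{th:functorsvsforms} reduces to a standard property of the integration of a $2$-form, and the only genuine work is to understand how the assignment $\Sigma \mapsto h$ (equivalently $\Sigma \mapsto \gamma_\Sigma$) behaves under the operations on bigons.

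Smoothness is then immediate, since $h$ depends smoothly on $\Sigma$ and fibre integration of the smooth form $\mathrm{curv}(P)$ is smooth. For the invariance I would first note that the adjoint $\exd h\maps [0,1]^2 \times S^1 \to X$ is assembled purely from reparametrizations and compositions, through the map $l$, of the paths $\Sigma^o$, $\Sigma^u$ and $\Sigma^m$. Hence if $\Sigma$ is thin, i.e. the map $(\sigma,t,s) \mapsto \Sigma(\sigma)(t)(s)$ has rank at most one, then $\exd h$ has rank at most one as well, the pulled-back $2$-form $h^{*}\mathrm{curv}(P)$ vanishes, and $G_P(\Sigma)=1$; reparametrizations of $\Sigma$ are covered in the same way. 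A rank-two homotopy between two bigons induces a homotopy of the associated maps $h$ relative to their common endpoints $\tau_0,\tau_1$, and since $\mathrm{curv}(P)$ is closed, Stokes' Theorem shows that $\int_{[0,1]^2} h^{*}\mathrm{curv}(P)$ is unchanged -- this is the exact $2$-dimensional counterpart of Remark~\ref{rem:rank} and Lemma~\ref{lem:pointwisethin}, where superficiality of the connection enters.

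The horizontal composition (along the path parameter $t$) is the cleanest step and is best done directly from the definition rather than through Lemma~\ref{lem:GPref}. For bigons $\Sigma\maps\beta_0 \to \beta_1$ and $\Sigma'\maps\beta_1 \to \beta_2$ that are composable in the $t$-direction, one reads off from the definition of $\gamma_\Sigma$ that $\gamma_{\Sigma\circ_h\Sigma'}$ is, up to reparametrization, the concatenation $\gamma_{\Sigma'}\pcomp\gamma_\Sigma$, whose intermediate loop is exactly the base point $\tau_1$ of the flat section $\can_1$ belonging to the shared middle path. Functoriality and $A$-equivariance of parallel transport then give
\[
\ptr{\gamma_{\Sigma\circ_h\Sigma'}}(\can_0)=\ptr{\gamma_{\Sigma'}}(\can_1 \cdot G_P(\Sigma))=\can_2 \cdot G_P(\Sigma')\,G_P(\Sigma)\text{,}
\]
so that $G_P(\Sigma\circ_h\Sigma')=G_P(\Sigma)\,G_P(\Sigma')$ by commutativity of $A$. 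The vertical composition (along the homotopy parameter $\sigma$) is the main obstacle, precisely because there all the paths $\gamma$ run between the same endpoints $\tau_0$ and $\tau_1$ and therefore do not concatenate. Here I would return to Lemma~\ref{lem:GPref} and subdivide the domain square of $h_{\Sigma\circ_v\Sigma'}$ along $\sigma=\tfrac{1}{2}$: the lower sub-square reproduces $h_\Sigma$ on the nose, while the upper sub-square agrees with $h_{\Sigma'}$ except for the extra prefix in the middle path $\Sigma^m$ coming from the already fully traversed bigon $\Sigma$. Since this prefix is independent of the homotopy parameter $\sigma$ and $\mathrm{curv}(P)$ is closed, it can be contracted by a rank-two homotopy relative to the boundary without changing the integral, and additivity of the integral over the two sub-squares yields $G_P(\Sigma\circ_v\Sigma')=G_P(\Sigma)\,G_P(\Sigma')$. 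Assembling these verifications shows that $G_P$ satisfies the conditions of Theorem~\ref{th:functorsvsforms}.
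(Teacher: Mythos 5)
Your overall architecture is right: smoothness, invariance under thin homotopy, and multiplicativity under the two compositions are the conditions to check, and you correctly identify the horizontal composition as the easy case (concatenation of the paths $\gamma_{\Sigma}$ through the flat section $\can$) and the vertical composition as the hard one. But your treatment of the vertical composition has a genuine gap. After subdividing the domain of $h_{\Sigma_2 \bullet \Sigma_1}$ at $\sigma = \tfrac{1}{2}$, the upper sub-square does not differ from $h_{\Sigma_2}$ by a removable prefix: in the first argument of $l$ one family carries the path $\Sigma_1^m(t) \pcomp \Sigma_1^o(t)$ where the other carries $\Sigma_1^u(t) = \Sigma_2^o(t)$. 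These two paths have the same endpoints but are related only by the homotopy swept out by the bigon $\Sigma_1$ itself, whose adjoint $[0,1]^3 \to X$ is an arbitrary smooth map with no rank restriction. Hence there is no rank-two homotopy rel boundary performing the exchange, Lemma \ref{lem:pointwisethin} does not apply, and closedness of $\mathrm{curv}(P)$ leaves uncontrolled lateral boundary terms in Stokes' theorem. This is exactly where the fusion product must enter: the paper forms, for each $t$, the triple $(\Sigma^m_1(t) \pcomp \Sigma_1^o(t),\; \id \pcomp \Sigma_1^u(t),\; \prev{\Sigma_2^m(t)} \pcomp \Sigma_2^u(t)) \in PX^{[3]}$, checks that the three loops obtained via $l \circ e_{ij}$ are rank-two-homotopic to $\gamma_{\Sigma_1}$, $\gamma_{\Sigma_2}$ and $\gamma_{\Sigma_2 \bullet \Sigma_1}$, and then uses the compatibility of the connection with $\lambda$ together with $\lambda(\can_k \otimes \can_k) = \can_k$ to combine the three parallel transports. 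A proof that never invokes $\lambda$ cannot close this step: all three paths $\gamma_{\Sigma_1}$, $\gamma_{\Sigma_2}$, $\gamma_{\Sigma_2 \bullet \Sigma_1}$ run from $\tau_0$ to $\tau_1$, and without the fusion product there is no map $P_{\tau_1} \otimes P_{\tau_1} \to P_{\tau_1}$ with which to multiply their transports, nor any identity of the type \erf{eq:curvadd} with which to compare the curvature integrals.

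A smaller remark on the invariance step: \quot{$\mathrm{curv}(P)$ is closed, so Stokes gives invariance} is not the operative mechanism either, since the lateral face of the interpolating cube corresponding to the fully traversed bigons is a genuine rank-two family of loops whose curvature integral does not vanish for formal reasons; what controls it is condition (ii) of Definition \ref{def:superficial}, i.e.\ precisely Lemma \ref{lem:pointwisethin} applied to the induced rank-two homotopy with fixed end-loops. Since you cite that lemma, I read this as a matter of emphasis rather than a second gap; the vertical-composition step, however, does not go through as written.
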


\begin{proof}
We  check the following four conditions:

1.)  $G_P \maps  B\px Xx \to A$ is smooth. This is straightforward and left out for brevity. 

2.) $G_P$ is constant on thin homotopy classes of bigons. Suppose  that $h\in PB\px Xx$ is a thin homotopy between bigons $\Sigma$ and $\Sigma'$. By definition, this is a path in $B\px Xx$ with $h(0)=\Sigma$ and $h(1)=\Sigma'$, such that the adjoint map $\exd h\maps [0,1]^3 \to \px Xx$ has rank two and the maps $h^{o},h^{u}:[0,1]^2 \to \px Xx$ defined by $h^{o}(s,t) \df  \exd h(s,0,t)$ and $h^{u}(s,t) \df  \exd h(s,1,t)$ have rank one. 
 Further, it keeps the \quot{endpoints} fixed, i.e. $h(t)(0,0) = \beta_0$ and $h(t)(1,1)=\beta_1$ for all $t\in [0,1]$, in the notation of Section \ref{sec:reconcon}. Notice that $P\gamma(h) \in PPLX$ is a homotopy between the paths $\gamma_{\Sigma}$ and $\gamma_{\Sigma'}$ with fixed \quot{end points} $\tau_0$ and $\tau_1$.  One checks that its adjoint has rank two; thus, since our connection is superficial, we have $\ptr{\gamma_{\Sigma}}= \ptr{\gamma_{\Sigma'}}$ by Lemma \ref{lem:pointwisethin} and hence $G_P(\Sigma) = G_P(\Sigma')$. 

3.) $G_P$ respects the vertical composition of bigons. We consider  two vertically composable bigons $\Sigma_1,\Sigma_2 \in B\px Xx$, i.e.  $\Sigma_1^u(t) = \Sigma^o_2(t)$. We define the path
\begin{equation*}
\beta\maps [0,1] \to \p X^{[3]}\maps t \mapsto (\Sigma^m_1(t) \pcomp \Sigma_1^o(t),\id \pcomp   \Sigma_1^u(t), \prev{\Sigma_2^m(t)} \pcomp \Sigma_2^u(t))\text{,}
\end{equation*}
and find the relations
\begin{equation*}
Pl(Pe_{12}(\beta)) \sim \gamma_{\Sigma_1}
\quomma
Pl(Pe_{23}(\beta)) \sim \gamma_{\Sigma_2}
\quand
Pl(Pe_{13}(\beta)) \sim \gamma_{\Sigma_2 \bullet \Sigma_1}\text{,}
\end{equation*}
where \quot{$\sim$} indicates a rank-two-homotopy between the adjoint maps $[0,1] \times S^1 \to X$. 
The rank-two homotopies ensure that the superficial connection on $P$ has equal parallel transport maps along the respective paths.  Using that the connection on $P$ is compatible with the fusion product $\lambda$, we obtain a commutative diagram
\begin{equation*}
\alxydim{@=1.2cm}{P_{\tau_0} \otimes P_{\tau_0} \ar[r]^-{\lambda} \ar[d]_{\ptr{\gamma_{\Sigma_1}} \otimes \ptr{\gamma_{\Sigma_2}}} & P_{\tau_0} \ar[d]^{\ptr{\gamma_{\Sigma_2 \bullet \Sigma_1}}} \\ P_{\tau_1} \otimes P_{\tau_1} \ar[r]_-{\lambda} & P_{\tau_1}\text{.}}
\end{equation*}
Since we have $\lambda(\can_k \otimes \can_k) = \can_k$ for $k=0,1$, this implies 
\begin{multline*}
\can_1 \cdot G_{P}(\Sigma_2 \bullet \Sigma_1) = \ptr{\gamma_{\Sigma_2 \bullet \Sigma_1}}(\can_0)=\ptr{\gamma_{\Sigma_1}} (\can_0 ) \otimes \ptr{\gamma_{\Sigma_2}}(\can_0) \\= \lambda (\can_1 \cdot G_{P}(\Sigma) \otimes \can_1 \cdot  G_{P}(\Sigma')) = \can_1 \cdot G_{P}(\Sigma) \cdot G_{P}(\Sigma') \text{,}
\end{multline*}
showing that $G_P$ respects the vertical composition.

4.) $G_P$ respects the horizontal composition of bigons. 
If $\Sigma_1,\Sigma_2\in B\px Xx$ are horizontally composable bigons, we have the simple relation
$\gamma_{\Sigma_2 \pcomp \Sigma_1} = \gamma_{\Sigma_2} \pcomp \gamma_{\Sigma_1}$,
which immediately implies the required condition:
$G_{P}(\Sigma_2 \pcomp \Sigma_1) = G_{P}(\Sigma_1) \cdot G_{P}(\Sigma_2)$.
\end{proof}

Thus, by Theorem \ref{th:functorsvsforms}, there exists a unique 2-form $B_P \in \Omega^2_{\mathfrak{a}}(\px Xx)$ such that
\begin{equation*}
G_P(\Sigma) = \exp \left ( - \int_{[0,1]^2}\Sigma^{*}B_P \right )
\end{equation*}
for all $\Sigma \in B\px Xx$. That this 2-form $B_P$ is a curving for the bundle gerbe $\mathcal{F}(l^{*}P)$ is the content of the next lemma:

\begin{lemma}
\label{lem:reconcurving2}
The 2-form $B_P$ satisfies the identity \erf{eq:curvcond}.
\end{lemma}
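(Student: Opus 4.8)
The plan is to test the $2$-form identity \erf{eq:curvcond} against bigons and reduce it, via Lemma \ref{lem:GPref}, to an additive relation for the closed form $\mathrm{curv}(P)$ over three squares in $LX$, which I then realise as a Stokes boundary. By the functors-versus-forms correspondence (Theorem \ref{th:functorsvsforms}), a $2$-form on a diffeological space is determined by the function $\Sigma \mapsto \exp(-\int_{[0,1]^2}\Sigma^{*}(\cdot))$ it induces on bigons; hence it suffices to verify, for every bigon $\Xi \in B(\px Xx^{[2]})$, that
\begin{equation*}
\exp\!\left(-\!\int_{[0,1]^2}\!\Xi^{*}(\mathrm{pr}_2^{*}B_P - \mathrm{pr}_1^{*}B_P)\right) = \exp\!\left(-\!\int_{[0,1]^2}\!\Xi^{*}\mathrm{curv}(l^{*}P)\right)\text{.}
\end{equation*}
Because the whole comparison lives inside such exponentials, only an equality modulo the period lattice $\ker(\exp)$ is needed, which is exactly what the bigon functions record.

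Writing $\Xi_i \df \mathrm{pr}_i\circ\Xi \in B\px Xx$, the defining property of $B_P$ identifies the left-hand side as $G_P(\Xi_2)\cdot G_P(\Xi_1)^{-1}$, and Lemma \ref{lem:GPref} rewrites each $G_P(\Xi_i)$ as $\exp(-\int_{[0,1]^2} h_{\Xi_i}^{*}\mathrm{curv}(P))$, with $h_{\Sigma}(\sigma,t)\df\gamma_{\Sigma_\sigma}(t)$ the square from that lemma. On the right, naturality of curvature gives $\mathrm{curv}(l^{*}P)=l^{*}\mathrm{curv}(P)$, so that $\Xi^{*}\mathrm{curv}(l^{*}P) = (l\circ\Xi)^{*}\mathrm{curv}(P)$. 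Thus the claim is reduced, modulo $\ker(\exp)$, to the additive relation
\begin{equation*}
\int_{[0,1]^2} h_{\Xi_2}^{*}\mathrm{curv}(P) - \int_{[0,1]^2} h_{\Xi_1}^{*}\mathrm{curv}(P) = \int_{[0,1]^2}(l\circ\Xi)^{*}\mathrm{curv}(P)
\end{equation*}
between integrals of the closed form $\mathrm{curv}(P)$ over three maps $[0,1]^2\to LX$.

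The heart of the proof is to exhibit this relation as the Stokes boundary of a single rank-two family. Since $\mathrm{curv}(P)$ is closed, its integral over a square is additive under gluing, invariant under rank-two homotopies rel boundary, and vanishes on any face along which the map into $LX$ has rank at most one (a $2$-form pulls back to zero there). I would therefore construct an explicit $H\maps [0,1]^3\to LX$ whose three essential faces reproduce $h_{\Xi_1}$, $h_{\Xi_2}$ and $l\circ\Xi$ with the correct orientations, whose remaining faces either have rank one or cancel in pairs, and whose adjoint $\exd H$ has rank two. The comparison of the underlying loops is exactly the one used in the vertical-composition step (step 3) of the proof of Lemma \ref{lem:reconcurving}, where the loops $Pl(Pe_{ij}(\beta))$ were linked by rank-two homotopies. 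Applying Stokes' Theorem to $\int_{[0,1]^3}H^{*}\mathrm{d}\,\mathrm{curv}(P)=0$ and discarding the vanishing faces then yields precisely the displayed relation.

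The main obstacle is the construction of the cube $H$ together with the verification of the rank and sitting-instant conditions on its faces. The squares $h_{\Xi_i}$ and $l\circ\Xi$ are built from different combinations of the reparametrisations $\iota_1,\iota_2$, the composition $l$, and the middle path $\Sigma^m$, so assembling them into one rank-two family requires patching rank-one and rank-two pieces while respecting sitting instants, in the manner of \cite[Lemma 2.1.3]{waldorf9}. Once $H$ is arranged so that the degenerate faces genuinely have rank at most one and the loop-rotation conventions match those in $\gamma_\Sigma$, the remainder is the routine Stokes computation above.
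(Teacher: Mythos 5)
Your opening reduction is correct and in fact a little more direct than the paper's: testing \erf{eq:curvcond} against bigons $\Xi\in B(\px Xx^{[2]})$ via Theorem \ref{th:functorsvsforms}, writing $\mathrm{curv}(l^{*}P)=l^{*}\mathrm{curv}(P)$, and applying Lemma \ref{lem:GPref} does reduce the lemma to the additive relation you display between the integrals of $\mathrm{curv}(P)$ over $h_{\Xi_1}$, $h_{\Xi_2}$ and $l\circ\Xi$ (the paper reaches essentially the same relation, phrased as \erf{eq:curvcond2}, by pulling everything back to the total space $Z=l^{*}P$). The gap is in your final step. That relation is \emph{not} a consequence of $\mathrm{d}\,\mathrm{curv}(P)=0$ together with rank-one degeneracy of residual faces, and no cube $H$ of the kind you describe exists: if it did, your argument would prove the relation with $\mathrm{curv}(P)$ replaced by an arbitrary closed $2$-form on $LX$, and that is false. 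Already for a diagonal bigon with $\Xi_1=\Xi_2$ one has $h_{\Xi_1}=h_{\Xi_2}$, so your relation forces $\int_{[0,1]^2}(l\circ\Xi)^{*}\mathrm{curv}(P)\in\ker(\exp)$ for a square of loops of the form $\Delta(\gamma)=l(\gamma,\gamma)$; this holds only because $\Delta^{*}P$ carries the flat section $\can$ of Lemma \ref{lem:fusbuntrivflat}, i.e.\ because the connection is compatible with the fusion product, and it fails for a generic closed $2$-form (e.g.\ one pulled back along an evaluation map $LX\to X$). More generally, at each $(\sigma,t)$ the loop swept out by $h_{\Xi_2}$ is, up to thin homotopy and the $\sigma$-independent (hence harmless) loop $\tau^{o}(t)$, the \emph{concatenation} of the loops swept out by $l\circ\Xi$ and $h_{\Xi_1}$; converting pointwise concatenation of loops into additivity of curvature integrals is precisely the fusion relation $e_{12}^{*}\mathrm{curv}(P)+e_{23}^{*}\mathrm{curv}(P)=e_{13}^{*}\mathrm{curv}(P)$ over $PX^{[3]}$, which your argument never invokes.

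This is exactly the input the paper's proof supplies: it pulls the four-term identity \erf{eq:curvadd} --- a consequence of $\lambda$ being connection-preserving --- back along $\beta\colon[0,1]^2\to PX^{[4]}$ and identifies the four terms. Two of them are $G_P(\Xi_1)$ and $G_P(\Xi_2)^{-1}$ by Lemma \ref{lem:GPref}; the cross term $C_{23}$ is a curvature integral over a square that is \emph{not} rank one as a map into $LX$, and is shown to be trivial by Stokes together with \emph{both} conditions of superficiality (its boundary loop is rank-two-homotopic to a thin loop); and $C_{14}$ is matched with the holonomy around $\prev{\tau^{u}}\pcomp\tau^{o}$ via another rank-two homotopy. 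So both the compatibility and the superficiality of the connection are load-bearing hypotheses in this lemma, and a repaired version of your argument has to let them in precisely at the faces where your cube refuses to close.
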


\begin{proof}
Consider $Z \df  l^{*}P = \px Xx^{[2]} \lli{l}\times_p P$ with its two projections $p:Z \to \ptx Xx^{[2]}$ and $\tilde l\maps Z \to P$. The identity \erf{eq:curvcond} we have to prove is an equation over $\ptx Xx^{[2]}$, but we can equivalently check its pullback  along the subduction $p$, which is
\begin{equation}
\label{eq:curvZ}
p^{*}\mathrm{pr}_2^{*}B_P - p^{*}\mathrm{pr}_1^{*}B_P = \tilde l^{*}\mathrm{d}\omega_P
\text{,}
\end{equation}
where $\omega_P$ is the connection 1-form on $P$. 

Let $F\maps PP \to A$ be the smooth map corresponding to $\omega_P$ (see Appendix \ref{app:functorsandforms}). Looking at \erf{eq:morfun} we see that \erf{eq:curvZ} is true if and only if $\tilde l^{*}F$ is a 1-morphism in $\tfun ZA$ going from $p^{*}\mathrm{pr}_2^{*}G_P$ to $p^{*}\mathrm{pr}^{*}G_P$. This means, in turn, that Equation \erf{eq:pseudotrans} must be satisfied:
\begin{equation}
\label{eq:axiom2fun}
F(\Sigma_P(1)) \cdot G(\Sigma_1)  = G(\Sigma_{2}) \cdot F(\Sigma_P(0))\text{,}
\end{equation}
where we have written $\Sigma_P \df  B\tilde l(\Sigma)\in B P$ and $\Sigma_i \df  B\mathrm{pr}_i(Bp(\Sigma))\in B\px Xx$ for $i=1,2$. We consider two  paths $\tau^u,\tau^o$ in $L X$ (see Figure \ref{fig:bigonindoublepathspace}).
\begin{figure}
\begin{center}
\hspace{-2cm}
\includegraphics{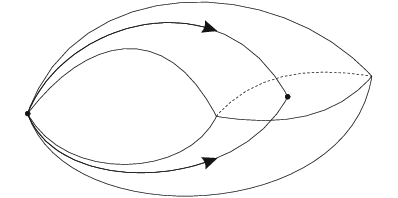}\setlength{\unitlength}{1pt}\begin{picture}(0,0)(268,366)\put(77.72655,398.03046){$x$}\put(221.78806,410.17890){$\Sigma$}\end{picture}
\quad
\includegraphics{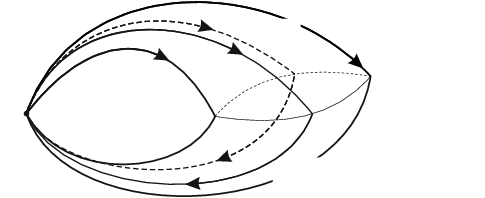}\setlength{\unitlength}{1pt}\begin{picture}(0,0)(73,366)\put(-159.87345,398.03046){$x$}\put(-90.97512,424.59392){$\kappa_0$}\put(-39.94866,443.25978){$\tau^o(t)$}\put(15.99591,427.67348){$\kappa_1$}\put(-29.35786,376.04069){$\tau^u(t)$}\end{picture}
\hspace{-4cm}\strut
\end{center}
\caption{On the left is a bigon in $\px Xx^{[2]}$: it can be seen as a bigon in $X$ which has for each of its points two   paths connecting it with the base point $x$. On the right it is shown how such pairs of paths can be combined to a loop.}
\label{fig:bigonindoublepathspace}
\end{figure}
They are given by
\begin{equation*}
\tau^o(t) \df  l(Bp(\Sigma)(0)(t))
\quand
\tau^u(t) \df  l(Bp(\Sigma)(1)(t))\text{.}
\end{equation*}
They start both at a common loop $\kappa_0$ and end at a common loop $\kappa_1$.
Below we prove \erf{eq:axiom2fun} by showing in two steps that
\begin{equation}
\label{eq:curvcond2}
F(\Sigma_P(0)) \cdot F(\Sigma_P(1))^{-1} = \mathrm{Hol}_P(\prev{\tau^u} \pcomp \tau^o)^{-1} =G(\Sigma_1) \cdot G(\Sigma_2)^{-1} \text{.}
\end{equation}

For the  equality on the left hand side of  \erf{eq:curvcond2} we use the following general fact: if $\gamma\in LY$ is a loop in some diffeological space $Y$,  $P$ is a principal $A$-bundle with connection over $Y$, and $\gamma$ lifts to a \emph{loop} $\tilde\gamma\in LP$, then $\mathrm{Hol}_P(\gamma) = F(\tilde\gamma)^{-1}$, where $F\maps PP \to A$ is the smooth map that corresponds to the connection.
 Here, with $Y=L X$, we have
\begin{equation*}
F(\Sigma_P(0)) \cdot F(\Sigma_P(1))^{-1} = F(\prev{\Sigma_P(1)}\pcomp \Sigma_P(0)) = \mathrm{Hol}_P(\prev{\tau^u} \pcomp \tau^o)^{-1} \text{,}
\end{equation*}
where the first equality is the functorality of $F$, and the second is the above mentioned general fact in combination with the definitions of $\tau^{o}$, $\tau^{u}$ and $\Sigma_P$.

For the equality on the right hand side of  \erf{eq:curvcond2} we use the reformulation of $G_P$ in terms of the curvature of $P$ given by Lemma \ref{lem:GPref}. We consider the smooth map $\beta\maps [0,1]^2 \to PX^{[4]}$ given by
\begin{equation*}
\beta(s,t) \df  (\id \pcomp \Sigma_1(s,t) \;,\;  \Sigma^{m}_{1,s}(t) \pcomp \Sigma^o_1(t) \; ,\; \Sigma^m_{2,s}(t) \pcomp \Sigma_2^o(t) \;,\; \id \pcomp \Sigma_2(s,t))\text{.}
\end{equation*}
Here, $\Sigma_{i,\sigma}$ the bigon defined by $\Sigma_{i,\sigma}(s,t) \df  \Sigma_i(\sigma \phi(s), t)$, where $\phi$ is a smoothing function. 
The four components of $\beta(s,t)$ are paths starting at $x$ and ending at $\Sigma_1(s,t)(1) = \Sigma_2(s,t)(1)$. Over $PX^{[4]}$, we infer from the fact that the fusion product is connection-preserving the formula
\begin{equation}
\label{eq:curvadd}
e_{12}^{*}\mathrm{curv}(P) + e_{23}^{*}\mathrm{curv}(P) + e_{34}^{*}\mathrm{curv}(P) = e_{14}^{*}\mathrm{curv}(P)\text{,}
\end{equation}
as well as $e_{ij}^{*}\mathrm{curv}(P) = -e_{ji}^{*}\mathrm{curv}(P)$.
Now we pull \erf{eq:curvadd} back along $\beta$ to $[0,1]^2$ and integrate. To abbreviate the notation, we write
\begin{equation*}
C_{ij} \df  \exp \left ( \int_{[0,1]^2} \beta^{*}e_{ij}^{*}\mathrm{curv}(P) \right ) \text{.}
\end{equation*}
We identify the four terms as follows.
\begin{enumerate}[(i)]
\item 
With Lemma \ref{lem:GPref}, we see that $C_{12} = G_P(\Sigma_1)$ and $C_{34}= G_P(\Sigma_2)^{-1}$.

\item
Consider for a general diffeological space $Y$ the map $\partial\maps BY \to LY$ that sends a bigon $\Sigma \in BY$ to the loop $\partial\Sigma \df  \prev{\Sigma(0)}\pcomp \Sigma(1)$, which goes counter-clockwise around  $\Sigma$ \cite[Section 3.2]{schreiber5}. By Stokes' Theorem, we have $C_{23} = \mathrm{Hol}_{P}(\partial(e_{23}\circ \beta))$, where $Y=LX$ in this case. Consider the family of bigons $h\in PBLX$ defined by
\begin{equation*}
\exd h\maps [0,1]^3 \to LX\maps (\sigma,s,t) \mapsto l(\Sigma^{m}_{1,\sigma s}(t) \pcomp \Sigma^o_1(t) \; ,\; \Sigma^m_{2,\sigma s}(t) \pcomp \Sigma_2^o(t))\text{,} \end{equation*}
which is a rank two map. Thus, $P\partial(h) \in PLLX$ is a homotopy between a loop $P\partial(h)(0) \in LLX$ and $\partial(e_{23}\circ \beta)$. Notice that $P\partial(h)(0)$ is  a thin loop. Thus, since the connection $P$ is superficial,
\begin{equation*}
C_{23} =\mathrm{Hol}_{P}(\partial(e_{23}\circ \beta)) =\mathrm{Hol}_{P}(P\partial(h)(0))=1\text{.}
\end{equation*} 

\item
Similar to (ii), we apply Stokes' Theorem and have $C_{14} = \mathrm{Hol}_P(\partial(e_{14}\circ \beta))$. Here, the loop $\partial(e_{14}\circ \beta)$ is rank-two-homotopic to the loop $\prev{\tau^{o}} \pcomp \tau^{u}$. Thus,
\begin{equation}
C_{14} = \mathrm{Hol}_P(\partial(e_{14}\circ \beta)) = \mathrm{Hol}_P(\prev{\tau^{o}} \pcomp \tau^{u})\text{.}
\end{equation}

\end{enumerate}
Identifications (i), (ii) and (iii) together with \erf{eq:curvadd} show that 
\begin{equation*}
G_P(\Sigma_1) \cdot G_P(\Sigma_2)^{-1} = \mathrm{Hol}_P(\prev{\tau^{u}} \pcomp \tau^{o})^{-1}\text{.}
\end{equation*}
This is the second part of the proof of \erf{eq:curvcond2}.
\end{proof}

This completes the construction of a connection on  $\mathcal{F}(l^{*}P)$, and we denote by $\uncon_x(P)$ the resulting diffeological $A$-bundle gerbe with  connection. Notice that
 Lemma \ref{lem:GPref} implies one part of  Corollary \ref{co:flat}:
\begin{proposition}
\label{prop:flatreg}
If the connection on $P$ is a flat,  the bundle gerbe $\uncon_x(P)$ is flat. 
\end{proposition}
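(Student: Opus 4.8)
The plan is to reduce flatness of $\uncon_x(P)$ to the vanishing of its curving $B_P$, and then to read off the latter directly from Lemma \ref{lem:GPref}. Recall that the curvature of a diffeological $A$-bundle gerbe with connection is the unique 3-form whose pullback along the subduction equals $\mathrm{d}B$, where $B$ is the curving. For $\uncon_x(P)$ the subduction is $\ev_1\maps \px Xx \to X$ and the curving is the 2-form $B_P \in \Omega^2_{\mathfrak{a}}(\px Xx)$ constructed in Section \ref{sec:reconcon}. Since pullback along a subduction is injective on differential forms, flatness of $\uncon_x(P)$ is equivalent to $\mathrm{d}B_P = 0$, and it therefore suffices to establish the stronger statement $B_P = 0$.

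First I would invoke Lemma \ref{lem:GPref}, which expresses the map $G_P\maps B\px Xx \to A$ entirely in terms of the curvature of $P$: for every bigon $\Sigma$ one has $G_P(\Sigma) = \exp(-\int_{[0,1]^2} h^{*}\mathrm{curv}(P))$, where $h$ is the homotopy associated to $\Sigma$. If the connection on $P$ is flat, then $\mathrm{curv}(P)=0$, and this formula immediately yields $G_P(\Sigma) = 1$ for all $\Sigma \in B\px Xx$.

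Next I would use the characterization of $B_P$ furnished by Theorem \ref{th:functorsvsforms}, namely that $B_P$ is the unique 2-form satisfying $G_P(\Sigma) = \exp(-\int_{[0,1]^2}\Sigma^{*}B_P)$ for all bigons $\Sigma$. The constant function $G_P \equiv 1$ is likewise represented by the zero 2-form, since $\exp(-\int_{[0,1]^2}\Sigma^{*}0) = 1$; by the uniqueness half of this correspondence, $B_P = 0$. Consequently $\mathrm{d}B_P = 0$, so the pullback along $\ev_1$ of $\mathrm{curv}(\uncon_x(P))$ vanishes, and injectivity gives $\mathrm{curv}(\uncon_x(P)) = 0$. (As a consistency check, with $B_P=0$ the curving condition \erf{eq:curvcond} reduces to $\mathrm{curv}(l^{*}P)=0$, which holds since $l^{*}P$ inherits the flat connection from $P$.)

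The substantive work is already contained in Lemma \ref{lem:GPref}, whose reformulation of $G_P$ through $\mathrm{curv}(P)$ rests on Stokes' Theorem together with the flatness of the canonical section $\can$ on constant loops from Lemma \ref{lem:fusbuntrivflat}. Granting that lemma, the present statement is essentially immediate, so I anticipate no genuine obstacle; the only point requiring a moment's care is the passage from $G_P \equiv 1$ to $B_P = 0$, which invokes the uniqueness in Theorem \ref{th:functorsvsforms} rather than any fresh computation.
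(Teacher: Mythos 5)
Your argument is correct and is precisely the one the paper intends: the proposition is stated immediately after Lemma \ref{lem:GPref} with the remark that it follows from that lemma, and the chain $\mathrm{curv}(P)=0 \Rightarrow G_P\equiv 1 \Rightarrow B_P=0$ (by the uniqueness in Theorem \ref{th:functorsvsforms}) $\Rightarrow \mathrm{d}B_P=0 \Rightarrow \mathrm{curv}(\uncon_x(P))=0$ is exactly the intended reasoning. Your explicit handling of the uniqueness step and the passage from $\mathrm{d}B_P=0$ to the vanishing of the gerbe curvature merely spells out what the paper leaves implicit.
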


It remains to discuss the functorality of the connection on regressed bundle gerbes. 
Suppose  $\varphi\maps P_1 \to P_2$ is an isomorphism between fusion bundles with connection. We have to equip the 1-isomorphism $\un_x(\varphi) = \mathcal{F}(l^{*}\varphi)$ with a connection compatible with the connections on $\uncon_x(P_1)$ and $\uncon_x(P_2)$. Since the principal $A$-bundle of the 1-isomorphism $\un_x(\varphi)$ is $l^{*}P_2$, it has already a connection, and its isomorphism (which is a composition of $\lambda_2$ and $\varphi$) is connection-preserving. It remains to check the condition for the curvings:
\begin{equation}
\label{eq:curvcond3}
\mathrm{pr}_2^{*}(B_{P_2}) - \mathrm{pr}_1^{*}(B_{P_1}) = \mathrm{curv}(l^{*}P_2)\text{.}
\end{equation}
Indeed, since $P_1$ and $P_2$ are isomorphic as fusion bundles with connection, they have the same curvature and thus $B_{P_1}=B_{P_2}$ by Lemma \ref{lem:GPref}. But then, \erf{eq:curvcond3} coincides with  \erf{eq:curvcond} for $P_2$ and is hence proved by Lemma \ref{lem:reconcurving2}. Thus we have defined a 1-isomorphism $\uncon_x(\varphi)$ in the 2-category $\diffgrbcon AX$. 

Finally, we have to check that the connections on the 1-isomorphisms $\uncon_x(\varphi)$ compose well, i.e. that the \quot{compositor} 2-isomorphism $\uncon_x(\psi) \circ \uncon_x(\varphi) \Rightarrow \uncon_x(\psi \circ \varphi)$ associated to  isomorphisms $\varphi:P_1 \to P_2$ and $\psi:P_2 \to P_3$  is connection-preserving. This condition only concerns the connections on the bundles (not the 2-forms $B_{P_i}$), and it follows from the condition that  $\psi$ and the fusion product $\lambda_3$ are connection-preserving isomorphisms. 
This finishes the construction of the regression functor $\uncon_x$. It is straightforward to see that all constructions are natural with respect to smooth, base point-preserving maps.

\subsection{Holonomy of the Reconstructed Gerbe}

\label{sec:holreg}

It is  interesting to express the surface holonomy of the regressed bundle gerbe $\uncon_x (P)$ in terms of the  connection  and the  fusion product on $P$. Below we provide formulas for cylinders, discs, and pairs of pants. Combining these by means of the gluing formula of Lemma \ref{lem:dbraneholprop} (c) yields the holonomy of a general surface.

The present section is purely complementary; its results are not used anywhere else in this paper. We work over a connected smooth manifold $M$.
We use some results developed for the proof of Theorem \ref{th:con} in Section \ref{sec:trafterreg}. There, we construct an element $p_{\mathcal{T}} \in P$
in the fibre over a loop $\tau$ from a given trivialization of $\tau^{*}\uncon_x(P)$, see Lemma \ref{lem:pTindep}. The first computation concerns the surface holonomy of $\uncon_x(P)$ around a cylinder.

\begin{proposition}
\label{prop:partransreg}
Let $\gamma\in PLM$ be a path and let $\exd\gamma\maps [0,1] \times S^1 \to M$ be its adjoint map, i.e. $\exd\gamma(t,z) \df  \gamma(t)(z)$.  Let $\mathcal{T}_0$ and $\mathcal{T}_1$ be trivializations of $\uncon_x(P)$ over the loops $\gamma(0)$ and $\gamma(1)$, respectively. Then, 
\begin{equation*}
\ptr{\gamma} (p_{\mathcal{T}_0})  = p_{\mathcal{T}_1} \cdot \mathscr{A}(\exd\gamma,\mathcal{T}_0,\mathcal{T}_1)\text{,} \end{equation*}
where $\mathscr{A}$ is the surface holonomy of $\uncon_x(P)$, and $\tau_{\gamma}$ is the parallel transport in $P$.
\end{proposition}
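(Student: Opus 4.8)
The plan is to compare the two elements $\ptr{\gamma}(p_{\mathcal{T}_0})$ and $p_{\mathcal{T}_1}$, both of which lie in the fibre $P_{\gamma(1)}$, by computing their ratio in $A$ and identifying it with the three factors of the surface-holonomy formula \erf{eq:holtriv}. First I would choose a global trivialization $\mathcal{S}\maps \exd\gamma^{*}\uncon_x(P) \to \mathcal{I}_{\rho}$ over the cylinder $C \df [0,1]\times S^1$; this exists by Lemma~\ref{lem:trivexist}, since $C$ is homotopy equivalent to $S^1$ and hence has $\h^3_{\mathrm{dR}}(C)=0$ and $\h^2(C,A)=0$. Restricting $\mathcal{S}$ to the loop $\gamma(t)$ for each $t$ gives a trivialization $\mathcal{S}_t$ of $\gamma(t)^{*}\uncon_x(P)$, and by Lemma~\ref{lem:pTindep} a corresponding element $p_{\mathcal{S}_t} \in P_{\gamma(t)}$. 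These assemble into a smooth lift $t \mapsto p_{\mathcal{S}_t}$ of the path $\gamma$ into $P$.

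The heart of the argument is to show that this lift differs from a horizontal lift exactly by the curving integral, i.e.
\[
\ptr{\gamma}(p_{\mathcal{S}_0}) = p_{\mathcal{S}_1} \cdot \exp\left( \int_C \rho \right)\text{.}
\]
Here I would invoke the construction of the curving $B_P$ from Section~\ref{sec:reconcon}: by \erf{eq:curvcond} and Lemma~\ref{lem:reconcurving2} the curvature of $P$ is controlled by $B_P$, while Lemma~\ref{lem:GPref} expresses the relevant parallel transport as the exponential of an integral of $\mathrm{curv}(P)$ over a homotopy square. Since $\mathcal{S}$ trivializes the gerbe over the whole cylinder, one has $\exd\gamma^{*}\mathrm{curv}(\uncon_x(P)) = \mathrm{d}\rho$, so that an application of Stokes' Theorem over $C$ converts the integral of $\mathrm{curv}(P)$ along the family into $\int_C \rho$. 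This is the step in which the precise normalisation and the sitting-instant bookkeeping of Section~\ref{sec:reconcon} must be handled with care, and I expect it to be the main obstacle.

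Finally I would pass from the global restrictions $\mathcal{S}_i$ back to the prescribed boundary data $\mathcal{T}_i$. By Lemma~\ref{lem:diffgrbcon}~(iii) there are flat bundles $P_i$ over the two boundary circles $b_0,b_1$ with $\mathcal{T}_i \otimes P_i \cong \mathcal{S}_i$, and the assignment $\mathcal{T} \mapsto p_{\mathcal{T}}$ is equivariant under tensoring by such flat bundles, giving $p_{\mathcal{S}_i} = p_{\mathcal{T}_i} \cdot \mathrm{Hol}_{P_i}(b_i)^{s_i}$, where the signs $s_0,s_1$ are fixed by the opposite induced orientations of $b_0$ and $b_1$ as parts of $\partial C$. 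Substituting these into the displayed identity, using equivariance of parallel transport and commutativity of $A$, and comparing with \erf{eq:holtriv} — whose three factors are exactly $\exp(\int_C\rho)$ and the two boundary holonomies $\mathrm{Hol}_{P_i}(b_i)^{-1}$ — yields $\ptr{\gamma}(p_{\mathcal{T}_0}) = p_{\mathcal{T}_1} \cdot \mathscr{A}(\exd\gamma,\mathcal{T}_0,\mathcal{T}_1)$, as claimed. As an alternative to the direct route, one could identify $L(\uncon_x(P))$ with $P$ via the results of Section~\ref{sec:trafterreg} and read off the statement from Proposition~\ref{prop:connectionprop1} together with the parallel-transport prescription \erf{eq:partransLG}, but this presupposes that the identification is connection-preserving and is therefore less self-contained.
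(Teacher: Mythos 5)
Your overall architecture --- trivialize over the whole cylinder, compute the bulk term, then correct by boundary holonomies via \erf{eq:holtriv} and the equivariance $p_{\mathcal{T}\otimes P_a}=p_{\mathcal{T}}\cdot a$ --- is reasonable, and the reduction from general $\mathcal{T}_i$ to the restrictions $\mathcal{S}_i$ of a global trivialization is indeed routine (modulo the orientation signs you leave open). The gap is in your ``heart of the argument'': the identity $\ptr{\gamma}(p_{\mathcal{S}_0})=p_{\mathcal{S}_1}\cdot\exp\left(\int_C\rho\right)$ is essentially the proposition itself in the special case where the boundary bundles $P_i$ are trivial, and the justification you offer for it does not go through. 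Lemma \ref{lem:GPref} computes $G_P(\Sigma)$ for a \emph{bigon} $\Sigma$ in $\px Mx$; it is a statement about the defining data of the curving $B_P$, not about the parallel transport of $P$ along the path $t\mapsto p_{\mathcal{S}_t}$ in $LM$. The elements $p_{\mathcal{S}_t}$ arise from the construction of Lemma \ref{lem:pTindep} (cutting the loop in half, transporting in the bundle $T$ of the trivialization along $\alpha$, applying $\tau$, then a thin homotopy), and there is no surface in $LM$ bounded by $\gamma$ over which one could integrate $\mathrm{curv}(P)$ and compare with $\int_C\rho$ by Stokes: $\rho$ lives on $C$ and $\mathrm{curv}(P)$ on $LM$, and building the bridge between them is precisely the content that is missing.

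The paper builds that bridge differently. It cuts the cylinder open into a square $Q$, lifts the resulting map to a section $\tilde\Phi\maps Q \to \px Mx$ of the subduction of $\uncon_x(P)$, and works with the induced trivialization of Lemma \ref{lem:sectriv} (so $\rho=\tilde\Phi^{*}B_P$) rather than an arbitrary one. For \emph{this} trivialization the bulk term $\exp\left(\int_Q\tilde\Phi^{*}B_P\right)$ is shown to equal $1$ (Lemma \ref{lem:calcterms} (i), using superficiality and the flat section over the constant loops), and all the content sits in the corner and boundary bookkeeping of \erf{eq:trivrep}, which produces the factor $\mathrm{PT}(\sigma_u^{*}P,\cdot,\cdot)$ of Lemma \ref{lem:pTconnpres}; that factor is then identified with the parallel transport $\ptr{\gamma}$ by a rank-two homotopy between $\gamma$ and $\sigma_u$ (Lemma \ref{lem:calcterms} (ii)). Your fallback route via Proposition \ref{prop:connectionprop1} is, as you suspect, circular: the fact that $\varphi_P$ is connection-preserving (Proposition \ref{prop:varphiconpres}) is itself deduced from Lemmas \ref{lem:pTconnpres} and \ref{lem:calcterms}, i.e. from the statement you are trying to prove.
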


The proposition is proved by Lemmata \ref{lem:pTconnpres} and \ref{lem:calcterms}. Applying Proposition \ref{prop:partransreg} to a loop in $LM$, we obtain

\begin{corollary}
\label{co:holregtorus}
Let $\tau \in LLM$ be a loop in $LM$, and let $\exd\tau:S^1 \times S^1 \to M$ be its adjoint map, i.e. $\exd\tau(t,s) = \tau(t)(s)$. Then,
$\mathrm{Hol}_{\uncon_x(P)}(\exd\tau) = \mathrm{Hol}_P(\tau)^{-1}$.
\end{corollary}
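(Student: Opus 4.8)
The plan is to deduce the corollary directly from Proposition \ref{prop:partransreg} by specializing it to the case of a \emph{closed} path in $LM$, and then to repackage the resulting cylinder holonomy as a torus holonomy via the gluing law. First I would represent the loop $\tau \in LLM$ by a path $\gamma \in PLM$: reparametrizing $S^1 \cong [0,1]/\{0 \sim 1\}$ and inserting sitting instants, I obtain $\gamma$ with $\gamma(0) = \gamma(1) = \tau(0) =: \beta$, whose adjoint $\exd\gamma \maps [0,1] \times S^1 \to M$ agrees with $\exd\tau$ up to an orientation-preserving reparametrization of the cylinder. Since parallel transport depends only on the thin homotopy class of a path, $\ptr{\gamma} = \ptr{\tau}$ as automorphisms of $P_{\beta}$, and since $\mathscr{A}$ is invariant under orientation-preserving diffeomorphisms (Lemma \ref{lem:dbraneholprop} (d)), the holonomy factors computed for $\exd\gamma$ coincide with those for $\exd\tau$; these reparametrization details are routine.

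Next I would choose a single trivialization $\mathcal{T}$ of $\uncon_x(P)$ over the common loop $\beta$, and set $\mathcal{T}_0 = \mathcal{T}_1 = \mathcal{T}$ in Proposition \ref{prop:partransreg}. Then $p_{\mathcal{T}_0} = p_{\mathcal{T}_1} = p_{\mathcal{T}} \in P_{\beta}$, and the proposition yields
\[
\ptr{\tau}(p_{\mathcal{T}}) = p_{\mathcal{T}} \cdot \mathscr{A}(\exd\tau, \mathcal{T}, \mathcal{T})\text{.}
\]
Because $\tau$ is a loop, the left-hand side is governed by holonomy rather than mere parallel transport: by the definition of holonomy in diffeological bundles \cite[Definition 3.2.11]{waldorf9} (the general fact recalled in the proof of Lemma \ref{lem:reconcurving2}), one has $\ptr{\tau}(p_{\mathcal{T}}) = p_{\mathcal{T}} \cdot \mathrm{Hol}_P(\tau)^{-1}$. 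Comparing the two expressions and cancelling the $A$-torsor element $p_{\mathcal{T}}$ gives $\mathscr{A}(\exd\tau, \mathcal{T}, \mathcal{T}) = \mathrm{Hol}_P(\tau)^{-1}$.

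Finally I would identify the cylinder holonomy $\mathscr{A}(\exd\tau, \mathcal{T}, \mathcal{T})$ with the closed-surface holonomy around the torus. Cutting the torus $S^1 \times S^1$ along the simple loop corresponding to $\gamma(0) = \gamma(1)$ produces the cylinder $\exd\gamma$, whose two new boundary components carry the identical map $\beta$ to $M$ and hence the identical trivialization $\mathcal{T}$. The gluing law Lemma \ref{lem:dbraneholprop} (c) then states exactly that $\mathrm{Hol}_{\uncon_x(P)}(\exd\tau) = \mathscr{A}(\exd\tau, \mathcal{T}, \mathcal{T})$. Combining this with the identity of the previous paragraph proves $\mathrm{Hol}_{\uncon_x(P)}(\exd\tau) = \mathrm{Hol}_P(\tau)^{-1}$.

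The only genuine subtlety — and where I would expect the main care to be needed — is bookkeeping the inverse conventions: the holonomy-versus-parallel-transport relation contributes one inversion, and I must ensure it is neither duplicated nor silently cancelled against the sign conventions already built into $\mathscr{A}$ and into Proposition \ref{prop:partransreg}. All the analytic content (smoothness of $\mathscr{A}$, the curving identity \erf{eq:curvcond}, and the superficiality inputs) is already absorbed into Proposition \ref{prop:partransreg} and Lemma \ref{lem:dbraneholprop}, so the corollary itself is essentially formal once the gluing law and the definition of holonomy are invoked.
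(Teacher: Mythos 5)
Your proposal is correct and takes essentially the same route as the paper, which obtains the corollary by directly applying Proposition \ref{prop:partransreg} to a closed path in $LM$ with $\mathcal{T}_0=\mathcal{T}_1$. The paper leaves the gluing-law step and the holonomy-versus-parallel-transport convention implicit; your write-up simply makes those routine pieces of bookkeeping explicit.
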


Next we discuss briefly the surface holonomy of a disc $D$. Since it is of no further relevance for this article, we shall only present the result. 
There is a path $\delta \in PLD$ -- unique up to rank-two-homotopy -- that starts with the constant loop $\delta(0) \df  \id_{x}$ at some boundary point $x\in \partial D$, and ends with a loop $\delta(1)$ which is an orientation-preserving  parameterization of $\partial D$.

\begin{proposition}
\label{prop:holdisc}
Let $\phi\maps D \to M$ be a smooth map, let $\beta \df  L\phi(\delta(1)) \in LM$ be the boundary loop in the parameterization given by the path $\delta$, and let $\mathcal{T}$ be a trivialization of $\uncon_x (P)$ over $\beta$. Then, 
\begin{equation*}
p_{\mathcal{T}} \cdot \mathscr{A}(\phi,\mathcal{T}) = \ptr{PL\phi(\delta)}(\can(\id_{\phi(x)}))  \text{,}
\end{equation*}
where $\mathscr{A}(\phi,\mathcal{T})$ is the surface holonomy of  $\uncon_x (P)$,  $\tau_{PL\phi(\delta)}$ is the parallel transport in $P$ along the path $PL\phi(\delta) \in PLM$, and $\can$ is the canonical section of $P$ over the constant loops.
\end{proposition}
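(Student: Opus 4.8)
The plan is to realize the disc holonomy as a degenerate cylinder holonomy and then feed it into the cylinder formula of Proposition~\ref{prop:partransreg}. Set $\gamma \df PL\phi(\delta) \in PLM$. By construction of $\delta$ this is a path in $LM$ running from the constant loop $\id_{\phi(x)}$ to the boundary loop $\beta$, and its adjoint $\exd\gamma(t,z) = \phi(\delta(t)(z))$ sends the inner circle $\{0\}\times S^1$ entirely to the point $\phi(x)$, since $\delta(0)$ is the constant loop and $\delta$ has sitting instants. Over $\id_{\phi(x)}$ I would take the canonical trivialization $\mathcal{T}_0$, chosen so that the element $p_{\mathcal{T}_0}$ produced by Lemma~\ref{lem:pTindep} is exactly the value $\can(\id_{\phi(x)})$ of the flat section from Lemma~\ref{lem:fusbuntrivflat} on constant loops; identifying these two is the first thing to verify, and it is forced by the construction of $p_\mathcal{T}$ together with the fact that a fusion bundle restricts canonically and flatly to the constant loops.

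With $\mathcal{T}_0$ and $\mathcal{T}_1 \df \mathcal{T}$ in hand, Proposition~\ref{prop:partransreg} applied to $\gamma$ gives
\begin{equation*}
\ptr{\gamma}(\can(\id_{\phi(x)})) = \ptr{\gamma}(p_{\mathcal{T}_0}) = p_{\mathcal{T}} \cdot \mathscr{A}(\exd\gamma, \mathcal{T}_0, \mathcal{T})\text{,}
\end{equation*}
so the entire statement reduces to the geometric identity $\mathscr{A}(\exd\gamma, \mathcal{T}_0, \mathcal{T}) = \mathscr{A}(\phi, \mathcal{T})$ between the cylinder holonomy and the disc holonomy.

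To prove this identity I would cap the inner boundary of the cylinder $C \df [0,1]\times S^1$. Let $\hat D$ be $C$ with $\{0\}\times S^1$ filled in by a disc $D_0$, and extend $\exd\gamma$ to $\psi\maps \hat D \to M$ by the constant value $\phi(x)$ on $D_0$; this is smooth because $\exd\gamma$ is already constant near the inner circle. Cutting $\hat D$ along the interior circle $\{0\}\times S^1$ and applying the gluing law of Lemma~\ref{lem:dbraneholprop} (c), together with the factorization of holonomy over the two resulting components $C$ and $D_0$, yields
\begin{equation*}
\mathscr{A}(\psi, \mathcal{T}) = \mathscr{A}(\exd\gamma, \mathcal{T}_0, \mathcal{T}) \cdot \mathscr{A}(\psi|_{D_0}, \mathcal{T}_0)\text{.}
\end{equation*}
Since $\psi|_{D_0}$ is constant, its curving pulls back to zero, and for the canonical $\mathcal{T}_0$ the cap factor $\mathscr{A}(\psi|_{D_0}, \mathcal{T}_0)$ equals $1$ by the trivialized-surface formula \erf{eq:holtriv}. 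It then remains to identify $\mathscr{A}(\psi, \mathcal{T})$ with $\mathscr{A}(\phi, \mathcal{T})$: the collapse map $\hat q\maps \hat D \to M$... more precisely, writing $\hat q\maps \hat D \to D$ for the polar map on $C$ that crushes $D_0$ to the centre, one has $\psi = \phi \circ \hat q$, and $\hat q$ is homotopic rel boundary to a diffeomorphism $d\maps \hat D \to D$. Composing such a homotopy with $\phi$ produces a homotopy $\hat D \times [0,1] \to M$ that factors through the two-dimensional manifold $D$ and hence has rank at most two; Lemma~\ref{lem:dbraneholprop} (b) then gives $\mathscr{A}(\phi\circ\hat q, \mathcal{T}) = \mathscr{A}(\phi \circ d, \mathcal{T})$, and diffeomorphism invariance (Lemma~\ref{lem:dbraneholprop} (d)) gives $\mathscr{A}(\phi\circ d, \mathcal{T}) = \mathscr{A}(\phi, \mathcal{T})$. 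Chaining these equalities proves $\mathscr{A}(\exd\gamma, \mathcal{T}_0, \mathcal{T}) = \mathscr{A}(\phi, \mathcal{T})$, and substituting into the cylinder formula gives $p_{\mathcal{T}}\cdot\mathscr{A}(\phi,\mathcal{T}) = \ptr{PL\phi(\delta)}(\can(\id_{\phi(x)}))$, as claimed (the factor of $A$ may be moved freely since $A$ is abelian).

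The main obstacle I expect is precisely the geometric step $\mathscr{A}(\exd\gamma, \mathcal{T}_0, \mathcal{T}) = \mathscr{A}(\phi, \mathcal{T})$: one must treat the degeneration of the cylinder into the disc carefully, ensuring that the inner boundary — which maps to a single point — contributes trivially for the correct $\mathcal{T}_0$, and that the non-injective collapse $\hat q$ can be traded for a genuine diffeomorphism at the level of holonomy. The observation that the trading homotopy has rank at most two \emph{because it factors through the two-dimensional $D$} is what makes Lemma~\ref{lem:dbraneholprop} (b) applicable and is the heart of the argument. A secondary point requiring care is the double role of $\mathcal{T}_0$ (it must simultaneously yield $p_{\mathcal{T}_0} = \can(\id_{\phi(x)})$ and make the cap contribution trivial), whose compatibility rests on the explicit description of $p_\mathcal{T}$ from Lemma~\ref{lem:pTindep}.
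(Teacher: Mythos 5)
Your argument is correct, but it takes a genuinely different route from the paper's. The paper declares the proof to be "similar to the one of Proposition \ref{prop:partransreg}", i.e.\ a direct computation in the style of Lemmata \ref{lem:pTconnpres} and \ref{lem:calcterms}: one writes down an explicit lift $\tilde\phi\maps D \to \px Mx$ of $\phi$ along $\ev_1$ using radial paths in the disc, obtains from it via Lemma \ref{lem:sectriv} a global trivialization of $\phi^{*}\uncon_x(P)$, shows that the bulk integral of the curving $B_P$ vanishes because the associated bigon projects to a path through the domain of the flat section $\can$ (superficiality), and reads off the boundary contribution as the parallel transport along $PL\phi(\delta)$ directly from the defining equation \erf{eq:defp} of $p_{\mathcal{T}}$. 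You instead use Proposition \ref{prop:partransreg} as a black box: cap the inner circle of the cylinder, apply the gluing law and a rank-two collapse homotopy to identify the disc holonomy with a degenerate cylinder holonomy, and feed in the constant loop at one end. What your approach buys is that the bulk/boundary computation is not redone; what it costs is two pieces of bookkeeping that the direct computation gets for free. First, the identification $p_{\mathcal{T}_0} = \can(\id_{\phi(x)})$ for the trivialization induced by a constant section: this does hold, because for that trivialization $\tau$ is the fusion product itself, the defining equation becomes $\lambda(p \otimes \ptr{\alpha}(q)) = q$ with $\alpha$ projecting to a thin loop (trivial holonomy by superficiality), forcing $p = \can$, and the final thin transport $\ptr{h}$ can by Lemma \ref{lem:thinpath} be routed through a thin path lying in the image of $\Delta$, along which $\can$ is parallel. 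Second, the orientation conventions of the gluing law versus the cylinder convention of Section \ref{sec:constrconn}, where an unnoticed reversal would invert $\mathscr{A}$. Both are checkable, and you flagged the first, so I regard the proof as complete modulo these verifications. One small slip: $\hat q$ crushes the cap to the boundary point $x \in \partial D$, not to the centre, since $\delta(0) = \id_x$ is based on the boundary; this does not affect the argument, as the straight-line homotopy rel boundary in the convex disc still factors through the two-dimensional $D$ and hence has rank at most two.
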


The proof of Proposition \ref{prop:holdisc} is similar as the one of Proposition \ref{prop:partransreg}, and we omit it for the sake of brevity. 
As an application, let us compute the surface holonomy of a 2-sphere. In order to apply the gluing formula of Lemma \ref{lem:dbraneholprop} (c), we choose a simple loop $\beta\maps S^1 \to S^2$ such that $\Sigma \setminus \mathrm{im}(\beta)$ is a disjoint union of two  discs $D^o$ and $D^u$. We can arrange the labels and orientations such that  $D^o$ has the orientation of $S^2$, and the orientations of both $D^{o}$ and $D^{u}$ induce the one prescribed by $\beta$ on their boundaries. We can further chose the base points of $D^u$ and $D^o$ so that they coincide in $S^2$. A given smooth map $\phi\maps S^2 \to M$ restricts to smooth maps $\phi^{o}$ and $\phi^{u}$, and the gluing formula shows that, for any bundle gerbe $\mathcal{G}$,
\begin{equation}
\mathrm{Hol}_{\mathcal{G}}(\phi) =  \mathscr{A}_{\mathcal{G}}(\phi,\mathcal{T},\mathcal{T}) =  \mathscr{A}_{\mathcal{G}}(\phi^o,\mathcal{T}) \cdot \mathscr{A}_{\mathcal{G}}(\phi^{u},\mathcal{T})^{-1} \text{.}
\end{equation}
Now consider the two paths $\delta^o\in PLD^{o}$ and $\delta^u\in PLD^{u}$ that combine to a loop $\tau \df \prev{\delta^u} \pcomp \delta^o \nobr\in\nobr LLS^2$. Proposition \ref{prop:holdisc} implies:
\begin{corollary}
Let $\phi:S^2 \to M$ be a smooth map, and let $LL\phi(\tau) \in LLM$ the  loop associated by the above construction. Then, $\mathrm{Hol}_{\uncon_x(P)}(\phi) = \mathrm{Hol}_{P}(\tau)^{-1}$.
\end{corollary}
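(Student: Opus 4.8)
The plan is to reduce the sphere to its two discs via the gluing formula and then to evaluate each disc with Proposition \ref{prop:holdisc}, reading off the ratio of the two disc contributions as a holonomy of $P$.

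First I would fix common data for $D^o$ and $D^u$. I take the base point $x$ on the shared boundary circle $\mathrm{im}(\beta)$ and choose the paths $\delta^o\in PLD^o$ and $\delta^u\in PLD^u$ so that $\delta^o(1)=\delta^u(1)=\beta$; then the two boundary loops in $M$ coincide, $L\phi(\delta^o(1))=L\phi(\delta^u(1))=\phi\circ\beta$, and both discs have base-point image $m_0\df \phi(x)$. I fix a single trivialization $\mathcal{T}$ of $\uncon_x(P)$ over $\phi\circ\beta$, which supplies one element $p_{\mathcal{T}}$ of the fibre of $P$ over $\phi\circ\beta$, and I set $q_0\df \can(\id_{m_0})$, the common initial value of the two lifted paths. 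Writing $\delta^o_M\df PL\phi(\delta^o)$ and $\delta^u_M\df PL\phi(\delta^u)$, Proposition \ref{prop:holdisc} applied to $\phi^o$ and to $\phi^u$ gives
\begin{equation*}
p_{\mathcal{T}}\cdot\mathscr{A}(\phi^o,\mathcal{T})=\ptr{\delta^o_M}(q_0)
\quand
p_{\mathcal{T}}\cdot\mathscr{A}(\phi^u,\mathcal{T})=\ptr{\delta^u_M}(q_0)\text{.}
\end{equation*}

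Next I would transport $q_0$ around the loop $LL\phi(\tau)=\prev{\delta^u_M}\pcomp\delta^o_M$. Using functoriality of parallel transport, that reversal inverts transport, and $A$-equivariance, together with the second identity above in the form $\ptr{\delta^u_M}^{-1}(p_{\mathcal{T}})=q_0\cdot\mathscr{A}(\phi^u,\mathcal{T})^{-1}$, I obtain
\begin{equation*}
\ptr{LL\phi(\tau)}(q_0)=\ptr{\delta^u_M}^{-1}\!\left(\ptr{\delta^o_M}(q_0)\right)=\ptr{\delta^u_M}^{-1}(p_{\mathcal{T}})\cdot\mathscr{A}(\phi^o,\mathcal{T})=q_0\cdot\mathscr{A}(\phi^u,\mathcal{T})^{-1}\,\mathscr{A}(\phi^o,\mathcal{T})\text{.}
\end{equation*}
By the convention for holonomy in diffeological bundles used throughout (e.g. in the proof of Proposition \ref{prop:connectionprop1}), the left-hand side equals $q_0\cdot\mathrm{Hol}_P(LL\phi(\tau))^{-1}$, so after cancelling $q_0$ and using that $A$ is abelian I get $\mathrm{Hol}_P(LL\phi(\tau))^{-1}=\mathscr{A}(\phi^o,\mathcal{T})\cdot\mathscr{A}(\phi^u,\mathcal{T})^{-1}$.

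Finally I would combine this with the gluing identity displayed immediately before the statement, which for $\mathcal{G}=\uncon_x(P)$ reads $\mathrm{Hol}_{\uncon_x(P)}(\phi)=\mathscr{A}(\phi^o,\mathcal{T})\cdot\mathscr{A}(\phi^u,\mathcal{T})^{-1}$; the two displays then give $\mathrm{Hol}_{\uncon_x(P)}(\phi)=\mathrm{Hol}_P(LL\phi(\tau))^{-1}$, which is the asserted identity (the statement writing $\mathrm{Hol}_P(\tau)$ for $\mathrm{Hol}_P(LL\phi(\tau))$). No new analytic input is needed; the only delicate point is the bookkeeping that makes the two disc contributions directly comparable — namely that one may use the same boundary loop $\phi\circ\beta$, the same trivialization $\mathcal{T}$ (hence the same $p_{\mathcal{T}}$), and the same initial element $q_0=\can(\id_{m_0})$ for both discs, which rests on the choices of $\delta^o,\delta^u$ with coinciding endpoints and a common base point on $\mathrm{im}(\beta)$, together with the orientation convention (arranged before the statement) that makes $D^u$ contribute with an inverse.
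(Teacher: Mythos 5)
Your proof is correct and follows exactly the route the paper intends (the corollary is stated as an immediate consequence of the displayed gluing identity together with Proposition \ref{prop:holdisc} applied to each disc, which is precisely what you carry out, including the holonomy convention $\ptr{\gamma}(q)\cdot\mathrm{Hol}_P(\gamma)=q$ and the bookkeeping that lets both discs share the trivialization $\mathcal{T}$ and the initial element $\can(\id_{\phi(x)})$). No discrepancy with the paper's argument.
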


Finally, let us look at a pair of pants. We assume the pair of pants in the standard form $\mathcal{P}$ shown in Figure \ref{fig:string}, as a unit disc with two  discs removed from its interior. Let $\phi\maps \mathcal{P} \to M$ be a smooth map, and let $\mathcal{T}_{12},\mathcal{T}_{23}$ and $\mathcal{T}_{13}$ be trivializations of  $\phi^{*}\uncon_x(P)$ over the three boundary components of $\mathcal{P}$. 
\begin{figure}
\begin{center}
\hspace{2.4cm}\includegraphics{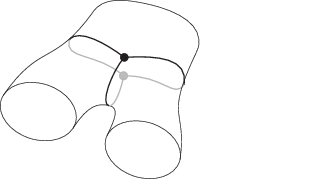}\setlength{\unitlength}{1pt}\begin{picture}(0,0)(205,53)\put(73.13357,126.18170){$\gamma_1$}\put(140.45017,97.67282){$\gamma_3$}\put(85.72195,96.65943){$\gamma_2$}\end{picture}
\includegraphics{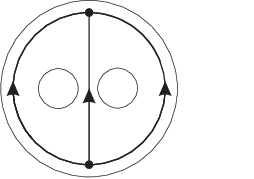}\setlength{\unitlength}{1pt}\begin{picture}(0,0)(391,46)\put(284.46860,103.15531){$\gamma_1$}\put(327.12321,104.94510){$\gamma_3$}\put(312.87742,64.19820){$\gamma_2$}\end{picture}
\end{center}
\caption{A pair of pants with a \quot{string tanga}, on the left in a form that explains the terminology and on the right in a standard form.}
\label{fig:string}
\end{figure}
To compute the surface holonomy $\mathscr{A}(\phi,\mathcal{T}_{12},\mathcal{T}_{23},\mathcal{T}_{13})$ of $\uncon_x(P)$, we choose a triple $(\gamma_1,\gamma_2,\gamma_3) \in P\mathcal{P}^{[3]}$ of paths  in $\mathcal{P}$ that form, in the given terminology, a \quot{string tanga}, see Figure \ref{fig:string}. We cut the pair of pants open along the paths $\gamma_i$, resulting in three cylinders $C_{12}$, $C_{23}$ and $C_{13}$ carrying the restrictions $\phi_{ij}$ of $\phi$, which we regard as paths $\phi_{ij} \in PLM$.  

\begin{proposition}
Let $\phi:\mathcal{P} \to M$ be a smooth map, and let paths $\phi_{ij} \in PLM$ and trivializations $\mathcal{T}_{ij}$ be chosen as described above. Then,
\begin{equation*}
\lambda( \ptr{\phi_{12}} (p_{\mathcal{T}_{12}})\otimes  \ptr{\phi_{23}} (p_{\mathcal{T}_{23}})) \cdot \mathscr{A}(\phi,\mathcal{T}_{12},\mathcal{T}_{23},\mathcal{T}_{13}) = \ptr{\phi_{13}} (p_{\mathcal{T}_{13}})  \text{,}
\end{equation*}
where $\lambda$ is the fusion product on $P$, $\tau_{\phi_{ij}}$ denotes the parallel transport in $P$, and $\mathscr{A}$ is the surface holonomy of $\uncon_x(P)$.
\end{proposition}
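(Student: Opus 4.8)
The plan is to reduce the pair-of-pants formula to the already-established cylinder formula of Proposition \ref{prop:partransreg} together with the behaviour of the fusion product at the seams, following the decomposition of Figure \ref{fig:string}. Write $\Gamma_i \df \phi \circ \gamma_i \in PM$ and $\kappa_{ij} \df l(\Gamma_i,\Gamma_j) = \prev{\Gamma_j}\pcomp\Gamma_i$ for the three inner loops along which the string tanga sits. The cylinder $C_{ij}$ carries the path $\phi_{ij}\in PLM$ running from the boundary loop $\phi_{ij}(0)$ (over which $\mathcal{T}_{ij}$ is a trivialization) to the inner loop $\phi_{ij}(1) = \kappa_{ij}$. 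First I would choose trivializations $\mathcal{T}'_{ij}$ of $\uncon_x(P)$ over the $\kappa_{ij}$, arranging them to be fusion-related, i.e. $(\mathcal{T}'_{12},\mathcal{T}'_{23}) \sim_{\uncon_x(P)} \mathcal{T}'_{13}$; this is possible by the first step of Section \ref{sec:prooffusion}, which shows that $\mathcal{T}'_{12}$ and $\mathcal{T}'_{23}$ determine such a $\mathcal{T}'_{13}$ up to isomorphism. Proposition \ref{prop:partransreg} then applies to each cylinder and gives
\[\ptr{\phi_{ij}}(p_{\mathcal{T}_{ij}}) = p_{\mathcal{T}'_{ij}}\cdot\mathscr{A}_{ij}\qucomma \mathscr{A}_{ij} \df \mathscr{A}(\exd{\phi_{ij}},\mathcal{T}_{ij},\mathcal{T}'_{ij})\text{.}\]

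The next step invokes the property of the construction $\mathcal{T}\mapsto p_{\mathcal{T}}$ developed in Section \ref{sec:trafterreg}: under the identification of the trivializations of $\tau^{*}\uncon_x(P)$ with the fibre $P_\tau$, the fusion product $\lambda$ on $P$ is intertwined with the fusion relation $\sim_{\uncon_x(P)}$. Hence the fusion-relatedness of the chosen $\mathcal{T}'_{ij}$ yields $\lambda(p_{\mathcal{T}'_{12}} \otimes p_{\mathcal{T}'_{23}}) = p_{\mathcal{T}'_{13}}$. Substituting the three cylinder identities into the left-hand side of the claimed formula and using that $\lambda$ is $A$-bilinear, the whole statement collapses to the purely holonomy-theoretic identity
\[\mathscr{A}(\phi,\mathcal{T}_{12},\mathcal{T}_{23},\mathcal{T}_{13}) = \mathscr{A}_{13}\cdot\mathscr{A}_{12}^{-1}\cdot\mathscr{A}_{23}^{-1}\text{,}\]
in which the elements $p_{\mathcal{T}}$ and the fusion product no longer appear.

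It remains to prove this last identity, which is a gluing statement: reassembling $\mathcal{P}$ from the three cylinders $C_{12}, C_{23}, C_{13}$ amounts to gluing them pairwise along the string-tanga paths $\gamma_1,\gamma_2,\gamma_3$. I would deduce it from the gluing law of Lemma \ref{lem:dbraneholprop}~(c), in the refined version for boundaries with corners, since the paths share the two endpoints of the tanga and thereby produce corners, so that boundary records and Equation \erf{eq:trivrep} are needed. The point is that the seam contributions of two adjacent cylinders cancel precisely because the inner trivializations $\mathcal{T}'_{ij}$ are fusion-related: the three 2-isomorphisms $\phi_1,\phi_2,\phi_3$ over the half-intervals $\iota_1,\iota_2$ witnessing $(\mathcal{T}'_{12},\mathcal{T}'_{23}) \sim_{\uncon_x(P)} \mathcal{T}'_{13}$ are exactly the corner-matching data required to glue the seam loops $\kappa_{ij} = \prev{\Gamma_j}\pcomp\Gamma_i$ consistently, and the compatibility \erf{eq:condfus2} at the two corners makes the corner terms cancel. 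The relative orientations of the inner boundary circles in $\mathcal{P}$ account for the two inverses on $\mathscr{A}_{12}$ and $\mathscr{A}_{23}$.

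The main obstacle I expect is this final gluing step, for two reasons. First, it rests on the compatibility of the correspondence $\mathcal{T}\mapsto p_{\mathcal{T}}$ with the fusion product, which must be imported from Section \ref{sec:trafterreg} (it is the fibrewise statement that transgression after regression is fusion-preserving). Second, the orientation and corner bookkeeping is delicate: the seams are arcs rather than closed loops, so the plain gluing of Lemma \ref{lem:dbraneholprop}~(c) must be upgraded to its corners version and combined with the 2-isomorphisms of the relation $\sim_{\uncon_x(P)}$. Once these two ingredients are secured, the remaining computation is a matter of tracking the three boundary records carefully, and it runs entirely parallel to the seam argument already carried out for $\lambda_{\mathcal{G}}$ in Section \ref{sec:prooffusion} and for the curving in the proof of Lemma \ref{lem:reconcurving2}.
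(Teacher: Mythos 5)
Your proposal follows essentially the same route as the paper's proof: choose inner trivializations $\mathcal{T}'_{ij}$ over the seam loops that are fusion-related, apply the cylinder formula of Proposition \ref{prop:partransreg} to each of the three cylinders, use that the assignment $\mathcal{T}\mapsto p_{\mathcal{T}}$ intertwines the fusion products to collapse the $p$-terms, and reduce everything to the gluing identity $\mathscr{A}(\phi,\mathcal{T}_{12},\mathcal{T}_{23},\mathcal{T}_{13}) = \mathscr{A}_{13}\cdot\mathscr{A}_{12}^{-1}\cdot\mathscr{A}_{23}^{-1}$, which is obtained from Lemma \ref{lem:dbraneholprop}~(c) with the 2-isomorphisms $\phi_1=\phi_3\bullet\phi_2$ of the fusion relation supplying the corner data. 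This is correct and matches the paper's argument; your extra attention to the corner/boundary-record bookkeeping only makes explicit what the paper compresses into ``feeding this information into the gluing formula.''
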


\begin{proof}
In order to apply the gluing formula  Lemma \ref{lem:dbraneholprop} (c) to $\mathscr{A}(\phi,\mathcal{T}_{12},\mathcal{T}_{23},\mathcal{T}_{13})$, we have to choose trivializations $\mathcal{T}_{ij}^{\,\prime}$ over the new boundary components $l(P\phi(\gamma_i),P\phi(\gamma_j))$. We  may chose them such that
$\lambda(\mathcal{T}_{12}^{\,\prime} \otimes \mathcal{T}_{23}^{\,\prime}) = \mathcal{T}_{13}^{\,\prime}$.
We recall that this relation implies the existence of 2-isomorphisms
\begin{equation*}
\phi_1\maps \iota_1^{*}\mathcal{T}_{12}^{\,\prime} \Rightarrow \iota_1^{*}\mathcal{T}_{13}^{\,\prime}
\quomma
\phi_2\maps \iota_2^{*}\mathcal{T}_{12}^{\,\prime} \Rightarrow \iota_1^{*}\mathcal{T}_{23}^{\,\prime}
\quand
\phi_3\maps \iota_2^{*}\mathcal{T}_{23}^{\,\prime} \Rightarrow \iota_2^{*}\mathcal{T}_{13}^{\,\prime}
\end{equation*}
that satisfy the condition $\phi_1 = \phi_{3} \bullet \phi_{2}$ over the two common end points of the paths $\gamma_i$. Feeding this information into the gluing formula, we obtain
\begin{equation*}
\mathscr{A}(\phi,\mathcal{T}_{12},\mathcal{T}_{23},\mathcal{T}_{13}) = \mathscr{A}(\phi_{12},\mathcal{T}_{12},\mathcal{T}_{12}^{\,\prime})^{-1} \cdot \mathscr{A}(\phi_{23},\mathcal{T}_{23},\mathcal{T}_{23}^{\,\prime})^{-1} \cdot \mathscr{A}(\phi_{13},\mathcal{T}_{13},\mathcal{T}_{13}^{\,\prime})\text{.}
\end{equation*}
Rewriting the holonomies of $\phi_{ij}$ using  Proposition \ref{prop:partransreg}  shows the claim. 
\end{proof}

\setsecnumdepth{2}

\section{Proof of Theorem \ref{th:con}}

\label{sec:proof}

We prove that transgression $\trcon$ and regression $\uncon$ form an equivalence of categories by constructing natural equivalences
\begin{equation*}
\mathcal{A}\maps \uncon_x \circ \trcon \Rightarrow \id_{\hc 1\diffgrbcon AM}
\quand
\varphi \maps \trcon \circ \uncon_x \Rightarrow \id_{\fusbunconsf A {L M}}
\text{.}
\end{equation*}
Since the pair ($\trcon$, $\uncon_x$) forms an equivalence of categories in which $\trcon$ is monoidal, it follows that also  $\uncon_x$ is  monoidal, and that ($\trcon$, $\uncon_x$) is a monoidal equivalence as claimed in Theorem \ref{th:con}. 

\subsection{Regression after Transgression}

\label{sec:regaftertr}

We have to associate to each diffeological bundle gerbe $\mathcal{G}$ with connection over $M$ a natural 1-isomorphism 
\begin{equation*}
\mathcal{A}_{\mathcal{G}}\maps \uncon_x(L\mathcal{G}) \to \mathcal{G}
\end{equation*}
in the 2-groupoid $\diffgrbcon AM$. We proceed in three steps. First we construct the 1-isomorphism $\mathcal{A}_{\mathcal{G}}$ in the category $\diffgrb AM$, i.e. without connections. In the second step we add the connections. In the third step we prove that $\mathcal{A}_{\mathcal{G}}$ is natural in $\mathcal{G}$. 

\subsubsection{Construction of the Isomorphism $\mathcal{A}_{\mathcal{G}}$}

We recall that the bundle gerbe $\un_x(L\mathcal{G})$ has the subduction $\ev_1\maps \px Mx \to M$ and over $\px Mx^{[2]}$ it has the bundle $l^{*}\lg\mathcal{G}$ equipped with its product $\lambda_{\mathcal{G}}$. 
To construct the 1-isomorphism $\mathcal{A}_{\mathcal{G}}$
we  have to specify a subduction $\zeta\maps Z \to \px Mx \times_M Y$. We put $Z \df  \px Mx \times_M Y$ and $\zeta$ the identity.  Next we have to construct a principal $A$-bundle $Q$ over $Z$.
We choose a lift $y_0\in Y$  of the  base point $x\in M$, i.e. $\pi(y_0)=x$. Consider a point $(\gamma,y)\in Z$, i.e. $\ev(\gamma)=(x,\pi(y))$. The fibre of the bundle $Q$ over the point $(\gamma,y)$ consists of triples $(\mathcal{T},t_0,t)$ where $\mathcal{T}\maps \gamma^{*}\mathcal{G} \to \mathcal{I}_0$ is a connection-preserving trivialization. Explicitly, $\mathcal{T}$ has a principal $A$-bundle $T$ over  $Y_{\gamma} \df  [0,1] \lli{\gamma}\times_{\pi} Y$ and an isomorphism $\tau\maps P \otimes \pi_2^{*}T \to \pi_1^{*}T$
of principal $A$-bundles over $Y_{\gamma}^{[2]}$. Further, $t_0$ is an element in $T$ projecting to the point $(0,y_{0}) \in Y_{\gamma}$, and $t$ is an element  in $T$ projecting to $(1,y)\in Y_{\gamma}$.
We identify two triples $(\mathcal{T},t_0,t)$ and $(\mathcal{T}',t_0',t')$ if there exists a  2-isomorphism $\varphi\maps \mathcal{T} \Rightarrow \mathcal{T}'$ with $\varphi(t_0) = t_0'$ and $\varphi(t)= t'$.
The total space $Q$ of the principal $A$-bundle we are going to construct is the disjoint union of equivalence classes of triples over all points of $Z$.
The evident projection is denoted $p:Q \to Z$.

A diffeology on $Q$ is defined similar to the one on $L\mathcal{G}$ performed in Section \ref{sec:transgression_bundle}. A map $c:U \to Q$ is a plot, if the composite $p \circ c:U \to Z$ is smooth, and if every point $u\in U$ has an open neighborhood $W \subset U$ such that
\begin{enumerate}
\item 
 
there exists a trivialization $\mathcal{T}\maps c_W^{*}\mathcal{G} \to \mathcal{I}_{\rho}$, where $c_W$ is given by
\begin{equation*}
\alxydim{@C=1.5cm}{[0,1] \times W \ar[r]^-{\id \times p \circ c} & [0,1] \times Z \ar[r]^-{\mathrm{pr}} & [0,1] \times \px Mx \ar[r]^-{\ev} & M\text{.}}
\end{equation*}

\item
there exist smooth sections $s^a\maps W \to a^{*}T$ and $s^e\maps W \to e^{*}T$, where $T$ is the principal $A$-bundle of $\mathcal{T}$, defined over $Y_W \df  ([0,1] \times W) \lli{c_W}\times_{\pi} Y$, and the smooth maps 
$a,e:W \to Y_W$ are defined by 
\begin{equation*}
a(w) \df  (0,w,y_0)
\quand 
e(w) \df  (1,w,y_w)\text{,}
\end{equation*}
where $y_w$ is the $Y$-component of $p(c(w)) \in Z$.

\item
$c(w) = (\iota_w^{*}\mathcal{T}, s^a(w),s^e(w))$
for all $w\in W$. 
\end{enumerate}
Along the lines of the discussion in Section \ref{sec:transgression_bundle} one can check that this defines a diffeology on $Q$ that makes the projection $p:Q \to Z$ smooth. The action of $A$ on $Q$ is the action of $A$ on the element $t\in T$ in the triples $(\mathcal{T},t_0,t)$.
It is straightforward to check that these definitions yield a diffeological principal $A$-bundle $Q$ over $Z$.

The last ingredient we need in the construction of the 1-isomorphism $\mathcal{A}_{\mathcal{G}}$ is an isomorphism
\begin{equation*}
\alpha\maps l^{*}L\mathcal{G} \otimes \zeta_2^{*}Q \to \zeta_1^{*}Q \otimes P \end{equation*}
between bundles over $Z^{[2]} \df  Z \times_M Z$. Here, $\zeta_1,\zeta_2\maps Z^{[2]} \to Z$ are the two projections, and $l^{*}L\mathcal{G}$ and $P$ are pulled back to $Z^{[2]}$ along the evident maps. Fibrewise over a point $(\gamma_1,\gamma_2,y_1,y_2)$, $\alpha$ is a map
\begin{equation*}
\alpha\maps  \lg\mathcal{G}_{\tau} \otimes Q_{\gamma_2,y_2} \to Q_{\gamma_1,y_1} \otimes P_{y_1,y_2} \text{,}
\end{equation*}
where $\tau \df  l(\gamma_1,\gamma_2)$. For $\mathcal{T}\maps  \tau^{*}\mathcal{G} \to \mathcal{I}_0$ representing an element in $L\mathcal{G}_{\tau}$, $t_0 \in T_{0,y_0}$ and $t_1\in T_{\frac{1}{2},y_1}$, we define
\begin{equation*}
\alpha(\mathcal{T} \otimes (\mathcal{T}_2, t_0,t_{2})) \df  q \otimes(\mathcal{T}_{1},t_0,t_{1})\text{,}
\end{equation*}
where $\mathcal{T}_k \df  \iota_k^{*}\mathcal{T}$, the point $q\in P_{y_1,y_2}$ is chosen arbitrarily and $t_2 \in T_{-1,y_2}$ is determined by $\tau(q \otimes t_2) = t_1$. This is independent of the choice of $q$, smooth and $A$-equivariant.

\begin{lemma}
The pair $(Q,\alpha)$ defines a 1-isomorphism $\mathcal{A}_{\mathcal{G}}\maps  \un_x(L\mathcal{G}) \to \mathcal{G}$. 
\end{lemma}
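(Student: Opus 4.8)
The plan is to verify in turn the three ingredients required of a 1-morphism of bundle gerbes in the sense of \cite[Definition 2]{waldorf1}: that $Q$ is a diffeological principal $A$-bundle over $Z$, that $\alpha$ is a well-defined smooth $A$-equivariant bundle morphism, and that $\alpha$ is compatible with the two bundle gerbe products over $Z^{[3]}$. Because the bundle $Q$ of the morphism is a principal $A$-bundle and $\alpha$ is $A$-equivariant, the resulting 1-morphism is automatically invertible, so it suffices to exhibit these data and check the product compatibility. The first two points are established exactly as in the construction of the transgression bundle $L\mathcal{G}$ in Section \ref{sec:transgression_bundle}: the diffeology on $Q$ is manifestly local and every plot of $Z$ lifts locally by Lemma \ref{lem:trivexist}, so $p\maps Q \to Z$ is a subduction, and the freeness and transitivity of the $A$-action --- i.e. that the shear map $Q \times A \to Q \times_Z Q$ is a diffeomorphism --- follow from Lemma \ref{lem:gerbehomspoint}: over a contractible chart any two triples $(\mathcal{T},t_0,t)$ and $(\mathcal{T}',t_0',t')$ over the same point differ by a 2-isomorphism, unique up to the element $a\in A$ read off by comparing the distinguished elements $t$. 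Smoothness and $A$-equivariance of $\alpha$, together with its independence of the auxiliary choice $q\in P_{y_1,y_2}$, are checked fibrewise and locally in the same manner.

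The main content of the lemma is the compatibility of $\alpha$ with the products. Over $Z^{[3]}$, writing $\tau_{ij} \df l(\gamma_i,\gamma_j)$ and $Q_i$, $P_{ij}$ for the pullbacks of $Q$ and $P$ along the evident maps, one must show that the route built from $\alpha$ and the fusion product $\lambda_{\mathcal{G}}$ on $L\mathcal{G}$ agrees with the route built from $\alpha$ and the product $\mu_P$ of $\mathcal{G}$, i.e. that
\[
\alpha_{13}\circ(\lambda_{\mathcal{G}}\otimes\id) = (\id_{Q_1}\otimes\mu_P)\circ(\alpha_{12}\otimes\id)\circ(\id\otimes\alpha_{23})\text{,}
\]
where $\mu_P\maps P_{y_1y_2}\otimes P_{y_2y_3}\to P_{y_1y_3}$ is the gerbe product. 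I would verify this by evaluating both composites on a decomposable element and inserting the defining relation $\sim_{\mathcal{G}}$ of $\lambda_{\mathcal{G}}$.

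Concretely, I would fix a trivialization $\mathcal{T}_{13}$ with $(\mathcal{T}_{12},\mathcal{T}_{23})\sim_{\mathcal{G}}\mathcal{T}_{13}$ together with 2-isomorphisms $\phi_1,\phi_2,\phi_3$ as in \erf{eq:2isos} satisfying \erf{eq:condfus2}. Recall that $\alpha$ splits a trivialization of a loop gerbe into the restrictions $\iota_1^{*}$ and $\iota_2^{*}$ to its two halves, recording the endpoint data over the base point $x$ (always at the fixed lift $y_0$) and over the common endpoint $\pi(y_i)$. The 2-isomorphism $\phi_1\maps \iota_1^{*}\mathcal{T}_{12}\Rightarrow\iota_1^{*}\mathcal{T}_{13}$ identifies the $Q_1$-output of the $\lambda_{\mathcal{G}}$-route with that of the $\alpha_{12}$-route (both lying over $\gamma_1$), while $\phi_3\maps \iota_2^{*}\mathcal{T}_{23}\Rightarrow\iota_2^{*}\mathcal{T}_{13}$ matches the contributions over $\gamma_3$, and $\phi_2$ mediates the middle restriction over $\gamma_2$, which is precisely the factor contracted against $Q_2$ in the $\alpha_{23}$-route. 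The two endpoint identities \erf{eq:condfus2} are then exactly what is needed: the identity at $0$ (the base point) makes the rewrites by $\phi_1,\phi_2,\phi_3$ compatible there, so all routes carry the same distinguished element $t_0$ over $y_0$, while the identity at $1$ (the common endpoint) forces the gluing isomorphisms $\tau$ of the three trivializations to combine into the single factor $\mu_P$ appearing on the right.

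The principal obstacle is this last bookkeeping: tracking which of the two endpoint conditions in \erf{eq:condfus2} produces the $P$-product and which enforces the common basepoint datum, through the several restrictions $\iota_k^{*}$ and the arbitrary choices of the auxiliary points $q$. Once both composites are written out fibrewise and the three $\phi_i$ together with \erf{eq:condfus2} are inserted, the equality of the two sides reduces to a direct comparison, and the independence of all intermediate choices follows once more from Lemma \ref{lem:gerbehomspoint} (b). This establishes the compatibility condition and completes the proof that $(Q,\alpha)$ defines a 1-isomorphism $\mathcal{A}_{\mathcal{G}}\maps \un_x(L\mathcal{G})\to\mathcal{G}$ in $\diffgrb A M$.
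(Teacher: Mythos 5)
Your proposal follows essentially the same route as the paper: the bundle and equivariance properties of $Q$ and $\alpha$ are dispatched as in the construction of $L\mathcal{G}$, and the substantive point --- compatibility of $\alpha$ with the two products --- is checked fibrewise over a point of $Z^{[3]}$ by choosing $\mathcal{T}_{13}$ with $(\mathcal{T}_{12},\mathcal{T}_{23})\sim_{\mathcal{G}}\mathcal{T}_{13}$ and feeding the 2-isomorphisms $\phi_1,\phi_2,\phi_3$ together with the endpoint identities \erf{eq:condfus2} into a comparison of the two composites, exactly as the paper does starting from the decomposable element $q_{12}\otimes q_{23}\otimes(\iota_2^{*}\mathcal{T}_{23},t_0,t_1)$. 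The paper carries the two fibrewise evaluations through to explicit matching expressions where you only sketch the bookkeeping, but the strategy and the ingredients are the same.
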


\begin{proof}
We have to show that $\alpha$ is compatible with the fusion products \cite[Definition 2]{waldorf1}. This can be checked fibrewise over a point $(\gamma_1,\gamma_2,\gamma_3,y_1,y_2,y_3)\in Z^{[3]}$.  We choose trivializations $\mathcal{T}_{12}$, $\mathcal{T}_{23}$ and $\mathcal{T}_{13} \df  \lambda_{\mathcal{G}}(\mathcal{T}_{12},\mathcal{T}_{23})$, as well as elements $q_{12}$, $q_{23}$ and $q_{13} \df  \lambda(q_{12},q_{23})$. We recall from Section \ref{sec:constrconn} that the first relation implies the existence of 2-isomorphisms
\begin{equation*}
\phi_1\maps \iota_1^{*}\mathcal{T}_{12} \Rightarrow \iota_1^{*}\mathcal{T}_{13}
\quomma
\phi_2\maps \iota_2^{*}\mathcal{T}_{12} \Rightarrow \iota_1^{*}\mathcal{T}_{23}
\quand
\phi_3\maps \iota_2^{*}\mathcal{T}_{23} \Rightarrow \iota_2^{*}\mathcal{T}_{13}
\end{equation*}
between trivializations of bundle gerbes over the interval $[0,1]$ such that
$\phi_1|_k = \phi_{3}|_k \bullet \phi_{2}|_k$
for $k=0,1$.
We start with an element in $P_{y_1,y_2} \otimes P_{y_2,y_3} \otimes Q_{\gamma_3,y_3}$, namely the element $x \df  q_{12} \otimes q_{23} \otimes (\iota_2^{*}\mathcal{T}_{23},t_0,t_1)$. A computation shows
\begin{equation*}
\zeta_{13}^{*}\alpha((\lambda \otimes \id)(x)) =  (\iota_1^{*}\mathcal{T}_{12}, \phi_1^{-1}(\phi_3(t_0)), \phi_1^{-1}(\tau_{13}(q_{13} \otimes \phi_3(t_1)))) \otimes \mathcal{T}_{13}\text{.} \end{equation*}
On the other hand, we get
\begin{equation*}
(\id \otimes \lambda_{\mathcal{G}})(\zeta_{12}^{*}\alpha \otimes \id)(\id \otimes \zeta_{23}^{*}\alpha)(x) = (\iota_1^{*}\mathcal{T}_{12},\phi_2^{-1}(t_0),\tau_{12}(q_{13}  \otimes \phi_2^{-1}(t_1))) \otimes \mathcal{T}_{13} \text{.}
\end{equation*}
The coincidence of the two results proves that $(Q,\alpha)$ is a 1-isomorphism.
\end{proof}

We remark that different choices of the lift $y_0$ of $x$ lead to 2-isomorphic 1-morphisms. So, as a morphism in $\hc 1 \diffgrb A M$, $\mathcal{A}_{\mathcal{G}}$ is independent of that choice. 

\subsubsection{Construction of a Connection on $\mathcal{A}_{\mathcal{G}}$}

We define a connection on $Q$ in the same way as we defined the connection on $L\mathcal{G}$, namely by specifying an object $F_Q \in \fun Q A$, see Appendix \ref{app:functorsandforms}. If $\tilde\gamma \in PQ$ is a path in $Q$, let $\gamma \in P\px Mx$ denote its projection to $\px Mx$, and let $\exd\gamma\maps [0,1]^2 \to M$ be the adjoint map defined by $\exd\gamma(s,t)\df \gamma(s)(t)$. We use the holonomy $\mathscr{A}_{\mathcal{G}}$ from Section \ref{sec:gerbeconnections} to associate to $\tilde\gamma$ a number. Here we are in the situation that our surface $\Sigma \df [0,1]^2$ has one boundary component with corners. We thus have to specify a \emph{boundary record}, i.e.  trivializations over the smooth parts, and 2-isomorphisms over the corners: 
\begin{enumerate}
\item 
Over $b_l \df  [0,1] \times \left \lbrace 0 \right \rbrace$, the map $\exd\gamma$ is constant with value $x$. It follows that the pullback $(\exd\gamma)^{*}\mathcal{G}|_{b_l}$ has the smooth section $\sigma_{y_0}(s) \df  (s,y_0) \in \Sigma \lli{\exd\gamma} \times_{\pi} Y$, which defines by Lemma \ref{lem:sectriv} a trivialization $\mathcal{T}_l$.

\item
Over $b_r \df [0,1] \times \left \lbrace 1 \right \rbrace $ we also have a section of the subduction of $\phi(\exd\gamma)^{*}\mathcal{G}|_{b_r}$, determined by the projection $\beta \in PY$ of the given path $\tilde\gamma$ to $Y$. This section is $\sigma_r(s) \df  ((s,1),\beta(s))\in \Sigma \lli{\exd\gamma}\times_{\pi} Y$. The corresponding trivialization is denoted $\mathcal{T}_r$.

\item
Over the remaining boundary components $b_k \df  \left \lbrace k \times [0,1] \right \rbrace $ for $k=0,1$ we choose representatives $(\mathcal{T}^k,t_0^k,t^k)$ of the given elements $\tilde\gamma(k) \in Q$. In particular, these contain trivializations $\mathcal{T}^1$ and $\mathcal{T}^2$. 

\item
Over the corners $(0,0)$ and $(0,1)$ we choose any  2-isomorphisms $\varphi_{0l}\maps \mathcal{T}^0\Rightarrow \mathcal{T}_{l}$ and $\varphi_{0r}\maps \mathcal{T}^0 \Rightarrow \mathcal{T}_r$ and consider the elements $u_l \df  \varphi_{0l}(t_0^0) \in T_l|_{0,y_0}$ and $u_r \df \varphi_{0r}(t^0)\in T_l|_{0,\beta(0)}$. Consider the evident paths $\alpha_l(t) \df  (t,y_0)$ and $\alpha_r(t) \df (t,\beta(t))$ in $b_l \lli{\exd\gamma}\times_{\pi} Y$ and $b_r \lli{\exd\gamma}\times_{\pi} Y$, respectively. Then, there are unique 2-isomorphisms $\varphi_{1l}\maps \mathcal{T}^1 \Rightarrow \mathcal{T}_l$ and $\varphi_{1r}\maps \mathcal{T}^1 \Rightarrow \mathcal{T}_r$ over the corners $(1,0)$ and $(1,1)$, respectively, such that $\varphi_{1l}(t_0^1) = \ptr{\alpha_{l}}(u_l)$ and $\varphi_{1r}(t^1) = \ptr{\alpha_r}(u_r)$, see Lemma \ref{lem:gerbehomspoint}. 
\end{enumerate} 
Summarizing,
\begin{equation*}
\mathscr{B} \df  \left \lbrace \mathcal{T}_l,\mathcal{T}_r,\mathcal{T}^0,\mathcal{T}^1,\varphi_{0l},\varphi_{0r}^{-1},\varphi_{1l}^{-1},\varphi_{1r} \right \rbrace
\end{equation*}
is a boundary record for the boundary of $\Sigma$, and we define:
\begin{equation}
\label{eq:condefQ}
F_Q(\tilde\gamma) \df  \mathscr{A}_{\mathcal{G}}(\exd\gamma,\mathscr{B}) \in A\text{.}
\end{equation}
Next we show in three steps that the resulting map $F_Q\maps PQ \to A$  is an object in $\fun QA$. 
\begin{enumerate}
\item 
\emph{Smoothness} follows from Lemma \ref{lem:holsmooth} and the fact that all trivializations we have used to form the boundary record $\mathscr{B}$ depends smoothly on the paths $\tilde\gamma$\maps $\mathcal{T}^0$ and $\mathcal{T}^1$ in terms of the diffeology on $Q$, and $\mathcal{T}_l$ and $\mathcal{T}_r$ in terms of the diffeology on $L\mathcal{G}$. 

\item
\emph{Thin homotopy invariance}. A thin homotopy $h \in PPQ$ between paths $\tilde\gamma$ and $\tilde\gamma'$ induce rank two homotopy between $\exd\gamma$ and $\exd\gamma'$,  a thin homotopy $h_Y$ between the paths $\beta$ and $\beta'$, and a thin homotopy $h_1$ between $\ev_1(\gamma)$ and $\ev_1(\gamma')$. We denote by $\mathcal{T}_r$
and $\mathcal{T}_r'$ the trivializations determined by $\beta$ and $\beta'$, respectively, and by $\mathscr{B}$ and $\mathscr{B}\,'$ the corresponding boundary records. Consider the trivialization $\mathcal{T}_Y$ of $h_1^{*}\mathcal{G}$ coming from the section defined by $h_Y$. It constitutes a boundary record $\mathscr{B}_1$ for $h_1$, which has by construction the same trivialization over $\ev_1(\gamma)$ as $\mathscr{B}$ and the same trivialization over $\ev_1(\gamma)$ as $\mathscr{B}'$. A slight generalization of Lemma \ref{lem:dbraneholprop} (b) shows then that
\begin{equation*}
\mathscr{A}_{\mathcal{G}}(\exd\gamma,\mathscr{B}) = \mathscr{A}_{\mathcal{G}}(\exd{(\gamma')},\mathscr{B}') \cdot \mathscr{A}_{\mathcal{G}}(h_1,\mathscr{B}_1)\text{.}
\end{equation*}
But  $\mathscr{A}_{\mathcal{G}}(h_1,\mathscr{B}_1)=1$ because the trivialization $\mathcal{T}_Y$ has the vanishing 2-form, since $h_Y$ has rank one. This shows that $F_Q(\tilde \gamma) = F_Q(\tilde\gamma')$. 

\item
\emph{Functorality}. For composable paths $\tilde\gamma_1,\tilde\gamma_2$ in $Q$ we must have
\begin{equation*}
F_Q(\tilde\gamma_2 \pcomp \tilde\gamma_1) = F_Q(\tilde\gamma_1) \cdot F_Q(\tilde\gamma_2)\text{.}
\end{equation*}
This is a simple application of the gluing formula Lemma \ref{lem:dbraneholprop} (c) for $\mathscr{A}_{\mathcal{G}}$, similar to the discussion in Section \ref{sec:constrconn}. 
\end{enumerate}
We conclude that $F_Q$ is an object in $\fun QA$ and thus defines a 1-form $\omega_Q \in \Omega^1_{\mathfrak{a}}(Q)$.

\begin{lemma}
The 1-form $\omega_Q$ is a connection on $Q$.
\end{lemma}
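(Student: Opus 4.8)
The plan is to verify that the object $F_Q \in \fun Q A$ satisfies the equivariance condition of Appendix \ref{app:functorsandforms} — the analogue of condition \erf{eq:connequiv} — which for $Q$ reads
\begin{equation*}
g(1) \cdot F_Q(\tilde\gamma g) = F_Q(\tilde\gamma) \cdot g(0)
\end{equation*}
for all $g \in PA$ and $\tilde\gamma \in PQ$; by the discussion of Appendix \ref{app:functorsandforms} this is exactly the condition that the 1-form $\omega_Q$ defined by $F_Q$ be a connection. Since $A$ is abelian it is equivalent to $F_Q(\tilde\gamma g) = F_Q(\tilde\gamma) \cdot g(0) g(1)^{-1}$, so the entire task reduces to tracking how the boundary record $\mathscr{B}$ entering \erf{eq:condefQ} changes when $\tilde\gamma$ is replaced by $\tilde\gamma g$.

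First I would record that the $A$-action on $Q$ multiplies only the last entry of a triple, so $(\tilde\gamma g)(k) = (\mathcal{T}^k, t_0^k, t^k \cdot g(k))$ for $k=0,1$, whereas the projections of $\tilde\gamma$ to $\px Mx$ and to $Y$ are unchanged; hence $\exd\gamma$, the trivializations $\mathcal{T}_l$, $\mathcal{T}_r$, and the trivializations $\mathcal{T}^0$, $\mathcal{T}^1$ all stay the same. Next I would inspect the four corner 2-isomorphisms. The left-hand data $u_l = \varphi_{0l}(t_0^0)$ and the $\varphi_{1l}$ determined by $\varphi_{1l}(t_0^1) = \ptr{\alpha_l}(u_l)$ involve only the $A$-invariant elements $t_0^0, t_0^1$, so I keep $\varphi_{0l}$ and $\varphi_{1l}$ fixed. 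On the right I keep the chosen $\varphi_{0r}$, whence $u_r = \varphi_{0r}(t^0)$ becomes $u_r \cdot g(0)$ by the equivariance of $\varphi_{0r}$; feeding $t^1 \mapsto t^1 g(1)$ and $u_r \mapsto u_r g(0)$ into the defining relation $\varphi_{1r}(t^1) = \ptr{\alpha_r}(u_r)$ and using the equivariance of parallel transport shows that the uniquely determined corner 2-isomorphism becomes $\varphi_{1r}' = \varphi_{1r} \otimes m_a$ with $a \df g(0) g(1)^{-1}$. Thus $\mathscr{B}$ and the new boundary record $\mathscr{B}\,'$ differ only in the single corner isomorphism at $(1,1)$, by the scalar $a$.

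Finally I would apply the corner refinement \erf{eq:trivrep} of Lemma \ref{lem:dbraneholprop} (a) to the smooth component $b_r$, taking there the trivial bundle $\trivlin_0$ with endpoint identifications $\mathrm{id}$ and $m_a$ (so that $\mathcal{S}^r = \mathcal{T}_r$ and $\mathrm{PT}(\trivlin_0,\mathrm{id},m_a) = a$); this modifies exactly the corner isomorphism $\varphi_{1r}$ and multiplies $\mathscr{A}_{\mathcal{G}}(\exd\gamma,\mathscr{B})$ by $a$, giving $F_Q(\tilde\gamma g) = F_Q(\tilde\gamma) \cdot g(0) g(1)^{-1}$ as required. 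I expect the main obstacle to be the sign at the corner: one must check, from the orientation of $\Sigma = [0,1]^2$ and from the fact that $\varphi_{1r}$ (and not its inverse) appears in $\mathscr{B}$, that the factor is $a$ and not $a^{-1}$. This is the precise analogue of the orientation-induced cancellation that produces the factor $g(1)^{-1} g(0)$ in the proof of Proposition \ref{prop:connectionprop1}, and verifying it is what pins down the correct sign convention and completes the argument.
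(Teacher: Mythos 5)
Your proposal is correct and follows exactly the paper's own route: verify the equivariance condition $g(1)\cdot F_Q(\tilde\gamma g) = F_Q(\tilde\gamma)\cdot g(0)$ by observing that only the entries $t^k$ in the representatives change to $t^k\cdot g(k)$, and that this shifts the boundary contribution by $g(0)\cdot g(1)^{-1}$. The paper states this in one line; you have merely spelled out the bookkeeping of the corner 2-isomorphisms and the appeal to \erf{eq:trivrep} that justifies it.
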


\begin{proof}
Similar to the proof of Proposition \ref{prop:connectionprop1} we have to prove
\begin{equation*}
g(1) \cdot F_Q(\tilde \gamma g) = F_Q(\tilde \gamma) \cdot g(0)
\end{equation*}
for all $g \in PA$ and $\tilde \gamma \in PQ$. But the only difference between $F_Q(\tilde\gamma g)$ and $F_Q(\tilde\gamma)$ is that we have $t^k.g(k)$ instead of $t^k$ in the representatives $(\mathcal{T}^k,t_0^k,t^k)$. This produces a difference of $g(0)\cdot g(1)^{-1}$ in the holonomy formula.
\end{proof}

Next is the check that the connection $\omega_{Q}$ on $Q$ makes $\mathcal{A}_{\mathcal{G}}$ a  1-isomorphism in the 2-groupoid $\diffgrbcon AM$ of bundle gerbes with connection. The conditions we have to prove are summarized in the following lemma. There, we denote by  $B_{L\mathcal{G}} \in \Omega^2_{\mathfrak{a}}(\px Mx)$  the curving of $\uncon_x(L\mathcal{G})$ and by $B\in \Omega^2_{\mathfrak{a}}(Y)$  the curving of the given bundle gerbe $\mathcal{G}$.
\begin{lemma}
\label{lem:concheck}
\begin{enumerate}[(i)]
\item 
The isomorphism $\alpha$ is connection-preserving.

\item
$\mathrm{curv}(Q) = B_{L\mathcal{G}} - B$. 
\end{enumerate}
\end{lemma}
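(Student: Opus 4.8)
The plan is to establish the two conditions that upgrade $\mathcal{A}_{\mathcal{G}}$ to a morphism in $\diffgrbcon A M$, and the guiding principle is that all three connections in play -- the transgression connection on $L\mathcal{G}$, the connection $\omega_Q$ on $Q$, and the gerbe-bundle connection on $P$ -- are defined through one and the same surface holonomy $\mathscr{A}_{\mathcal{G}}$. Hence both (i) and (ii) should reduce to gluing/splitting identities for $\mathscr{A}_{\mathcal{G}}$, exactly in the spirit of the compatibility proof for $\lambda_{\mathcal{G}}$ in Section \ref{sec:constrconn} and of Lemma \ref{lem:reconcurving2}.

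For (i), since connections on diffeological bundles are determined by their parallel transport, I would check the connection-preserving property fibrewise against parallel transport along a path $\Gamma\in P(Z^{[2]})$ with components $\gamma_1,\gamma_2\in P\px Mx$ and $y_1,y_2\in PY$: one must show that $\alpha$ intertwines the transport in $l^{*}L\mathcal{G}\otimes\zeta_2^{*}Q$ with the transport in $\zeta_1^{*}Q\otimes P$. The transport in $l^{*}L\mathcal{G}$ along $s\mapsto l(\gamma_1(s),\gamma_2(s))$ is governed by the holonomy of $\mathcal{G}$ over the cylinder $C(s,z)\df l(\gamma_1(s),\gamma_2(s))(z)$, and the crucial observation is that the two maps $\iota_1,\iota_2$ from Section \ref{sec:constrfusion} cut this cylinder into the two squares $\exd{\gamma_1}$ and $\exd{\gamma_2}$ that govern the two $Q$-transports. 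I would therefore apply the gluing law of Lemma \ref{lem:dbraneholprop}\,(c) along these seams, using the restricted trivializations $\iota_k^{*}\mathcal{T}$ precisely as in the definition of $\alpha$, so that the boundary records of the two squares are matched by the $2$-isomorphisms $\phi_1,\phi_2,\phi_3$ witnessing the fusion relation $\sim_{\mathcal{G}}$. The $P$-factor enters exactly through the relation $\tau(q\otimes t_2)=t_1$ defining $\alpha$, and its parallel transport is absorbed by the connection-preserving isomorphism $\tau$ of the trivialization together with the refined corner identity \erf{eq:trivrep}.

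For (ii), I would extract the $2$-form $\mathrm{curv}(Q)$ from $F_Q$ by evaluating $\mathscr{A}_{\mathcal{G}}$ on bigons in $Z$, just as in Lemma \ref{lem:reconcurving2}: the holonomy of $Q$ around the boundary of a bigon $\Sigma$ equals $\exp\!\big(-\int_{\Sigma}\mathrm{curv}(Q)\big)$, so it suffices to split the value $F_Q=\mathscr{A}_{\mathcal{G}}(\exd\gamma,\mathscr{B})$ into its bulk and boundary contributions. The bulk term $\mathrm{AG}(\exd\gamma^{*}c)$ reproduces, through the same computation that defines the regressed curving (Lemma \ref{lem:GPref} and Proposition \ref{prop:connectionprop1}), the summand $B_{L\mathcal{G}}$. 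Among the boundary pieces, the edge $b_l$ carries the trivialization $\mathcal{T}_l$ whose underlying surface is constant equal to $x$, hence of rank one, and so contributes nothing by Proposition \ref{prop:thinhol}; the edge $b_r$ carries the trivialization $\mathcal{T}_r$ determined by the section $\beta$ in $Y$, whose bundle $T$ satisfies $\mathrm{curv}(T)=B$ (Lemma \ref{lem:sectriv}), and this is precisely the origin of the summand $-B$; the trivializations $\mathcal{T}^0,\mathcal{T}^1$ over $b_0,b_1$ only enter in the path direction and drop out of the $2$-form. Assembling the surviving pieces yields $\mathrm{curv}(Q)=B_{L\mathcal{G}}-B$.

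The main obstacle in both parts is the bookkeeping of boundary records and corner $2$-isomorphisms. In (i) one has to verify that the corner data produced by cutting the cylinder coincide on the nose with the triples $(\mathcal{T}_1,t_0,t_1)$, $(\mathcal{T}_2,t_0,t_2)$ and the element $q$ appearing in the definition of $\alpha$, so that the surface-holonomy identity closes up; in (ii) one must confirm that the auxiliary corner and path-direction terms genuinely cancel, leaving only the two curving $2$-forms. Both hinge on the refined gluing identity \erf{eq:trivrep}, so the strategy is to organize the computation edge by edge and corner by corner so that Lemma \ref{lem:dbraneholprop} and \erf{eq:trivrep} apply directly.
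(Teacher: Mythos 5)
For part (i) your route is essentially the paper's: take a path $\Gamma$ in $\zeta_1^{*}Q\times L\mathcal{G}$, write the $L\mathcal{G}$-transport as a cylinder holonomy $\mathscr{A}_{\mathcal{G}}(\exd\gamma,\mathcal{T}_0,\mathcal{T}_1)$, cut the cylinder along the two seams $[0,1]\times\left\lbrace 0\right\rbrace$ and $[0,1]\times\left\lbrace\tfrac{1}{2}\right\rbrace$ into the two squares governing the two $Q$-transports, and close up with the gluing law and \erf{eq:trivrep}. One correction to your bookkeeping: the matching data along the seams is \emph{not} the fusion $2$-isomorphisms $\phi_1,\phi_2,\phi_3$ (the relation $\sim_{\mathcal{G}}$ plays no role in this lemma); it is the trivializations induced via Lemma \ref{lem:sectriv} by the constant section $y_0$ along the first seam and by the $Y$-components of the two $Q$-paths along the second. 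The factor $F_P(\eta)$ then arises exactly as the parallel-transport correction, via \erf{eq:trivrep}, of the difference bundle pulled back from $P$ along the pair of $Y$-sections -- which is where the defining relation $\tau(q\otimes t_2)=t_1$ of $\alpha$ enters, as you say.

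For part (ii) you take a genuinely different route. The paper never splits the Alvarez--Gaw\c edzki formula into bulk and boundary pieces; it verifies the $1$-morphism condition \erf{eq:pseudotrans}, namely $F_Q(\tilde\beta_o)\cdot G_{L\mathcal{G}}(\Sigma_P)=G_B(\Sigma_Y)\cdot F_Q(\tilde\beta_u)$, by writing $G_{L\mathcal{G}}(\Sigma_P)$ as the surface holonomy of the cylinder $\exd{\gamma_{\Sigma_P}}$ with canonical trivializations at its ends and cutting \emph{that} cylinder into quarters: two quarters reproduce the $F_Q$-terms, one carries the lift $\Sigma_Y$ to $Y$ and yields $G_B(\Sigma_Y)=\exp(\int\Sigma_Y^{*}B)$, and one has rank one. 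Every intermediate quantity stays gauge-invariant. Your plan can be pushed through, but the sentence \quot{the bulk term reproduces $B_{L\mathcal{G}}$} is where all the work hides, for two reasons. First, the bulk term $\mathrm{AG}$ and the $b_r$-boundary term are individually gauge- and triangulation-dependent; only after combining the Stokes contribution of the moving faces adjacent to $b_r$ with the variation of the boundary cochain do you obtain the well-defined pieces $\exp(\int_W H)$ and $\exp(\pm\int_{\Sigma_Y}B)$. Second, identifying $\exp(\int_W H)$ -- the curvature $3$-form integrated over the region $W$ swept between the two squares -- with $\exp(\mp\int B_{L\mathcal{G}})$ is not automatic: it requires checking that $W$ agrees, up to rank-$\le 2$ pieces, with the region swept by the loops $\gamma_{\Sigma_\sigma}(t)$ in Lemma \ref{lem:GPref} applied to $P=L\mathcal{G}$, together with $\mathrm{curv}(L\mathcal{G})=-\int_{S^1}\ev^{*}\mathrm{curv}(\mathcal{G})$ from Proposition \ref{prop:connectionprop1}. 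That check is precisely the geometric content the paper makes explicit through its quarter decomposition, so it must be carried out rather than asserted.
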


\begin{proof}[Proof of Lemma \ref{lem:concheck} (i)]
Let $\Gamma$ be a path in $\zeta_1^{*}Q \times L\mathcal{G}$. We denote by $\tilde\gamma \in PL\mathcal{G}$ its projection to the second factor and by $\gamma \in PLM$ its further projection to the base of $L\mathcal{G}$. Our plan is to compute
\begin{equation}
\label{eq:concheck1}
F(\tilde\gamma) = \mathscr{A}_{\mathcal{G}}(\exd\gamma,\mathcal{T}_0,\mathcal{T}_1)\text{,}
\end{equation}
where $\mathcal{T}_0$ and $\mathcal{T}_1$ are trivializations representing the given elements $\tilde\gamma(0)$ and $\tilde\gamma(1)$, respectively.
The first step is to cut the cylinder $C=[0,1] \times S^1$ on which $\exd\gamma$ is defined open along the lines $e_l \df  [0,1] \times \left \lbrace 0 \right \rbrace$ and $e_r \df  [0,1] \times \left \lbrace \frac{1}{2} \right \rbrace$. The two parts can be identified with  squares $\Sigma^u$ and $\Sigma^o$ using embeddings $j^o \df  \id \times \iota_1\maps \Sigma^o \to C$ and $j^u \df  \id \times \iota_2\maps \Sigma^u \to C$, of which $j^o$ is orientation-reversing, and $j^u$ orientation-preserving.

We need more notation: by $\tilde\beta^o\in PQ$ we denote the projection of $\Gamma$ to $Q$, by $\beta^o\in PZ$ its projection to the base of $Q$, and by $\beta^o_r \in PY$ its further projection to $Y$. 
The boundary of $\Sigma^o$ consists of four smooth parts denoted $e_l^o$, $e_r^o$, $b_0^o$ and $b_1^o$. The boundary of $\Sigma^u$ is labeled similarly (see Figure \ref{fig:cyclindercut}).
\begin{figure}
\begin{center}
\includegraphics{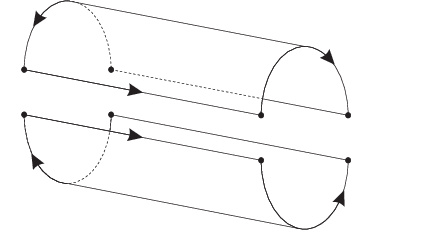}\setlength{\unitlength}{1pt}\begin{picture}(0,0)(543,691)\put(478.65033,764.13969){$e^o_r$}\put(408.20712,726.74391){$e^u_l$}\put(342.30756,722.81849){$b^u_0$}\put(496.09417,788.80394){$b^o_1$}\put(341.84268,793.94400){$b^o_0$}\put(411.80712,769.94391){$e^o_l$}\put(478.65033,724.53969){$e^u_r$}\put(501.01909,691.20879){$b^u_1$}\end{picture}
\end{center}
\caption{A cylinder cut open into two squares.}
\label{fig:cyclindercut}
\end{figure}
The trivializations $\mathcal{T}_0$ and $\mathcal{T}_1$ pull back to trivializations $\mathcal{T}_0^o$, $\mathcal{T}_0^u$, $\mathcal{T}_1^o$ and $\mathcal{T}_1^u$ over the  boundary parts $b^{u,o}_{0,1}$. We also have to equip the new boundary parts $e^{u,o}_{l,r}$ with trivializations. Firstly, notice that $\exd\gamma|_{e_l}$ is constantly equal to $x$, so that $y_0$ defines a (constant) section $\sigma_{y_0}$ into the subduction of $(\exd\gamma)^{*}\mathcal{G}|_{e_{l}}$. Let $\mathcal{T}_l$ be the corresponding trivialization. Secondly, the path $\beta^o_r$ defines a section into the subduction of $(\exd\gamma)^{*}\mathcal{G}|_{e_r}$. We denote the corresponding trivialization by $\mathcal{T}_r$. The trivializations $\mathcal{T}_l$ and $\mathcal{T}_r$ pull back to trivializations $\mathcal{T}_l^u$, $\mathcal{T}^u_r$, $\mathcal{T}^o_l$ and $\mathcal{T}^o_r$. 

It remains to equip the eight points $v^{o,u}_{(0,1),(l,r)}$ with 2-isomorphisms. We  choose some 2-isomorphisms $\varphi^{o}_{0,l}$ and $\varphi^{o}_{0,r}$. We also choose representatives $\tilde\beta^o(0) = (\mathcal{T}^o_0,a_0,b_0^o)$ and $\tilde\beta^o(1)=(\mathcal{T}_1^o,a_1,b_1^o)$, with the trivializations coinciding with the ones pulled back from $\mathcal{T}_1$ and $\mathcal{T}_2$. Parallel transport in the bundles of the trivializations determine further 2-isomorphisms $\varphi^o_{1,l}$ and $\varphi^o_{1,r}$. This determines a boundary record $\mathcal{B}^o$ for $\Sigma^o$, and we note that $\mathscr{A}_{\mathcal{G}}(\exd\gamma \circ j^o, \mathcal{B}^o) = F_Q(\tilde\beta^{o})$. Further, we may use $\varphi^u \df  \varphi^o$ over all four points of $\Sigma^u$, and thus obtain a boundary record $\mathcal{B}^u$ for $\Sigma^u$. All together,  \erf{eq:concheck1} becomes 
\begin{equation}
\label{eq:concheck2}
F(\tilde\gamma)  = F_Q(\tilde\beta^{o})^{-1} \cdot \mathscr{A}_{\mathcal{G}}(\exd\gamma \circ j^u, \mathcal{B}^u)\text{.}
\end{equation}
Now we consider the path $\alpha^{-1} \circ \Gamma$ in $P \times \zeta_2^{*}Q$. We denote its projection to $Q$ by $\tilde\beta^u$, its projection to the base by $\beta^u\in PZ$ and its further projection to $Y$ by $\beta_r^u\in PY$. 
In order to compute $F_Q(\tilde\beta^u)$, we choose representatives  $\tilde\beta^u(0) = (\mathcal{T}^u_0,a_0,b_0^u)$ and $\tilde\beta^u(1)=(\mathcal{T}_1^u,a_1,b_1^u)$, with the trivializations coinciding with the ones pulled back from $\mathcal{T}_1$ and $\mathcal{T}_2$, and the $a$'s like above. We use the 2-isomorphism $\varphi^u_{0,l}$ from above, but have to choose new 2-isomorphisms $\psi^{u}_{0,r}$. They determine 2-isomorphisms $\varphi^u_{1,r}$ and $\psi^u_{1,l}$, all together making up a boundary record $\mathcal{C}^u$ for $\Sigma^u$, in such a way that
\begin{equation}
\label{eq:concheck3}
F_Q(\tilde\beta^u) = \mathscr{A}_{\mathcal{G}}(\gamma \circ j^{u}, \mathcal{C}^u)\text{.}
\end{equation} 
Next we compute the difference between the boundary records $\mathcal{B}^u$ and $\mathcal{C}^u$. They differ only in the trivialization over $e_r^u$ and in the two 2-isomorphisms attached to it. Namely, $\mathcal{B}^u$ has the trivialization $\mathcal{T}^u_r$ coming from the section $\beta^o_r$ while $\mathcal{C}^u$ has a trivialization $\mathcal{T}_r$ coming from the section $\beta_r^u$. By Lemma \ref{lem:sectriv}, the bundle $R \df  (\beta_r^o,\beta_r^u)^{*}P$ comes with a 2-isomorphism $\rho\maps \mathcal{T}_r^u \otimes R \Rightarrow \mathcal{T}_r$. Consider trivializations $t_k\maps R|_k \to \trivlin_0$ over the endpoints $k=0,1$, that combine with $\rho$ to 2-isomorphisms $\rho_k\maps \mathcal{T}_r^u|_k \Rightarrow \mathcal{T}_r|_k$. 
We might have chosen $\psi^u_{0,r} = \varphi^o_{0,r} \circ \rho_0$. Then, $\psi^u_{1,r} =  a_{t_0,t_1} \otimes  \varphi^o_{1,r} \circ \rho_1$, where $a_{t_0,t_1} \in A$ is produced by parallel transport in $R$ in combination with the trivializations $t_0$ and $t_1$ at the endpoints. Thus
\begin{equation}
\label{eq:concheck4}
\mathscr{A}_{\mathcal{G}}(\exd\gamma \circ j^{u}, \mathcal{C}^u) = \mathscr{A}_{\mathcal{G}}(\exd\gamma \circ j^{u}, \mathcal{B}^u) \cdot  a_{t_0,t_1}\text{.} 
\end{equation} 
It remains to identify $a_{t_0,t_1}$. Let $F_P\maps PP \to A$ be the smooth map corresponding to the connection on $P$. We denote by $\eta \in PP$ the projection of the path $\alpha^{-1} \circ \Gamma$ to $P$. Notice that $\eta$ projects to the path $(\beta_r^o,\beta_r^u)$ in $Y^{[2]}$. We may have chosen the trivializations $t_0$ and $t_1$ such that $\eta(k)=t_k$ for $k=1,2$, which implies 
\begin{equation}
\label{eq:concheck5}
F_P(\eta) = a_{t_0,t_1}^{-1}. 
\end{equation}
Summarizing equations \erf{eq:concheck2} to \erf{eq:concheck5} we have
\begin{equation}
F(\tilde\gamma) =F_Q(\tilde\beta^{o})^{-1} \cdot F_Q(\tilde\beta^u) \cdot F_P(\eta)\text{.}
\end{equation}
This shows that $\alpha$ preserves the connections.
\end{proof}

\begin{proof}[Proof of Lemma \ref{lem:concheck} (ii)]
We work on the total space of $Q$, and shall prove that the object $F_Q$ in $\fun QA$ defines a 1-morphism in $\tfun QA$ between the pullbacks of $G_{L\mathcal{G}}$ and $G_B$ to $Q$, where $G_{L\mathcal{G}} \in \tfun {\px Mx}A$ is constructed in Section \ref{sec:reconcon} and  $G_B \df  \mathfrak{P}_2(B) \in \tfun YA$ is the 2-functor associated to the curving $B$. That is, we have to prove condition \erf{eq:pseudotrans}, namely
\begin{equation}
\label{eq:connpres1}
F_Q(\tilde\beta_o) \cdot G_{L\mathcal{G}}(\Sigma_{P}) = G_B(\Sigma_Y) \cdot F_Q(\tilde\beta_u)
\end{equation}
for all bigons $\Sigma \in BQ$, where $\ev(\Sigma)=(\tilde\beta_o,\tilde\beta_u)$, and $\Sigma_P$ and $\Sigma_Y$ are the projections to $\px Mx$ and $Y$, respectively. Further, we denote by $\eta_o,\eta_u$ the projections of $\tilde\beta_o$ and $\tilde\beta_u$ to $\px Mx$. 
Consider the path $\gamma_{\Sigma_P}\in PLM$ used in the definition of $G_{L\mathcal{G}}$ in Section \ref{sec:reconcon},  and let $\mathcal{T}_0$ and $\mathcal{T}_1$ be the canonical trivializations over $\gamma_{\Sigma_P}(0)$ and $\gamma_{\Sigma_P}(1)$, respectively. 
Then, 
\begin{equation}
\label{eq:connpres2}
G_{L\mathcal{G}}(\Sigma_P) = \mathscr{A}_{\mathcal{G}}(\exd{\gamma_{\Sigma_P}}, \mathcal{T}_0,\mathcal{T}_1)\text{,}
\end{equation}
with $\exd{\gamma_{\Sigma_P}}\maps [0,1] \times S^1 \to M$ the adjoint of $\gamma_{\Sigma_P}$. The plan is to cut this surface holonomy into three pieces; these will provide the other three terms in \erf{eq:connpres1}. We consider three embeddings $[0,1] \to S^1$ defined by
\begin{equation*}
\iota_o(t) \df  {\textstyle\frac{1}{4}t}
\quomma
\iota_r(t) \df   {\textstyle\frac{1}{4}t+\frac{1}{4}}
\quand
\iota_u(t) \df   {\textstyle\frac{1}{4}t+\frac{3}{4}}
\end{equation*} 
and notice that $\exd{\gamma_{\Sigma_P}} \circ (\id \times \iota_o) = \tilde \eta_o$ and $\exd{\gamma_{\Sigma_P}} \circ (\id \times \iota_u) = \tilde \eta_u$, while $\eta_r \df  \exd{\gamma_{\Sigma_P}} \circ (\id \times \iota_r) = B\pi(\Sigma_Y)$. In particular, $\eta_{r}$ comes with a lift to $Y$, namely $\Sigma_Y$. As a consequence, there is a canonical trivialization $\mathcal{S}\maps \eta_r^{*}\mathcal{G} \to \mathcal{I}_{\rho}$ with $\rho \df  \Sigma_Y^{*}B$. We shall choose 2-isomorphisms $\psi_k\maps \iota_r^{*}\mathcal{T}_k \Rightarrow \mathcal{S}$ for $k=0,1$ defining boundary records $\mathscr{B}_o \df  \left \lbrace \iota_o^{*}\mathcal{T}_0, \iota_o^{*}\mathcal{T}_1, \mathcal{S}, \psi_0,\psi_1 \right \rbrace$ and $\mathscr{B}_u \df  \left \lbrace \iota_u^{*}\mathcal{T}_0, \iota_u^{*}\mathcal{T}_1, \mathcal{S}, \psi_0,\psi_1 \right \rbrace$. Then,
\begin{equation}
\label{eq:connpres3}
\mathscr{A}_{\mathcal{G}}(\exd{\gamma_{\Sigma_P}}, \mathcal{T}_0,\mathcal{T}_1) = \mathscr{A}_{\mathcal{G}}(\eta_o,\mathscr{B}_o)^{-1} \cdot \mathscr{A}_{\mathcal{G}}(\eta_u,\mathscr{B}_u) \cdot \mathscr{A}_{\mathcal{G}}(\eta_r,\mathcal{S})\text{.}
\end{equation}
Strictly speaking, there is a fourth factor corresponding to the forth quarter of the circle. But there, $\exd{\gamma_{\Sigma_P}}$ has rank one so it does not contribute. 
We have 
\begin{equation}
\label{eq:connpres4}
\mathscr{A}_{\mathcal{G}}(\eta_r,\mathcal{S}) = \exp \left ( \int_{[0,1]} \rho \right ) = \exp \left ( \int_{[0,1]} \Sigma_Y^{*}B \right ) = G_B(\Sigma_Y)\text{.}
\end{equation}
Further, we see that 
\begin{equation}
\label{eq:connpres5}
\mathscr{A}_{\mathcal{G}}(\eta_o,\mathscr{B}_o) = F_Q(\tilde\beta_o)
\quand
\mathscr{A}_{\mathcal{G}}(\eta_u,\mathscr{B}_u) = F_Q(\tilde\beta_u)
\end{equation}
Formulae \erf{eq:connpres2} -- \erf{eq:connpres5} show \erf{eq:connpres1}.
\end{proof}

\subsubsection{Proof that $\mathcal{A}_{\mathcal{G}}$ is natural in $\mathcal{G}$}

We show that the 1-isomorphisms $\mathcal{A}_{\mathcal{G}}$ are the components of a natural transformation. Let $\mathcal{B}\maps \mathcal{G} \to \mathcal{H}$ be a connection-preserving isomorphism. It suffices to find a 2-isomorphism
\begin{equation*}
\alxydim{@R=1.2cm@C=1.5cm}{\uncon_x(L\mathcal{G}) \ar[r]^-{\uncon_x(L\mathcal{B})} \ar[d]_{\mathcal{A}_{\mathcal{G}}} & \uncon_x(L\mathcal{H}) \ar[d]^{\mathcal{A}_\mathcal{H}} \ar@{=>}[dl] \\ \mathcal{G} \ar[r]_-{\mathcal{B}} & \mathcal{H}}
\end{equation*}
in the 2-category $\diffgrbcon AM$. 
Clockwise, the 1-isomorphism $\mathcal{A}_{\mathcal{H}} \circ \uncon_x(L\mathcal{B})$ consists of the principal $A$-bundle $l^{*}L\mathcal{H} \otimes Q_{\mathcal{H}}$ over $\px Mx^{[2]} \times_M Y_{\mathcal{H}}$. Counter-clockwise, the 1-isomorphism $\mathcal{B} \circ \mathcal{A}_{\mathcal{G}}$ consists of the principal $A$-bundle $Q_{\mathcal{G}} \otimes B$ over $\px Mx \times_M Y_{\mathcal{G}} \times_M Y_{\mathcal{H}}$, where $B$ is the principal $A$-bundle over $Y_{\mathcal{G}} \times_M Y_{\mathcal{H}}$ of the isomorphism $\mathcal{B}$.  To construct the 2-isomorphism, we have the freedom to choose a subduction $\omega\maps W \to \px Mx^{[2]} \times_M Y_{\mathcal{G}} \times_M Y_{\mathcal{H}}$, and instead of choosing the identity, we choose one for which the pullback $\omega^{*}B$ is trivializable. There is even a canonical choice: $W \df  \px Mx^{[2]} \times_M B$, for which we  have a canonical trivialization $g_B\maps \omega^{*}B \to \trivlin_{\eta}$, where $\eta\in\Omega^1_{\mathfrak{a}}(B)$ is the connection on $B$. Now we need an isomorphism  
\begin{equation*}
\phi\maps L\mathcal{H} \otimes Q_{\mathcal{H}} \to Q_{\mathcal{G}} \otimes \trivlin_{\eta}
\end{equation*}
of principal $A$-bundles over $W$. Over a point $w\in W$ with $\omega(w) = (\gamma_1,\gamma_2,b)$ and $b$ projecting to $(g,h) \in Y_{\mathcal{G}} \times_M Y_{\mathcal{H}}$, this is the  map
\begin{equation*}
\phi\maps  L\mathcal{H}|_{l(\gamma_1,\gamma_2)} \otimes Q_{\mathcal{H}}|_{(\gamma_2,h)} \to Q_{\mathcal{G}}|_{(\gamma_1,g)} \maps \mathcal{S} \otimes (\mathcal{S}_2,s_0,s) \mapsto  (\mathcal{S}_1 \circ \gamma_1^{*}\mathcal{B}, b_0 \otimes s_0, b \otimes  s)
\end{equation*}
where $\mathcal{S}_k \df  \iota_k^{*}\mathcal{S}$, and $b_0$ is a fixed choice of an element in $B$ projecting to $(g_0,h_0)$, where these are the choices of lifts of $x$ to $Y_{\mathcal{G}}$ and $Y_{\mathcal{H}}$ we have made to construct $Q_{\mathcal{G}}$ and $Q_{\mathcal{H}}$, respectively.  This is smooth and $A$-equivariant. 

\begin{lemma}
The bundle isomorphism $\phi$ defines a  2-iso\-mor\-phism.
\end{lemma}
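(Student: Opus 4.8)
The plan is to verify the two conditions that make $\phi$ a 2-isomorphism between 1-morphisms of bundle gerbes with connection in the sense of \cite{waldorf1}: compatibility with the bundle-gerbe isomorphisms of the two composites $\mathcal{A}_{\mathcal{H}} \circ \uncon_x(L\mathcal{B})$ and $\mathcal{B} \circ \mathcal{A}_{\mathcal{G}}$, and connection-preservation. Since $\phi$ has been given by an explicit fibre formula over $W = \px Mx^{[2]} \times_M B$, both conditions can be checked fibrewise, reducing the whole statement to bookkeeping with the definitions of $\phi$, of the composite $\alpha$-maps, and of the trivialization $g_B$.

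First I would make the two composite isomorphisms explicit. The clockwise $\alpha$ is assembled from the isomorphism of $\mathcal{A}_{\mathcal{H}}$ (built from $\lambda_{\mathcal{H}}$ and the triple data $(\mathcal{T}, t_0, t)$) and the isomorphism of $\mathcal{F}(l^{*}L\mathcal{B})$ (built from $L\mathcal{B}$ and $\lambda_{\mathcal{H}}$), while the counter-clockwise $\alpha$ is assembled from the isomorphism of $\mathcal{A}_{\mathcal{G}}$ and the bundle isomorphism of $\mathcal{B}$. Over a point of the two-fold fibre product $W^{[2]}$ I would take a representative $\mathcal{S} \otimes (\mathcal{S}_2, s_0, s)$ of the source fibre and chase it around the resulting square. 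The decisive inputs are the defining rule $L\mathcal{B}(\mathcal{T}) = \mathcal{T} \circ \mathcal{B}^{-1}$ for transgressed morphisms, the fusion relation $\sim_{\mathcal{H}}$ that encodes $\lambda_{\mathcal{H}}$, and the fact that $\mathcal{B}$ is a morphism of gerbes so that $\gamma^{*}\mathcal{B}$ intertwines the gerbe products; together with the flatness of $g_B$ these show that both ways around the square return the same triple, which is precisely the compatibility of $\phi$ with the two $\alpha$-maps.

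The connection-preservation is, I expect, the main obstacle. As in the proof of Lemma \ref{lem:concheck}, I would phrase it as the statement that $\phi$ intertwines the relevant objects of $\fun{-}{A}$, i.e. that along every path in $W$ the surface-holonomy contributions satisfy $F_{L\mathcal{H}} \cdot F_{Q_{\mathcal{H}}} = F_{Q_{\mathcal{G}}} \cdot F_{\eta}$, where $F_{\eta}$ is the holonomy of the connection $\eta$ carried by $g_B$. Here the hypothesis that $\mathcal{B}$ is connection-preserving is essential: it identifies the curving and Deligne data of $\mathcal{G}$ and $\mathcal{H}$ up to the boundary contribution of the bundle $B$, so that the $\mathscr{A}_{\mathcal{H}}$-contributions appearing in $F_{L\mathcal{H}}$ and $F_{Q_{\mathcal{H}}}$ reassemble, after the cut-and-reglue manipulation of surface holonomy used for Lemma \ref{lem:concheck}, into the $\mathscr{A}_{\mathcal{G}}$-contribution in $F_{Q_{\mathcal{G}}}$ together with exactly the holonomy of $\eta$. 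With both conditions verified, $\phi$ is a connection-preserving 2-isomorphism, so the naturality square for $\mathcal{A}_{\mathcal{G}}$ commutes up to $\phi$, establishing naturality in $\mathcal{G}$.
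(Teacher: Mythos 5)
Your proposal follows essentially the same route as the paper: first the fibrewise diagram chase showing $\phi$ intertwines the two composite $\alpha$-isomorphisms (which the paper also only sketches as a "tedious but straightforward calculation"), and then connection-preservation phrased as the identity $F(\tilde\gamma)\cdot F_{Q_{\mathcal{H}}}(\tilde\beta) = F_{Q_{\mathcal{G}}}(\tilde\beta')\cdot\exp(\int_{\tilde b}\eta)$, proved by substituting trivializations in the boundary records, invoking that the connection-preserving 1-isomorphism $\mathcal{B}$ identifies $\mathscr{A}_{\mathcal{G}}$ with $\mathscr{A}_{\mathcal{H}}$, and regluing via Lemma \ref{lem:dbraneholprop} (c). The only stray point is your appeal to "flatness of $g_B$" in the first step — that step involves no connections at all, and $g_B$ is just the tautological (connection-preserving, not flat) trivialization of $\omega^{*}B$ carrying the 1-form $\eta$ that later produces the $\exp(\int_{\tilde b}\eta)$ factor.
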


\begin{proof}
Let us first show that $\phi$ is compatible with the bundle isomorphisms involved in $\mathcal{B} \circ \mathcal{A}_{\mathcal{G}}$ and $\mathcal{A}_{\mathcal{H}} \circ \uncon_x(L\mathcal{B})$. This is a condition over $W \times_M W$; over a point $(w,w')$ it is the commutativity of the diagram
\begin{equation*}
\alxydim{@C=1.6cm@R=1.2cm}{L\mathcal{G}_{l(\gamma_1,\gamma_1')} \otimes L\mathcal{H}_{l(\gamma_1',\gamma_2')} \otimes Q_{\mathcal{H}}|_{\gamma_2',h'}   \ar[d]_{\psi_{\mathcal{B}} \otimes \id} \ar[r]^-{\id \otimes \phi} & L\mathcal{G}_{l(\gamma_1,\gamma_1')} \otimes Q_{\mathcal{G}}|_{\gamma_1',g'} \ar[d]^{\alpha_{\mathcal{G}}}  \\    L\mathcal{H}_{l(\gamma_1,\gamma_2)} \otimes L\mathcal{H}_{l(\gamma_2,\gamma_2')} \otimes Q_{\mathcal{H}}|_{\gamma_2',h'} \ar[d]_{\id \otimes \alpha_{\mathcal{H}}} & Q_{\mathcal{G}}|_{\gamma_1,g} \otimes P_{\mathcal{G}}|_{g,g'} \ar[d]^{\id \otimes \tilde\beta} \\ L\mathcal{H}_{l(\gamma_1,\gamma_2)} \otimes Q_{\mathcal{H}}|_{\gamma_2,h} \otimes P_{\mathcal{H}}|_{h,h'}\ar[r]_-{\phi \otimes \id} & Q_{\mathcal{G}}|_{\gamma_1,g} \otimes P_{\mathcal{H}}|_{h,h'}\text{.}}
\end{equation*}
Here, $\psi_{\mathcal{B}}$ is the bundle isomorphism of the 1-isomorphism $\uncon_x(L\mathcal{B})$, and $\tilde\beta$ is the pullback of the bundle isomorphism $\beta$ of $\mathcal{B}$ to $B$.
The proof of the commutativity is a tedious but straightforward calculation which we leave out for the sake of brevity.

Now it remains to check that the isomorphism $\phi$ preserves the connections. We consider a path $\Gamma =(\tilde\gamma,\tilde\beta) \in P( L\mathcal{H} \times Q_{\mathcal{H}})$. Its various projections are: $\gamma$ the projection to $PLM$, $\beta$ the projection to $PZ$, $(\beta_1,\beta_2)$ the projection to $P\px Mx^{[2]}$, $\tilde b$ the projection to $PB$, $b$ the projection to $P(Y_{\mathcal{G}} \times_M Y_{\mathcal{H}})$ and $b_{\mathcal{H}},b_{\mathcal{G}}$ the projections to $Y_{\mathcal{G}}$ and $Y_{\mathcal{H}}$, respectively. We note that $\beta = (\beta_2,b_{\mathcal{H}})$. Further, we choose representatives $\mathcal{S}^t$ of $\tilde\gamma(t)$, and $(\mathcal{S}^t_2,s^t_0,s^t)$ of $\tilde\beta(t)$. Then, we have
\begin{equation*}
\tilde\beta'(t) \df  (\phi \circ \Gamma)(t) = (\mathcal{S}_1^t \circ \mathcal{B},b_0 \otimes s_0^t, \tilde b(t) \otimes s^t)\text{.} 
\end{equation*}
We have to show that
\begin{equation}
\label{eq:phiprescon}
F(\tilde\gamma) = F_{Q_{\mathcal{H}}}(\tilde\beta)^{-1} \cdot F_{Q_{\mathcal{G}}}(\tilde\beta') \cdot \exp \left ( \int_{\tilde b} \eta \right )\text{.}
\end{equation}
First of all, we have $F(\tilde\gamma)=\mathscr{A}_{\mathcal{H}}(\gamma,\mathcal{S}^0,\mathcal{S}^1)$. To compute $F_{Q_{\mathcal{H}}}(\tilde\beta)$, we have to choose connection-preserving 2-isomorphisms $\varphi_{0l}\maps \mathcal{S}_2^0 \Rightarrow \mathcal{S}_l$ and $\varphi_{0r}\maps \mathcal{S}_2^0 \Rightarrow \mathcal{S}_r$, where $\mathcal{S}_l$ and $\mathcal{S}_r$ are the canonical trivializations determined by the sections $\sigma_l(s) \df  (s,h_0)$ and $\sigma_r(s) \df  (s,b_{\mathcal{H}}(s))$. As explained in the definition of the connection on $Q_{\mathcal{H}}$, these determine further 2-isomorphisms $\varphi_{1l}$ and $\varphi_{1r}$ forming a boundary record 
\begin{equation*}
\mathscr{B} \df  \left \lbrace \mathcal{S}_2^0,\mathcal{S}_2^1, \mathcal{S}_l,\mathcal{S}_r, \varphi_{0l},\varphi_{0r},\varphi_{1l},\varphi_{1r} \right \rbrace\text{.}
\end{equation*}
Then, $F_{Q_{\mathcal{H}}}(\tilde\beta) = \mathscr{A}_{\mathcal{H}}(\beta_2,\mathscr{B})$. For $F_{Q_{\mathcal{G}}}(\tilde\beta') = \mathscr{A}_{\mathcal{G}}(\beta_1,\mathscr{B}\,')$ we have the boundary record  
\begin{equation}
\label{eq:brman1}
\mathscr{B}\,' = \left \lbrace \mathcal{S}_1^0 \circ \mathcal{B}, \mathcal{S}_1^1 \circ \mathcal{B},\mathcal{T}_l,\mathcal{T}_r,\psi_{0l},\psi_{0r},\psi_{1l},\psi_{1r}  \right \rbrace\text{,}
\end{equation} 
where $\mathcal{T}_l$ and $\mathcal{T}_r$ are the canonical trivializations determined by the sections $\tau_l(s) \df  (s,g_0)$ and $\tau_r(s) \df  (s,b_{\mathcal{G}}(s))$. The 2-isomorphisms $\psi_{0l}$ and $\psi_{0r}$ are chosen in the following way. First we infer that the data of the 1-isomorphism $\mathcal{B}$ determines  connection-preserving 2-isomorphisms
$\beta_l\maps \mathcal{S}_l \circ \mathcal{B} \Rightarrow \mathcal{T}_l$
 and $\beta_r\maps \mathcal{S}_r \circ \mathcal{B} \Rightarrow b^{*}B \otimes \mathcal{T}_r$
over $[0,1]$. Then we define $\psi_{0l} \df  \beta_l \bullet  (\varphi_{0l} \circ \id)$. A computation shows that then $\psi_{1l} = \beta_l \bullet (\varphi_{1l} \circ \id)$. Since $b\maps [0,1] \to Y_{\mathcal{G}} \times_M Y_{\mathcal{H}}$ has a lift $\tilde b\maps [0,1] \to B$, the bundle $b^{*}B$ has a canonical trivialization $t\maps b^{*}B \to \trivlin_0$. We define $\psi_{0r} \df  (t \otimes \id) \bullet \beta_r \bullet (\varphi_{0r} \circ \id)$. Another computation shows that then $\psi_{1r}= (t \otimes \id) \bullet \beta_r \bullet (\varphi_{1r} \circ \id)$. Our first manipulation with the boundary record \erf{eq:brman1} is to replace $\mathcal{T}_r$ by $b^{*}B \otimes \mathcal{T}_r$. The new boundary record is 
 \begin{equation*}
\mathscr{B}\,'' = \left \lbrace \mathcal{S}_1^0 \circ \mathcal{B}, \mathcal{S}_1^1 \circ \mathcal{B},\mathcal{T}_l,b^{*}B \otimes \mathcal{T}_r,\psi_{0l}, \beta_r \bullet (\varphi_{0r} \circ \id),\psi_{1l},\beta_r \bullet (\varphi_{1r} \circ \id)  \right \rbrace\text{,}
\end{equation*} 
and
\begin{equation}
\label{eq:brman2}
\mathscr{A}_{\mathcal{G}}(\beta_1,\mathscr{B}\,'') = \mathscr{A}_{\mathcal{G}}(\beta_1,\mathscr{B}\,') \cdot \exp \left ( \int_{\tilde b } \eta \right ) \text{,}
\end{equation}
since the latter factor is precisely the parallel transport in $b^{*}B$ along $[0,1]$, in the trivialization given by $\tilde b$. Our second manipulation is to replace $\mathcal{T}_l$ and $b^{*}B \otimes \mathcal{T}_r$ by $\mathcal{S}_l \circ \mathcal{B}$ and $\mathcal{S}_r \circ \mathcal{B}$, respectively. The new boundary record is
 \begin{equation*}
\mathscr{B}\,''' = \left \lbrace \mathcal{S}_1^0 \circ \mathcal{B}, \mathcal{S}_1^1 \circ \mathcal{B},\mathcal{S}_l \circ \mathcal{B},\mathcal{S}_r \circ \mathcal{B},\varphi_{0l}\circ \id,\varphi_{0r}\circ \id,\varphi_{1l}\circ \id,\varphi_{1r} \circ \id  \right \rbrace\text{,}
\end{equation*} 
and $\mathscr{A}_{\mathcal{G}}(\beta_1,\mathscr{B}\,'') = \mathscr{A}_{\mathcal{G}}(\beta_1,\mathscr{B}\,''')$. Now we use that the surface holonomy of the bundle gerbe $\mathcal{G}$ is equal to the one of the bundle gerbe $\mathcal{H}$, in virtue of the 1-isomorphism $\mathcal{B}$. That is,
\begin{equation*}
\mathscr{A}_{\mathcal{G}}(\beta_1,\mathscr{B}\,''') = \mathscr{A}_{\mathcal{H}}(\beta_1,\mathscr{B}\,'''')
\quere
\mathscr{B}\,'''' = \left \lbrace \mathcal{S}_1^0, \mathcal{S}_1^1,\mathcal{S}_l,\mathcal{S}_r,\varphi_{0l},\varphi_{0r},\varphi_{1l},\varphi_{1r} \right \rbrace\text{.}
\end{equation*}
Comparing $\mathscr{B}\,''''$ with $\mathscr{B}$, the gluing formula for surface holonomy yields
\begin{equation*}
\mathscr{A}_{\mathcal{H}}(\gamma,\mathcal{S}^0,\mathcal{S}^1) = \mathscr{A}_{\mathcal{H}}(\beta_2,\mathscr{B})^{-1} \cdot \mathscr{A}_{\mathcal{H}}(\beta_1,\mathscr{B}\,'''')  \text{.}
\end{equation*}
Together with \erf{eq:brman2}, this shows \erf{eq:phiprescon}.
\end{proof}

\subsection{Transgression after Regression}

\label{sec:trafterreg}

We have to associate to each fusion bundle $P$ with superficial connection over $L M$ a fusion-preserving isomorphism \begin{equation*}
\varphi_P\maps \trcon(\uncon_x(P)) \to P\text{.}
\end{equation*}
The construction of $\varphi_P$ is the content of the following lemmata, and ends below with Proposition \ref{prop:varphiconpres}. The lemmata are proven under the following slightly weaker assumptions (so that we can use them in the forthcoming Part III \cite{waldorf11}):
\begin{equation}
\label{assumptions}
\begin{minipage}[c]{0.8\textwidth}
Let $(P,\lambda)$ be a fusion bundle with  connection $\omega$, i.e. $\omega$ is compatible and symmetrizing. Instead of requiring that $\omega$ is superficial we only require condition (i) of Definition \ref{def:superficial}. Let $B \in \Omega^2(\px Mx)$ be a curving on the bundle gerbe $\un_x(P)$, which is compatible with the connection $l^{*}\omega$ on the principal $A$-bundle $l^{*}P$ of $\un_x(P)$ in the sense of Definition \ref{def:conn}. The resulting bundle gerbe with connection is denoted $\un_x^B(P)$.
\end{minipage}
\end{equation}
The point is that since $\omega$ may not be superficial,  the canonical curving $B_P$ on the bundle gerbe $\un_x(P)$ constructed in Section \ref{sec:reconcon} is not available. If $\omega$ \emph{is} superficial, we have $\un_x^{B_P}(P) = \uncon_x(P)$.

The following construction relates points in $P$ with  trivializations of $\un^B_x(P)$. We decompose a loop $\beta \in LM$ into several pieces in the following way. First we divide the loop $\beta$ into its two halves $\gamma_1,\gamma_2\in PM$ satisfying $\gamma_1(0)=\gamma_2(0)=\beta(0)$ and $\gamma_1(1)=\gamma_2(1)=\beta(\frac{1}{2})$. For this purpose we choose a smoothing function $\phi$ which adds sitting instants. Then we choose a path  $\gamma \in \px Mx$ with $\gamma(1)=\beta(0)$ and a thin homotopy $h\maps l(\gamma_1 \pcomp \gamma,\gamma_2 \pcomp \gamma) \to \beta$. 
Consider a trivialization $\mathcal{T}\maps \beta^{*}\un^B_x(P) \to \mathcal{I}_0$. This is a principal $A$-bundle $T$ with connection over the diffeological space $Z \df  S^1 \lli{\beta}\times_{\ev_1} \px Mx$, together with a connection-preserving isomorphism
\begin{equation*}
\tau\maps l^{*}P \otimes \zeta_2^{*}T \to \zeta_1^{*}T 
\end{equation*}
over $Z \times_{S^1} Z$. A point in that space is a triple $(t,\eta_1,\eta_2)$, where $t\in S^1$ and $\eta_1,\eta_2\in \px Mx$ such that $\beta(t) = \eta_1(1)=\eta_2(1)$. Over  such a point point, $\tau$ is a map
\begin{equation*}
\tau_{(t,\eta_1,\eta_2)}\maps P_{l(\eta_1,\eta_2)} \otimes T_{t,\eta_2} \to T_{t,\eta_1}\text{.}
\end{equation*}
We construct an element $p \in P_{l(\gamma_1 \pcomp \gamma,\gamma_2 \pcomp \gamma)}$ as follows. The smoothing function $\phi$ defines for $i=1,2$ thin paths  $\alpha_i \in PZ$ which  go from  $(0,\id \pcomp \gamma)$ to $(\frac{1}{2},\gamma_i \pcomp \gamma)$. Let $\alpha \df  \alpha_2\pcomp \prev{\alpha_1}$, and 
choose an element $q\in T_{(\frac{1}{2},\gamma_1 \pcomp \gamma)}$. Then, $p$ is defined by requiring
\begin{equation}
\label{eq:defp}
\tau(p \otimes \ptr\alpha(q))= q\text{,}
\end{equation}
where $\ptr{\alpha}$ is the parallel transport in the bundle $T$. Since $\tau$ and $\ptr{\alpha}$ are $A$-equivariant, $p$ does not depend on the choice of $q$. 
We define
\begin{equation}
\label{eq:defpt}
p_{\mathcal{T}} \df  \ptr{h}(p) \in P_{\beta}\,\text{;}
\end{equation}
this establishes the relation between trivializations of $\un_x^B(P)$ and points in $P$ we were aiming at.

\begin{lemma}
\label{lem:pTindep}
Under the assumptions \erf{assumptions}, the element $p_{\mathcal{T}}$ defined in \erf{eq:defpt} is independent of the choice of the path $\gamma$, the thin homotopy $h$, and the smoothing function $\phi$. Moreover, if $\mathcal{T}$ and $\mathcal{T}'$ are 2-isomorphic trivializations, $p_{\mathcal{T}} = p_{\mathcal{T}'}$.  
\end{lemma}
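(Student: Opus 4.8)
The plan is to show that any two admissible sets of choices produce base loops in $LX$ that are thin homotopic, and then to reduce the entire statement to a single transport identity, after which Lemma~\ref{lem:thinpath} closes the argument. Note first that $p$ does not depend on the auxiliary element $q$, since $\tau$ and $\ptr{\alpha}$ are $A$-equivariant and hence the solution $p$ of \erf{eq:defp} is unchanged under rescaling $q$; this is already recorded in the construction. Crucially, the only superficiality assumption retained in \erf{assumptions} is condition~(i) of Definition~\ref{def:superficial}, and the proof of Lemma~\ref{lem:thinpath} uses only that condition, so Lemma~\ref{lem:thinpath} remains available throughout.

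\emph{Reduction.} Write $\ell \df l(\gamma_1 \pcomp \gamma, \gamma_2 \pcomp \gamma)$, so that $p \in P_\ell$ and $p_{\mathcal{T}} = \ptr{h}(p)$ for a thin path $h \maps \ell \to \beta$. Let $(\gamma',\phi',h')$ be a second set of choices, with base loop $\ell'$, element $p' \in P_{\ell'}$ and $p_{\mathcal{T}}' = \ptr{h'}(p')$. The mere existence of the thin paths $h$ and $h'$ says that both $\ell$ and $\ell'$ are thin homotopic to $\beta$, so $k \df \prev{h'} \pcomp h$ is a thin path from $\ell$ to $\ell'$. I claim it suffices to prove the single identity $p' = \ptr{k}(p)$. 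Indeed, granting it, functoriality of parallel transport gives $\ptr{h'}(p') = \ptr{h' \pcomp \prev{h'} \pcomp h}(p)$, and since $h' \pcomp \prev{h'} \pcomp h$ and $h$ are both thin paths from $\ell$ to $\beta$, Lemma~\ref{lem:thinpath} yields $\ptr{h' \pcomp \prev{h'} \pcomp h} = \ptr{h}$, whence $p_{\mathcal{T}}' = \ptr{h}(p) = p_{\mathcal{T}}$. This one identity simultaneously absorbs the choices of $h$, $\phi$ and $\gamma$.

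\emph{The core identity.} Proving $p' = \ptr{k}(p)$ is the main obstacle. Changing the smoothing function $\phi$ only reparameterizes $\gamma_1,\gamma_2$ and $\alpha_1,\alpha_2$ within their thin homotopy classes, so by the thin-homotopy invariance of parallel transport in diffeological bundles \cite[Proposition 3.2.10]{waldorf9} this part is immediate; the substantial content is the replacement of $\gamma$ by $\gamma'$, which turns $\ell$ into the conjugate loop $\ell'$. To handle it I would construct a homotopy inside $Z = S^1 \lli{\beta}\times_{\ev_1} \px Xx$ connecting the data defining $p$ to that defining $p'$: a family of path-components at $t=0$ interpolating $\gamma$ and $\gamma'$, together with the corresponding families of the sections $\alpha_i$. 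Because the structure isomorphism $\tau$ is connection-preserving, it intertwines the parallel transports in $l^{*}P$, $\zeta_1^{*}T$ and $\zeta_2^{*}T$ along such a homotopy; transporting the defining equation \erf{eq:defp} along it, and using that the loops in $LX$ arising from the interpolation are thin, so that condition~(i) forces the extra holonomy contributions to vanish, should convert it into the defining equation for $p'$ transported by $\ptr{k}$. The delicate points are the bookkeeping of base points and orientations over $Z \times_{S^1} Z$ and the verification that the interpolating maps are genuinely rank one; this is where the compatibility of the connection with $\lambda$, together with $\lambda(\can \otimes \can) = \can$ from Lemma~\ref{lem:fusbuntrivflat}, should be used to control the fusion factor $P_{l(\gamma,\gamma')}$ entering $\tau_{(0,\gamma,\gamma')}$ as $\gamma'$ degenerates to $\gamma$.

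\emph{Invariance under $2$-isomorphisms.} Finally, let $\varphi \maps \mathcal{T} \Rightarrow \mathcal{T}'$ be a $2$-isomorphism, with underlying connection-preserving bundle isomorphism $\varphi \maps T \to T'$ satisfying $\varphi \circ \tau = \tau' \circ (\id \otimes \varphi)$. I would compute $p_{\mathcal{T}'}$ using the \emph{same} $\gamma,\phi,h$ and the choice $q' \df \varphi(q)$. Since $\varphi$ is connection-preserving it commutes with parallel transport, so $\ptr{\alpha}(q') = \varphi(\ptr{\alpha}(q))$; applying $\varphi$ to \erf{eq:defp} and invoking the intertwining relation then shows that $p$ itself solves the defining equation for $p'$. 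By uniqueness $p' = p$, and therefore $p_{\mathcal{T}'} = \ptr{h}(p) = p_{\mathcal{T}}$, which completes the plan.
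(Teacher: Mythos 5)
Your reduction to the single identity $p' = \ptr{k}(p)$ with $k = \prev{h'} \pcomp h$ is valid, and your treatments of the smoothing function and of 2-isomorphic trivializations are correct and essentially what the paper does (a homotopy of smoothing functions is automatically thin since $[0,1]$ is one-dimensional, and the 2-isomorphism case follows by choosing $q' = \varphi(q)$). You are also right that Lemma \ref{lem:thinpath} only needs condition (i) of Definition \ref{def:superficial}, so it is available under the assumptions \erf{assumptions}.

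The gap is the core identity, i.e.\ the independence of the choice of $\gamma$, which you only sketch, and the sketch as described would fail. First, your proposed ``family of path-components at $t=0$ interpolating $\gamma$ and $\gamma'$'' is a path in the fibre of $\ev_1\maps \px Xx \to X$ over $\beta(0)$; such a path exists only when $\gamma$ and $\gamma'$ are homotopic relative to endpoints, which is false in general. The fibre $P_{l(\id\pcomp\gamma,\,\id\pcomp\gamma')}$ exists precisely to record the obstruction to such an interpolation, and $\lambda(\can\otimes\can)=\can$ cannot be used to ``control'' it, since $l(\gamma,\gamma')$ is not a diagonal loop when $\gamma\neq\gamma'$. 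Second, and more fundamentally, your argument never invokes the \emph{symmetrizing} property of the connection, which is part of \erf{assumptions} and is indispensable at exactly this point. The paper's proof relates $p$ and $p'$ through the identity $\lambda(p\otimes\ptr{\varepsilon_2}(\tilde q))=\lambda(\ptr{\varepsilon_1}(\tilde q)\otimes p')$ for a fixed $\tilde q\in P_{l(\id\pcomp\gamma,\,\id\pcomp\gamma')}$, obtained from the compatibility of $\tau$ with $\lambda$; the two copies of $\tilde q$ sit on \emph{opposite} sides of the fusion product, and cancelling them against each other requires moving one past $p$, which is precisely what the symmetrizing condition (via the lifted half-rotation $R_{\pi}$) provides, in combination with condition (i) to identify parallel transports along the various thin paths. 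Since $\lambda$ is not commutative, no purely transport-theoretic homotopy argument can substitute for this step, so your plan as written does not establish $p'=\ptr{k}(p)$.
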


\begin{proof}
The equality $p_{\mathcal{T}} = p_{\mathcal{T}'}$ is easy to see (for the same choices of $\gamma,h,\phi$).
The independence of the smoothing function follows from the fact that two smoothing functions $\phi,\phi':[0,1]\to [0,1]$ are homotopic
(because $[0,1]^{2}$ is simply connected). Moreover, every homotopy $H$ is automatically thin because $[0,1]$ is one-dimensional. It induces accordant thin homotopies between the various paths  involved in the construction of the elements $p$ (computed with paths $(\gamma_1,\gamma_2,\alpha,h)$ using $\phi$) and $p'$ (computed with paths $(\gamma'_1,\gamma_2',\alpha',h')$ using $\phi'$), in such a way that $p$ and $p'$ correspond to each other under the parallel transport 
\begin{equation*}
\tau_H:P_{l(\gamma_1 \pcomp \gamma,\gamma_2\pcomp \gamma)} \to P_{l(\gamma_1' \pcomp \gamma,\gamma_2'\pcomp \gamma)}
\end{equation*}
along the evident thin path induced by $H$, i.e. $\tau_H(p) = p'$.
Then, Lemma \ref{lem:thinpath} implies that $\tau_h(p) = \tau_{h'}(p')$, i.e. the claimed invariance.

Suppose we have -- for a fixed smoothing function -- two choices $\gamma$, $\gamma'$, and accordant choices of thin homotopies $h$ and $h'$. In the following we denote the two versions of the element $p$ obtained  using $\gamma$ and $\gamma'$, respectively, by $p$ and $p'$. 
Choose any element $\tilde q \in P_{l(\id \pcomp \gamma, \id \pcomp \gamma')}$, and let $\varepsilon_i \in PLM$ be the paths with
\begin{equation*}
\varepsilon_i(0) = l(\id \pcomp \gamma, \id \pcomp \gamma')
\quand
\varepsilon_i(1) = l(\gamma_i \pcomp \gamma, \gamma_i \pcomp \gamma')\text{;}
\end{equation*}
obtained by scaling $\gamma_i$ using the smoothing function.
We claim (1) that
\begin{equation}
\label{eq:claim}
\lambda(p \otimes \ptr{\varepsilon_2}(\tilde q)) = \lambda(\ptr{\varepsilon_1}(\tilde q) \otimes p')\text{,}
\end{equation}
where $\ptr{\varepsilon_i}$ is the parallel transport in $P$. We  claim (2) that the diagram
\begin{equation}
\label{eq:claim2}
\alxydim{@C=0.4cm@R=1.2cm}{P_{l(\gamma_1\pcomp \gamma,\gamma_2\pcomp \gamma)} \otimes P_{l(\gamma_2\pcomp \gamma,\gamma_2\pcomp \gamma')} \ar[r]^-{\lambda} \ar[d]_{\ptr{h} \otimes \ptr{{\varepsilon_2}}^{-1}} & P_{l(\gamma_1\pcomp \gamma,\gamma_2\pcomp\gamma')} \ar[r]^-{\lambda^{-1}} & P_{l(\gamma_1\pcomp \gamma,\gamma_1\pcomp \gamma')} \otimes P_{l(\gamma_1\pcomp \gamma',\gamma_2\pcomp \gamma')} \ar[d]^{\ptr{{\varepsilon_1}}^{-1} \otimes \ptr{h'}} \\ P_{\beta} \otimes P_{l(\id \pcomp \gamma, \id \pcomp \gamma')} \ar[rr]_{\text{flip}} &&  P_{l(\id \pcomp \gamma, \id \pcomp \gamma')} \otimes P_{\beta}}
\end{equation}
is commutative.
Both claims together prove that $\ptr{h}(p) = \ptr{h'}(p')$, which was to show.

In order to prove the first claim, let $q$ and $q'$ be the choices needed to determine $p$ and $p'$, respectively. We can arrange these choices such that
\begin{equation}
\label{eq:choicesofq}
\tau(\tilde q \otimes \ptr{\alpha_1'}^{-1}(q')) = \ptr{\alpha_1}^{-1}(q)\text{.}
\end{equation}
 We compute
\begin{equation*}
\tau(\lambda(p \otimes \ptr{\varepsilon_2}(\tilde q)) \otimes \ptr{\alpha'}(q')) = \tau(p \otimes \tau(\ptr{\varepsilon_2}(\tilde q) \otimes \ptr{{\alpha_2'}}(\ptr{\alpha_1'}^{-1}(q'))) = \tau(p \otimes \ptr{\alpha}(q)) = q\text{,}
\end{equation*}
where the first step uses the compatibility condition between $\tau$ and the fusion product $\lambda$, the second step uses that $\tau$ is a connection-preserving isomorphism and \erf{eq:choicesofq}, and the last step is \erf{eq:defp}. Similarly, we compute
\begin{equation*}
\tau(\lambda(\ptr{\varepsilon_1}(\tilde q) \otimes p') \otimes \ptr{\alpha'}(q')) = q\text{.}
\end{equation*}
This proves the first claim  \erf{eq:claim}.

In order to prove the second claim, we fill  diagram \erf{eq:claim2} by commutative subdiagrams. Let $\delta_1,\delta_2\in P(PM^{[3]})$ be thin paths satisfying:
\begin{eqnarray*}
\delta_1(0) =(\gamma_1 \pcomp \gamma,\gamma_2 \pcomp \gamma,\gamma_2 \pcomp \gamma') &\quad& \delta_1(1) = (\prev{\gamma_2} \pcomp \gamma_1,\id,\gamma'\pcomp \prev{\gamma})
\\
\delta_2(0) = (\gamma_1 \pcomp \gamma,\gamma_1 \pcomp \gamma',\gamma_2 \pcomp \gamma')&& \delta_2(1) = (\gamma \pcomp \prev{\gamma'},\id,\prev{\gamma_1} \pcomp \gamma_2)
\end{eqnarray*}
These paths can be chosen arbitrarily; but one can construct explicit examples.
Now we consider the following diagram of maps between $A$-torsors, in which the unlabelled arrows are parallel transport along obvious reparameterizations the obvious reparameterizations:
\begin{equation*}
\footnotesize
\alxydim{@R=0.8cm@C=-1.6cm}{P_{l(\gamma_1\pcomp \gamma,\gamma_2\pcomp \gamma)} \otimes P_{l(\gamma_2\pcomp \gamma,\gamma_2\pcomp \gamma')} \ar@{}[rrd]|*+[F-:<8pt>]{A} \ar@/_1.5pc/[dddd]_{\ptr{h} \otimes \ptr{{\varepsilon_2}}}="8" \ar@{}[drd];"8"|>>>>>>>*+[F-:<8pt>]{C} \ar[rdd]|{\ptr{e_{12}(\delta_1)} \otimes \ptr{e_{23}(\delta_1)}} \ar[rrr]^-{\lambda} &&& P_{l(\gamma_1\pcomp \gamma,\gamma_2\pcomp\gamma')} \ar[dr]^{\ptr{e_{13}(\delta_2)}} \ar[dl]_{\ptr{e_{13}(\delta_1)}} \ar[rrr]^-{\lambda^{-1}} &&& P_{l(\gamma_1\pcomp \gamma,\gamma_1\pcomp \gamma')} \otimes P_{l(\gamma_1\pcomp \gamma',\gamma_2\pcomp \gamma')} \ar@{}[lld]|*+[F-:<8pt>]{A} \ar[ddl]|{\ptr{e_{12}(\delta_2)} \otimes \ptr{e_{23}(\delta_2)}} \ar@/^1.5pc/[dddd]^{\ptr{{\varepsilon_1}} \otimes \ptr{h'}}="9" \ar@{}[ddl];"9"|>>>>>>>*+[F-:<8pt>]{C} \\  &&  P_{l(\prev{\gamma_2} \pcomp \gamma_1,\gamma'\pcomp \prev{\gamma})} \ar[rr]_{R_{\pi}}="10" \ar@{}[ur];"10"|*+[F-:<8pt>]{D} & \ar@{}[dd]|*+[F-:<8pt>]{B}& P_{l(\gamma \pcomp \prev{\gamma'},\prev{\gamma_1} \pcomp \gamma_2)} \ar[rd]^-{\lambda^{-1}} &  \\ & P_{l(\prev{\gamma_2} \pcomp \gamma_1,\id)} \otimes P_{l(\id,\gamma'\pcomp \prev{\gamma})} \ar[ddl] \ar[rrd]_<<<<<<<{\text{flip}} \ar[ru]^-{\lambda} &&    && P_{l(\gamma \pcomp \prev{\gamma'},\id)} \otimes P_{l(\id,\prev{\gamma_1} \pcomp \gamma_2)} \ar[ddr] \\ &&&P_{l(\id,\gamma'\pcomp \prev{\gamma})} \otimes P_{l(\prev{\gamma_2} \pcomp \gamma_1,\id)}  \ar[rur]_>>>>>>>{R_{\pi} \otimes R_{\pi}}&&&\\ P_{\beta} \otimes P_{l(\id \pcomp \gamma, \id \pcomp \gamma')} \ar[rrrrrr]_-{\text{flip}}="5" \ar@{}[urrr];"5"|*+[F-:<8pt>]{C} &&&&&& P_{l(\id \pcomp \gamma, \id \pcomp \gamma')} \otimes P_{\beta}}
\end{equation*} 
We argue now that all subdiagrams are commutative.
Diagrams $\alxydim{@=0cm}{\ar@{}[r]|*+[F-:<13pt>]{A} &}$ commute since the connection $\omega$ is compatible with $\lambda$.
Diagram $\alxydim{@=0cm}{\ar@{}[r]|*+[F-:<13pt>]{B} &}$ commutes since the connection $\omega$ symmetrizes  $\lambda$.
Diagrams $\alxydim{@=0cm}{\ar@{}[r]|*+[F-:<13pt>]{C} &}$ and $\alxydim{@=0cm}{\ar@{}[r]|*+[F-:<13pt>]{D} &}$ commute because the connection $\omega$ satisfies condition (i) of Definition \ref{def:superficial}. More precisely, diagrams $\alxydim{@=0cm}{\ar@{}[r]|*+[F-:<13pt>]{C} &}$ commute already as diagrams of direct products of $A$-torsors. 
This proves the second claim.
\end{proof}

We recall that isomorphism classes of  trivializations of $\un_x^B(P)$ form the elements of the transgression $\trcon(\un_x^B(P))$. From that point of view, Lemma \ref{lem:pTindep} assures that the assignment $\mathcal{T} \mapsto p_{\mathcal{T}}$ yields a well-defined map
\begin{equation*}
\varphi_{P}: \trcon(\un_x^B(P)) \to P\text{.}
\end{equation*}

\begin{lemma}
\label{lem:varphiPfus}
 $\varphi_P$ is a smooth, fusion-preserving bundle morphism. 
\end{lemma}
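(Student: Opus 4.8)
The plan is to verify three things: that $\varphi_P$ covers the identity and is $A$-equivariant (hence a bundle morphism), that it is smooth, and that it is fusion-preserving; the last is the substantial point. Throughout I exploit Lemma \ref{lem:pTindep}, which frees me to make convenient and \emph{coherent} choices of the auxiliary data (base path $\gamma$, the two halves, the smoothing function, the thin homotopy $h$) entering the construction of $p_{\mathcal{T}}$. That $\varphi_P$ covers the identity is immediate, since $p_{\mathcal{T}}\in P_{\beta}$ whenever $\mathcal{T}$ trivializes over $\beta$. For $A$-equivariance I would compute $p_{\mathcal{T}\otimes P_a}$: tensoring the bundle $T$ of $\mathcal{T}$ with the pullback of $P_a$ alters the transport $\ptr{\alpha}$ in \erf{eq:defp} by the holonomy of $P_a$ along the $S^1$-projection of $\alpha=\alpha_2\pcomp\prev{\alpha_1}$. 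Since this projection runs $\tfrac12\to 0\equiv 1\to\tfrac12$ and so winds once around $S^1$, that holonomy is $a$, giving $p_{\mathcal{T}\otimes P_a}=p_{\mathcal{T}}\cdot a$ (the final transport $\ptr{h}$ takes place in $P$ and is itself $A$-equivariant, so contributes nothing). Smoothness I would obtain by the usual plot argument, as in Lemma \ref{lem:holsmooth}: a plot of $\trcon(\un_x^B(P))$ is locally a smooth family of trivializations over $S^1\times W$, and all ingredients of $p_{\mathcal{T}}$ — the paths $\gamma,\gamma_1,\gamma_2,\alpha,h$, a local section realizing the seed $q$, the isomorphism $\tau$, and the transports in $T$ and in $P$ — can be chosen smoothly in $w\in W$, so smoothness of diffeological parallel transport \cite[Proposition 3.2.10]{waldorf9} exhibits $\varphi_P\circ c$ as a plot.

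For fusion-preservation I must show $\lambda(p_{\mathcal{T}_{12}}\otimes p_{\mathcal{T}_{23}})=p_{\mathcal{T}_{13}}$ whenever $(\mathcal{T}_{12},\mathcal{T}_{23})\sim\mathcal{T}_{13}$ for loops $\tau_{ij}=l(\gamma_i,\gamma_j)$ with $(\gamma_1,\gamma_2,\gamma_3)\in PM^{[3]}$ sharing initial point $y_0$ and final point $y_1$. Using Lemma \ref{lem:pTindep} I fix one base path $\gamma$ from $x$ to $y_0$, one smoothing function, and use as halves of $\tau_{ij}$ the paths $\gamma_i,\gamma_j$ themselves; this yields elements $p_{ij}\in P_{l(\gamma_i\pcomp\gamma,\gamma_j\pcomp\gamma)}$ with $p_{\mathcal{T}_{ij}}=\ptr{h_{ij}}(p_{ij})$, where the triple $(\gamma_1\pcomp\gamma,\gamma_2\pcomp\gamma,\gamma_3\pcomp\gamma)$ lies in $\px Mx^{[3]}$ and the $h_{ij}$ are taken to be the images under $l\circ e_{ij}$ of a \emph{single} thin path in $PM^{[3]}$ retracting $\gamma$. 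The heart of the matter is the identity $\lambda(p_{12}\otimes p_{23})=p_{13}$ for these unconjugated elements.

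To prove it, write $a_{ij}$ for seeds in the base-point fibre $T_{ij}|_{(0,\id\pcomp\gamma)}$ and $u_{ij},v_{ij}$ for their transports to the midpoint along the first and second half, so that \erf{eq:defp} reads $\tau_{ij}(p_{ij}\otimes v_{ij})=u_{ij}$. The decisive observation is that at the midpoint $s=\tfrac12$ the two halves meet: there each $\phi_k|_1$ from \erf{eq:2isos} is a bundle isomorphism over the fibre of \emph{all} paths ending at $y_1$, and, being a $2$-isomorphism of trivializations, commutes there with the isomorphisms $\tau_{ij}$; being connection-preserving, each $\phi_k$ also commutes with parallel transport along its half. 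Choosing $a_{13}=\phi_1|_0(a_{12})$ and $a_{23}=\phi_2|_0(a_{12})$ (consistent by \erf{eq:condfus2} at $0$) gives $\phi_1|_1(u_{12})=u_{13}$, $\phi_2|_1(v_{12})=u_{23}$, $\phi_3|_1(v_{23})=v_{13}$. Using the $\tau$-compatibility of $\phi_3$, the relation $\tau_{23}(p_{23}\otimes v_{23})=u_{23}$, these identities, and \erf{eq:condfus2} at $1$ in the form $\phi_3|_1\circ\phi_2|_1=\phi_1|_1$, one first checks $\tau_{13}(p_{23}\otimes v_{13})=\phi_1|_1(v_{12})$; then, since $\tau_{13}$ is compatible with the bundle gerbe product $\lambda$ of $\un_x^B(P)$,
\begin{align*}
\tau_{13}\big(\lambda(p_{12}\otimes p_{23})\otimes v_{13}\big)
&=\tau_{13}\big(p_{12}\otimes\tau_{13}(p_{23}\otimes v_{13})\big)
=\tau_{13}\big(p_{12}\otimes\phi_1|_1(v_{12})\big)\\
&=\phi_1|_1\big(\tau_{12}(p_{12}\otimes v_{12})\big)=\phi_1|_1(u_{12})=u_{13}.
\end{align*}
Comparing with the defining relation $\tau_{13}(p_{13}\otimes v_{13})=u_{13}$ forces $\lambda(p_{12}\otimes p_{23})=p_{13}$.

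Finally I would transport this through the homotopies: since the connection on $P$ is compatible with $\lambda$, parallel transport along the common retraction path commutes with the fusion product, whence $\lambda(\ptr{h_{12}}p_{12}\otimes\ptr{h_{23}}p_{23})=\ptr{h_{13}}\lambda(p_{12}\otimes p_{23})=\ptr{h_{13}}p_{13}$, that is $\lambda(p_{\mathcal{T}_{12}}\otimes p_{\mathcal{T}_{23}})=p_{\mathcal{T}_{13}}$, as required. The main obstacle is the bookkeeping in the displayed computation: one must verify that at $s=\tfrac12$ all three trivializations, their isomorphisms $\tau_{ij}$, and the $2$-isomorphisms $\phi_k|_1$ act on fibres over the \emph{same} paths ending at $y_1$, so that the compatibility relations may be chained. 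This is precisely what makes the midpoint, rather than a generic point of the loop, the correct place to run the argument, and it is also where condition (i) of Definition \ref{def:superficial} (already built into the construction of $p_{\mathcal{T}}$) is silently used through Lemma \ref{lem:pTindep}.
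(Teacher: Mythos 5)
Your proposal is correct and follows essentially the same route as the paper: the same winding-number argument for $A$-equivariance, the same coherent choice of seeds linked by the $2$-isomorphisms $\phi_1,\phi_2,\phi_3$ (your $u_{13}=\phi_1|_1(u_{12})$, $u_{23}=\phi_2|_1(v_{12})$ are exactly the paper's choices $q_{13}\df\phi_1(q_{12})$, $q_{23}\df\phi_2(\ptr{\alpha_{12}}(q_{12}))$), the same chain through the $\tau$-compatibility of the $\phi_k$ and \erf{eq:condfus2} at the midpoint to get $\lambda(p_{12}\otimes p_{23})=p_{13}$, and the same final transport along a single thin path in $P(PM^{[3]})$ using compatibility of the connection with $\lambda$. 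The only difference is that you spell out the midpoint bookkeeping that the paper suppresses.
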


\begin{proof}
$\varphi_P$ is by definition fibre-preserving, and its smoothness is straightforward to prove. 
The $A$-equivariance can be seen as follows. Let $a\in A$ and $P_a$ the corresponding principal $A$-bundle over $S^1$ with $\mathrm{Hol}_{P_a}(S^1)=a$. We have to show that 
\begin{equation}
\label{eq:fusisoequiv}
p_{P_a \otimes \mathcal{T}} = p_{\mathcal{T}} \cdot a\text{.}
\end{equation}
To compute $p_{\mathcal{T}}$ may have chosen $q\in T_{(1,\gamma_1 \pcomp \gamma)}$. The new bundle is $T \otimes \mathrm{pr}_1^{*}P_a$, so here we may choose $q \otimes f_a$, for $f_a \in P_a|_1$. Notice that the projection of the path $\alpha \in PZ$ to $S^1$ is the path that runs once around $S^1$. Thus, $\ptr\alpha(q \otimes f_a ) = (\ptr\alpha(q) \otimes f_a) \cdot a$. Then,
\begin{multline*}
\tau(p_{P_a \otimes \mathcal{T}} \otimes q \otimes f_a)= \ptr\alpha(q \otimes f_a) = (\ptr\alpha(q) \otimes f_a) \cdot a = (\tau(p_{\mathcal{T}} \otimes q) \otimes f_a) \cdot a = \tau(p_{\mathcal{T}}\cdot a \otimes  q \otimes f_a)\text{.}
\end{multline*}
This proves \erf{eq:fusisoequiv}.
It remains to show that $\varphi_P$ is fusion-preserving. 
Assume we have a triple $(\gamma_1,\gamma_2,\gamma_3) \nobr\in\nobr PM^{[3]}$ and trivializations $\mathcal{T}_{12}$, $\mathcal{T}_{23}$ and $\mathcal{T}_{13}$ such that $(\mathcal{T}_{12},\mathcal{T}_{23})\sim\mathcal{T}_{13}$, in virtue of 2-isomorphisms
\begin{equation}
\phi_1\maps \iota_1^{*}\mathcal{T}_{12} \Rightarrow \iota_1^{*}\mathcal{T}_{13}
\quomma
\phi_2\maps \iota_2^{*}\mathcal{T}_{12} \Rightarrow \iota_1^{*}\mathcal{T}_{23}
\quand
\phi_3\maps \iota_2^{*}\mathcal{T}_{23} \Rightarrow \iota_2^{*}\mathcal{T}_{13}
\end{equation}
between trivializations of bundle gerbes over the interval $[0,1]$ such that
\begin{equation}
\label{eq:condfus22}
\phi_1|_0 = \phi_{3}|_0 \bullet \phi_{2}|_0
\quand
\phi_1|_1 = \phi_{3}|_1 \bullet \phi_{2}|_1\text{.}
\end{equation}
In order to determine the elements $p_{\mathcal{T}_{ij}}$ we may choose the same path $\gamma$ for all of them. 
We further choose $q_{12} \in T_{12}|_{(\frac{1}{2},\gamma_1 \pcomp \gamma)}$, then we put
\begin{equation*}
q_{13} \df  \phi_1(q_{12}) \in T_{13}|_{(\frac{1}{2},\gamma_1 \pcomp \gamma)}
\quand
q_{23} \df  \phi_2(\ptr{\alpha_{12}}(q_{12})) \in T_{23}|_{(\frac{1}{2},\gamma_2 \pcomp \gamma)}\text{.}
\end{equation*}
A computation shows
\begin{equation}
\label{eq:q13}
\tau_{13}(\lambda(p_{12} \otimes p_{23}) \otimes \ptr{\alpha_{13}}(q_{13})) = q_{13}\text{,}
\end{equation}
Equation \erf{eq:q13} implies
\begin{equation}
\label{eq:prefus}
\lambda(p_{12} \otimes p_{23})=p_{13}\text{.}
\end{equation}
Finally, we may choose a path $H \in P(PM^{[3]})$ with $H(0)= (\gamma_1 \pcomp \gamma, \gamma_2 \pcomp \gamma, \gamma_3 \pcomp \gamma)$ and $H(1) = (\gamma_1,\gamma_2,\gamma_3)$, such that $h_{ij} := e_{ij}(H)$ are thin homotopies. Since the connection $\omega$ is compatible with  $\lambda$,  equation \erf{eq:prefus} implies $\lambda(p_{\mathcal{T}_{12}} \otimes p_{\mathcal{T}_{23}}) = p_{\mathcal{T}_{13}}$. \end{proof}

Next we explore in which sense the bundle isomorphism $\varphi_P$ preserves the connections. Let $\gamma\in PLM$ be a path and let $\exd\gamma\maps C \to M$ be its adjoint map, i.e.  $C \df  [0,1] \times S^1$ is the standard cylinder and $\exd\gamma(t,z) \df  \gamma(t)(z)$.  Let $\mathcal{T}_0$ and $\mathcal{T}_1$ be trivializations of $\un^B_x(P)$ over the loops $\gamma(0)$ and $\gamma(1)$, respectively.
The following technical lemma computes the surface holonomy $\mathscr{A}_{\un_x^B(P)}(\exd\gamma,\mathcal{T}_0,\mathcal{T}_1)$.

We consider the embedding $\iota\maps Q \to C$ defined by $\iota(t,s) \df  (t,s-\frac{1}{2})$, and write $\Phi := \exd\gamma \circ \iota:Q \to M$. With respect to the standard orientations (the counter-clockwise one on $Q$ and the one on $C$ that induces the counter-clockwise one the the boundary loop $\left \lbrace 1 \right \rbrace \times S^1$), the embedding $\iota$ is orientation-reversing. We define a lift $\tilde\Phi\maps Q \to \px Mx$ of $\Phi$ along the end-point evaluation $\ev_1\maps \px Mx \to M$. Let $\psi_t$ and $\xi_{t,s}$ be paths in $Q$ defined by $\psi_t(\tau) \df  (t\tau,\frac{1}{2})$ and $\xi_{t,s}(\tau) \df  (t,(s-\frac{1}{2})\tau + \frac{1}{2})$, with sitting instants produced with some smoothing function, see Figure \ref{fig:square} (a).
\begin{figure}
\hspace{1.3cm}
\begin{tabular}{ccc}
\includegraphics{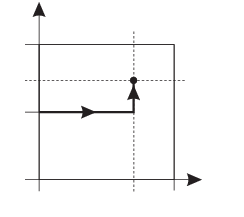}\setlength{\unitlength}{1pt}\begin{picture}(0,0)(203,180)\put(86.60324,259.27427){$1$}\put(86.82066,225.59244){$\textstyle\frac{1}{2}$}\put(87.25550,193.60460){$0$}\put(101.91685,180.78208){$0$}\put(166.95553,180.78208){$1$}\put(148.95553,180.78208){$t$}\put(86.60324,241.27427){$s$}\put(122.95559,214.91830){$\psi_t$}\put(154.48110,232.95487){$\xi_{t,s}$}\end{picture}\hspace{-0.6cm}
& 
\includegraphics{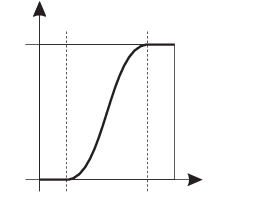}\setlength{\unitlength}{1pt}\begin{picture}(0,0)(345,179)\put(246.65868,179.78230){$\varepsilon$}\put(277.53647,179.56488){$1-\varepsilon$}\put(273.50759,222.28554){$\varphi$}\put(219.80324,259.27427){$1$}\put(220.45550,193.60460){$0$}\end{picture}\hspace{-0.7cm}
& 
\includegraphics{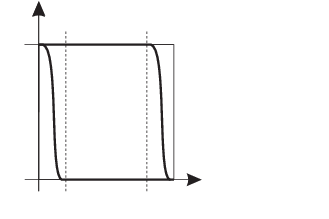}\setlength{\unitlength}{1pt}\begin{picture}(0,0)(543,179)\put(412.25868,179.78230){$\varepsilon$}\put(443.13647,179.56488){$1-\varepsilon$}\put(421.10759,225.88554){$\varepsilon_0$}\put(472.94787,225.88554){$\varepsilon_1$}\put(385.40324,259.27427){$1$}\put(386.05550,193.60460){$0$}\end{picture}\hspace{-1.8cm}
\\
(a) & (b) & (c)
\end{tabular}
\caption{(a) shows a lift of a point in the square to a path in the square. (b) shows a smoothing function which is a diffeomorphism away from its sitting instants. (c) shows two functions that are useful for drawing a bigon into a square.}
\label{fig:square}
\end{figure}
Further, let $\beta$ be a path connecting the base point $x\in M$ with $\Phi(0,\frac{1}{2})$. Then, we put
\begin{equation*}
\tilde\Phi(t,s) \df  P\Phi(\xi_{t,s} \pcomp \psi_t) \pcomp \beta\text{.}
\end{equation*}
Let $\sigma_u\in PLM$ be defined by $\sigma_u(t) \eq l(\tilde\Phi(t,0),\tilde\Phi(t,1))$. Notice that  $\sigma_u(t)$ is thin homotopic to $\gamma(t)$ for every $t\in [0,1]$.

\begin{lemma}
\label{lem:pTconnpres}
Let the structure \erf{assumptions} be given. Suppose further   a path $\gamma \in PLM$ and trivializations $\mathcal{T}_0,\mathcal{T}_1$ of $\un_x^B(P)$ over the end-loops of $\gamma$. Let $\tilde\Phi: Q \to \px Mx$ be the lift constructed above, and let $h_0,h_1 \in PLM$ be thin paths from $h_k(0)=\gamma(k)$ to $h_k(1) = \sigma_u(k)$. Then, we have
\begin{equation*}
\label{eq:holglue}
\mathscr{A}_{\un_x^B(P)}(\exd\gamma,\mathcal{T}_0,\mathcal{T}_1)^{-1} =   \exp \left (  \int_{Q} \tilde\Phi^{*}B \right ) \cdot  \mathrm{PT}(\sigma_u^{*}P,\tau_{h_0}(p_{\mathcal{T}_0}),\tau_{h_1}(p_{\mathcal{T}_1})) \text{,}
\end{equation*}
where $\tau_{h_k}$ denotes the parallel transport in $P$ along $h_k$, and $\mathrm{PT}$ is defined in \erf{eq:defbigpt}.
\end{lemma}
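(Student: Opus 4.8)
The plan is to compute the surface holonomy $\mathscr{A}_{\un_x^B(P)}(\exd\gamma,\mathcal{T}_0,\mathcal{T}_1)$ by exhibiting an explicit trivialization of $(\exd\gamma)^*\un_x^B(P)$ over the whole cylinder $C$, and then to read off the right-hand side from the formula \erf{eq:holtriv} for holonomy of a surface with boundary in terms of a global trivialization. First I would transport the problem to the square $Q$ via the orientation-reversing embedding $\iota$, picking up an inverse (hence the exponent $-1$ on the left-hand side), and use the lift $\tilde\Phi\maps Q \to \px Mx$ constructed above. By Lemma \ref{lem:sectriv}, the section $\tilde\Phi$ of the subduction $\ev_1\maps \px Mx \to M$ determines a trivialization $\mathcal{S} \df \mathcal{T}_{\tilde\Phi}$ of $\Phi^*\un_x^B(P) = (\exd\gamma \circ \iota)^*\un_x^B(P)$ with $\mathcal{S}\maps \Phi^*\un_x^B(P) \to \mathcal{I}_{\tilde\Phi^*B}$. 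This immediately accounts for the bulk term: applying \erf{eq:holtriv} with $\rho = \tilde\Phi^*B$ gives the factor $\exp\left(\int_Q \tilde\Phi^*B\right)$.

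The next step is to identify the boundary contributions of \erf{eq:holtriv} with the $\mathrm{PT}$-term. The two ``horizontal'' boundary edges of $Q$ (where $s=0$ and $s=1$) map under $l$ to the path $\sigma_u$, so these edges contribute the parallel transport along $\sigma_u$ in $\sigma_u^*P$; the two ``vertical'' edges ($t=0$ and $t=1$) correspond to the end-loops $\gamma(0)$ and $\gamma(1)$, carrying the given trivializations $\mathcal{T}_0$ and $\mathcal{T}_1$. The key computation is to compare the trivialization $\mathcal{S}$ restricted to these end-loops with the canonical elements $p_{\mathcal{T}_0}$ and $p_{\mathcal{T}_1}$ produced by the construction \erf{eq:defpt}. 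Here I would use precisely the definition \erf{eq:defp} of the element $p$ from a trivialization, together with the fact that the lift $\tilde\Phi$ was built from the paths $\psi_t$ and $\xi_{t,s}$ that reproduce the half-path decomposition used in defining $p_{\mathcal{T}}$. The thin paths $h_0,h_1$ enter because $\sigma_u(k)$ is only thin homotopic to $\gamma(k)$, and by Lemma \ref{lem:thinpath} (condition (i) of superficiality, which is all we assume in \erf{assumptions}) the parallel transports $\tau_{h_k}$ are well-defined independently of the choice of thin path. Assembling these edge contributions via \erf{eq:defpt} yields exactly $\mathrm{PT}(\sigma_u^*P,\tau_{h_0}(p_{\mathcal{T}_0}),\tau_{h_1}(p_{\mathcal{T}_1}))$.

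The main obstacle I anticipate is the careful bookkeeping of orientations, corners, and the matching of trivializations at the four corners of $Q$. Since $Q$ is a square with corners, the rigorous statement requires the corner-version of surface holonomy with a \emph{boundary record}, and one must check that the 2-isomorphisms at the corners chosen from $\mathcal{S}$ are compatible with those implicit in the definition of $p_{\mathcal{T}_k}$; the refined version \erf{eq:trivrep} of Lemma \ref{lem:dbraneholprop} (a) is the natural tool for converting between the trivialization $\mathcal{S}|_{\text{edge}}$ and the data $(\mathcal{T}_k, \psi_0,\psi_1)$ that computes $\mathrm{PT}$. The orientation-reversing nature of $\iota$ must be tracked consistently to produce the overall inverse, and one must confirm that the edge where $\exd\gamma$ degenerates (the part of $\partial C$ over which $\iota$ does not surject, corresponding to the seam $s=\frac12$) contributes trivially because the associated map has rank one. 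Once these compatibilities are verified, the product of the bulk and boundary factors is exactly the claimed formula; the verification is essentially the same decomposition strategy already used in the proof of Lemma \ref{lem:concheck} (i), so I would model the argument closely on that cutting-and-gluing computation.
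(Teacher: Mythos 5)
Your plan follows the paper's proof essentially step for step: cut the cylinder to the square along the seam (the orientation-reversing $\iota$ producing the overall inverse), trivialize globally via the section $\tilde\Phi$ to obtain the bulk factor $\exp\left(\int_Q\tilde\Phi^{*}B\right)$, and convert the boundary record to $\mathcal{S}$ using \erf{eq:trivrep}, so that the only surviving compensation is $\mathrm{PT}(\sigma_u^{*}P,\cdot,\cdot)$ with the endpoint data identified with the elements defining $p_{\mathcal{T}_0},p_{\mathcal{T}_1}$ through \erf{eq:defp} and the thin-path invariance of Lemma \ref{lem:pTindep}. The one slip is in your final paragraph: $\iota$ is surjective and the seam does not contribute trivially "by rank one" --- it is exactly the seam (the $s=0$ edge of $Q$, where the seam trivialization and $\mathcal{S}$ differ by the bundle $\sigma_u^{*}P$, the $s=1$ edge being arranged to agree with $\mathcal{S}$) that produces the $\mathrm{PT}$ term, consistent with what you correctly describe in your second paragraph.
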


\begin{proof}
In the proof we write $\mathcal{G} \df  \un^B_x(P)$ in order to simplify the notation. We are in the situation of the gluing formula of Lemma \ref{lem:dbraneholprop} (c), and choose a trivialization $\mathcal{T}$ of $\mathcal{G}$ over $[0,1] \times \left \lbrace \frac{1}{2} \right \rbrace \subset C$, together with 2-isomorphisms $\varphi_0\maps \mathcal{T}_0|_{\frac{1}{2}} \Rightarrow \mathcal{T}|_{0,\frac{1}{2}}$ and $\varphi_k\maps \mathcal{T}_1|_{\frac{1}{2}} \Rightarrow \mathcal{T}|_{1,\frac{1}{2}}$. Then, the collection
\begin{equation*}
\mathscr{B} \df  \left \lbrace \iota^{*}\mathcal{T}_0, \iota^{*}\mathcal{T}_1, \iota^{*}\mathcal{T},\iota^{*}\mathcal{T},\varphi_0,\varphi_0,\varphi_1,\varphi_1 \right \rbrace
\end{equation*}
is a boundary record for $\Phi$, and
\begin{equation}
\label{eq:cylcomp1}
\mathscr{A}_{\mathcal{G}}(\exd\gamma, \mathcal{T}_0,\mathcal{T}_1) = \mathscr{A}_{\mathcal{G}}(\Phi, \mathscr{B})^{-1}\text{,}
\end{equation} 
where the sign accounts for the change of the orientation.

Note that $\tilde\Phi$ is a section into the subduction of $\Phi^{*}\mathcal{G}$. 
Let $\mathcal{S}:\Phi^{*}\mathcal{G} \to \mathcal{I}_{\rho}$ be the trivialization defined by this section  according to Lemma \ref{lem:sectriv}, with $\rho = \tilde\Phi^{*}B$. The trivialization $\mathcal{T}$ used above can be taken such that $\iota^{*}\mathcal{T}|_{[0,1] \times \left \lbrace 1 \right \rbrace} = \mathcal{S}|_{[0,1] \times \left \lbrace 1 \right \rbrace}$.
The holonomy $\mathscr{A}_{\mathcal{G}}(\Phi, \mathscr{B})$ we want to compute has the usual two terms \erf{eq:holtriv}: the  integral $I(\tilde\Phi) := \exp ( \int_Q \tilde\Phi^{*}B)$, and a term containing the boundary contributions.

In order to  treat the  boundary term, we first we gather some information about the trivializations over the boundary components of $Q$. 
For $k\eq0,1$, we consider the boundary components $b_k \df  \left \lbrace (k,s) \;|\; s\in[0,1] \right \rbrace$ and  the trivializations $\iota^{*}\mathcal{T}_k$ of $\Phi^{*}\mathcal{G}|_{b_k}$. 
They consist of principal $A$-bundles $T_k$ with connection over  $Z_k \df  [0,1] \times_{M} \px Mx$, and isomorphisms $\tau_k$ over $Z_k^{[2]}$. 
We have the sections $\sigma_k \maps  b_{k} \to Z_k$ defined by $\sigma_k(s) \df (s,\tilde\Phi(k,s))$. 
According to Lemma \ref{lem:sectriv} (b), the isomorphisms $\tau_k$ induce 2-isomorphisms $\tau_k\maps \mathcal{S} \otimes \sigma_k^{*}T_k \Rightarrow \iota^{*}\mathcal{T}_k$. 
Over the boundary component $b_{u}:= \left \lbrace (t,0) \;|\; t \in [0,1] \right \rbrace$ we have the section $\sigma_u$, and the corresponding 2-isomorphism $\lambda\maps \mathcal{S} \otimes \sigma_u^{*}P \to \mathcal{T}$. 
Over the remaining boundary component $b_o \df \left \lbrace (t,1) \;|\; t\in [0,1] \right\rbrace$ we have $\mathcal{S}|_{_{b_o}}=\mathcal{T}|_{b_o}$ by assumption.

Now we  substitute the trivialization $\mathcal{S}$ for each of the trivializations in $\mathscr{B}$ using the formula \erf{eq:trivrep}, producing a new boundary record $\mathscr{B}^{\;\!\prime}$.  Thus, we have to choose trivializations of the \quot{difference bundles} $\sigma_k^{*}T_k$ and $\sigma_u^{*}P$ over the endpoints of their base intervals. Such trivializations are given by selecting points in the fibres: $q_k \in T_k|_{0,\tilde\Phi(k,0)}$, $\tilde q_k \in T_k|_{1,\tilde\Phi(k,1)}$ and $p_k \in P|_{\tilde\Phi(k,0),\tilde\Phi(k,1)}$. Using the paths $\alpha^k\in PZ_k$ from  the definitions of $p_{\mathcal{T}_k}$ (see \erf{eq:defp}) we can  choose $\tilde q_k \eq \ptr{\alpha^k}(q_k)$.
According to \erf{eq:trivrep}, the new boundary record $\mathscr{B}^{\;\!\prime}$ has over  the four corners of $Q$  the 2-isomorphisms
\begin{eqnarray}
\label{eq:id1}
&\alxydim{}{\mathcal{S} \ar@{=>}[r]^-{\varphi_0^{-1}} & \iota^{*}\mathcal{T}_0 \ar@{=>}[r]^-{\tau_0^{-1}} & \mathcal{S} \otimes \sigma_0^{*}T_0  \ar@{=>}[r]^-{\tilde q_0} & \mathcal{S}}&\text{ over }(0,1)\quad\quad\quad
\\
\label{eq:id2}
&\alxydim{}{\mathcal{S}  \ar@{=>}[r]^-{\tilde q_1} &  \mathcal{S} \otimes \sigma_1^{*}T_1 \ar@{=>}[r]^-{\tau_1} & \iota^{*}\mathcal{T}_1 \ar@{=>}[r]^-{\varphi_1} & \mathcal{S}}&\text{ over }(1,1)
\\
\label{eq:id3}
&\alxydim{}{\mathcal{S}  \ar@{=>}[r]^-{q_0} & \mathcal{S} \otimes \sigma_0^{*}T_0  \ar@{=>}[r]^-{\tau_0} & \iota^{*}\mathcal{T}_0 \ar@{=>}[r]^-{\varphi_0} & \mathcal{S} \ar@{=>}[r]^-{\lambda^{-1}} & \mathcal{S} \otimes \sigma_u^{*}P  \ar@{=>}[r]^-{p_0} & \mathcal{S}}&\text{ over }(0,0)
\\
\label{eq:id4}
&\alxydim{}{\mathcal{S}  \ar@{=>}[r]^-{p_1} & \mathcal{S} \otimes \sigma_u^{*}P \ar@{=>}[r]^-{\lambda} & \mathcal{S} \ar@{=>}[r]^-{\varphi_1} & \iota^{*}\mathcal{T}_1 \ar@{=>}[r]^-{\tau_1^{-1}} & \mathcal{S} \otimes \sigma_1^{*}T_1 \ar@{=>}[r]^-{ q_1} & \mathcal{S}}&\text{ over }(1,0)
\end{eqnarray}
Obviously, it is  possible to make the choices  $\varphi_0$, $\varphi_1$, $p_0$, $p_1$ in such a way such that all four 2-isomorphisms are \emph{identities}. Accordingly, all boundary contributions vanish, and $\mathscr{A}_{\mathcal{G}}(\Phi,\mathscr{B}^{\;\!\prime})= I(\tilde\Phi)$. However, by going from $\mathscr{B}$ to $\mathscr{B}^{\;\!\prime}$ we must  compensate the changes according to \erf{eq:trivrep} by the following three terms: $\mathrm{PT}(\sigma_k^{*}T_k,q_k,\tilde q_k)$ for $k=0,1$, and $\mathrm{PT}(\sigma_u^{*}P,p_0,p_1)$. The first vanishes for $k=0,1$ due to the definition of $\tilde q_k$. Thus,
\begin{equation}
\label{eq:cylcomp2}
\mathscr{A}_{\mathcal{G}}(\Phi, \mathscr{B}) = \mathscr{A}_{\mathcal{G}}(\Phi,\mathscr{B}^{\;\!\prime})  \cdot \mathrm{PT}(\sigma_u^{*}P,p_0,p_1) = I(\tilde\Phi) \cdot \mathrm{PT}(\sigma_u^{*}P,p_0,p_1)\text{.}
\end{equation}
We look at equations \erf{eq:id1} to \erf{eq:id4}. Combining all relations we have collected, \erf{eq:id1} and \erf{eq:id3} imply
$\tau_0(p_0 \otimes \tilde q_0)=q_0$, and \erf{eq:id2} and \erf{eq:id4} imply
$\tau_1(p_1 \otimes \tilde q_1) = q_1$.
Thus, the elements $p_0$ and $p_1$ are exactly those used in the definition of the elements $p_{\mathcal{T}_0}$ and $p_{\mathcal{T}_1}$. Due to the independence proved in Lemma \ref{lem:pTindep}, we may choose the thin paths $\prev{h_0}$ and $\prev{h_1}$, so that $p_{\mathcal{T}_0} = \tau_{\prev{h_0}}(p_0)$ and $p_{\mathcal{T}_1} = \tau_{\prev{h_1}}(p_1)$. Together with \erf{eq:cylcomp1} and \erf{eq:cylcomp2}, this shows the claimed formula.
\end{proof}

In the following we specialize the discussion  to the case that the connection $\omega$ on $P$ is superficial, and that the curving $B$ on $\un_x(P)$ is the canonical curving $B_P$ of Section \ref{sec:reconcon}.
We compute the two terms in the formula of Lemma \ref{lem:pTconnpres}:
\begin{lemma}
\label{lem:calcterms}
In the situation of Lemma \ref{lem:pTconnpres}, assume that $\omega$ is superficial and that $B = B_P$. Then, we have:
\begin{enumerate}[(i)]
\item 
\label{eq:bulkint}
$\displaystyle \exp \left (  \int_{Q} \tilde\Phi^{*}B \right )=1$.

\item
$\ptr{\gamma}(p_{\mathcal{T}_0})\cdot \mathrm{PT}(\sigma_u^{*}P,\tau_{h_0}(p_{\mathcal{T}_0}),\tau_{h_1}(p_{\mathcal{T}_1}))  = p_{\mathcal{T}_1}$.
\end{enumerate}
\end{lemma}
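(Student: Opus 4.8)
The two items are independent computations, and once both are in hand they combine with Lemma~\ref{lem:pTconnpres} to give Proposition~\ref{prop:partransreg}; so I would prove them separately. Throughout I would use that the canonical curving $B_P$ of Section~\ref{sec:reconcon} is, by its construction via Theorem~\ref{th:functorsvsforms}, characterized by $G_P(\Sigma) = \exp(-\int_{[0,1]^2}\Sigma^{*}B_P)$ for every bigon $\Sigma$, and that $G_P$ is controlled by the flat section $\can$ of Lemma~\ref{lem:fusbuntrivflat}.

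For part \eqref{eq:bulkint} the plan is to recognize the bulk integral as a value of $G_P$ and to show that this value is trivial. Regarding the square $Q$ as carrying its fundamental bigon $\Sigma_Q \in BQ$, the map $\Sigma \df B\tilde\Phi(\Sigma_Q) \in B\px Mx$ is a bigon with $\int_{[0,1]^2}\Sigma^{*}B_P = \int_Q \tilde\Phi^{*}B_P$ (the reparametrization is orientation-preserving), whence $\exp(\int_Q \tilde\Phi^{*}B_P) = G_P(\Sigma)^{-1}$ and it suffices to prove $G_P(\Sigma)=1$. I would compute the three paths $\Sigma^o,\Sigma^u,\Sigma^m$ of \erf{eq:bigonpaths} directly from the explicit lift $\tilde\Phi(t,s) = P\Phi(\xi_{t,s}\pcomp\psi_t)\pcomp\beta$: since $\xi_{t,s}$ merely sweeps radially across the loop while $\psi_t$ moves the base point, one finds $\Sigma^u(t)$ thin homotopic to $\Sigma^m(t)\pcomp\Sigma^o(t)$. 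Consequently the associated loop $\gamma_{\Sigma}(t)=l(\Sigma^m(t)\pcomp\Sigma^o(t),\id\pcomp\Sigma^u(t))$ is, for every $t$, thin homotopic to $l(\Sigma^u(t),\Sigma^u(t))$ and hence factors up to thin homotopy through the diagonal $\Delta\maps PX \to LX$. Along such a path the flat section of Lemma~\ref{lem:fusbuntrivflat} is parallel, exactly as for the path $o_{\Sigma}$ in the proof of Lemma~\ref{lem:GPref}, so $\ptr{\gamma_{\Sigma}}(\can_0)=\can_1$ and $G_P(\Sigma)=1$.

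For part (ii) the plan is to transport the identity across the thin homotopy relating $\gamma$ and $\sigma_u$. Since $\sigma_u(t)$ is thin homotopic to $\gamma(t)$ for each $t$, these pointwise thin homotopies assemble into a homotopy $H$ from $\gamma$ to $\sigma_u$ in $PLM$ whose adjoint has rank two (cf.\ Remark~\ref{rem:rank}), i.e.\ a rank-two-homotopy; its induced end-paths are thin and run from $\gamma(k)$ to $\sigma_u(k)$, hence by Lemma~\ref{lem:thinpath} share the parallel transport of the given $h_0,h_1$. Lemma~\ref{lem:pointwisethin} — which is where superficiality of $\omega$ is used — then yields $\ptr{h_1}\circ\ptr{\gamma} = \ptr{\sigma_u}\circ\ptr{h_0}$. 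Unwinding the definition of $\mathrm{PT}$ from Section~\ref{sec:holonomy}, namely $\ptr{\sigma_u}(\ptr{h_0}(p_{\mathcal{T}_0}))\cdot\mathrm{PT} = \ptr{h_1}(p_{\mathcal{T}_1})$, and applying the commuting square to $p_{\mathcal{T}_0}$, I get $\ptr{h_1}(\ptr{\gamma}(p_{\mathcal{T}_0}))\cdot\mathrm{PT} = \ptr{h_1}(p_{\mathcal{T}_1})$; by $A$-equivariance of $\ptr{h_1}$ this reads $\ptr{h_1}(\ptr{\gamma}(p_{\mathcal{T}_0})\cdot\mathrm{PT}) = \ptr{h_1}(p_{\mathcal{T}_1})$, and since $\ptr{h_1}$ is bijective this is exactly $\ptr{\gamma}(p_{\mathcal{T}_0})\cdot\mathrm{PT}=p_{\mathcal{T}_1}$.

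The main obstacle is part \eqref{eq:bulkint}: pinning down the precise paths $\Sigma^o,\Sigma^u,\Sigma^m$ produced by $\tilde\Phi$ and verifying the thin homotopy $\Sigma^u\simeq\Sigma^m\pcomp\Sigma^o$ demands careful bookkeeping of the smoothing functions and sitting instants built into $\psi_t$ and $\xi_{t,s}$, together with the orientation-reversing embedding $\iota$ used to define $\Phi$. The analogous disc computation is the template, but the cylinder geometry must be matched to the bigon conventions precisely enough that $\gamma_{\Sigma}$ genuinely lands, up to thin homotopy, in the image of $\Delta$; everything else, including the rank-two bookkeeping in part (ii), is routine given Lemmata~\ref{lem:thinpath} and~\ref{lem:pointwisethin}.
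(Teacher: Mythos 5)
Your proposal is correct and follows essentially the same route as the paper: part (i) converts the bulk integral into $G_P$ of the bigon $B\tilde\Phi(\Sigma_Q)$ and shows this value is $1$ by retracting $\gamma_{\Sigma}$ onto the diagonal where the flat section $\can$ lives, and part (ii) is the paper's rank-two-homotopy argument essentially verbatim. The one step to make explicit in (i) is that $\gamma_{\Sigma}$ itself does not factor through the diagonal --- only a pointwise thin-homotopic replacement does --- so you must assemble those pointwise homotopies into a rank-two homotopy of paths in $LM$ fixing the end-loops and then invoke Lemma \ref{lem:pointwisethin} (this is where superficiality enters part (i)), which is exactly the paper's closing step there.
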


\begin{proof}
(i) 
We recall that the 2-form $B_P$ has been defined as corresponding to a smooth map $G_P\maps B\px Mx \to A$ under the relation $G_P(\Sigma) = I(\Sigma)$ for any bigon $\Sigma\in B\px Mx$, where $I(\Sigma) := \exp \int_{Q} (\Sigma^{*}B)$. Unfortunately, the map $\tilde\Phi\maps Q \to \px Mx$ is not directly a bigon. To remedy this problem we define a bigon $\Sigma_Q \in BQ$  such that  $\Sigma \df  B\tilde\Phi (\Sigma_Q) \in B\px Mx$ satisfies $I(\Sigma) = I(\tilde\Phi)^{-1}$. Then we prove that $G_P(\Sigma)=1$, which implies \erf{eq:bulkint}.

We give an explicit definition of $\Sigma_Q$. We use the choice of a particular smoothing function $\varphi:[0,1] \to [0,1]$: we require a sitting instant $0 < \varepsilon  < \frac{1}{2}$ such that $\varphi(t)=0$ for all $t < \varepsilon$ and $\varphi(t)=1$ for all $t>1-\varepsilon$, and we require that $\varphi$ restricted to $(\varepsilon,1-\varepsilon)$ is a diffeomorphism onto $(0,1)$, see Figure \ref{fig:square} (b). Let $\varepsilon_0,\varepsilon_1\maps [0,1] \to [0,1]$ be smooth maps with sitting instants such that $\varepsilon_0(0)=1$ and $\varepsilon_0(t)=0$ for all $t \geq \varepsilon$, and similar $\varepsilon_1(t)= 1$ for $t\leq 1-\varepsilon$ and $\varepsilon_1(1)=0$, see Figure \ref{fig:square} (c). Then, we define
\begin{equation*}
\Sigma_Q\maps [0,1]^2 \to Q\maps (t,s) \mapsto (\varphi(t), \varepsilon_1(t) - \varphi(s)(\varepsilon_1(t) - \varepsilon_0(t)) )\text{.}
\end{equation*}
Consider the small square $Q' \df  (\varepsilon,1-\varepsilon)^2$. The restriction of $\Sigma_Q$ to $Q'$ is an orientation-reversing diffeomorphism to $(0,1)^2$. The restriction of $\Sigma_Q$ to  $Q\setminus Q'$ has rank one, in particular, the pullback of any 2-form vanishes in that region. Thus, we have
\begin{equation*}
I(\tilde\Phi) = 
\exp \left ( \int_{\Sigma_Q(Q')} \tilde\Phi^{*}B_P \right )= \exp \left ( \int_{Q'} -\Sigma_Q^{*}\tilde\Phi^{*}B_P \right )
= I(B\tilde\Phi(\Sigma_Q))^{-1}=I(\Sigma)^{-1}\text{.}
\end{equation*}

Next we prove that $G_P(\Sigma)=1$. We recall that $G_P(\Sigma)$ was defined in Section \ref{sec:reconcon} via parallel transport  along a path $\gamma_{\Sigma}$ in $LM$. In the notation used there, we find:
\begin{multline*}
\Sigma^o(t) = P\Phi(\xi_{\varphi(t),\varepsilon_1(t)} \pcomp \psi_{\varphi(t)}) \pcomp \beta
\quomma
\Sigma^m(t)(s) =   \Phi(\Sigma_Q(t,s))
\\\quand
\Sigma^u(t) = P\Phi(\xi_{\varphi(t),\varepsilon_0(t)} \pcomp \psi_{\varphi(t)}) \pcomp \beta\text{.}
\end{multline*}
We observe that the loop $\gamma_{\Sigma}(t)$ is thin homotopy equivalent to the loop $l(\beta_t,\beta_t)$ with $\beta_t \df  P\Phi(\psi_{\varphi(t)}) \pcomp \beta$. Moreover, these thin homotopies can be chosen smoothly depending on $t$, and are identities for $t=0,1$. This shows that the path $\gamma_{\Sigma}$ is rank-two-homotopic to the path $t \mapsto l(\beta_t,\beta_t)$ that factors through the support of the flat section $\can$. Together with Lemma \ref{lem:pointwisethin}, this shows that $G_P(\Sigma)=1$.

(ii) We choose a rank-two-homotopy $h$ between the original path $\gamma\in PLM$ and the path $\sigma_u\in PLM$, which can be obtain by reparameterizations and by retracting the path $\gamma$. Let $h_k$ be the restrictions of $h$ to the end-loops. Since $\omega$ is superficial, Lemma \ref{lem:pointwisethin} implies that the diagram
\begin{equation*}
\alxydim{@=1.2cm}{P_{\gamma(0)} \ar[d]_{\ptr{h_0}} \ar[r]^{\ptr{\gamma}} & P_{\gamma(1)} \ar[d]^{\ptr{h_1}} \\ P_{\tilde\Phi(0,0),\tilde\Phi(0,1)} \ar[r]_{\tau_{\sigma_u}}  & P_{\tilde\Phi(1,0),\tilde\Phi(1,1)} }
\end{equation*}
is commutative. The commutativity implies
\begin{multline*}
\tau_{h_1}(\tau_{\gamma}(p_{\mathcal{T}_0})) \cdot \mathrm{PT}(\sigma_u^{*}P,\tau_{h_0}(p_{\mathcal{T}_0}),\tau_{h_1}(p_{\mathcal{T}_1}))\\=\tau_{\sigma_u}(\tau_{h_0}(p_{\mathcal{T}_0})) \cdot \mathrm{PT}(\sigma_u^{*}P,\tau_{h_0}(p_{\mathcal{T}_0}),\tau_{h_1}(p_{\mathcal{T}_1})) = \tau_{h_1}(p_{\mathcal{T}_1})\text{,} 
\end{multline*}
where the last equality is the definition of $\mathrm{PT}$. This shows (ii).
\end{proof}

The following proposition summarizes the results of all lemmata above in the case of a superficial connection. 

\begin{proposition}
\label{prop:varphiconpres}
Suppose $(P,\lambda)$ is a fusion bundle with superficial connection. Then, the assignment $\mathcal{T} \mapsto p_{\mathcal{T}}$ defines a  fusion-preserving, connection-preserving bundle isomorphism
\begin{equation*}
\varphi_P: \trcon(\uncon_x(P)) \to P\text{.}
\end{equation*}
\end{proposition}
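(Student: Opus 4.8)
The plan is to show that the map $\mathcal{T} \mapsto p_{\mathcal{T}}$, which has already been constructed and analyzed in the preceding lemmata, assembles into the desired isomorphism $\varphi_P$ by simply collecting the relevant pieces. First I would invoke Lemma \ref{lem:pTindep}, which guarantees under the assumptions \erf{assumptions} — and in particular under the stronger hypothesis that $\omega$ is superficial — that $p_{\mathcal{T}}$ is well-defined, independent of all auxiliary choices (the path $\gamma$, the thin homotopy $h$, the smoothing function $\phi$), and depends only on the isomorphism class of $\mathcal{T}$. Since the fibre of $\trcon(\uncon_x(P))$ over a loop $\beta$ is precisely the set of isomorphism classes of trivializations of $\beta^{*}\uncon_x(P)$, this yields a well-defined, fibre-preserving map $\varphi_P\maps \trcon(\uncon_x(P)) \to P$. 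Then Lemma \ref{lem:varphiPfus} supplies that $\varphi_P$ is smooth, $A$-equivariant, and fusion-preserving; in particular, being an $A$-equivariant, fibre-preserving smooth map between principal $A$-bundles, it is automatically a bundle isomorphism.

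It therefore remains only to verify that $\varphi_P$ is connection-preserving, and this is where the two computational lemmata enter. By the definition of holonomy and parallel transport in diffeological bundles \cite{waldorf9}, $\varphi_P$ preserves connections if and only if it intertwines the parallel transport of the connection $\omega_{\uncon_x(P)}$ on $\trcon(\uncon_x(P))$ — which by the construction in Section \ref{sec:constrconn} is computed by the surface holonomy $\mathscr{A}_{\uncon_x(P)}$ — with the parallel transport $\ptr{\gamma}$ of $\omega$ on $P$. Concretely, for a path $\gamma \in PLM$ with trivializations $\mathcal{T}_0, \mathcal{T}_1$ over its end-loops, I must establish the identity
\begin{equation*}
\ptr{\gamma}(p_{\mathcal{T}_0}) = p_{\mathcal{T}_1} \cdot \mathscr{A}_{\uncon_x(P)}(\exd\gamma,\mathcal{T}_0,\mathcal{T}_1)\text{,}
\end{equation*}
which is exactly the content of Proposition \ref{prop:partransreg}. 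This in turn follows by combining Lemma \ref{lem:pTconnpres} with Lemma \ref{lem:calcterms}: the former expresses $\mathscr{A}_{\uncon_x(P)}(\exd\gamma,\mathcal{T}_0,\mathcal{T}_1)^{-1}$ as a product of the bulk integral $\exp(\int_Q \tilde\Phi^{*}B)$ and the boundary term $\mathrm{PT}(\sigma_u^{*}P,\tau_{h_0}(p_{\mathcal{T}_0}),\tau_{h_1}(p_{\mathcal{T}_1}))$, and the latter — specialized to the superficial case with canonical curving $B = B_P$ — shows that the bulk integral is trivial (part (i)) and that the boundary term rearranges into the parallel-transport relation (part (ii)).

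The main obstacle is really housed in Lemma \ref{lem:pTconnpres} and Lemma \ref{lem:calcterms}, which I would treat as established; the proof of the proposition proper is thus an orchestration rather than a fresh calculation. The one genuinely delicate point to flag is the role of superficiality: the construction of the canonical curving $B_P$ in Section \ref{sec:reconcon} is unavailable unless $\omega$ is superficial, so the equality $\uncon_x^{B_P}(P) = \uncon_x(P)$ must be in force before Lemma \ref{lem:calcterms} can be applied. Once superficiality is assumed, the vanishing of $\exp(\int_Q \tilde\Phi^{*}B_P)$ in Lemma \ref{lem:calcterms}(i) hinges on rewriting $\tilde\Phi$ through an explicit bigon $\Sigma_Q$ and then using that the associated path $\gamma_\Sigma$ is rank-two-homotopic to a path factoring through the constant loops, so that Lemma \ref{lem:pointwisethin} forces $G_P(\Sigma)=1$. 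I would simply cite Proposition \ref{prop:partransreg} to conclude the connection-preserving property, and then assert that, taken together, well-definedness (Lemma \ref{lem:pTindep}), the fusion-preserving bundle-isomorphism property (Lemma \ref{lem:varphiPfus}), and connection-preservation (Proposition \ref{prop:partransreg}) establish the proposition.
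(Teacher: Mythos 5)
Your proposal is correct and follows essentially the same route as the paper: well-definedness and the fusion-preserving bundle-isomorphism property are delegated to Lemmata \ref{lem:pTindep} and \ref{lem:varphiPfus}, and connection-preservation is obtained by combining Lemmata \ref{lem:pTconnpres} and \ref{lem:calcterms} with the definition of the transgressed connection via surface holonomy. The only cosmetic difference is that you route the parallel-transport identity through Proposition \ref{prop:partransreg}, which the paper itself notes is proved by exactly those two lemmata.
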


\begin{proof}
First we note that the assumptions \erf{assumptions} are satisfied, so that $\varphi_P$ is a well-defined, smooth, fusion-preserving bundle isomorphism according to Lemmata \ref{lem:pTindep} and \ref{lem:varphiPfus}.
By definition of the connection on $\trcon(\uncon_x(P))$, 
\begin{equation*}
\tau_{\gamma}(\mathcal{T}_0) = \mathcal{T}_1 \cdot \mathscr{A}(\exd\gamma,\mathcal{T}_0,\mathcal{T}_1)\text{,}
\end{equation*}
where $\mathscr{A}(\exd\gamma, \mathcal{T}_0,\mathcal{T}_1)$ denotes the surface holonomy of $\uncon_x(P)$, $\gamma\in PLM$ is a path and $\exd\gamma$ is its adjoint map. According to Lemmata \ref{lem:pTconnpres} and \ref{lem:calcterms} we have  
\begin{multline*}
\ptr{\gamma}(\varphi_P(\mathcal{T}_0)) = \ptr{\gamma}(p_{\mathcal{T}_0})\cdot \mathrm{PT}(\sigma_u^{*}P,\tau_{h_0}(p_{\mathcal{T}_0}),\tau_{h_1}(p_{\mathcal{T}_1})) \cdot \mathscr{A}(\exd\gamma,\mathcal{T}_0,\mathcal{T}_1) \\=  \varphi_P(\mathcal{T}_1) \cdot \mathscr{A}(\exd\gamma,\mathcal{T}_0,\mathcal{T}_1) = \varphi_P(\ptr{\gamma}(\mathcal{T}_0))\text{.}
\end{multline*}
This shows that $\varphi_P$ exchanges the parallel transport of the connections $\omega$ and $\omega_{\mathcal{G}}$. 
\end{proof}

Finally, we prove that the isomorphism $\varphi_P$ is natural in $P$. Let $\psi\maps P_1 \to P_2$ be an isomorphism in $\fusbunconsf A{LM}$. We write $\mathcal{G}_1$ and $\mathcal{G}_2$ for the regressed bundle gerbes over $M$, and $\mathcal{A}_{\psi}\maps \mathcal{G}_1 \to \mathcal{G}_2$ for the regressed isomorphism, i.e.
\begin{equation*}
L\mathcal{A}_{\psi} \maps L\mathcal{G}_1 \to L\mathcal{G}_2 \maps \mathcal{T} \mapsto \mathcal{T} \circ \mathcal{A}_{\psi}^{-1}\text{.}
\end{equation*}
Let us first compute the trivialization $\mathcal{T}'\df \mathcal{T} \circ \mathcal{A}_{\psi}^{-1}$ of $\mathcal{G}_2$ over a loop $\beta \in LM$. Recall that $\mathcal{T}$ consists of a principal $A$-bundle $T$ over $Z$, and of an isomorphism $\tau$ over $Z \times_{S^1} Z$. From the rules of inverting and composing 1-isomorphisms between bundle gerbes \cite{waldorf1} and the definition of regression we see that $\mathcal{T}'$ has the subduction $Z'\df  S^1 \lli{\beta} \times_{\ev_1} \px Mx^{[2]}$ with the projection $\zeta\maps Z' \to Z\maps (t,\eta_1,\eta_2)\mapsto (t,\eta_1)$ to the subduction of $\mathcal{G}_2$. Over $Z'$, it has the principal $A$-bundle 
\begin{equation*}
T' \df  \mathrm{pr}^{*}_{12} s^{*}P_2^{\vee} \otimes \mathrm{pr}^{*}_{13}T\text{,}
\end{equation*}
where $s\maps \px Mx^{[2]} \to \px Mx^{[2]}$ flips the two factors, and $P_2^{\vee}$ is the dual bundle (it has the same total space but with $A$ acting through inverses). Further, the trivialization $\mathcal{T}'$ has an isomorphism 
\begin{equation*}
\tau'\maps P_2 \otimes \zeta_2^{*}T' \to \zeta_1^{*}T'
\end{equation*}
over $Z' \times_{S^1} Z'$, which is a combination of $\lambda_2$ and $\psi$. 

We claim that $\mathcal{T}'$ is 2-isomorphic to the trivialization $\mathcal{S}$ consisting of the principal $A$-bundle $S \df  T$ over $Z$ and the 2-isomorphism $\sigma\maps P_2 \otimes \zeta_2^{*}T \to \zeta_1^{*}T$, which is defined over a point $(t,\eta,\tilde\eta) \in Z \times_{S^1} Z$ by
\begin{equation*}
\alxydim{@C=1.3cm}{P_2|_{\eta,\tilde\eta} \otimes T_{t,\tilde\eta} \ar[r]^{\psi^{-1} \otimes \id} & P_1|_{\eta,\tilde\eta} \otimes T_{t,\tilde\eta} \ar[r]^-{\tau} & T_{t,\eta}\text{.}}
\end{equation*}
Indeed, a 2-isomorphism $\phi\maps \mathcal{T}' \Rightarrow \mathcal{S}$ is given by
\begin{equation*}
\alxydim{@C=1.3cm}{P_{2}^{\vee}|_{\eta_2,\eta_1} \otimes T_{t,\eta_2} \ar[r]^-{t_2 \otimes \id} & P_2|_{\eta_1,\eta_2} \otimes T_{t,\eta_2} \ar[r]^-{\psi^{-1} \otimes \id} &  P_1|_{\eta_1,\eta_2} \otimes T_{t,\eta_2} \ar[r]^-{\tau} & T_{t,\eta_1}\text{.}}
\end{equation*}
It is straightforward to show that it satisfies the necessary condition $\zeta_2^{*}\phi \circ \tau' = \sigma \circ (\id \otimes \zeta_1^{*}\phi)$, using that $\psi$ is fusion-preserving.

The result of the above computation is that $L\mathcal{A}_{\psi}(\mathcal{T}) = \mathcal{S}$. Now suppose we have determined $\varphi_{P_1}(\mathcal{T})$ from the equation $\tau(p_{\mathcal{T}} \otimes \ptr\alpha(q))= q$. Then, the calculation
\begin{equation*}
\sigma(\psi^{-1}(p_{\mathcal{T}}) \otimes \ptr\alpha(q)) = \tau(p_{\mathcal{T}} \otimes \tau_{\alpha}(q)) = q
\end{equation*}
shows that $p_{\mathcal{S}} = \psi^{-1}(p_{\mathcal{T}})$. All together, we obtain a commutative diagram
\begin{equation*}
\alxydim{@=1.2cm}{L\mathcal{G}_1 \ar[d]_{\varphi_{P_1}} \ar[r]^{L\mathcal{A}_{\psi}} & L\mathcal{G}_2 \ar[d]^{\varphi_{P_2}} \\ P_1 \ar[r]_{\psi} & P_2}
\end{equation*}
showing that the isomorphisms $\varphi_P$ are the components of a natural equivalence.

\setsecnumdepth{2}

\begin{appendix}

\setsecnumdepth{1}

\section{Smooth 2-Functors and 2-Forms}

\label{app:functorsandforms}

Bigons in a smooth manifold are discussed in detail in \cite{schreiber5}. Here we recall some aspects.
A \emph{bigon} in a diffeological space $X$ is a path $\Sigma \in PPX$ with $P\ev_0(\Sigma) = \id_x$ and $P\ev_1(\Sigma)=\id_y$, for some $x,y\in X$. The diffeological space of bigons in $X$ is denoted  by $BX$. It is functorial: for $f:X \to Y$ a smooth map, there is an induced smooth map $Bf\maps BX \to BY$ with $B\id = \id_{BX}$ and $B(g \circ f)= Bg \circ Bf$. Bigons can be composed in two ways. \emph{Vertical composition} is simply the composition of paths in $PX$, namely $\Sigma' \pcomp \Sigma$, defined whenever $\Sigma(1) = \Sigma'(0)$. \emph{Horizontal composition} is defined whenever $P\ev_1(\Sigma) = P\ev_0(\Sigma')$, namely by
$\Sigma' \circledast \Sigma \df  (P\pcomp)(\Sigma',\Sigma)$,
where $P\pcomp\maps P(PX \times_X PX) \to PPX$ is induced from path composition, applied to the path $(\Sigma',\Sigma)$ in $PX \times_X PX$.

Two bigons $\Sigma$ and $\Sigma'$ are called \emph{thin homotopy equivalent}, if there exists a path $h \in PBX$ in the space of bigons with $\ev(h)=(\Sigma,\Sigma')$,
such that $P\ev_0(h) \in PPX$ and $P\ev_1(h)\in PPX$ are thin homotopies between $\Sigma(0)$ and $\Sigma'(0)$, and $\Sigma(1)$ and $\Sigma'(1)$, respectively, and such that the adjoint map $\exd h\maps [0,1]^3 \to X$ has rank two. 
The diffeological space of thin homotopy classes of bigons in $X$ is denoted $\mathcal{B}X$, and we have a  smooth projection $\mathrm{pr}\maps BX \to \mathcal{B}X$. 

On a smooth manifold $M$ there is a  relation between differential forms $\Omega^k_{\mathfrak{a}}(M)$ with values in the Lie algebra $\mathfrak{a}$ of an abelian Lie group $A$, and certain smooth and functorial maps
$F\maps P^kM \to A$, where $P^k$ is an iterated path space.
Heuristically, the relation is the one between connections on higher, trivial principal bundles and their higher parallel transport maps. 

For $k=1$  a precise formulation is developed in \cite{schreiber3} (including the case of a non-abelian Lie group). Let $\fun M A$ be the groupoid whose objects are smooth maps $F\maps PM \to A$ which are constant on thin homotopy classes of paths and satisfy
\begin{equation*}
F(\gamma_2 \pcomp \gamma_1) = F(\gamma_2) \cdot F(\gamma_1)\text{,}
\end{equation*}
and whose morphisms from $F_1$ to $F_2$ are smooth maps $g:M \to A$ satisfying
\begin{equation}
\label{eq:trans}
g(\gamma(1)) \cdot F_1(\gamma) = F_2(\gamma) \cdot g(\gamma(0))
\end{equation}
for all $\gamma \in PM$. Composition is multiplication. On the other side, consider the groupoid $\mathcal{Z}_M^1(A)$ whose objects are 1-forms $\omega \in \Omega^1_{\mathfrak{a}}(M)$ and whose morphisms between $\omega_{1}$ and $\omega_2$ are smooth maps $g:M \to A$ such that $\omega_2 = \omega_1 - g^{*}\bar\theta$, where $\bar\theta$ is the right-invariant Maurer-Cartan form on $A$. We have shown \cite[Proposition 4.7]{schreiber3} that
the functor
\begin{equation*}
\mathfrak{P}_1^{\infty} \maps \mathcal{Z}_M^1(A) \to \fun MA 
\end{equation*}
that sends a 1-form $\omega \in \Omega^1_{\mathfrak{a}}(M)$ to the map
\begin{equation*}
F_{\omega}\maps PM \to A \maps \gamma \mapsto  \exp \left ( - \int_{\gamma} \omega \right ) \text{,}
\end{equation*}
and that is the identity on morphisms, 
is an \emph{isomorphism} between categories.
\cite[Theorem B.2]{waldorf9} assures that this results  remains true upon replacing the smooth manifold $M$ by a general diffeological space $X$. We have used this theorem in our construction of the connection on the bundle $L\mathcal{G}$ in Section \ref{sec:constrconn}.

In Section \ref{sec:reconcon} we also need the version for $k=2$.  For smooth manifolds and a  (possibly non-abelian) Lie 2-group it has been formulated and proved in \cite{schreiber5}. Here we restrict ourselves to an abelian Lie group $A$ (making it \emph{much} simpler) and generalize \emph{this} version to diffeological spaces. 

We consider the 2-groupoid $\tfun MA$  \cite[Section 2.1]{schreiber5} whose  objects  are smooth maps $G\maps \mathcal{B}M \to A$ that exchange both the vertical and horizontal composition of bigons with the multiplication in $A$. The 1-morphisms between such maps $G_1$ and $G_2$ are objects $F\maps \pt M \to A$ in $\fun MA$ satisfying 
\begin{equation}
\label{eq:pseudotrans}
F(\gamma_2) \cdot G_2(\Sigma) = G_1(\Sigma) \cdot F(\gamma_1)
\end{equation}
for all bigons $\Sigma\in \mathcal{B}X$ with $\ev(\Sigma) =: (\gamma_1,\gamma_2) \in \pt M^{[2]}$. Equation \erf{eq:pseudotrans} can be seen as the analog of  \erf{eq:trans}.   
The 2-morphisms between $F_1$ and $F_2$ are simply the morphisms in $\fun MA$.
On the other hand, we consider a 2-groupoid $\mathcal{Z}^2_M(A)$ \cite[Definition 2.12]{schreiber5} whose objects are 2-forms $\psi \in \Omega^2_{\mathfrak{a}}(M)$. A 1-morphism from $\psi_1$ to $\psi_2$ is a 1-form $\omega\in \Omega^1_{\mathfrak{a}}(M)$ such that
\begin{equation}
\label{eq:morfun}
\psi_2  = \psi_1 - \mathrm{d}\omega\text{,}
\end{equation}
and a 2-morphism between $\omega_1$ and $\omega_2$ is simply a morphism in $\mathcal{Z}_M^1(A)$. By
\cite[Theorem 2.21]{schreiber5} and \cite[Lemma 4.1.2]{schreiber2}, the 2-functor
\begin{equation*}
\mathfrak{P}_2^{\infty}\maps \mathcal{Z}^2_M(A) \to \tfun MA
\end{equation*}
that sends 2-form $\psi\in\Omega^2_{\mathfrak{a}}(M)$ to the map
\begin{equation}
\label{eq:defG}
G_{\psi}\maps \mathcal{B}M \to A\maps \Sigma \mapsto \exp \left ( -\int_{\Sigma} \psi \right )\text{,}
\end{equation}
and that is on 1-morphisms and 2-morphisms given by the functor $\mathfrak{P}_1^{\infty}$, is an \emph{isomorphism} between 2-groupoids.

All this is functorial in the manifold $M$: for a smooth map $f:M \to N$ one has on  one  side the obvious pullback 2-functor $f^{*}\maps \mathcal{Z}^2_N(A) \to \mathcal{Z}^2_M(A)$, and
on the other side the 2-functor $f^{*}\maps \tfun NA \to \tfun MA$
 defined as the pre-composition with the induced map $\mathcal{B}f\maps \mathcal{B}M \to \mathcal{B}N$. The isomorphism $\mathfrak{P}_2^{\infty}$ exchanges these two 2-functors with each other; in other words, $\mathfrak{P}_2^{\infty}$ is an isomorphism between presheaves of 2-groupoids over the category of smooth manifolds. 

The generalized version we need in Section \ref{sec:reconcon} is the following. All definitions are literally valid for  an arbitrary diffeological space $X$: the 2-groupoids $\mathcal{Z}^2_X(A)$ and $\tfun XA$, of the pullback 2-functors induced by smooth maps $f:X \to Y$, and  the 2-functor $\mathfrak{P}_2^{\infty}$ (we denote the generalized version by $\mathfrak{P}_2$). Namely, on objects $\mathfrak{P}_2$ is defined by an  integral of the ordinary  2-form $\Sigma^{*}\psi$ over $[0,1]^2$, while on 1-morphisms and 2-morphisms, the functor $\mathfrak{P}_1^{\infty}$ has already been generalized to a functor $\mathfrak{P}_1$ in the diffeological setup \cite[Theorem B.2]{waldorf9}. We claim:

\begin{theorem}
\label{th:functorsvsforms}
For any diffeological space $X$, the 2-functor
\begin{equation*}
\mathfrak{P}_2\maps \mathcal{Z}^2_X(A) \to \tfun XA
\end{equation*}
is an isomorphism of 2-groupoids. 
\end{theorem}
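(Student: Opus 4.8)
The plan is to deduce the diffeological statement from the smooth-manifold isomorphism $\mathfrak{P}_2^{\infty}$ (\cite[Theorem 2.21]{schreiber5}) by testing on plots, exploiting that a diffeological $2$-form, a smooth map $\mathcal{B}X \to A$, and the defining conditions of both $2$-groupoids are all determined by their pullbacks along plots $c\maps U \to X$ with $U \subseteq \R^n$ open. Since $\mathfrak{P}_1$ is already an isomorphism of categories for every diffeological space (\cite[Theorem B.2]{waldorf9}) and $\mathfrak{P}_2$ acts as $\mathfrak{P}_1$ on $1$- and $2$-morphisms, the theorem will follow once I establish (A) that $\psi \mapsto G_{\psi}$ is a bijection on objects, and (B) that $\mathfrak{P}_1$ carries the morphism conditions of $\mathcal{Z}^2_X(A)$ precisely onto those of $\tfun XA$. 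Throughout I would use the naturality identity $G_{\psi} \circ \mathcal{B}c = G_{c^{*}\psi}$: evaluated on a bigon $\Sigma \in BU$ both sides equal $\exp(-\int_{[0,1]^2}(c \circ \exd\Sigma)^{*}\psi)$, using $(\exd\Sigma)^{*}c^{*}\psi = (c\circ\exd\Sigma)^{*}\psi$.

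For (A), injectivity is immediate from this identity: if $G_{\psi}=G_{\psi'}$ then $G_{c^{*}\psi}=G_{c^{*}\psi'}$ on the manifold $U$, hence $c^{*}\psi=c^{*}\psi'$ by the injectivity clause of $\mathfrak{P}_2^{\infty}$, and since this holds for all plots, $\psi=\psi'$. For surjectivity, given a smooth $G\maps \mathcal{B}X \to A$ exchanging both compositions with multiplication, I would define for each plot $c\maps U \to X$ the unique $\psi_c \in \Omega^2_{\mathfrak{a}}(U)$ with $G_{\psi_c}=G\circ\mathcal{B}c$, which exists by $\mathfrak{P}_2^{\infty}$ on $U$. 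Compatibility $\psi_{c\circ f}=f^{*}\psi_c$ for smooth $f\maps V\to U$ then follows from $G_{f^{*}\psi_c}=G_{\psi_c}\circ\mathcal{B}f=G\circ\mathcal{B}c\circ\mathcal{B}f=G\circ\mathcal{B}(c\circ f)=G_{\psi_{c\circ f}}$ and uniqueness on $V$, so the family $(\psi_c)$ assembles into a single diffeological $2$-form $\psi$. Finally $G=G_{\psi}$, because any bigon $\Sigma$ equals $Bc(\Box)$ for $c=\exd\Sigma$ (extended to an open neighbourhood of $[0,1]^2$) and $\Box$ the fundamental bigon with $\exd\Box=\mathrm{id}$, whence $G(\Sigma)=(G\circ\mathcal{B}c)(\Box)=G_{\psi_c}(\Box)=\exp(-\int_{[0,1]^2}(\exd\Sigma)^{*}\psi)=G_{\psi}(\Sigma)$.

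For (B), fix objects $\psi_1,\psi_2$. The Hom-category $\mathrm{Hom}_{\mathcal{Z}^2_X(A)}(\psi_1,\psi_2)$ is the full subcategory of $\mathcal{Z}^1_X(A)$ on $1$-forms $\omega$ with $\psi_2=\psi_1-\mathrm{d}\omega$; fullness holds because a $2$-morphism is any $\mathcal{Z}^1$-morphism $g$, and $\mathrm{d}(g^{*}\bar\theta)=0$ keeps the constraint $\mathrm{d}\omega=\psi_1-\psi_2$ intact. Likewise $\mathrm{Hom}_{\tfun XA}(G_{\psi_1},G_{\psi_2})$ is the full subcategory of $\fun XA$ on objects $F$ satisfying \erf{eq:pseudotrans}. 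As $\mathfrak{P}_1$ is an isomorphism $\mathcal{Z}^1_X(A)\cong\fun XA$ sending $\omega$ to $F_\omega$, it suffices to show that $\psi_2=\psi_1-\mathrm{d}\omega$ holds exactly when $F_\omega$ satisfies \erf{eq:pseudotrans} relative to $G_{\psi_1},G_{\psi_2}$. Writing both sides in the abelian group $A$ via $F_{\omega}(\gamma)=\exp(-\int_{\gamma}\omega)$ and $G_{\psi_i}(\Sigma)=\exp(-\int_{\Sigma}\psi_i)$, and applying Stokes' theorem $\int_{\Sigma}\mathrm{d}\omega=\int_{\gamma_2}\omega-\int_{\gamma_1}\omega$ (the side edges of a bigon are constant), condition \erf{eq:pseudotrans} becomes $\exp(\int_{\Sigma}(\psi_1-\psi_2-\mathrm{d}\omega))=1$ for every bigon $\Sigma$; testing on arbitrarily small bigons inside a plot forces $\psi_1-\psi_2-\mathrm{d}\omega=0$, and conversely this equation makes the exponent vanish identically. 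Equivalently, one can pull the equivalence back plot-by-plot and cite $\mathfrak{P}_2^{\infty}$ directly.

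The main obstacle I anticipate is the surjectivity step of (A): checking that the plot-wise forms $\psi_c$ genuinely glue into a diffeological $2$-form and recover $G$. This is precisely where the naturality of $\mathfrak{P}_2^{\infty}$ in the manifold argument and its uniqueness clause are indispensable, and where one must treat the adjoint of a bigon as a bona fide plot and verify that the fundamental bigon $\Box$ realizes an arbitrary bigon up to thin homotopy. Everything else is bookkeeping layered on top of the already-established isomorphisms $\mathfrak{P}_1$ and $\mathfrak{P}_2^{\infty}$.
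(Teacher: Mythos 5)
Your proposal is correct and follows essentially the same route as the paper: reduce to the smooth-manifold isomorphism $\mathfrak{P}_2^{\infty}$ by pulling back along plots to define the inverse on objects, and invoke the diffeological isomorphism $\mathfrak{P}_1$ for the $1$- and $2$-morphism levels. You simply spell out the details the paper leaves as ``straightforward'' (the gluing of the plot-wise forms $\psi_c$, the recovery $G = G_{\psi}$ via the adjoint of a bigon, and the Stokes-theorem match between the conditions \erf{eq:morfun} and \erf{eq:pseudotrans}).
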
 

\def\ep#1{\widetilde{#1}{}}

\begin{proof}
On 1-morphisms and 2-morphisms, it is an isomorphism because the functor $\mathfrak{P}_1$ is an isomorphism between groupoids \cite[Theorem B.2]{waldorf9}. On objects, an inverse is  easy to define. If an  object $G$ in $\tfun XA$ is given, one has for each plot $c:U \to X$ a 2-form $\psi_c \in \Omega^2_{\mathfrak{a}}(U)$, obtained by applying the inverse of $\mathfrak{P}_2^{\infty}$ to $c^{*}G$.  These 2-forms  define an object $\psi =\left \lbrace \psi_c\right \rbrace$ in $\mathcal{Z}^2_X(A)$. It is straightforward to check that $G_{\psi} = G$. 
\end{proof}

\end{appendix}


\kobib{../../bibliothek/tex}

\end{document}